\newtheorem{thm}{Theorem}[section]
\newtheorem{prop}[thm]{Proposition}
\newtheorem{lem}[thm]{Lemma}
\newtheorem{cor}[thm]{Corollary}
\newtheorem*{thm*}{Theorem}
\newtheorem*{cor*}{Corollary}
\newtheorem{quest}[thm]{Question}
\theoremstyle{definition}
\newtheorem{dfn}[thm]{Definition}
\newtheorem{example}[thm]{Example}
\newtheorem{rem}[thm]{Remark}
\theoremstyle{remark}
\newtheorem{remark}[thm]{Remark}
\numberwithin{equation}{section}
\newcommand{\cA}{{\mathcal A}}
\newcommand{\cB}{{\mathcal B}}
\newcommand{\cE}{{\mathcal E}}
\newcommand{\cF}{{\mathcal F}}
\newcommand{\cG}{{\mathcal G}}
\newcommand{\cH}{{\mathcal H}}
\newcommand{\cI}{{\mathcal I}}
\newcommand{\cJ}{{\mathcal J}}
\newcommand{\cL}{{\mathcal L}}
\newcommand{\cM}{{\mathcal M}}
\newcommand{\cN}{{\mathcal N}}
\newcommand{\cO}{{\mathcal O}}
\newcommand{\cT}{{\mathcal T}}
\newcommand{\cU}{{\mathcal U}}
\newcommand{\cV}{{\mathcal V}}
\newcommand{\cX}{{\mathcal X}}
\newcommand{\cY}{{\mathcal Y}}
\newcommand{\fA}{{\mathfrak A}}
\newcommand{\fB}{{\mathfrak B}}
\newcommand{\fC}{{\mathfrak C}}
\newcommand{\fH}{{\mathfrak H}}
\newcommand{\fS}{{\mathfrak S}}
\newcommand{\fV}{{\mathfrak V}}
\newcommand{\fW}{{\mathfrak W}}
\newcommand{\fm}{{\mathfrak m}}
\newcommand{\C}{{\mathbb C}}
\newcommand{\N}{{\mathbb N}}
\newcommand{\M}{{\mathbb M}}
\newcommand{\F}{{\mathbb F}}
\newcommand{\W}{{\mathbb W}}
\newcommand{\D}{{\mathbb D}}
\newcommand{\bB}{\mathbb{B}}
\newcommand{\bC}{\mathbb{C}}
\newcommand{\bD}{\mathbb{D}}
\newcommand{\bH}{\mathbb{H}}
\newcommand{\bM}{\mathbb{M}}
\newcommand{\bN}{\mathbb{N}}
\newcommand{\id}{{\bf id}}
\newcommand{\mlt}{\operatorname{Mult}}
\newcommand{\alg}{\operatorname{alg}}
\newcommand{\dist}{\operatorname{dist}}
\newcommand{\spn}{\operatorname{span}}
\newcommand{\Rep}{\operatorname{Rep}}
\newcommand{\mspn}{\operatorname{mat-span}}
\newcommand{\ol}{\overline}
\def\mcc{M\raise.5ex\hbox{c}C}
\def\mccarthy{M\raise.5ex\hbox{c}Carthy}
\begin{document}

\title[Nc analytic functions on nc varieties]{Algebras of bounded noncommutative analytic functions on subvarieties of the noncommutative unit ball}

\author{Guy Salomon}
\address{Department of  Mathematics\\
Technion --- Israel Institute of Technology\\
Haifa, 3200003, Israel}
\email{guy.salomon@technion.ac.il}
\author{Orr M. Shalit}
\email{oshalit@technion.ac.il}
\author{Eli Shamovich}
\email{eshamovich@uwaterloo.ca}

\subjclass[2010]{47LXX,46L07,47L25}

\thanks{The first author was partially supported by the Clore Foundation.
The second author was partially supported by Israel Science Foundation Grants no. 474/12 and 195/16, and by
EU FP7/2007-2013 Grant no. 321749}

\begin{abstract}
We study algebras of bounded, noncommutative (nc) analytic functions on nc subvarieties of the nc unit ball.
Given an nc variety $\fV$ in the nc unit ball $\fB_d$, we identify the algebra of bounded analytic functions on $\fV$ --- denoted $H^\infty(\fV)$ --- as the multiplier algebra $\mlt \cH_\fV$ of a certain reproducing kernel Hilbert space $\cH_\fV$ consisting of nc functions on $\fV$.
We find that every such algebra $H^\infty(\fV)$ is completely isometrically isomorphic to the quotient $H^\infty(\fB_d)/ \cJ_\fV$ of the algebra of bounded nc holomorphic functions on the ball by the ideal $\cJ_\fV$ of bounded nc holomorphic functions which vanish on $\fV$.
In order to demonstrate this isomorphism, we prove that the space $\cH_\fV$ is an nc complete Pick space (a fact recently proved --- by other methods --- by Ball, Marx and Vinnikov).

We investigate the problem of when two algebras $H^\infty(\fV)$ and $H^\infty(\fW)$ are (completely) isometrically isomorphic.
If the variety $\fW$ is the image of $\fV$ under an nc analytic automorphism of $\fB_d$, then $H^\infty(\fV)$ and $H^\infty(\fW)$ are completely isometrically isomorphic.
We prove that the converse holds in the case where the varieties are homogeneous; in general we can only show that if the algebras are completely isometrically isomorphic, then there must be nc holomorphic maps between the varieties (in the case $d = \infty$ we need to assume that the isomorphism is also weak-$*$ continuous).

We also consider similar problems regarding the bounded analytic functions that extend continuously to the boundary of $\fB_d$ and related norm closed algebras; the results in the norm closed setting are somewhat simpler and work for the case $d = \infty$ without further assumptions. 

Along the way, we are led to consider some interesting problems on function theory in the nc unit ball.
For example, we study various versions of the Nullstellensatz (that is, the problem of to what extent an ideal is determined by its zero set), and we obtain perfect Nullstellensatz in both the homogeneous as well as the commutative cases.
\end{abstract}

\maketitle
\tableofcontents

\section{Introduction}
\subsection{Historical Background and Motivation}
The study of bounded analytic functions on open domains in $\C^n$ is well-entrenched.
In particular, the algebra $H^{\infty}(\D)$ of bounded analytic functions on the disc was extensively studied by many, starting from Hardy \cite{Har15} and Riesz \cite{Rie23}; see also the excellent books \cite{Gar07}, \cite{Rudin} and \cite{Rud69}.
One area that stood out, in particular, due to its applications, is the theory of interpolation of bounded analytic functions on the disc initiated by Pick \cite{Pick16} and Nevanlinna  \cite{Nev19,Nev29}.
These concepts were later given a fresh approach from the operator theoretic perspective by Sarason in \cite{Sar85} and by others (see \cite{AM_Book}).
In this approach, one regards $H^{\infty}(\D)$ as an algebra of operators on the Hilbert space $H^2(\D)$, the space of analytic functions on the disc with square summable Taylor coefficients at the origin.


Another connection of the classical theory of bounded analytic functions on the disc to operator theory was discovered by von Neumann in \cite{vNeu51}, where he proved his celebrated inequality.
The inequality of von Neumann was extended to the bidisc by Ando in \cite{An63}.
After many unsuccessful generalization attempts, it was Parrot \cite{Par70} who showed that the von Neumann inequality fails for the tridisc.
However, in the case of commuting row contractions, Drury observed in \cite{Drury} that one can obtain a von Neumann inequality if we replace the bounded analytic functions on the unit ball with the algebra of multipliers of the Drury--Arveson space (also known as the symmetric Fock space), where the norm of the algebra is the multiplier norm instead of the supremum norm (see also \cite{MV93} and \cite{Arv98}).
The Drury--Arveson space is, in fact, a reproducing kernel Hilbert space of analytic functions on the unit ball and has the complete Pick property, i.e., the interpolation question for matrix valued functions has a satisfactory answer (see \cite{ArPop95,Popescu95b} and \cite[Section 8.9]{AM_Book} and the references therein).
Moreover, Agler and {{\mccarthy}} proved in \cite{AM00} that this space is universal among the spaces with the complete Pick property.

Let $d \in \bN \cup \{\infty\}$, and let $H^2_d$ denote the Drury--Arveson space, and let $\cM_d$ denote the multiplier algebra of the Drury--Arveson space (see the survey \cite{ShalitSurvey}).
We note that $\cM_d$ is closed in the weak-operator topology ($\textsc{wot}$) on $B(H^2_d)$ and, furthermore, it is obtained as the $\textsc{wot}$-closure of the algebra generated by the shifts $M_{z_i} : f \mapsto z_i f$ ($i=1,\ldots, d$).
Every analytic subvariety of the ball cut out by functions in $H^2_d$ can be cut out by functions in $\cM_d$.
Following \cite{DRS15}, with every such subvariety $V \subset \bB_d$ we associate the subspace $\cH_V \subset H^2_d$, which is the closure of the subspace spanned by the kernel functions corresponding to points of $V$.
This is a reproducing kernel Hilbert space, and we let $\cM_V$ denote the multiplier algebra of $\cH_V$. Using the complete Pick property, one obtains that there is a completely contractive and surjective  map $\cM_d \to \cM_V$ and its kernel is a $\textsc{wot}$-closed ideal.
This consideration tells us that $\cM_d$ enjoys a property similar to the property of Stein manifolds and affine schemes, namely that every ``function'' on a subvariety lifts to a global ``function''.
Therefore, it stands to reason to ask, to what extent do the variety and the algebra of multipliers on it determine each other.

This question was answered in the case of complete isometric isomorphism by Davidson, Ramsey and Shalit in \cite[Theorem 4.5]{DRS15}.
They proved that $\cM_V \cong \cM_W$ completely isometrically if and only if there exists an automorphism of the ball $F$, such that $F(V) = W$.
In particular, the multiplier algebras ``see'' not only the analytic structure of the variety, but also give information about the embedding of the variety into the ball.
In the case of homogeneous subvarieties of $\bB_d$ the result is much stronger.
In fact Davidson, Ramsey and Shalit in \cite{DRS11} and Hartz in \cite{Hartz12} proved that if $d < \infty$, then  for homogeneous varieties $V$ and $W$ we have that $\cM_V \cong \cM_W$ algebraically if and only if there exists a linear map $\varphi \in GL_d(\C)$, such that $\varphi(V) = W$.
We refer the reader to \cite{SalomonShalit} for a detailed survey and also additional results on these questions.

The theory described above gives a satisfactory answer to the classification of quotient algebras of the form $\cM_d / \cJ_V$, where $\cJ_V$ is the kernel of the restriction map $f \mapsto f\big|_V$. 
The limitation, however, is that it deals with radical ideals only.
One way to see higher order vanishing of a function of one variable at a point $\lambda$ is to consider $f(\begin{smallmatrix} \lambda & 1 \\ 0 & \lambda \end{smallmatrix})$.
This consideration among others leads us to consider the noncommutative (also called ``free") setting.


Noncommutative (nc for short) functions are functions defined on subsets of matrices of all sizes which respect direct sums and similarities (see Section \ref{sec:prelim} for precise definitions).
Such functions were first introduced by Taylor in \cite{Tay72frame,Tay73} and also by Voiculescu in \cite{Voic85,Voic86,Voic04,Voic10}.
Recently, many works laid out the foundations of noncommutative free analysis, such as \cite{KVBook}, \cite{AM15,AM15a,AM15c} and \cite{Popescu06b}.
The field of noncommutative analysis has enjoyed such rapid growth, due to applications in many fields such as free probability \cite{PopVin13,BPV13} and real and convex algebraic geometry \cite{Hel02,HM2012,HKM13,HKM13-relax,HKMS15}.
Noncommutative functions appeared earlier in the realm of operator algebras in the works of Bunce \cite{Bunce}, Frazho \cite{Frazho} and Popescu \cite{Popescu91}, that generalized von Neumann's inequality to an arbitrary (non commuting) row contraction; here the shift $M_{z_1}, \ldots, M_{z_d}$ on Drury--Arveson space is replaced by the left creation operators $L_1, \ldots, L_d$ on the full Fock space (we shall explain below how to interpret $L_i$ as the noncommutative coordinate function on the nc unit ball).
The $\textsc{wot}$-closed algebra generated by $L_1, \ldots, L_d$ was studied in detail by Arias and Popescu \cite{AriasPopescu}, Davidson and Pitts \cite{DavPitts1,DavPitts2,DavPittsPick}, Muhly and Solel \cite{MS04,MS11,MS13} and Popescu \cite{Popescu95,Popescu06b, Popescu10}.

Analogues of the Nevanlinna-Pick interpolation on the noncommutative ball first appeared in \cite{DavPittsPick} and \cite{AriasPopescu}.
More general noncommutative versions of the classical interpolation and realization results appeared recently in the works of Agler and {\mccarthy} \cite{AM15d} and Ball, Marx and Vinnikov \cite{BMV15a,BMV15b}, who also introduced a generalization of reproducing kernel Hilbert spaces to the free setting.

Our first goal in this work is to show that the full Fock space is a noncommutative reproducing kernel Hilbert space (nc RKHS), and its algebra of multipliers is, on the one hand, the algebra of bounded functions on the noncommutative ball (such that the multiplier norm and the supremum norm coincide), and, on the other hand, that this algebra coincides with the $\textsc{wot}$ closed algebra considered by Arias--Popescu and Davidson--Pitts.
With this identification in hand, our second goal is to show that several results of \cite{BMV15b} in the case of the noncommutative ball follow from established operator algebraic techniques and results, in particular, the complete Pick property of the noncommutative kernel of the full Fock space.

We then proceed to study subvarieties cut out by multipliers in the noncommutative ball. 
Let $H^{\infty}(\fB_d)$ denote the algebra of bounded nc functions on the noncommutative ball, and let $\cH^2_d$ denote the full Fock space.
Then, as above, for every subvariety $\fV \subset \fB_d$ we associate an nc RKHS $\cH^2_\fV \subset \cH^2_d$ and its algebra of multipliers $\mlt \cH^2_\fV$. 
We prove that $\mlt \cH^2_\fV = H^{\infty}(\fV)$ and show that $H^{\infty}(\fV)$ is a quotient of $H^{\infty}(\fB_d)$ by a $\textsc{wot}$-closed ideal, as in the commutative case.
Thus, we are led to study the isomorphism problem for such algebras.
We obtain the following (partial) generalization of the result of \cite{DRS15}.
\begin{thm*}[Theorem \ref{thm:isomorphism}]
Let $\fV \subseteq \fB_d$ and $\fW \subseteq \fB_{e}$ be nc varieties, and let $\alpha : H^\infty(\fV) \to H^\infty(\fW)$ be a completely isometric isomorphism. Assume that $d$ and $e$ are finite or that $\alpha$ is weak-$*$ continuous.
Then there exists an nc map $G: \fB_e \to \fB_d$ such that $G \big|_{\fW}  = G_\alpha$ maps $\fW$ bijectively onto $\fV$, which implements $\alpha$ by the formula
\[
\alpha(f) = f \circ G \quad, \quad f \in H^\infty(\fV).
\]
\end{thm*}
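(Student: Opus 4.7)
The plan is to mimic the strategy used in the commutative case, with scalar characters replaced by matrix-valued evaluations, exploiting the identification $H^\infty(\fV) = H^\infty(\fB_d)/\cJ_\fV$ established earlier in the paper. Write $z_1,\ldots,z_d$ for the nc coordinate functions on $\fB_d$ (the left creation operators $L_1,\ldots,L_d$), and set $G_i := \alpha(z_i|_\fV) \in H^\infty(\fW)$. Because $[z_1|_\fV,\ldots,z_d|_\fV]$ has norm at most $1$ in $M_{1,d}(H^\infty(\fV))$ and $\alpha$ is completely isometric, $[G_1,\ldots,G_d]$ is a row contraction in $M_{1,d}(H^\infty(\fW))$. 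Using the completely isometric identification $H^\infty(\fW) = H^\infty(\fB_e)/\cJ_\fW$, lift each $G_i$ to $\tilde G_i \in H^\infty(\fB_e)$ with $[\tilde G_1,\ldots,\tilde G_d]$ still row contractive, and assemble $G := (\tilde G_1,\ldots,\tilde G_d)$; this is the desired nc map $G : \fB_e \to \fB_d$.

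Next, for each matrix level $n$ and each $W \in \fW_n$, evaluation $\operatorname{ev}_W : H^\infty(\fW) \to M_n$ is a completely contractive unital homomorphism, and hence so is $\rho_W := \operatorname{ev}_W \circ \alpha : H^\infty(\fV) \to M_n$. The central step is to show that every such $\rho_W$ (assumed weak-$*$ continuous when $d = \infty$) is of the form $\operatorname{ev}_Z$ for some $Z \in \fV_n$. For $H^\infty(\fB_d)$ this is the nc von Neumann / functional-calculus statement classifying (weak-$*$ continuous) completely contractive unital representations by points of $\fB_d$; composing with the quotient $H^\infty(\fV) = H^\infty(\fB_d)/\cJ_\fV$ shows the resulting point annihilates $\cJ_\fV$, so by the Nullstellensatz (the zero set of $\cJ_\fV$ is $\fV$) it must lie in $\fV_n$. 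Defining $G_\alpha(W) := Z$ and reading off coordinates gives $Z_i = \rho_W(z_i|_\fV) = \operatorname{ev}_W(G_i) = \tilde G_i(W)$, so $G_\alpha$ coincides with $G|_\fW$ and takes values in $\fV$.

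Applying the same construction to $\alpha^{-1} : H^\infty(\fW) \to H^\infty(\fV)$ yields an nc map $H : \fB_d \to \fB_e$ whose restriction to $\fV$ is a set-theoretic map $\fV \to \fW$. Uniqueness of the point $Z$ in the previous step, combined with $\alpha \circ \alpha^{-1} = \id$ and $\alpha^{-1} \circ \alpha = \id$, forces $G|_\fW$ and $H|_\fV$ to be mutually inverse bijections between $\fW$ and $\fV$. The intertwining $\alpha(f) = f \circ G|_\fW$ holds by construction for the generators $f = z_i|_\fV$; since both sides are (weak-$*$ continuous when $d = \infty$) completely contractive unital homomorphisms $H^\infty(\fV) \to H^\infty(\fW)$, they agree on all of $H^\infty(\fV)$.

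The main obstacle is the representation-theoretic lemma identifying completely contractive (and weak-$*$ continuous, when $d = \infty$) unital homomorphisms $H^\infty(\fV) \to M_n$ with point evaluations on $\fV_n$. Establishing it requires a Pick/dilation-type classification of such representations of $H^\infty(\fB_d)$ together with a Nullstellensatz for $\cJ_\fV$, both of which are ingredients developed earlier in the paper, and the weak-$*$ continuity hypothesis for $d = \infty$ enters precisely here to ensure that a representation of $H^\infty(\fB_d)$ is implemented by a single row contraction in $\fB_d$.
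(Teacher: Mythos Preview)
Your overall strategy matches the paper's, but there is a genuine gap at the step where you assert that $G$ maps $\fB_e$ into the \emph{open} ball $\fB_d$. Complete isometry of $\alpha$ only tells you that the row $[G_1,\ldots,G_d]$ has multiplier norm at most $1$, so a priori $G : \fB_e \to \overline{\fB}_d$. Your subsequent representation-theoretic claim --- that a unital completely contractive representation $\rho$ of $H^\infty(\fB_d)$ on $M_n$ is automatically evaluation at a point of $\fB_d$ --- is false as stated: Davidson--Pitts (Theorem~\ref{thm:DavPitts_reps}) only gives that $\pi_{d,k}(\rho)$ lies in $\overline{\fB_d(k)}$, and the fiber over a boundary point need not be a singleton (nor need $\rho$ be weak-$*$ continuous). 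So you cannot conclude that $\rho_W = \operatorname{ev}_W \circ \alpha$ equals $\Phi_{G(W)}$, and without this the identity $\alpha(f)(W) = f(G(W))$ does not follow. This is exactly the place where the argument could collapse in the finite-$d$ case, since there you are not assuming $\alpha$ is weak-$*$ continuous.

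The paper fills this gap with a maximum principle (Lemma~\ref{lem:maximum_principle}): if $\|G(W_0)\| = 1$ for some $W_0 \in \fB_e$, then $G$ is constant, which forces each coordinate function $z_i|_\fV$ to be constant (via injectivity of $\alpha$), contradicting $\fV \neq \emptyset$ and $\|G(0)\| = 1$. Once $G(\fW) \subseteq \fB_d$ is established, the uniqueness of the fiber over an interior point (for $d<\infty$) or weak-$*$ continuity (for $d=\infty$) gives $\alpha^*(\Phi_W) = \Phi_{G(W)}$, and the rest of your argument goes through. Your final paragraph about extending from generators via continuity is unnecessary: once $\alpha^*(\Phi_W) = \Phi_{G(W)}$, the formula $\alpha(f)(W) = f(G(W))$ holds for \emph{all} $f$ directly.
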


More satisfactory results are obtained in the homogeneous case. First, we show that in the homogeneous case we have a strong form of the Nullstellensatz, that does not require taking radicals.
\begin{thm*}[Theorem \ref{thm:null_poly}]
Let $d \in \bN$ and let $J \triangleleft \F_d$ be a homogeneous ideal.
Then
\[I(V_{\fB_d}(J)) = J. \]
\end{thm*}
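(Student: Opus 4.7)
The plan is to produce, for each $N$, a single strict row contraction inside $V_{\fB_d}(J)$ powerful enough to detect non-membership in $J$. The construction will be via truncated compressions of the left creation operators on the full Fock space.

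\textbf{Reduction to homogeneous $f$.} Because $J$ is homogeneous, $V_{\fB_d}(J)$ is stable under the scalar action $X \mapsto tX$ for $|t| \leq 1$. Writing $f = \sum_k f_k$ in homogeneous components, at any $X \in V_{\fB_d}(J)$ the equality $0 = f(tX) = \sum_k t^k f_k(X)$ (a polynomial identity in $t$) forces each $f_k(X) = 0$. So it suffices to show that a homogeneous $f \in (\F_d)_k$ vanishing on $V_{\fB_d}(J)$ lies in $J_k$.

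\textbf{Construction of the test points.} Let $\cH^2_d$ be the full Fock space, so that via the usual identification $(\F_d)_m = (\C^d)^{\otimes m}$ and $J_m \subseteq (\C^d)^{\otimes m}$. Set $\cK = \bigoplus_m \cK_m$ with $\cK_m := (\C^d)^{\otimes m} \ominus J_m$, and for $N \geq k$ put $\cK^{\leq N} = \bigoplus_{m \leq N} \cK_m$ and $\overline J = \bigoplus_m J_m$. Since $J$ is a left ideal, $\overline J$ is invariant under each left creation operator $L_i$, so $\cK$ is $L_i^*$-invariant. Define
\[
\tilde L_i^{(N)} := P_{\cK^{\leq N}} L_i P_{\cK^{\leq N}}, \qquad X^{(N,r)} := (r \tilde L_1^{(N)}, \ldots, r \tilde L_d^{(N)}), \quad 0 < r < 1.
\]
Since $(L_1, \ldots, L_d)$ is a row isometry, any compression is a row contraction, and scaling by $r$ makes $X^{(N,r)}$ a strict row contraction, hence a point of $\fB_d$.

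\textbf{$X^{(N,r)}$ lies in $V_{\fB_d}(J)$.} By homogeneity of $J$ it is enough to check $p(\tilde L^{(N)}) = 0$ for $p \in J_l$. The $L_i$-invariance of $\overline J$ yields the semi-invariance identity $P_\cK L_i P_\cK = P_\cK L_i$ on $\cH^2_d$, and induction on $l$ then gives
\[
\tilde L_{i_1}^{(N)} \cdots \tilde L_{i_l}^{(N)} v = P_\cK L_{i_1} \cdots L_{i_l} v \quad \text{for all } v \in \cK_m \text{ with } m + l \leq N.
\]
Hence $p(\tilde L^{(N)}) v = P_\cK(p \cdot v) = 0$, because $p \cdot v \in \overline J$ when $p \in J$. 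For $v \in \cK_m$ with $m + l > N$ the iterated truncation by $P_{\cK^{\leq N}}$ forces the product to vanish once the degree would exceed $N$. Thus $p(\tilde L^{(N)}) = 0$ and so $p(X^{(N,r)}) = r^l p(\tilde L^{(N)}) = 0$.

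\textbf{Conclusion.} Applying the hypothesis $f(X^{(N,r)}) = 0$ to the vacuum vector $\Omega = 1 \in \cK_0 \subseteq \cK^{\leq N}$, and using $f(L)\Omega = f$ as a vector in $(\C^d)^{\otimes k}$, we obtain
\[
0 = f(X^{(N,r)}) \Omega = r^k P_\cK f,
\]
so $f \in \overline J \cap (\C^d)^{\otimes k} = J_k \subseteq J$.

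The main work will be the semi-invariance identity together with its iterated form, which is what makes the compressions $\tilde L_i^{(N)}$ behave, on the appropriate grading range, like a genuine representation of $\F_d / J$; once this identity is in hand the remaining verifications (strict row-contractivity and the vacuum evaluation) are routine.
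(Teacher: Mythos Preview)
Your proof is correct and follows essentially the same strategy as the paper: both compress the left creation operators to the co-invariant subspace $J^\perp$, truncate to finitely many degrees, and scale into $\fB_d$ to produce the test point. The paper defers the key fact that $q(S)=0 \Leftrightarrow q\in J$ for homogeneous $q$ to \cite[Lemma 7.6]{ShalitSolel} and uses the commutation of $P_n$ with $P_{J^\perp}$ abstractly, whereas you spell out the semi-invariance identity $P_\cK L_i P_\cK = P_\cK L_i$ and its iterated form explicitly; otherwise the two arguments coincide.
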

Then we show that --- as in the commutative case --- completely isometric isomorphisms are implemented by automorphisms of the ball.
\begin{thm*}[Theorem \ref{thm:isomorphism_homo} and Corollary \ref{cor:equivalence_is_linear_homog}]
Let $\fV \subseteq \fB_d$ and $\fW \subseteq \fB_{e}$ be homogeneous nc varieties, and let $\alpha : H^\infty(\fV) \to H^\infty(\fW)$ be a completely isometric isomorphism. Assume that $d$ and $e$ are finite or that $\alpha$ is weak-$*$ continuous.
Then $\fV$ and $\fW$ are conformally equivalent, in the sense that one may assume that there is some $k$ such that $\fV, \fW \subseteq \fB_k$, and that under this assumption there exists an automorphism $G \in \operatorname{Aut}(\fB_k)$ such that $G(\fW)  = \fV$, and such that
\[
\alpha(f) = f \circ G \quad, \quad f \in H^\infty(\fV).
\]
Furthermore, in this case there exists a unitary mapping $\fV$ onto $\fW$.
\end{thm*}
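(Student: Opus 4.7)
The plan is to bootstrap Theorem \ref{thm:isomorphism} by exploiting the $\C^*$-grading produced by homogeneity, thus replacing the general nc biholomorphism by a \emph{linear} one (a unitary), and then applying the strong Nullstellensatz for homogeneous ideals (Theorem \ref{thm:null_poly}) to identify it as the map implementing $\alpha$.

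First, by the natural isometric embeddings $\fB_d, \fB_e \hookrightarrow \fB_k$ with $k = \max(d,e)$, obtained by padding with zero coordinates (which preserves the $H^\infty$ algebras of the varieties), I reduce to the case $\fV, \fW \subseteq \fB_k$. Applying Theorem \ref{thm:isomorphism} to $\alpha$ and to $\alpha^{-1}$ then produces nc maps $G, F : \fB_k \to \fB_k$ such that $G|_\fW : \fW \to \fV$ and $F|_\fV : \fV \to \fW$ are mutually inverse bijections and $\alpha(f) = f \circ G$.

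Next, I extract the linear part of $G$ using homogeneity. For $\lambda \in \bD \setminus \{0\}$ set $G_\lambda(X) := \lambda^{-1} G(\lambda X)$; this is a uniformly bounded nc self-map of $\fB_k$, and since $\fW$ is homogeneous, $G_\lambda$ maps $\fW$ into $\lambda^{-1} \fV = \fV$. As $\lambda \to 0$, the Taylor--Taylor expansion of $G$ at the origin (which converges on $\fB_k$) shows $G_\lambda \to L := G'(0)$ pointwise, so $L(\fW) \subseteq \fV$. The same argument applied to $F$ produces a linear $L' := F'(0)$ with $L'(\fV) \subseteq \fW$, and matching linear parts of the identity $F \circ G = \mathrm{id}$ on $\fW$ shows that $L' L$ is the identity on $\spn(\fW)$ (and symmetrically). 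Since $L$ is a linear nc self-map of $\fB_k$, it is a row contraction; since its two-sided inverse is also a row contraction, $L$ must be row-unitary---i.e., a unitary element of $\operatorname{Aut}(\fB_k)$ carrying $\fW$ bijectively onto $\fV$. This already yields the unitary claim at the end of the theorem.

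Finally, I verify that this linear $L$ itself implements $\alpha$. By Theorem \ref{thm:null_poly} the defining ideals $\cJ_\fV, \cJ_\fW$ are graded (homogeneous), so $H^\infty(\fV)$ and $H^\infty(\fW)$ inherit compatible gradings on their polynomial parts. The map $f \mapsto f \circ L$ is a completely isometric, grading-preserving isomorphism that agrees with $\alpha$ on the linear coordinates (because the linear part of $G$ is $L$). By grading and (weak-$*$ or norm) density of polynomials, $\alpha = (\,\cdot\,) \circ L$ on all of $H^\infty(\fV)$. The main obstacle lies in this last step: Theorem \ref{thm:isomorphism} only furnishes the nonlinear $G$ implementing $\alpha$, and one must argue that $G$ may be replaced by its linear part $L$ without losing the implementation. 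The essential input is the homogeneous Nullstellensatz, which ensures that the ideals are determined by their degree-by-degree content, so that agreement on each graded piece forces equality of $\alpha$ and $(\,\cdot\,) \circ L$.
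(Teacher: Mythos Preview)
Your argument has two genuine gaps, and in fact ends up proving something the theorem does not assert.

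\textbf{Gap 1: you tacitly assume $G(0)=0$.} The map $G$ supplied by Theorem~\ref{thm:isomorphism} need not send $0$ to $0$; it only maps $\fW$ bijectively onto $\fV$, and $G(0)$ can be any point of $\fV$. Your construction $G_\lambda(X)=\lambda^{-1}G(\lambda X)$ is uniformly bounded, and converges to $G'(0)$, \emph{only} when $G(0)=0$; otherwise $G_\lambda(0)=\lambda^{-1}G(0)$ blows up. The paper devotes a separate argument (the ``disc trick'', Proposition~\ref{prop:biholo=>0-biholo}) precisely to producing a conformal equivalence that fixes~$0$. Your linearization step collapses without this.

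\textbf{Gap 2: $L$ need not implement $\alpha$.} Even granting $G(0)=0$, your final step is incorrect. You have $\alpha(z_i)=G_i\big|_\fW$, which in general has higher-order terms and is \emph{not} equal to $L_i\big|_\fW$. In particular $\alpha$ is not graded, so the phrase ``agrees with $\alpha$ on the linear coordinates'' and the appeal to grading and density do not yield $\alpha = (\,\cdot\,)\circ L$. What the theorem actually claims is that the (possibly non-linear) $G$ is an automorphism of $\fB_k$ implementing $\alpha$, and that \emph{separately} there exists a unitary carrying $\fV$ onto $\fW$; it does not claim the unitary implements $\alpha$. The paper obtains the first conclusion by applying the nc Cartan uniqueness theorem to $F\circ G$ (after reducing to $\mspn\fV=\fB_d$ so that $\Delta(F\circ G)(0,0)=I$), thereby showing $F\circ G=\id$ on all of $\fB_k$, not merely on $\fW$. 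Your argument only gives $L'L=\id$ on $\spn(\fW)$, which neither forces $G$ to be an automorphism nor forces $L$ to be unitary on $\C^k$ (your padding reduction to $k=\max(d,e)$ does not ensure $\spn(\fW)=\C^k$; the paper uses $\mspn$ for exactly this reason).
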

In Theorem \ref{thm:iso_homo_cont_older} we obtain a closely related result: if $\fV, \fW \subseteq \fB_d$ ($d<\infty$) are homogeneous varieties, then $H^\infty(\fV)$ is isometrically isomorphic to $H^\infty(\fW)$ if and only if there exists a unitary mapping $\fV$ onto $\fW$.

We then proceed to discuss the norm closure of the free algebra in the supremum norm on the noncommutative ball; this should be considered as the nc analogue of the disc algebra $A(\D)$.
We discuss the conditions for a bounded noncommutative function on a homogeneous variety to be in the norm closure of the polynomial functions on the variety, and discuss the isomorphism problem for the norm closed algebras instead of the $\textsc{wot}$-closed algebras; the classification scheme turns out to be the same. 

Lastly, we discuss connections to subproduct systems and to the commutative case.
In \cite{EisHoch79} Eisenbud and Hochester proved what they called a version of Nullstellensatz with nilpotents. We provide a different proof to this perfect ``free commutative Nullstellensatz", which shows how nc varieties encode the higher order zeros of commuting polynomials.
This version of the Nullstellensatz is ``perfect", in the sense that an appropriately defined zero locus of an ideal captures all the information about that ideal, in a way that does not involve radicals.

To briefly explain the result, let $\fC\bM_d$ denote the disjoint union $\sqcup_n CM_n^d$, consisting of all commuting $d$-tuples of $n \times n$ matrices, where $n$ varies through $\bN$.
Let $\bC[z] = \bC[z_1, \ldots, z_d]$ be the algebra of polynomials in $d$ (commuting) variables.
Given $\Omega \subset \fC\bM_d$ and $S \subseteq \bC[z]$, we let
\[
I_{\bC[z]}(\Omega) = \{p \in \bC[z] :  p(X) = 0 \,\, \textrm{ for all } \,\, X \in \Omega \}
\]
and
\[
V_{\fC\bM_d}(S) = \{X \in \Omega : p(X) = 0 \,\, \textrm{ for all } \,\, p \in S\}.
\]
Then our commutative free Nullstellensatz reads as follows.
\begin{cor*}[Commutative free Nullstellensatz --- Corollary \ref{cor:free_com_NSTZ}]
For every ideal $J \triangleleft \bC[z]$,
\[
I_{\bC[z]}(V_{\fC\bM_d}(J)) = J.
\]
\end{cor*}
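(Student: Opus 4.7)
The plan is to prove the nontrivial inclusion $I_{\bC[z]}(V_{\fC\bM_d}(J)) \subseteq J$ by establishing that the commutative quotient $A := \bC[z]/J$ is residually finite-dimensional. Given $p \in \bC[z] \setminus J$, I want to produce a commuting tuple $X = (X_1,\ldots,X_d)$ of matrices on which every $q \in J$ vanishes while $p(X) \neq 0$. Since a commuting tuple of $n \times n$ matrices is precisely a unital $\bC$-algebra homomorphism $\bC[z] \to M_n(\bC)$, and since requiring that $J$ vanish on the tuple is the same as requiring this homomorphism to factor through $A$, the task reduces to: given $0 \ne a := p + J \in A$, produce a finite-dimensional $\bC$-algebra $B$ and a homomorphism $A \to B$ under which $a$ has nonzero image. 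Any such $B$ embeds in $M_N(\bC)$ via its left-regular representation (where $N = \dim_\bC B$), yielding the desired tuple.

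So everything reduces to showing that for every $0 \ne a \in A$ there exists a finite-codimensional ideal $\bar K \triangleleft A$ with $a \notin \bar K$. The key steps are: (i) since $\ann(a)$ is a proper ideal of $A$, choose a maximal ideal $\fm$ of $A$ containing $\ann(a)$; (ii) by Hilbert's Nullstellensatz, $A/\fm \cong \bC$, so each $A/\fm^n$ is a local Artinian $\bC$-algebra with residue field $\bC$, forcing $\dim_\bC A/\fm^n < \infty$ (via the filtration whose successive quotients $\fm^i/\fm^{i+1}$ are finitely generated $\bC$-vector spaces); (iii) since $A$ is a finitely generated, hence Noetherian, commutative $\bC$-algebra, the localization $A_\fm$ is Noetherian local, the image of $a$ in $A_\fm$ is nonzero (no element of $A\setminus \fm$ annihilates $a$, by our choice of $\fm$), and Krull's intersection theorem gives $\bigcap_n \fm^n A_\fm = 0$, so $a \notin \fm^n A_\fm$, and hence $a \notin \fm^n$, for some $n$. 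Setting $\bar K := \fm^n$ finishes the reduction, and pulling $\bar K$ back to a finite-codimensional ideal $K \triangleleft \bC[z]$ containing $J$ produces the required tuple by acting on $\bC[z]/K$.

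The main obstacle lies in the possibility that $a$ is nilpotent in $A$: no single maximal ideal can then separate $a$ from zero, and one must work with the nilpotent thickenings $\fm^n$. What makes this work is the careful choice of $\fm$ containing $\ann(a)$ --- a weaker choice (for instance, a maximal ideal containing some associated prime of the zero ideal of $A$) would in general allow $a$ to die in the localization, and Krull's intersection theorem would no longer be available. Everything else is a routine combination of Noetherianity, Hilbert's Nullstellensatz for fields, and the translation between finite-dimensional commutative quotients and commuting tuples of matrices; in particular, the argument is self-contained and does not appeal to the noncommutative machinery developed earlier in the paper.
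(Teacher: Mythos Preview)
Your proof is correct and follows essentially the same route as the paper. Both arguments reduce to the residual finite-dimensionality of $A=\bC[z]/J$, established via the same three ingredients: Hilbert's Nullstellensatz (to get $A/\fm\cong\bC$), Noetherianity (for finite-dimensionality of $A/\fm^n$), and Krull's intersection theorem in the localization $A_\fm$; and both use the same device of picking $\fm$ to contain $\ann(a)$ so that $a$ survives in $A_\fm$. The only difference is organizational: the paper first proves a local statement (Proposition~\ref{prop:matrices_know_radical}) and then localizes at every maximal ideal, invoking the annihilator trick at the end, whereas you go straight for the single $\fm\supseteq\ann(a)$ that does the job.
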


\subsection{Readers' Guide}

This subsection contains a more detailed outline of the structure of the paper for the convenience of the reader.

Section \ref{sec:prelim} contains the preliminaries and notations. We deal with the notion of noncommutative functions and sets.
Then we proceed to discuss noncommutative completely positive kernels, nc RKHS and multipliers following \cite{BMV15a}.

Section \ref{sec:szego_and_bounded_functions} contains the comparison of objects that we intend to study with objects that are already established in the literature (in particular in the works of Arias and Popescu, Davidson and Pitts, and Popescu).
We demonstrate that the full Fock space is isomorphic to an nc RKHS associated to the noncommutative Szego kernel.
We then show that this map induces unitary equivalence between the algebra of multipliers and the algebra of bounded analytic nc functions, and we conclude that it is also unitarily equivalent to the $\textsc{wot}$-closure of the free algebra generated by the left creation operators on the full Fock space.
In Section \ref{sec:complete_pick} we proceed to show, using operator algebraic methods, that the Szego kernel has the complete Pick property (this result was first obtained in \cite{BMV15b}, by different methods).

Section \ref{sec:quotients} begins our discussion of subvarieties of the noncommutative ball.
We show that we can associate to every subvariety a reproducing kernel Hilbert space and thus an algebra of multipliers. Every such algebra is completely isometrically isomorphic to a quotient of the algebra of multipliers of the noncommutative Szego kernel.
We also show that every multiplier is, in fact, a bounded nc function on the variety and the multiplier norm coincides with the supremum norm.
We then proceed, in Section \ref{sec:isomorphisms}, to discuss the isomorphism problem for subvarieties of the ball.
This section contains one of the main results of the paper, namely that multiplier algebras of two subvarieties of the noncommutative ball are completely isometrically isomorphic if and only if the varieties are biholomorphic (and, moreover, that a biholomorphism implements the isomorphism).

Sections \ref{sec:homog_case} and \ref{sec:homog_isom} contain the discussion of the homogeneous case.
In the homogeneous setting, the extra structure afforded by the action of the multiplicative group $\C^{\times}$ allows us to show that if the multiplier algebras of two homogeneous varieties are completely isometrically isomorphic, then the varieties are mapped one onto another by a linear automorphism of the commutative ball. In addition, we show that a free homogeneous Nullstellensatz holds, i.e, that the ideal of functions vanishing on the variety cut out by a $\textsc{wot}$-closed homogeneous ideal $J$ is $J$ itself.

Section \ref{sec:continuous} contains the discussion of the algebra that is the norm closure of the algebra generated by the left creation operators.
This algebra is the multivariable noncommutative analogue of the disc algebra.
We discuss conditions that allow us to approximate a multiplier by polynomials in norm, and we consider examples.
We then proceed to discuss the norm closed version of the isomorphism problem.

Section \ref{sec:subproduct} connects our paper with the study of subproduct systems initiated by Shalit and Solel in \cite{ShalitSolel}. 
We show that the study of subproduct systems in the case when $d < \infty$ is equivalent to the study of homogeneous ideals of multipliers. 
We explain how the results of this paper add to what is known, and also contribute to the longstanding problem of classifying tensor algebras of subproduct systems in terms of the subproduct systems (see Proposition \ref{prop:iso_homo_dinfty}). 

Lastly, in Section \ref{sec:connections_to_comm} we discuss the connection of this work to the commutative case; in particular, we explain the connection to the isomorphism problem for complete Pick algebras \cite{DHS14,DRS11,DRS15,Hartz12,Hartz16,KerMcSh13,RamseyThesis,SalomonShalit}.
We prove an algebraic version of the matricial Nullstellensatz (the ``commutative free Nullstellensatz") and show some obstructions to such a Nullstellensatz in the case of the algebras of bounded functions on the noncommutative ball.

\section{Preliminaries}\label{sec:prelim}

\subsection{Noncommutative sets and noncommutative functions}

We consider noncommutative (nc) function theory in $d$ complex variables, where $d \in \bN$ or $d = \infty$. 
Let $M_n = M_n(\bC)$ denote the set of all $n \times n$ matrices over $\bC$, and let $M_n^d$ be the set of all $d$-tuples $X = (X_1, X_2, \ldots)$ of such matrices such that the row $X$ determines a bounded operator from $\bC^n \oplus \bC^n \oplus \ldots$ to $\bC^n$ (of course, this specification matters only when $d = \infty$).
We norm $M_n^d$ with the row operator norm (that is, $\|X\| = \|\sum_j X_j  X_j^*\|^{1/2}$), and endow $M_n^d$ with the induced topology.

Let
\[
\bM_d = \sqcup_{n=1}^{\infty} M_n^d.
\]
A set $\Omega \subset \bM_d$ is said to be a {\em nc set} if it is closed under direct sums.
If $\Omega$ is an nc set, we denote $\Omega_n = \Omega \cap M_n^d$.
We also use the notation $\Omega(n) = \Omega \cap M_n^d$.

Let $\cV$ be a vector space.
A function $f$ from an nc set $\Omega \subseteq \bM_d$ to $\sqcup_{n=1}^\infty M_n(\cV)$ is said to be a {\em nc function (with values in $\cV$)} if
\begin{enumerate} 
\item $f$ is graded: $X \in \Omega_n \Rightarrow f(X) \in M_n(\cV)$,
\item $f$ respects direct sums: $f(X \oplus Y) = f(X) \oplus f(Y)$,
\item $f$ respects similarities: if $X \in \Omega_n$ and $S \in M_n$ is invertible, and if $S^{-1} X S \in \Omega_n$, then $f(S^{-1} X S) = S^{-1} f(X) S$.
\end{enumerate}
An nc function with values in $\bC$ is said to be a {\em scalar valued} nc function.

We will be mostly interested in scalar valued nc functions, but we shall also require the cases where $\cV = \cE$ or $\cV = B(\cE)$, where $\cE$ is a Hilbert space.
In the case $\cV = \cE$, we identify $\cE$ with $B(\C,\cE)$ (bounded operators from $\C$ into $\cE$) and we identify $M_n(B(\C,\cE))$ with $B(\C^n, \cE^n)$.

A {\em free polynomial} is an element in $\F_d := \bC\langle z_1, \ldots, z_d\rangle$ (the free algebra in $d$ variables).
Every free polynomial is a (scalar valued) nc function.
Let $\W_d$ be the free monoid on $d$ generators.
A polynomial $p(z) = \sum_{k \in \W_d} a_k z^k$ can be written in a unique way as
$p(z) = \sum_{n \in \bN} p_n(z)$
where
\[p_n(z) = \sum_{k\in \W_d, |k|=n} a_k z^k .
\]
The polynomial $p_n$ is called {\em the homogeneous component of degree $n$} of $p$.

The  {\em ($d$-dimensional) open matrix unit ball} $\fB_d$ is defined to be
\[
\fB_d = \left\{ X \in \bM_d : \|X\|^2 = \left\|\sum X_j X_j^*\right\| < 1\right\}.
\]

A subset $\Omega \subseteq \bM_d$ is said to be {\em open/closed} if for all $n$, $\Omega_n$ is open/closed. 
An nc set $\Omega \subseteq \M_d$ will be said to be a {\em nc domain} if it is open and if every $\Omega_n$ is connected. 
The topology determined by this collection of open sets is sometimes called the {\em disjoint union (du) topology}.
The {\em boundary of $\Omega$}, denoted $\partial \Omega$, is defined to be $\sqcup_{n=1}^\infty \partial \Omega_n$.

A function $f$ defined on an nc open set $\Omega$ is said to be {\em nc holomorphic} if it is an nc function and, in addition, it is locally bounded.

By {\em locally bounded} we mean that for every $X \in \Omega$, there is a set $U \ni X$, which is open in the du topology, such that $f$ is bounded on $U$.
(There are other topologies one may consider on $\bM_d$, which lead to different notions of local boundedness and hence to different notions of holomorphy.
Since we are mainly interested with bounded nc functions on $\fB_d$ ($d \in \bN \cup \{ \infty\}$), which is open in all topologies of interest, local boundedness will not be an issue.)
It turns out that an nc holomorphic function is really a holomorphic function when considered as a function $f : \Omega_n \to M_n$, for all $n$, and moreover it has a ``Taylor series'' at every point (see \cite{KVBook}).

A {\em noncommutative (nc) algebraic variety} is a set of the form
\[
V_\Omega(S) = \{X \in \Omega :  p(X) = 0 \,\, \textrm{ for all } p \in S\},
\]
where $S \subseteq \F_d$.
Likewise, we define a {\em nc holomorphic variety in $\Omega$} to be the joint zero set of a set of scalar valued nc holomorphic functions on $\Omega$.
There are potentially more general definitions that may be worth considering, but this will be our working definition. 
Note that nc algebraic and nc holomorphic varieties are nc sets.

If $\Omega$ is an open nc set, $\fV \subset \Omega$ is an nc variety, and $f : \fV \to \bM_1$ is a function, we say that $f$ is a {\em nc function} if it satisfies the nc function conditions, and we say that it is {\em nc holomorphic} if, in addition, for every $X \in \fV$, there exists an open neighborhood $X \in U \subset \Omega$, such that $f$ extends to a bounded nc function on $U$.

\begin{remark}
We will see in Theorems \ref{thm:quotient_mult} and \ref{thm:mult_are_bounded_on_V}, that if $\Omega = \fB_d$, and if
$f : \fV \to \bM_1$ is an nc function that is bounded on $\fV$, then there exists a bounded nc holomorphic function $F$ on $\fB_d$ such that $f = F\big|_\fV$ (this also follows from results in \cite{BMV15b}).
\end{remark}

Given an open nc set $\Omega \subseteq \bM_d$, we define $H^\infty(\Omega)$ to be the algebra of bounded holomorphic functions on $\Omega$, and $A(\Omega)$ to be the algebra of bounded holomorphic functions that extend to uniformly continuous functions on  $\overline{\Omega} = \Omega \cup \partial \Omega$ (see Section \ref{sec:continuous} for more details).
We give $H^\infty(\Omega)$ and $A(\Omega)$ the obvious operator algebra structure, where the matrix norm of $F \in M_n(H^\infty(\fB_d))$ is given by
\[
\|F\| = \sup_{z \in \Omega} \|F(z)\| = \sup_{k \in \bN} \sup_{z \in \Omega(k)} \|F(z)\|_{M_{nk}}.
\]
Similarly, we define for a variety $\fV \subseteq \Omega$ the algebra $H^\infty(\fV)$ of bounded nc functions on $\fV$, and the algebra $A(\fV)$ of bounded nc functions on $\fV$ that continue to uniformly continuous functions on $\overline{\fV}$.

\subsection{Noncommutative reproducing kernel Hilbert spaces and multipliers}

In what follows, we let $B(X,Y)$ denote the space of all bounded linear maps between two normed spaces $X$ and $Y$.

Let $\Omega \subseteq \bM_d$, and let $\cA$ and $\cB$ be C*-algebras.
A {\em completely positive (cp) nc kernel} ({\em with values in $B(\cA, \cB)$}) on $\Omega$ is a function
\[
k : \Omega \times \Omega \to \sqcup_{m,n=1}^{\infty} B(M_{m\times n}(\cA), M_{m\times n}(\cB))
\]
such that
\begin{enumerate}
\item $k$ is graded, in the sense that
\[
Z \in \Omega_m, W \in \Omega_n \Rightarrow k(Z,W) \in B(M_{m\times n}(\cA), M_{m\times n}(\cB)) .
\]
\item $k$ respects intertwining, in the sense that
\[
Ak(Z,W)(P)B^* = k(Z',W')(APB^*)
\]
whenever $AZ = Z'A$ and $BW = W'B$, for all appropriately sized matrices with $Z,Z',W,W' \in \Omega$.
\item $k(Z,Z)$ is a cp map for all $Z \in \Omega$.
\end{enumerate}
This definition is a special case of the definition introduced and used in \cite{BMV15a,BMV15b}.

In this paper, we will be interested in the particular case where $\cA = \bC$ and $\cB = B(\cE)$, with $\cE$ a Hilbert space (we will say then that the kernel has values in $B(\cE) \cong B(\C,B(\cE))$). 
With these specifications, a main result of \cite{BMV15a} (Theorem 3.1 there) is that every cp nc kernel $k$ (with values in $B(\cE)$) is the ``reproducing kernel'' of an nc reproducing kernel Hilbert space (RKHS) --- a Hilbert space $\cH$ consisting of nc functions with values in $\cE \cong B(\C,\cE)$, in which ``point evaluation'' is bounded, and which is generated by the set of nc functions
\[
\left\{k_{W,v,y} : \Omega \to \sqcup_{n=1}^\infty M_n(B(\mathbb C, \mathcal E)) : W \in \Omega_n,  y \in \cE^n, v \in \C^n,  n \in \bN \right\}
\]
given by the formula
\[
k_{W,v,y}(Z) u = k(Z,W)(uv^*) y , \quad Z \in \Omega_m, u \in \C^m.
\]
Moreover, the kernel functions $k_{W,v,y}$ have the reproducing property that for every $h \in \cH$,
\begin{equation}\label{eq:rep_prop}
\langle h, k_{W,v,y} \rangle = \langle h(W)v, y \rangle.
\end{equation}

The {\em multiplier algebra} of an nc RKHS $\cH$ is the algebra of nc functions
\[
\mlt\cH = \{f : \Omega \to \sqcup_{n=1}^\infty M_n(B(\mathcal E))  :  fh \in \cH \,\, \textrm{ for all } h \in \cH\}.
\]
Every multiplier $f \in \mlt \cH$ determines a bounded multiplication operator $M_f : \cH \to \cH$  given by $M_f h = f h$ \cite{BMV15a}.

\begin{lem}\label{lem:adjoint}
Let $k$ be a cp nc kernel on $\Omega$ with values in $B(\cE)$.
If $f \in \mlt\cH$, then for every kernel function $k_{W,v,y}$,
\[
M_f^* k_{W,v,y} = k_{W,v,f(W)^*y}.
\]
\end{lem}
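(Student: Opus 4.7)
The plan is to verify the identity by testing against arbitrary $h \in \cH$ using the reproducing property \eqref{eq:rep_prop} on both sides, exploiting the fact that $M_f$ acts by pointwise multiplication so that $(fh)(W) = f(W)h(W)$ as an element of $M_n(B(\C,\cE)) \cong B(\C^n,\cE^n)$.

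Concretely, fix $W \in \Omega_n$, $v \in \C^n$ and $y \in \cE^n$, and let $h \in \cH$ be arbitrary. First I would compute
\[
\langle h,\, M_f^* k_{W,v,y}\rangle \;=\; \langle M_f h,\, k_{W,v,y}\rangle \;=\; \langle fh,\, k_{W,v,y}\rangle,
\]
and then apply the reproducing property \eqref{eq:rep_prop} to $fh \in \cH$ to rewrite this as $\langle (fh)(W)v,\, y\rangle_{\cE^n}$. Because $f$ acts as pointwise multiplication and $M_f h = fh$ is an element of $\cH$, one has $(fh)(W) = f(W)h(W)$, where $f(W) \in M_n(B(\cE))$ acts on $h(W)v \in \cE^n$. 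Then
\[
\langle f(W)h(W)v,\, y\rangle_{\cE^n} \;=\; \langle h(W)v,\, f(W)^* y\rangle_{\cE^n},
\]
and a second application of \eqref{eq:rep_prop}, this time with the kernel vector $k_{W,v,f(W)^*y}$, identifies this with $\langle h,\, k_{W,v,f(W)^*y}\rangle$. Since $h \in \cH$ was arbitrary, the claimed equality of elements of $\cH$ follows.

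The only point that requires a moment's thought is the identity $(fh)(W) = f(W)h(W)$ at the matrix level; this is simply the definition of the multiplier action on the values of nc functions (values at $W \in \Omega_n$ lie in $M_n(B(\cE))$ and $M_n(B(\C,\cE))$ respectively, and the multiplication is the natural matrix product). Everything else is a clean bookkeeping exercise with the reproducing property, so I do not expect any genuine obstacle; the argument is essentially the standard RKHS computation adapted to the graded, operator-valued nc setting.
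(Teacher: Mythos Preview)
Your proof is correct and follows essentially the same approach as the paper's own proof: both test against an arbitrary $h \in \cH$, use the adjoint relation and the reproducing property to obtain $\langle f(W)h(W)v, y\rangle$, move $f(W)$ to the other side as $f(W)^*$, and apply the reproducing property once more.
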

\begin{proof}
For every $h \in \cH$,
\begin{align*}
\langle h, M_f^* k_{W,v,y} \rangle &= \langle M_f h,  k_{W,v,y} \rangle \\
&= \langle f(W) h(W) v, y \rangle \\
&= \langle  h(W) v, f(W)^* y \rangle \\
&= \langle h,  k_{W,v,f(W)^*y} \rangle .
\end{align*}
\end{proof}

\begin{lem}\label{lem:facts_kernel}
Let $k$ be a cp nc kernel on $\Omega$ with values in $B(\cE)$.
\begin{enumerate}
\item $k_{W,v,y} + k_{W',v',y'} = k_{W\oplus W', v \oplus v', y \oplus y'}$.
\item $k_{SWS^{-1},v,y} = k_{W,S^{-1}v, S^*y}$
\end{enumerate}
\end{lem}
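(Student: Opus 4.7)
My plan is to establish both identities by appealing to the reproducing property \eqref{eq:rep_prop} and testing each equation against an arbitrary $h \in \cH$. Since $\cH$ is an nc RKHS and point evaluation is bounded, two elements of $\cH$ agree if and only if they yield the same inner product with every $h \in \cH$, so this reduces each identity to a bookkeeping computation that uses only the fact that elements of $\cH$ are themselves nc functions.

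For (1), I would compute, for $h \in \cH$,
\[
\langle h, k_{W \oplus W', v \oplus v', y \oplus y'} \rangle = \langle h(W \oplus W')(v \oplus v'), y \oplus y' \rangle,
\]
then use that $h$ respects direct sums to split $h(W \oplus W') = h(W) \oplus h(W')$. The block structure of the inner product on $\cE^n \oplus \cE^{n'}$ then separates this into $\langle h(W) v, y\rangle + \langle h(W') v', y'\rangle$, which by the reproducing property is $\langle h, k_{W,v,y}\rangle + \langle h, k_{W',v',y'}\rangle = \langle h, k_{W,v,y} + k_{W',v',y'}\rangle$. Nondegeneracy of the inner product concludes the argument.

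For (2), the analogous computation is
\[
\langle h, k_{SWS^{-1}, v, y}\rangle = \langle h(SWS^{-1}) v, y\rangle = \langle S h(W) S^{-1} v, y\rangle = \langle h(W)(S^{-1}v), S^* y\rangle,
\]
where the middle equality is the similarity-invariance of the nc function $h$ (applied under the implicit assumption, built into the statement, that both $W$ and $SWS^{-1}$ lie in $\Omega$ so that the kernel function $k_{SWS^{-1},v,y}$ is defined). By the reproducing property the last expression is $\langle h, k_{W, S^{-1} v, S^* y}\rangle$, and again nondegeneracy finishes the proof.

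I do not anticipate any real obstacle: the argument is just a matter of writing out the reproducing property and invoking the two structural axioms of an nc function for the elements of $\cH$. The only subtle point is the implicit well-definedness in (2), namely that $SWS^{-1} \in \Omega$, which is automatic from the hypothesis that the symbol $k_{SWS^{-1},v,y}$ appears in the statement; one could also derive (2) directly from the intertwining axiom of the kernel $k$ itself applied to the intertwining $S \cdot W = (SWS^{-1}) \cdot S$, but the reproducing-property route seems cleaner and more transparent.
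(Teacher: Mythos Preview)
Your proposal is correct and complete; the paper itself simply writes ``The proof is straightforward'' and omits the details, so your reproducing-property argument is exactly the kind of verification the authors had in mind. Your alternative remark about deriving (2) directly from the intertwining axiom of $k$ is also valid and equally routine.
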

The proof is straightforward.

\begin{lem}\label{lem:operator}
Let $k$ be a cp nc kernel on $\Omega$ with values in $B(\cE)$.
Let $f : \Omega \to \sqcup_{n=1}^\infty M_n(B(\cE))$ be a graded function, and let $T$ be an operator on $\cH$ such that
\[
T k_{W,v,y} = k_{W,v,f(W)^*y}
\]
for all $W, v, y$.
Then $f \in \mlt \cH$ and $T = M_f^*$.
\end{lem}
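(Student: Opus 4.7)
The approach is a direct calculation via the reproducing property. My first step would be, for any $h \in \cH$ and any kernel function $k_{W,v,y}$, to pair and compute
\[
\langle T^*h, k_{W,v,y}\rangle = \langle h, T k_{W,v,y}\rangle = \langle h, k_{W,v,f(W)^*y}\rangle = \langle h(W)v, f(W)^*y\rangle = \langle f(W)h(W)v, y\rangle,
\]
and compare this with $\langle T^*h, k_{W,v,y}\rangle = \langle (T^*h)(W)v, y\rangle$ coming from \eqref{eq:rep_prop}. Since $y \in \cE^n$ is arbitrary, this forces the pointwise identity
\[
(T^*h)(W) = f(W)h(W), \qquad W \in \Omega,
\]
so the pointwise product $fh$ coincides with the element $T^*h \in \cH$.

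From here the conclusion is almost mechanical. The identity immediately gives $fh \in \cH$ for every $h \in \cH$. To check that $f$ is genuinely an nc function (not just a graded function), I would plug $W = W_1 \oplus W_2$ (respectively $W = S^{-1}WS$) into $f(W)h(W) = (T^*h)(W)$ and use that both $h$ and $T^*h$ are nc functions to rewrite each side as a direct sum (respectively a similarity-conjugate); together with Lemma \ref{lem:facts_kernel}, applied to a sufficient supply of kernel functions $h = k_{W_0,v_0,y_0}$, this pins down $f$ on the relevant fibers and shows that it respects direct sums and similarities. With $f \in \mlt \cH$ in hand, Lemma \ref{lem:adjoint} gives $M_f^* k_{W,v,y} = k_{W,v,f(W)^*y} = Tk_{W,v,y}$, and since kernel functions span $\cH$ densely and both operators are bounded, $T = M_f^*$.

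The only real subtlety is the nc-ness step: the hypothesis determines $f(W)^*$ a priori only modulo the orthogonal complement of $\spn\{h(W)v : h \in \cH,\, v \in \C^n\}$, so I would need to argue that the main identity $fh = T^*h$, applied to enough $h$'s, is rigid enough to pass the nc relations from $h$ and $T^*h$ back to $f$ on this visible part of each fiber (and that the invisible part does not obstruct the equality $T = M_f^*$). Everything else is a routine verification.
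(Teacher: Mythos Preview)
Your approach is essentially the paper's, with the steps reordered: the paper applies Lemma \ref{lem:facts_kernel} directly to the hypothesis $Tk_{W,v,y} = k_{W,v,f(W)^*y}$ to conclude that $f$ is nc, and only then runs your reproducing-property computation to obtain $(T^*h)(W) = f(W)h(W)$, from which $T^* = M_f$ follows at once (so the final detour through Lemma \ref{lem:adjoint} is unnecessary). The subtlety you flag about the ``invisible part'' of each fiber is equally present in the paper's one-line nc-ness argument and is not addressed there either.
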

\begin{proof}
Using Lemma \ref{lem:facts_kernel} we find that $f$ respects direct sums and similarities, thus $f$ is an nc function.
Next,
\begin{align*}
\langle T^* h, k_{W,v,y} \rangle &=
\langle h, T k_{W,v,y} \rangle \\
&= \langle h, k_{W,v,f(W)^* y} \rangle \\
&= \langle h(W) v, f(W)^* y \rangle \\
&= \langle f(W) h(W) v,  y \rangle .
\end{align*}
Therefore, the value of $T^* h$ on $W$ is $f(W) h(W)$.
It follows that $f$ is a multiplier and that $M_f = T^*$.
\end{proof}

\begin{lem} \label{prop:bound_wot_conv_is_point_conv}
Let $k$ be a cp nc kernel on $\Omega$ with values in $B(\cE)$, and assume that for every $n \in \bN$ and every $X \in \Omega(n)$, 
\[
\overline{\operatorname{span}}\{h(X)v : v \in \bC^n, h \in \cH(k)\} =\cE^n .
\]
Let $\{f_{\alpha} \}_{\alpha \in I}$ be a bounded net of multipliers on $\cH(k)$. 
Then $f_{\alpha}$ converges to a multiplier $f$ in the weak-operator topology if and only if it is pointwise convergent, i.e., $f_{\alpha}(X) \to f(X)$ in the weak-operator topology, for all $X \in \Omega$.
\end{lem}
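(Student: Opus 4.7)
The plan is to exploit Lemma \ref{lem:adjoint}, $M_g^* k_{X,v,y} = k_{X,v,g(X)^*y}$, which combined with the reproducing identity \ref{eq:rep_prop} yields the basic bridge
\[
\langle M_g h, k_{X,v,y}\rangle = \langle h, M_g^* k_{X,v,y}\rangle = \langle h(X)v, g(X)^*y\rangle = \langle g(X) h(X) v, y\rangle
\]
for every $h \in \cH(k)$, $X \in \Omega(n)$, $v \in \bC^n$, and $y \in \cE^n$. This identity converts matrix coefficients of $M_g$ (tested against kernel functions) into matrix coefficients of the point value $g(X)$, and drives both directions of the equivalence.

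For the forward implication, if $M_{f_\alpha} \to M_f$ in WOT then the displayed identity immediately yields $\langle f_\alpha(X) h(X)v, y\rangle \to \langle f(X) h(X)v, y\rangle$ for every $h \in \cH(k)$, $v \in \bC^n$, and $y \in \cE^n$. Since the span of $\{h(X)v : h \in \cH(k), v \in \bC^n\}$ is dense in $\cE^n$ by hypothesis, and since $\{f_\alpha(X)\}$ is uniformly bounded in $B(\cE^n)$ (inherited from $\sup_\alpha \|M_{f_\alpha}\| < \infty$ together with the standard multiplier-to-point-evaluation norm bound $\|g(X)\| \leq \|M_g\|$ valid in the present setting), an $\epsilon/3$ approximation promotes this convergence to arbitrary $e \in \cE^n$, giving $f_\alpha(X) \to f(X)$ in WOT on $B(\cE^n)$.

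For the converse, assume $f_\alpha(X) \to f(X)$ in WOT for every $X \in \Omega$. By boundedness of $\{M_{f_\alpha}\}$ and density of $\operatorname{span}\{k_{W,v,y} : W \in \Omega, v, y\}$ in $\cH(k)$, it suffices to verify WOT convergence of $M_{f_\alpha}$ on pairs of kernel functions. The bridging identity, applied symmetrically, gives
\[
\langle M_{f_\alpha} k_{X_1,v_1,y_1}, k_{X_2,v_2,y_2}\rangle = \langle f_\alpha(X_2) k_{X_1,v_1,y_1}(X_2) v_2, y_2\rangle,
\]
exhibiting this as a matrix coefficient of $f_\alpha(X_2)$ against the fixed vectors $k_{X_1,v_1,y_1}(X_2) v_2$ and $y_2$ in $\cE^{n_2}$, which by hypothesis converges to $\langle f(X_2) k_{X_1,v_1,y_1}(X_2) v_2, y_2\rangle = \langle M_f k_{X_1,v_1,y_1}, k_{X_2,v_2,y_2}\rangle$. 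The main delicacy is the uniform bound $\sup_\alpha \|f_\alpha(X)\| < \infty$ used in the forward direction; this is precisely where the density hypothesis plays its essential role, ensuring that the extension from $\operatorname{span}\{h(X)v\}$ to all of $\cE^n$ goes through.
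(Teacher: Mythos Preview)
Your proof is correct and follows essentially the same route as the paper's: both directions use the identity $\langle M_g h, k_{X,v,y}\rangle = \langle g(X)h(X)v, y\rangle$ to pass between WOT convergence of the multiplication operators and pointwise WOT convergence of the values, then extend by density together with the boundedness of the net. The only cosmetic difference is that in the converse the paper tests $M_{f_\alpha}h$ against kernel functions for \emph{arbitrary} $h \in \cH(k)$, whereas you take $h$ itself to be a kernel function; this is an inessential variation, and your explicit mention of the bound $\|g(X)\|\le\|M_g\|$ (which the paper uses implicitly) is a welcome clarification.
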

\begin{proof}
Assume first that $\lim f_{\alpha} = f$ in the weak-operator topology. 
Then for every $X \in \Omega(n)$, $y \in \cE^n$ and $v \in\C^n$, we have that:
\[
\langle f_\alpha(X) h(X) v,y \rangle = \langle f_{\alpha} h , k_{X,v,y} \rangle \xrightarrow[\alpha]{} \langle f h, k_{X,v,y} \rangle = \langle f(X) h(X) v, y \rangle.
\]
By assumption, vectors of the form $h(X) v$ are dense in $\cE^n$, thus $f_{\alpha}(X)$ converges to $f(X)$ in the weak-operator topology on $M_n(B(\cE))$.

Conversely, assume that $f_{\alpha}(X) \xrightarrow{\textsc{wot}} f(X)$ for every $X \in \Omega$. 
Then, as above, we find that for every $X \in \Omega(n)$, $y \in \cE^n$ and $v \in \C^n$, and for every $h \in \cH(k)$, 
\[
\langle f_{\alpha} h , k_{X,v,y} \rangle  \xrightarrow[\alpha]{} \langle f h, k_{X,v,y} \rangle.
\]
Now note that by Lemma \ref{lem:facts_kernel}, a linear combination of kernel functions is again a kernel function, therefore the kernel functions are dense in $\cH(k)$.  
Making use of the assumption that $\{f_\alpha\}$ is bounded, it follows $f_{\alpha}$ converges in the weak-operator topology to $f$. 
\end{proof}

The following is the noncommutative analogue of \cite[Lemma 2.1]{MitPau10}.

\begin{cor} \label{cor:mult_wst_closed}
Let $\Omega \subset \M_d$ and $k$ a cp nc kernel on $\Omega$ as above. 
Set $\cH(k)$ to be the nc RKHS associated to $k$ and $\cM(k)$ the algebra of multipliers. 
Then $\cM(k)$ is weak-* closed and in particular it is a dual algebra.
\end{cor}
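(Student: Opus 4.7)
The strategy is to invoke the Krein--Smulian theorem to reduce weak-$*$ closedness to WOT-closedness on norm-bounded subsets (these two topologies coincide on bounded subsets of $B(\cH(k))$, since the predual is the trace class). Identifying $\cM(k)$ with $\{M_f : f \in \cM(k)\} \subseteq B(\cH(k))$, it therefore suffices to show that if $\{M_{f_\alpha}\}$ is a net with $\|M_{f_\alpha}\| \le 1$ and $M_{f_\alpha} \xrightarrow{\textsc{wot}} T$ for some $T \in B(\cH(k))$, then $T = M_f$ for some $f \in \cM(k)$. Once this closedness is in hand, $\cM(k)$ is a weak-$*$ closed subspace of a dual space, hence itself a dual space; separate WOT-continuity of operator multiplication on bounded sets then promotes it to a dual algebra.

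To produce the limiting multiplier $f$, I would first extract uniform pointwise operator bounds on $\{f_\alpha(X)\}$ at each $X \in \Omega$ from the $\cH(k)$-norm inequality
\[
\bigl\|k_{X,v,f_\alpha(X)^{*} y}\bigr\|^{2} = \bigl\langle k(X,X)(vv^{*})\, f_\alpha(X)^{*} y,\, f_\alpha(X)^{*} y\bigr\rangle \le \|k_{X,v,y}\|^{2},
\]
which follows from the adjoint formula of Lemma \ref{lem:adjoint} combined with $\|M_{f_\alpha}^{*}\| \le 1$. Invoking the nondegeneracy hypothesis of Lemma \ref{prop:bound_wot_conv_is_point_conv}, this quadratic-form estimate upgrades to a genuine norm bound on $f_\alpha(X)$ in $B(\cE^n)$, uniformly in $\alpha$. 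A Tychonoff argument applied to the product (over $X \in \Omega$) of WOT-compact balls in $B(\cE^n)$ then supplies a subnet along which $f_\alpha(X) \to f(X)$ in WOT for every $X$, producing a graded function $f$ on $\Omega$. By weak continuity of the linear map $\eta \mapsto k_{X,v,\eta}$ on $\cE^n$, taking weak limits of $M_{f_\alpha}^{*} k_{X,v,y} = k_{X,v,f_\alpha(X)^{*}y}$ in $\cH(k)$ yields $T^{*} k_{X,v,y} = k_{X,v,f(X)^{*}y}$, and Lemma \ref{lem:operator} applied to $T^{*}$ then delivers $f \in \cM(k)$ with $T = M_f$, as required.

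\textbf{Main obstacle.} The delicate step is obtaining uniform boundedness of the pointwise operators $\{f_\alpha(X)\}_\alpha \subset B(\cE^n)$ from the multiplier-norm bound alone. In the scalar commutative setting this is immediate from the eigenfunction relation $M_f^{*} k_x = \overline{f(x)}\, k_x$, whereas in the nc, vector-valued setting one only has the operator inequality $f_\alpha(X)^{*} k(X,X)(vv^{*}) f_\alpha(X) \le k(X,X)(vv^{*})$; converting this into an honest norm bound on $f_\alpha(X)$ is precisely where the kernel-nondegeneracy assumption enters, and it is the single point in the argument that uses more than formal manipulation of the adjoint and reproducing identities.
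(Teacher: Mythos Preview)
Your proof is correct and follows essentially the same route as the paper's: Krein--Smulian reduces the problem to WOT-closedness of the unit ball, and one then identifies the WOT limit $T$ with a multiplication operator via Lemma \ref{lem:operator}. The only difference is packaging---the paper invokes Lemma \ref{prop:bound_wot_conv_is_point_conv} directly to produce the pointwise limit, whereas you unpack the pointwise-boundedness and convergence steps explicitly; your Tychonoff/subnet extraction is in fact unnecessary, since WOT convergence of $M_{f_\alpha}$ to $T$ already forces WOT convergence of the \emph{entire} net $f_\alpha(X)$ (via $\langle f_\alpha(X)h(X)v,y\rangle = \langle f_\alpha h, k_{X,v,y}\rangle \to \langle Th, k_{X,v,y}\rangle$ together with density and your boundedness estimate).
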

\begin{proof}
By the Krein--Smulian theorem, it suffices to prove that the unit ball of $\cM(k)$ is weak-* closed. 
Since on norm bounded sets the $\textsc{wot}$ and weak-* topologies coincide, it is enough to prove that for every $\textsc{wot}$ convergent net $\{f_{\lambda}\}$ in the ball, the limit is also in the ball. 
If $T = \lim_{\lambda}^{\textsc{wot}} f_{\lambda}$, then by Lemma \ref{prop:bound_wot_conv_is_point_conv} there exists a pointwise limit $f$ of $f_{\lambda}$ on $\Omega$. 
Now we apply Lemma \ref{lem:operator} to get that $T = M_f$ as desired.
\end{proof}

If $\cH(k)$ is an nc RKHS on $\Omega$ with values in $\cE$, and $\cF$ is a Hilbert space, then we can consider the space $\cH(k) \otimes \cF$ as an nc RKHS consisting of nc functions with values in $B(\cE \otimes \cF)$, i.e., graded, direct sum and similarity preserving functions from $\Omega$ to $\sqcup_n M_n(\cE \otimes \cF)$.
The reproducing kernel of $\cH(k) \otimes \cF$ is given by $\tilde{k} = k \otimes I_\cF$, where
\[
\tilde{k}(Z,W)(P) =(k \otimes I_\cF)(Z,W)(P) =  k(Z,W) (P) \otimes I_\cF ,
\]
and the kernel functions are
\[
\tilde{k}_{W,v,y\otimes f} = k_{W,v,y} \otimes f.
\]
Multipliers are then nc functions with values in $B(\cE \otimes \cF)$, that is, graded, direct sum and similarity preserving functions from $\Omega$ to $\sqcup_n M_n(B(\cE \otimes \cF))$.

Similarly to left multipliers, we may consider right multipliers as well. Given a scalar valued nc function $f : \Omega \to \M_1$ and an nc RKHS $\cH(k)$, we define the right multiplication operator
 \[
 R_f (h) (W) = h(W) f(W).
 \]
 We denote by $\mlt_r (\cH(k))$ the algebra of all nc functions such that $R_f$ gives rise to a well defined (bounded) operator on $\cH(k)$.
 
\subsection{The nc Szego kernel and the nc Drury--Arveson space}\label{subsec:ncDA}
 
We now define the nc Drury--Arveson space, which was introduced by Ball, Marx and Vinnikov in \cite{BMV15b}; this nc reproducing Hilbert space will play a central role in this paper. 

Recall that $\W_d$ denotes the free monoid on $d$ generators ($d \in \bN \cup \{\infty\}$).
The {\em nc Szego kernel} $K(Z,W)$ on the nc ball $\fB_d$ of $\M_d$ defined by
\[
K(Z,W)(T) = \sum_{k\in\W_d} Z^k T W^{*k} ,
\]
for $Z \in \fB_d(n)$, $W \in \fB_d(m)$ and $T \in M_{n \times m}(\C)$. 
We are using the plain notation $K$, because this kernel will be the only reproducing kernel to be discussed from this point on. 
Consider the nc function $K_{W,v,y}$ for $W \in \fB_d(m)$, $v, y \in \C^m$, defined for $Z \in \fB_d(n)$:
\[
K_{W,v,y}(Z)u = K(Z,W)(uv^*)y = \sum_{k \in \W_d} Z^k u v^* W^{*k} y = \sum_{k \in \W_d} \langle  y, W^k v \rangle Z^k u.
\]
Thus $K_{W,v,y}$ is an nc function given by the power series
\[
K_{W,v,y}(Z) = \sum_{k \in \W_d} \langle y, W^k v \rangle Z^k .
\]
The {\em nc Drury--Arveson space} $\cH^2_{d}$ is the RKHS determined by the nc Szego kernel $K$.
In the next section we will see that $\mlt \cH^2_d = H^\infty(\fB_d)$.

\section{The bounded holomorphic functions on the ball} \label{sec:szego_and_bounded_functions}

In noncommutative multivariable operator theory, there have been several candidates for the role of ``bounded analytic functions on the unit ball''.
Our goals in this section are (1) to carry out the task of demonstrating that all the natural approaches give rise to the same algebra; and (2) to collect some facts about these algebras to be used in the sequel. 
We will compare the algebra $H^\infty(\fB_d)$ of bounded holomorphic functions on the open matrix unit ball, with the {\em noncommutative analytic Toeplitz algebra} $\cL_d$ that was studied extensively by Davidson and Pitts, and by Popescu (who denoted it $\mathscr F^\infty_d$), and with Popescu's algebra of {\em bounded free holomorphic functions on the operatorial ball} \cite{Popescu06b}, denoted $H^\infty(B(\mathcal{X})^d_1)$.
Note that although similar words and even the same notations are used to describe these algebras, the definitions are different.
Nevertheless, these algebras are all isometrically isomorphic. 
We will also show that these algebras can be identified with the multiplier algebra $\mlt \cH^2_d$ of the nc Drury--Arveson space.

In this section, we work with any $d \in \bN \cup \{\infty\}$.
Let $\cE$ be a $d$ dimensional Hilbert space, and define the full Fock space to be
\[
\cF(\cE) = \C \oplus \cE \oplus (\cE \otimes \cE) \oplus \ldots .
\]
Define the shift operators $L_i \xi = e_i \otimes \xi$ ($i=1, \ldots, d$) where $\{e_1, \ldots, e_d\}$ is an orthonormal basis for $\cE$.
The tuple $L = (L_1, \ldots, L_d)$ is simply referred to as {\em the shift}.

Davidson and Pitts defined $\cL_d$ to be the closure of $\alg(L)$ in the weak-operator topology (more precisely, their underlying Hilbert space was $\ell_2(\mathbb W_d)$ and not $\cF(\cE)$, but these are clearly, up to a natural identification, the same Hilbert space).
They also showed that the weak-operator and weak-$*$ topologies coincide \cite{DavPitts1}.
The algebra $\cL_d$ is also Popescu's {\em noncommutative Hardy algebra} \cite{Popescu91}, which he denotes by $\mathscr F^\infty_d$.
In \cite{Popescu89} $\mathscr F_d^\infty$ was defined to be the algebra 
all $g \in \cF(\cE)$ for which $\sup_{p}\|g\otimes p\|<\infty$, where $p$ runs over all polynomials of norm one. However, it was shown that this is the same as the closure of $\alg(L)$ in the weak-operator topology.
This algebra is also a particular instance of Muhly and Solel's {\em Hardy algebra} \cite{MS04}, and would be denoted in their context as $H^\infty(\cE)$.

The {\em algebra of bounded holomorphic functions on the operatorial unit ball}, denoted by $H^\infty(B(\mathcal{X})^d_1)$ in Popescu's work, is defined to be the algebra of all functions $F$ which have a power series representation on $(B(\mathcal X)^d)_1$ and satisfy $\sup \| F(X_1, \dots,X_d) \| < \infty$ (where the supremum runs over all $(X_1,\dots,X_d) \in (B(\mathcal X)^d)_1$ for some infinite dimensional Hilbert space $\mathcal X$); see \cite{Popescu06b}.
This is not quite $H^\infty(\fB_d)$ as was defined above, since the arguments of functions are tuples of operators, and since the functions considered are a priori only those given by a power series.

\begin{thm}\label{thm:HinftyVSPopescu}
The algebras $H^\infty(B(\mathcal{X})_1^d)$, $\cF_d^{\infty}$ and $H^\infty(\fB_d)$ are completely isometrically isomorphic.
\end{thm}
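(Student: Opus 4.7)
The plan is to construct, at each matrix level $n$, mutually inverse completely contractive maps
\[
\Phi_n: M_n(\cF^\infty_d) \rightleftarrows M_n(H^\infty(\fB_d)): \Psi_n,
\]
realized by evaluation/restriction, and then invoke Popescu's theorem (from \cite{Popescu06b}) for the completely isometric identification $\cF^\infty_d \cong H^\infty(B(\mathcal X)^d_1)$. The two main tools from Popescu's work I will use are: (i) every $F\in \cF_d^\infty$ admits a noncommutative Fourier series $F \sim \sum_{\alpha \in \W_d} a_\alpha L^\alpha$ with Abel-type recovery $F = \text{SOT-}\lim_{r\to 1^-}F_r$, where $F_r = \sum_\alpha a_\alpha r^{|\alpha|} L^\alpha$ converges in norm and $\|F_r\|\le\|F\|$; and (ii) the noncommutative von Neumann inequality: for every polynomial $p$ and every strict row contraction $X$ on any Hilbert space, $\|p(X)\|\le\|p(L)\|$ (and matrix-valued analogue). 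All arguments are carried out for matrix coefficients $A_\alpha\in M_n$ throughout.

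\textbf{From $\cF^\infty_d$ to $H^\infty(\fB_d)$.} Given $F\in\cF^\infty_d$ with Fourier series $\sum a_\alpha L^\alpha$, define, for $X\in\fB_d$,
\[
\Phi(F)(X) := \lim_{r\to 1^-} \sum_\alpha a_\alpha r^{|\alpha|} X^\alpha.
\]
Popescu's inequality, applied to the norm-convergent partial sums, gives $\|\sum_{|\alpha|\le N} a_\alpha r^{|\alpha|}X^\alpha\| \le \|F_r\|\le\|F\|$; since $X$ is a strict row contraction, the series $\sum a_\alpha X^\alpha$ in fact converges absolutely and $\|\Phi(F)(X)\|\le\|F\|$. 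Invariance under direct sums and similarities is immediate from the power-series form, and the uniform bound makes $\Phi(F)$ locally bounded, hence nc holomorphic on $\fB_d$. This gives $\|\Phi(F)\|_{H^\infty(\fB_d)}\le\|F\|_{\cF^\infty_d}$, and the amplified statement for $F\in M_n(\cF^\infty_d)$ is proved identically using matrix-valued Fourier series.

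\textbf{From $H^\infty(\fB_d)$ back to $\cF^\infty_d$.} Given $f\in H^\infty(\fB_d)$, expand it in its Taylor--Taylor series at $0$: $f(Z)\sim\sum c_\alpha Z^\alpha$. For each $0<r<1$ set $F_r := \sum c_\alpha r^{|\alpha|}L^\alpha$. The crucial estimate is $\|F_r\|\le\|f\|_{H^\infty(\fB_d)}$ uniformly in $r$. To establish it, I will use Popescu's compression technique: let $P_k$ be the projection of $\cF(\cE)$ onto words of length $\le k$, so that $X_k := P_k L P_k\big|_{P_k\cF(\cE)}$ is a nilpotent matrix tuple; for any $0<s<1$, $sX_k\in\fB_d$, and for a matrix polynomial $p$,
\[
p(sX_k) = P_k\, p(sL)\,P_k \quad \text{whenever } k \ge \deg p,
\]
so that $\|p(sL)\| = \sup_k \|p(sX_k)\| \le \sup_{Y\in\fB_d}\|p(Y)\|$. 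Applying this to the (norm-convergent) truncations of $F_r$ and then to $F_r$ itself yields $\|F_r\|\le\|\tilde f_r\|_{H^\infty(\fB_d)}\le\|f\|$, where $\tilde f_r(Y) = f(rY)$. Since the closed unit ball of $\cF_d^\infty$ is SOT/weak-$*$ compact, we can extract a SOT limit $F$ of $F_r$ as $r\to 1^-$, with $\|F\|\le\|f\|$. Comparing Taylor series shows $\Phi(F)=f$, so $\Phi$ is a complete isometry. The same scheme applied to matrix-valued $F\in M_n(H^\infty(\fB_d))$ yields complete isometry.

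\textbf{Closing the loop with $H^\infty(B(\mathcal X)_1^d)$.} By \cite{Popescu06b}, the analogous evaluation map $\cF^\infty_d \to H^\infty(B(\mathcal X)_1^d)$, $F \mapsto (X\mapsto F(X))$ for strict row contractions $X$ on an infinite-dimensional Hilbert space $\mathcal X$, is a completely isometric isomorphism. Composing with the isomorphism $\cF^\infty_d\cong H^\infty(\fB_d)$ obtained above closes the triangle.

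\textbf{Main obstacle.} The genuine work is the reverse direction: passing from a bounded nc holomorphic function on the matrix ball back to an element of $\cF_d^\infty$, i.e.\ controlling the operator norms $\|F_r\|$ uniformly by the supremum $\|f\|_{H^\infty(\fB_d)}$. The point is that $\cF_d^\infty$ is defined intrinsically on the infinite-dimensional Fock space, while $H^\infty(\fB_d)$ sees only \emph{finite}-dimensional data. The compression argument $P_kLP_k\in\fB_d$ bridges this gap, but requires care: one must apply it after Abel-summing by $r<1$ to guarantee norm convergence of the relevant series, and then let both $k\to\infty$ and $r\to 1^-$, extracting a SOT cluster point in the (compact) unit ball of $\cF_d^\infty$.
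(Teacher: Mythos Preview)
Your approach is essentially the same as the paper's: invoke Popescu for $\cF_d^\infty \cong H^\infty(B(\mathcal X)_1^d)$, and for the remaining identification use the compression trick $P_k L P_k$ together with the radial dilation $f_r$ to bound $\|F_r\|$ by $\|f\|_\infty$. The paper finishes slightly differently, using Popescu's $\|F\| = \lim_{r\to 1}\|F_r\|$ directly rather than extracting an SOT cluster point, but this is cosmetic.

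There is one genuine gap. The statement covers $d = \infty$, and in that case your projection $P_k$ onto words of length $\le k$ lands on an \emph{infinite}-dimensional subspace (there are infinitely many letters), so $X_k = P_k L P_k$ is not a matrix tuple and does not lie in $\fB_d$. Your key inequality $\|p(sX_k)\| \le \sup_{Y\in\fB_d}\|p(Y)\|$ therefore fails as written. The paper handles this by projecting onto the span of monomials of degree at most $n$ \emph{in the first $n$ variables}; this subspace $H_n$ is finite-dimensional for every $n$, is still co-invariant for $L$, and $P_n \to I$ strongly, so the same norm computation goes through. With that correction your argument is complete.
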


\begin{proof}
In \cite[Theorem 3.1]{Popescu06b} Popescu showed that $H^\infty(B(\mathcal{X})_1^d)$ and $\cF_d^{\infty}$ are completely isometrically isomorphic. 

Now we proceed to prove the completely isometric isomorphism with $H^{\infty}(\fB_d)$. It is clear that every $F \in H^\infty(B(\mathcal{X})_1^d)$ defines a bounded holomorphic function $\hat{F}$ on $\fB_d$, and that the map $F \mapsto \hat{F}$ is a completely contractive homomorphism.

Let $f \in H^\infty(\fB_d)$.
By \cite[Theorem 5.1]{MS15}, $f$ has a power series representation $f(z) = \sum_\alpha c_\alpha z^\alpha$ with radius of convergence
\[
R = \left(\overline{\lim}_{k} \left| \sum_{|\alpha| = k} |c_\alpha|^2\right|^{\frac{1}{2k}}\right)^{-1} \geq 1.
\]
For every $r\in (0,1)$ define $f_r(z) = f(rz)$.
Then $f_r \in H^\infty(\fB_d)$ and $\|f_r\|_\infty \leq \|f\|_\infty$.
Moreover, since the power series of $f_r$ converges uniformly in the closed unit ball of $B(H)^d$ ($H$ is any Hilbert space), we can plug in the shift $L$.
Let us fix $\{e_j\}_{j=1}^d$ an orthonormal basis for $\cE$ and set $\cE_n$ to be the (closed) subspace of $\cE$ spanned by $e_1,\ldots,e_n$. Let $P_n$ be the sequence of projections onto
\begin{equation}\label{eq:Hn}
H_n = \C \oplus \cE_n \oplus \cdots \oplus \cE_n^{\otimes n} \subseteq \cF(\cE).
\end{equation}
If we view $H_n$ as a subspace of $\F_d$, then it corresponds to the subspace spanned by the monomial of degree at most $n$ in the first $n$ variables. Since $H_n$ are all co-invariant for $L$, we have that
\[
\|f_r(L)\| = \lim_n \|P_n f_r(L)P_n \| = \lim_n\|f_r(P_n L P_n) \| \leq \|f_r\|_\infty \leq \|f\|_\infty.
\]
But by \cite[Theorem 3.1]{Popescu06b}, this implies that $F = \sum_\alpha c_\alpha L_\alpha$ determines an element in $\mathscr F^\infty_d$ with $\lim_{r\to 1}\|f_r(L)\| = \lim_{r\to 1}\|F_r\| = \|F\|$.
Since $\hat{F} = f$, the mapping $F \mapsto \hat{F}$ is a surjective isometric isomorphism.

Finally, to show that $F \mapsto \hat{F}$ is completely isometric, let $f \in M_n(H^\infty(\fB_d)) = H^\infty(\fB_d) \otimes M_n$.
As above, there is an $F \in M_n(\mathscr F^\infty_d)$ such that $\hat{F} = f$, and we need to show that
\[
\|F\| \leq \|f\|_\infty = \sup_{X \in \fB_d}\|f(X)\| .
\]
But again $\|F\| = \lim_{r \to 1}\|F_r\|$ and $F_r = f_r(L)$.
Repeating the above computation with $P_n \otimes I_{\C^n}$ instead of $P_n$, the required inequality follows from
\[
\|f_r(L)\| = \lim_n \|(P_n \otimes I_{\C^n}) f_r(L)(P_n \otimes I_{\C^n}) \| = \lim_n\|f_r(P_n L P_n) \| \leq \|f_r\|_\infty \leq \|f\|_\infty.
\]
\end{proof}

One could also deduce the isomorphism between $H^\infty(B(\mathcal{X})_1^d)$ and $H^{\infty}(\fB_d)$ from \cite[Corollary\ 4.5.1]{BMV15b}. In the notations of \cite{BMV15b} one takes $\cU = \cY = \C$ and $Q(Z) = \begin{bmatrix} Z_1&Z_2&\dots&Z_d \end{bmatrix}$ (we will use a similar consideration in Proposition \ref{prop:bmv}). An earlier instance of such a result can be found in \cite{AlKV06} in the case of the noncommutative polydisc.


It is clear that the natural unitary transformation mapping $\cF(\cE)$ onto $\ell_2(\W_d)$ implements a unitary equivalence between
the $\textsc{wot}$-closed algebras $\cL_d$ and $\mathscr F_d^\infty$. In Proposition \ref{prop:mult=F^infty_d}, we will show that similarly, the unitary transformation mapping $\cF(\cE)$ onto $\cH_2^d$ implements a unitary equivalence between the $\textsc{wot}$-closed algebras $\mlt \cH_2^d$ and $\mathscr F_d^\infty$. Thus, the $\textsc{wot}$-closed algebras $\cL_d$, $\mathscr F_d^\infty$, and $\mlt \cH_2^d$ are all unitarily equivalent. Combining this with Theorem \ref{thm:HinftyVSPopescu} we conclude that all five  $\textsc{wot}$-closed algebras  $\cL_d$, $\mathscr F_d^\infty$, $\mlt \cH_2^d$, $H^\infty(B(\mathcal{X})_1^d)$, and $H^\infty(\fB_d)$ are completely isometrically isomorphic.

We start establishing these isomorphisms by showing that $\cH^2_d$ can be identified with the Fock space $\cF(\cE)$ over a $d$-dimensional Hilbert space $\cE$.

\begin{prop}\label{prop:identify_Fock_HK}
$\cH^2_{d}$ can be identified with the Fock space $\cF(\cE)$ (where $\cE$ is a $d$ dimensional Hilbert space), which in turn can be identified with the Hilbert space of formal power series in $d$ variables
\[
\cF_d = \{ F(z) = \sum_{k \in \W_d} a_k z^k : \|F\|^2 :=\sum_{k \in \W_d} |a_k|^2 < \infty \}.
\]
\end{prop}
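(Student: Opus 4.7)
The identification $\cF(\cE) \cong \cF_d$ is immediate: fixing an orthonormal basis $\{e_1, \ldots, e_d\}$ of $\cE$, the map $e_{i_1} \otimes \cdots \otimes e_{i_m} \mapsto z^{i_1 \cdots i_m}$ extends by linearity and continuity to a unitary. The substantive assertion is $\cH^2_d \cong \cF_d$, and my plan is to show directly that $\cF_d$ carries the structure of an nc RKHS whose reproducing kernel is the nc Szego kernel $K$, and then appeal to the uniqueness of the nc RKHS attached to a cp nc kernel (Theorem~3.1 of \cite{BMV15a}) to conclude $\cF_d = \cH(K) = \cH^2_d$.

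The key analytic input is a row-contraction estimate: for $X \in \fB_d(n)$ with $r := \|X\|^2 < 1$, a simple induction on $m$ (using the fact that $\sum_j X_j A X_j^* \leq r \|A\|\,I$ whenever $A \geq 0$) yields
\[
\Big\| \sum_{|k|=m} X^k X^{*k} \Big\| \leq r^m.
\]
Two consequences follow. First, for any $F = \sum_k a_k z^k \in \cF_d$ the series $F(X) := \sum_k a_k X^k$ converges in operator norm on every $X \in \fB_d$, so each $F$ defines a scalar-valued nc function on $\fB_d$, and $F \mapsto F(X)v$ is bounded on $\cF_d$ for every $X$ and $v$. Second, applying the same estimate with $W \in \fB_d(m)$ in place of $X$ and invoking Cauchy--Schwarz gives $\sum_k |\langle y, W^k v\rangle|^2 \leq \|v\|^2 \|y\|^2 / (1-r)$, so the formal series $\sum_k \langle y, W^k v\rangle z^k$ lies in $\cF_d$ and, by the explicit formula of Section~\ref{subsec:ncDA}, agrees with the kernel function $K_{W,v,y}$. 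A direct computation with the standard inner product on $\cF_d$ then gives, for $F = \sum a_k z^k$,
\[
\langle F, K_{W,v,y}\rangle_{\cF_d} = \sum_k a_k \overline{\langle y, W^k v\rangle} = \sum_k a_k \langle W^k v, y\rangle = \langle F(W)v, y\rangle,
\]
which is precisely the reproducing identity \eqref{eq:rep_prop}.

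Finally, I would check that the kernel functions $K_{W,v,y}$ are dense in $\cF_d$, so that $\cF_d$ is the nc RKHS of $K$ rather than a strict superspace: if $F \in \cF_d$ is orthogonal to every $K_{W,v,y}$, then by the reproducing identity $F(W)v = 0$ for all $W \in \fB_d$ and $v$, and evaluating $F = \sum a_k z^k$ at small multiples of matrix tuples of arbitrarily large size (e.g.\ truncations of the left-creation operators on $\cF(\cE)$) forces all $a_k$ to vanish by linear independence of the resulting $X^k$. Uniqueness of the nc RKHS associated with $K$ then gives $\cF_d = \cH^2_d$, and combining with $\cF(\cE) \cong \cF_d$ completes the proof. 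The only real obstacle is the row-contraction estimate, which underpins both the convergence of $\sum a_k X^k$ and the $\ell^2$-summability of the kernel coefficients; everything else is bookkeeping or a standard nc density argument.
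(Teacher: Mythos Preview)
Your argument is correct and is essentially the alternative approach the paper itself sketches in the Remark immediately following the proof: view $\cF_d$ as a Hilbert space of nc functions on $\fB_d$ with bounded point evaluation, compute its reproducing kernel to be $K$ (the paper cites \cite[Theorems~3.3 and~3.5]{BMV15a} for this), and invoke uniqueness of the nc RKHS. The row-contraction estimate you isolate is exactly the analytic content needed, and your density argument via $t P_n L P_n$ is precisely what the paper has in mind when it says one can hit each monomial $z^m$ using compressions of $L$ to the finite-dimensional spaces $H_n$ of \eqref{eq:Hn}.

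The paper's \emph{primary} proof is marginally more direct: rather than verifying that $\cF_d$ satisfies the axioms of the nc RKHS for $K$, it simply defines $U:\cH^2_d \to \cF_d$ on kernel functions by $K_{W,v,y} \mapsto \sum_k \langle y, W^k v\rangle z^k$, checks this preserves inner products (so extends to an isometry), and checks surjectivity by producing kernel functions mapping onto each monomial. This avoids any appeal to the uniqueness clause of \cite{BMV15a} and makes the unitary explicit, at the cost of a small inner-product computation. Your route has the advantage of making transparent \emph{why} the identification holds---namely, that $\cF_d$ is already the nc RKHS of the Szego kernel---which is conceptually cleaner and is the viewpoint the paper returns to in the Remark. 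Both arguments rest on the same estimate $\|\sum_{|k|=m} X^k X^{*k}\| \leq \|X\|^{2m}$, so neither is materially simpler than the other.
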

\begin{proof}
The identification of $\cF(\cE)$ with the space of formal power series with square summable coefficients is clear.
The map $U : \cH^2_{d} \to \cF_d$ mapping the kernel function $K_{W,v,y}$ to the power series $\sum_{k \in \W_d} \langle y, W^k v \rangle z^k$ is readily seen to preserve inner products, thus it extends to an isometry.
On the other hand, it not hard to find, for every $m \in \W_d$, a kernel function $K_{W,v,y}$ that gets mapped to the monomial $z^m$ (e.g, using compression of $L$ to finite dimensional subspaces as in \eqref{eq:Hn}).
Thus, $U$ is unitary.
\end{proof}

\begin{remark}
Here is another point of view on the unitary equivalence of $\cH^2_{d}$ and $\cF_d$.
It is easy to see that $\cF_d$ is a Hilbert space of nc functions on $\fB_d$ in which point evaluation is bounded (just plug in a tuple into the formal power series and use Cauchy--Schwarz).
Now $\{z^k : k \in \W_d\}$ is an orthonormal basis for $\cF_d$.
By \cite[Theorems 3.3 and 3.5]{BMV15a}, the space $\cF_d$ is associated to a kernel $\tilde{K}$ which is given by $\tilde{K}(Z,W)(P) = \sum_{k \in \W_d} Z^k P W^{*k}$ (this is an nc analogue to a familiar result in the classical theory of RKHS).
Thus the Hilbert spaces $\cH^2_{d}$ and $\cF_d$ actually give rise to the same reproducing kernel Hilbert spaces on $\fB_d$.
\end{remark}

Recall that the multiplier algebra of $\cH^2_{d}$ is the algebra $\mlt\cH^2_{d}$ of all nc functions from $\fB_d$ to $\bM_d$ to $\M_1$ such that $fh \in \cH^2_{d}$ for all $h \in \cH^2_{d}$.

\begin{prop}\label{prop:mult_in_H}
$\mlt \cH^2_{d} \subseteq \cH^2_{d}$.
\end{prop}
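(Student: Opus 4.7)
The plan is to exploit the fact that the multiplier algebra acts on the Hilbert space by multiplication, so that any multiplier $f$ can be recovered as $f = f \cdot 1 = M_f(\mathbf{1})$, provided we can show that the constant function $\mathbf{1}$ lies in $\cH^2_d$. Once that is established, the conclusion is immediate from the boundedness of $M_f$ on $\cH^2_d$.

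The first step is therefore to verify that the constant nc function $\mathbf{1}$ belongs to $\cH^2_d$. My preferred way to see this is to exhibit $\mathbf{1}$ as a kernel function. Taking $W = 0 \in M_1 = \fB_d(1)$ and $v = y = 1$, the formula
\[
K_{W,v,y}(Z) = \sum_{k \in \W_d} \langle y, W^{k} v\rangle\, Z^{k}
\]
collapses, since $0^{k}$ vanishes for every nonempty word $k$ and equals $1$ for the empty word. Thus $K_{0,1,1}(Z) = I_n$ for every $Z \in \fB_d(n)$, which is precisely the constant function $\mathbf{1}$. Alternatively, under the identification $\cH^2_d \cong \cF_d$ from Proposition~\ref{prop:identify_Fock_HK}, the constant function $\mathbf{1}$ corresponds to the formal power series with only the $z^{\emptyset}$ coefficient equal to $1$, which is trivially square summable; either argument puts $\mathbf{1}$ in $\cH^2_d$.

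The second step is immediate: if $f \in \mlt \cH^2_d$, then by definition $M_f$ is a bounded operator from $\cH^2_d$ into itself, so $M_f(\mathbf{1}) \in \cH^2_d$. But the definition of multiplication gives $M_f(\mathbf{1})(Z) = f(Z)\cdot I_n = f(Z)$ for all $Z \in \fB_d(n)$, hence $f = M_f(\mathbf{1}) \in \cH^2_d$, as desired. There is essentially no obstacle here --- the only content of the argument is the (easy) verification that $\mathbf{1}$ lies in the space, after which the conclusion is a one-line consequence of the multiplier property.
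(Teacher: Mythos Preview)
Your proof is correct and matches the paper's approach exactly: show that the constant function $\mathbf{1}$ lies in $\cH^2_d$ (the paper uses your alternative argument via the identification with $\cF_d$), and then conclude $f = M_f \mathbf{1} \in \cH^2_d$. Your additional observation that $\mathbf{1} = K_{0,1,1}$ is a nice bonus.
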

\begin{proof}
The constant function $\fB_d(n) \ni Z \mapsto I_n$ is in $\cH^2_{d}$, since the constant formal power series $1$  is clearly in $\cF_d$.
From this the containment $\mlt \cH^2_{d} \subseteq \cH^2_{d}$ follows immediately.
\end{proof}

\begin{prop} \label{prop:mult=F^infty_d}
The unitary $U$ of Proposition \ref{prop:identify_Fock_HK} maps $\mlt \cH^2_{d}$ onto $\mathscr F^\infty_d$, when considered as subspaces of $\cH^2_d$ and $\cF(\cE)$, respectively. Moreover, this unitary implements a completely isometric isomorphism via conjugation between these $\textsc{wot}$-closed algebras, when considered as subalgebras of $B(\cH^2_d)$ and $B(\cF(\cE))$, respectively. 
In fact, these two maps coincide.
\end{prop}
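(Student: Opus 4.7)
The statement comprises three assertions: (i) the unitary $U$ of Proposition \ref{prop:identify_Fock_HK} sends the subspace $\mlt \cH^2_{d} \subseteq \cH^2_d$ onto the subspace $\mathscr F_d^\infty \subseteq \cF(\cE)$; (ii) the conjugation $\Phi(T) := UTU^*$ restricts to a completely isometric algebra isomorphism of $\mlt \cH^2_{d}$ onto $\mathscr F_d^\infty$ as WOT-closed operator algebras in $B(\cH^2_d)$ and $B(\cF(\cE))$; and (iii) under the identification $T \leftrightarrow T\mathbf{1}$ that views $\mathscr F_d^\infty$ simultaneously as operators and as a subspace of $\cF(\cE)$, the maps in (i) and (ii) coincide. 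My plan is to match coordinate functions with shifts on the monomial basis, use WOT-closure for one containment of algebras, and the Davidson--Pitts commutant theorem for the reverse containment.

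First, I will verify on the basis $z^k \leftrightarrow e_k$ of Proposition \ref{prop:identify_Fock_HK} that $U M_{z_i} U^* = L_i$ and $U R_{z_i} U^* = R_i$, where $R_{z_i}$ denotes right multiplication by $z_i$ on $\cH^2_d$ (in the sense of the $R_f$ operator introduced earlier) and $R_i$ denotes the right creation operator on $\cF(\cE)$: left multiplication by $z_i$ prepends $i$ to the word $k$ (matching $L_i$) while right multiplication appends it (matching $R_i$). In particular each $M_{z_i}$ and each $R_{z_i}$ is bounded on $\cH^2_d$, so every $M_{z_i}$ lies in $\mlt\cH^2_d$. Since $\mlt \cH^2_{d}$ is a unital WOT-closed subalgebra of $B(\cH^2_d)$ by Corollary \ref{cor:mult_wst_closed}, and $\Phi$ is a WOT-continuous $*$-isomorphism of $B(\cH^2_d)$ onto $B(\cF(\cE))$, the image $\Phi(\mlt \cH^2_{d})$ is a WOT-closed unital subalgebra of $B(\cF(\cE))$ containing $L_1,\ldots,L_d$. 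By the definition of $\mathscr F_d^\infty$ as $\overline{\alg(L)}^{\textsc{wot}}$, this yields the containment $\mathscr F_d^\infty \subseteq \Phi(\mlt \cH^2_{d})$.

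For the reverse containment, I exploit the fact that every left multiplier commutes with every right multiplier: for $f \in \mlt \cH^2_{d}$, $h \in \cH^2_d$ and $Z \in \fB_d$, associativity of matrix multiplication gives $(M_f R_{z_i} h)(Z) = f(Z)(h(Z)Z_i) = (f(Z)h(Z))Z_i = (R_{z_i} M_f h)(Z)$, so $M_f R_{z_i} = R_{z_i} M_f$. Conjugating by $U$ and applying $U R_{z_i} U^* = R_i$ from the previous step gives $\Phi(M_f) R_i = R_i \Phi(M_f)$ for every $i$. By the Davidson--Pitts commutant theorem \cite{DavPitts1}, $\mathscr F_d^\infty$ coincides with the commutant $\{R_1,\ldots,R_d\}'$ in $B(\cF(\cE))$, whence $\Phi(M_f) \in \mathscr F_d^\infty$. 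Together with the first containment, this gives $\Phi(\mlt \cH^2_{d}) = \mathscr F_d^\infty$; since $\Phi$ is unitary conjugation, the resulting algebra isomorphism is automatically completely isometric.

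Finally, the coincidence of the two maps reduces to the one-line identity $(UM_fU^*)\mathbf{1}_{\cF(\cE)} = UM_f(\mathbf{1}_{\cH^2_d}) = U(f)$, using $U^*\mathbf{1}_{\cF(\cE)} = \mathbf{1}_{\cH^2_d}$ and the multiplier property $M_f\mathbf{1}_{\cH^2_d} = f$; thus both (i) and (ii) send $f \in \mlt \cH^2_{d}$ to $U(f) \in \mathscr F_d^\infty$. I expect the main bookkeeping obstacle to be checking that $R_{z_i}$ is actually a well-defined bounded operator on $\cH^2_d$, but this is settled immediately by the monomial-level computation $U R_{z_i} U^* = R_i$ on the dense set of monomials; with this in hand, the commutation approach bypasses the need to approximate multipliers by polynomials entirely, and no appeal to the sup-norm theory of Theorem \ref{thm:HinftyVSPopescu} is needed for the present statement.
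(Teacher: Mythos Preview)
Your proof is correct and takes a genuinely different route from the paper for the containment $\Phi(\mlt\cH^2_d)\subseteq\mathscr F_d^\infty$. The paper obtains this inclusion indirectly: it first invokes Popescu's original subspace description of $\mathscr F_d^\infty$ (as those $g\in\cF(\cE)$ with $g\otimes f\in\cF(\cE)$ for all $f$) to get $U(\mlt\cH^2_d)=\mathscr F_d^\infty$ as subspaces, and then combines this with the formula $UM_fU^*=f(L)$ on polynomials extended by bounded \textsc{wot}-density. You instead observe the pointwise commutation $M_fR_{z_i}=R_{z_i}M_f$, conjugate, and appeal to the Davidson--Pitts commutant theorem $\{R_1,\ldots,R_d\}'=\mathscr F_d^\infty$. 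This is cleaner in that it avoids Popescu's alternative characterization altogether and produces the operator-algebra equality directly; it also anticipates the paper's later Corollary \ref{cor:commutant}. The paper's route, on the other hand, is more self-contained since the commutant theorem is itself a substantial result.

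One small point to tighten: you cite Corollary \ref{cor:mult_wst_closed} for the claim that $\mlt\cH^2_d$ is \textsc{wot}-closed, but that corollary only asserts weak-$*$ closure (its proof shows the unit ball is \textsc{wot}-closed and then invokes Krein--Smulian). This is harmless here, since you can either (a) run the whole argument in the weak-$*$ topology, using that $\mathscr F_d^\infty$ equals the weak-$*$ closure of $\alg(L)$ as well, or (b) use that every element of $\mathscr F_d^\infty$ is a \emph{bounded} \textsc{wot}-limit of polynomials (Ces\`aro means), together with bounded-\textsc{wot} closure of $\mlt\cH^2_d$. Either adjustment preserves your strategy intact.
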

\begin{proof}
Recall that Popescu's original definition of $\mathscr F_d^\infty$ is as the subspace of the Fock space $\cF(\cE)$, of a $d$-dimensional Hilbert space $\cE$, consisting of all $g \in \cF(\cE)$ for which $\sup_{p}\|g\otimes p\|<\infty$, where $p$ runs over all polynomials of norm one \cite{Popescu91}. It is easy to see that this space is, in fact, the space of all $g\in \cF(\cE)$ for which $g \otimes f \in \cF(\cE)$ for all $f \in \cF(\cE)$; see e.g. Popescu's observation in \cite{Popescu06b} after equation (3.1).
It now follows by Propositions \ref{prop:mult_in_H} and \ref{prop:identify_Fock_HK}  that the unitary $U$ from the latter proposition maps $\mlt \cH^2_{d}$ onto $\mathscr F^\infty_d$.

We now turn to the unitary equivalence. 
When thinking of $\mlt \cH^2_{d}$  as a $\textsc{wot}$-closed subalgebra of $B(\cH^2_d)$, we identify $f$ with the multiplication operator $M_f$; when thinking of $\mathscr F^\infty_d$  as a $\textsc{wot}$-closed subalgebra of $B(\cF(\cE))$, we identify $f$ with $f(L) = f(L_1,\dots,L_d)$, where $L_1,\dots,L_d$ are the creation operators on the associated Fock space (see \cite[Corollary 3.5]{Popescu91}). 
Then for every $1\leq i \leq d$, $W \in \fB_d(m)$, $v,y \in \mathbb C^m$, and $n \in \mathbb W_d$ we have
\[
\begin{split}
\langle UM_{z_i}^* K_{W,v,y} , z^n \rangle &= \langle UK_{W,v,W_i^*y} , z^n \rangle
                                                                         = \sum_{k\in \mathbb W_d} \langle W_i^*y, W^k v \rangle \langle z^k , z^n \rangle \\
                                                                       &= \langle W_i^*y, W^n v \rangle
                                                                         = \langle y, W_i W^n v \rangle \\
                                                                       &= \sum_{k\in \mathbb W_d} \langle y, W^k v \rangle \langle z^k , L_i z^n \rangle
                                                                         = \langle L_i^* U K_{W,v,y} , z^n \rangle.
\end{split}
\]
Thus, $UM_{z_i}U^*=L_i$. 
Next, if $f$ is a polynomial then
$UM_{f}U^*=f(UM_{z}U^*)=f(L)$. 
Since every $f \in \mathscr F^\infty_d$ is the $\textsc{wot}$-limit of a bounded polynomial net and $\mlt \cH^2_d$ is $\textsc{wot}$-bounded closed, we have that $UM_{f}U^*=f(UM_{z}U^*)=f(L)$ for every $f \in \mathscr F^\infty_d$. 
As $U(\mlt \cH^2_d) = \mathscr F_d^\infty$, we are done.

Finally, it is clear that on the level of formal power series the map sending $f$ to $Uf$ and the map sending $f$ to the formal power series associated with $f(S)$ agree. 
Therefore, they coincide on $\mlt \cH^2_{d}$.
\end{proof}

\begin{cor} \label{cor:unitary_equiv}
The $\textsc{wot}$-closed algebras $\cL_d$, $\mathscr F_d^\infty$, $H^\infty(B(\cX)^d_1)$, $\mlt \cH^2_d$ and $H^\infty(\fB_d)$ are all completely isometrically isomorphic.
\end{cor}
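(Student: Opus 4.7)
The plan is to chain together the three unitary/isometric equivalences already established in this section. Specifically, Theorem \ref{thm:HinftyVSPopescu} gives completely isometric isomorphisms between $H^\infty(B(\cX)^d_1)$, $\mathscr F^\infty_d$, and $H^\infty(\fB_d)$, and Proposition \ref{prop:mult=F^infty_d} provides a completely isometric isomorphism (implemented by the unitary $U$ of Proposition \ref{prop:identify_Fock_HK}) between $\mlt \cH^2_d$ and $\mathscr F^\infty_d$. The only remaining link is to match $\cL_d$ with $\mathscr F^\infty_d$, and this is simply the observation, recalled in the paragraph preceding Theorem \ref{thm:HinftyVSPopescu}, that the natural identification of the Fock space $\cF(\cE)$ with $\ell_2(\W_d)$ (sending the standard orthonormal basis of one to the standard basis of the other) carries the creation operators $L_1,\dots,L_d$ used in the definition of $\mathscr F^\infty_d$ to those used in the definition of $\cL_d$, and therefore implements a unitary equivalence of the $\textsc{wot}$-closed algebras they generate.

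Concretely, I would organize the short argument as follows. First, invoke Theorem \ref{thm:HinftyVSPopescu} to reduce the comparison of $H^\infty(B(\cX)^d_1)$ and $H^\infty(\fB_d)$ with $\mathscr F^\infty_d$. Second, cite Proposition \ref{prop:mult=F^infty_d} to identify $\mlt \cH^2_d$ with $\mathscr F^\infty_d$ via the unitary $U$, noting that since this identification is implemented by a unitary it is automatically a complete isometry. Third, note that the canonical unitary $\cF(\cE)\to\ell_2(\W_d)$ intertwines the two incarnations of the left creation operators, hence conjugation by it is a completely isometric, $\textsc{wot}$-continuous isomorphism taking $\mathscr F^\infty_d$ onto $\cL_d$ (this is already standard and indeed implicit in the discussion of Davidson--Pitts and Popescu at the start of this section). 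Composing these three equivalences yields all pairwise complete isometric isomorphisms required by the corollary.

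There is no real obstacle here; this is purely a bookkeeping consequence of the results just proved. The only minor point to be a bit careful about is that all equivalences are \emph{complete} isometries and not just isometries, but this is automatic from the unitary implementation in two of the three links, and is the content of Theorem \ref{thm:HinftyVSPopescu} for the third.
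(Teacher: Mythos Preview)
Your proposal is correct and follows essentially the same approach as the paper: the corollary is stated without a separate proof because it is exactly the bookkeeping combination you describe, namely Theorem \ref{thm:HinftyVSPopescu} for the chain $H^\infty(B(\cX)^d_1) \cong \mathscr F^\infty_d \cong H^\infty(\fB_d)$, Proposition \ref{prop:mult=F^infty_d} for $\mlt \cH^2_d \cong \mathscr F^\infty_d$, and the natural unitary $\cF(\cE)\to\ell_2(\W_d)$ for $\mathscr F^\infty_d \cong \cL_d$ (the latter already noted in the paragraph preceding Corollary \ref{cor:unitary_equiv}).
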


\begin{cor} \label{cor:mult=H^infty}
The completely isometric isomorphism $H^\infty(\fB_d) \to \mlt \cH^2_{d}$ can be chosen to be the identity map. In particular $H^\infty(\fB_d)$ and $\mlt \cH^2_{d}$ are the same set.
\end{cor}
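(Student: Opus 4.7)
The plan is to trace the chain of completely isometric isomorphisms from Theorem \ref{thm:HinftyVSPopescu} and Proposition \ref{prop:mult=F^infty_d} (summarised in Corollary \ref{cor:unitary_equiv}) at the level of power series, and then verify that both $H^\infty(\fB_d)$ and $\mlt \cH^2_d$ assign the \emph{same} nc function on $\fB_d$ to a given formal power series.

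First I would recall, as in the proof of Theorem \ref{thm:HinftyVSPopescu}, that every $f \in H^\infty(\fB_d)$ admits a power series representation $f(Z) = \sum_{k \in \W_d} c_k Z^k$ which converges to $f$ on $\fB_d$. Under the isomorphism of Corollary \ref{cor:unitary_equiv}, $f$ corresponds to $F = \sum_k c_k L_k \in \mathscr F_d^\infty$, and by Propositions \ref{prop:identify_Fock_HK} and \ref{prop:mult=F^infty_d} the unitary $U$ identifies $F$ with a multiplier $g \in \mlt \cH^2_d$ whose image in $\cF_d$ is the formal power series $\sum_k c_k z^k$ with exactly the same coefficients $c_k$.

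Second I would recover $g(Z)$ as an nc function via the reproducing property. From Proposition \ref{prop:identify_Fock_HK}, $U K_{Z,v,y} = \sum_k \langle y, Z^k v\rangle z^k$, which gives $\langle z^\ell, K_{Z,v,y}\rangle = \langle Z^\ell v, y\rangle$. Hence for each $Z \in \fB_d(n)$ and $v,y \in \C^n$,
\[
\langle g(Z) v, y\rangle = \langle g, K_{Z,v,y}\rangle = \sum_k c_k \langle Z^k v, y\rangle,
\]
so $g(Z) = \sum_k c_k Z^k = f(Z)$. Conversely, an arbitrary multiplier $g \in \mlt \cH^2_d$ lies in $\cH^2_d \cong \cF_d$ by Proposition \ref{prop:mult_in_H}, so it has a power series $\sum_k c_k z^k$; the same reproducing-kernel computation shows that its nc function values on $\fB_d$ are given by this series, and its supremum norm equals its multiplier norm by Corollary \ref{cor:unitary_equiv}, whence $g \in H^\infty(\fB_d)$.

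Combining the two directions, $H^\infty(\fB_d)$ and $\mlt \cH^2_d$ consist of precisely the same nc functions on $\fB_d$, and the completely isometric isomorphism of Corollary \ref{cor:unitary_equiv} acts as the identity on them. The only subtlety I anticipate is justifying the convergence of $\sum_k c_k Z^k$ at a given $Z \in \fB_d$ together with the interchange of summation and inner product in the kernel computation; this is handled by the square-summability of $(c_k)$ combined with $Z$ being a strict row contraction, so that Cauchy--Schwarz delivers absolute convergence.
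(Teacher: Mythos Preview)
Your proposal is correct and takes essentially the same approach as the paper: trace the composition of isomorphisms $\mlt \cH^2_d \to \mathscr F_d^\infty \to H^\infty(B(\cX)^d_1) \to H^\infty(\fB_d)$ at the level of formal power series and observe it is the identity. You add the explicit reproducing-kernel computation to verify that a multiplier's nc function values on $\fB_d$ are given by its power series, which makes concrete what the paper simply asserts with ``easily seen to be the identity.''
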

\begin{proof}
This follows by applying the composition of isomorphisms
\[
\mlt \cH^2_{d} \to \mathscr F^\infty_d \to H^\infty(B(\cX)^d_1)  \to H^\infty (\fB_d)
\]
on the level of formal power series on which it is easily seen to be the identity.
\end{proof}

\begin{remark}
The equality $\mlt \cH^2_{d} = H^\infty(\fB_d)$ also follows from \cite[Theorem 5.5]{BMV15b} (the equivalence of conditions (1) and (1') when taking $a(Z) = I_n$).
\end{remark}

\begin{remark}
We have seen that the various algebras $H^\infty(\fB_d)$, $\cL_d$ (or $\mathscr F^\infty_d$ in Popescu's notation), $H^\infty(B(\mathcal{X})^d)$, and $\mlt \cH^2_d$, are all the same.
Henceforth, these algebras will be identified.
In particular, the free shift $L = (L_1, \ldots, L_d)$ on $\cF(\bC^d)$ will be identified with the tuple $M_z = (M_{z_1}, \ldots, M_{z_d})$ of multiplication by coordinate operators, which will be identified with $z = (z_1, \ldots, z_d)$.
\end{remark}

Using \cite[Theorem 1.2]{DavPitts1} we obtain
\begin{cor}\label{cor:commutant}
$\mlt \cH^2_{d}$ and $\mlt_r\cH^2_{d}$ are unitarily equivalent and are mutual commutants: $\mlt \cH^2_{d} = (\mlt_r \cH^2_{d})'$ and $\mlt_r \cH^2_{d} = (\mlt \cH^2_{d})'$.
\end{cor}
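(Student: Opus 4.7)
The plan is to transfer the problem to the Fock space $\cF(\cE)$ via the unitary $U$ from Proposition \ref{prop:identify_Fock_HK} and invoke the Davidson--Pitts theorem \cite[Theorem 1.2]{DavPitts1}, which asserts that the left and right noncommutative analytic Toeplitz algebras $\cL_d$ and $\cR_d$ on $\cF(\cE)$ are mutual commutants and are unitarily equivalent via the ``tensor--reversing'' flip unitary $F$ on $\cF(\cE)$. Proposition \ref{prop:mult=F^infty_d} has already done the analogous transfer for left multipliers: $U \mlt \cH^2_d \, U^* = \cL_d$, with $U M_{z_i} U^* = L_i$.

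The main step is the parallel statement for right multipliers, namely that $U \mlt_r \cH^2_d \, U^* = \cR_d$, with $U R_{z_i} U^* = R_i$ for each $i$. I would first observe a right-multiplier analogue of Lemma \ref{lem:adjoint}: a direct application of the reproducing property yields
\[
R_f^* K_{W,v,y} = K_{W, f(W) v, y}
\]
for every $f \in \mlt_r \cH^2_d$, and a verbatim modification of Lemma \ref{lem:operator} turns this identity on kernel functions into a characterization of $R_f$. Applying this to $f = z_i$, so that $R_{z_i}^* K_{W,v,y} = K_{W, W_i v, y}$, and expanding both $U R_{z_i}^* K_{W,v,y}$ and $R_i^* U K_{W,v,y}$ in the orthonormal basis $\{z^n : n \in \W_d\}$ of $\cF_d$, the two sides match coefficient by coefficient (the $n$-th coefficient is $\langle y, W^{ni} v\rangle$). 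Hence $U R_{z_i} U^* = R_i$.

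To upgrade this generator-level equality to $U \mlt_r \cH^2_d \, U^* = \cR_d$, I would mirror the bounded $\textsc{wot}$-net argument used in the proof of Proposition \ref{prop:mult=F^infty_d}: for any free polynomial $f$, $U R_f U^*$ equals the corresponding polynomial in the $R_i$'s; every element of $\cR_d$ is a bounded $\textsc{wot}$-limit of such polynomials, and $\mlt_r \cH^2_d$ is $\textsc{wot}$-closed (by the right-multiplier version of Corollary \ref{cor:mult_wst_closed}, whose proof is identical). Conversely, by pointwise convergence of kernel functions along a bounded $\textsc{wot}$-net of polynomials, any $f \in \mlt_r \cH^2_d$ is sent by conjugation by $U$ into $\cR_d$.

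Once both identifications $U \mlt \cH^2_d \, U^* = \cL_d$ and $U \mlt_r \cH^2_d \, U^* = \cR_d$ are in place, both conclusions follow immediately from \cite[Theorem 1.2]{DavPitts1}: the commutant identities $\cL_d = \cR_d'$ and $\cR_d = \cL_d'$ conjugate by $U^*$ to $\mlt \cH^2_d = (\mlt_r \cH^2_d)'$ and $\mlt_r \cH^2_d = (\mlt \cH^2_d)'$, while the unitary $U^* F U$ on $\cH^2_d$ conjugates $\mlt \cH^2_d$ onto $\mlt_r \cH^2_d$. The only real obstacle is the bookkeeping in identifying $\mlt_r \cH^2_d$ with $\cR_d$; once the kernel-function computation is done, the result is a direct consequence of the Davidson--Pitts commutant theorem.
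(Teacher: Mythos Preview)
Your proof is correct and follows essentially the same route as the paper: both reduce to \cite[Theorem 1.2]{DavPitts1} after identifying the left and right multiplier algebras on $\cH^2_d$ with $\cL_d$ and $\cR_d$. The paper's version is just terser --- since $\cH^2_d$ has already been identified with $\cF_d$ in Proposition \ref{prop:identify_Fock_HK}, it observes directly that the word-reversing unitary $z^\alpha \mapsto z^{\tilde\alpha}$ on $\cH^2_d$ conjugates left multiplication to right multiplication, thereby bypassing your explicit kernel-function verification that $U R_{z_i} U^* = R_i$.
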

\begin{proof}
The algebra $\mlt_r \cH_d^2$ is the algebra of nc functions $f$ on $\fB_d$, such that for every $g \in \cH_d^2$, we have $g f \in \cH_d^2$. Following \cite{DavPitts1} we note that the map that sends the monomial $z^{\alpha}$ to the reversed monomial $z^{\tilde{\alpha}}$ extends to the power series representing functions in $\cH_d^2$. 
In fact it, is a unitary and implements an isomorphism between $\mlt \cH_d^2$ and $\mlt_r \cH_d^2$. The second claim is precisely \cite[Theorem 1.2]{DavPitts1}.
\end{proof}

The works of Davidson and Pitts \cite{DavPitts2,DavPittsPick,DavPitts1}, Arias and Popescu \cite{AriasPopescu} and Popescu \cite{Popescu95b,Popescu96,Popescu06b} introduce the algebra $H^{\infty}(\fB_d)$ as the $\textsc{wot}$-closure of the multiplication operators by $\F_d$ on $\cH_d^2$. It was proved in \cite{DavPitts1} that in fact the weak-* and $\textsc{wot}$ topologies coincide on $H^{\infty}(\fB_d)$. Hence in particular the unit ball of $H^{\infty}(\fB_d)$ is $\textsc{wot}$-compact.

A final useful fact about $H^\infty(\fB_d)$ is the following. 
\begin{thm}
Let $f \in H^\infty(\fB_d)$. Then $f$ has a Taylor series $f = \sum_{n=0}^\infty f_n$ that converges pointwise, converges Ces\`{a}ro in the weak-$*$ topology, and $f \mapsto f_n$ is bounded.
\end{thm}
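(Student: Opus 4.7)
The plan is to exploit the gauge action $\alpha_\lambda(f)(X) := f(\lambda X)$ on $H^\infty(\fB_d)$, which is a complete contraction for $|\lambda|\le 1$ and a complete isometry for $|\lambda|=1$ because $\lambda \fB_d \subseteq \fB_d$. For $f \in H^\infty(\fB_d)$ and $n \ge 0$ I define
\[
f_n(X) = \frac{1}{2\pi}\int_0^{2\pi} e^{-in\theta}\, f(e^{i\theta}X)\,d\theta, \qquad X \in \fB_d.
\]
The integrand is a continuous matrix-valued function of $\theta$ of norm at most $\|f\|_\infty$, so the integral converges; since the scalar action $X \mapsto e^{i\theta}X$ commutes with direct sums and similarities, $f_n$ is an nc function. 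The obvious estimate gives $\|f_n\|_\infty \le \|f\|_\infty$, so $f \mapsto f_n$ is completely bounded with norm at most one.

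To see that $f_n$ really is the $n$-th homogeneous part of $f$ and to obtain pointwise convergence, I fix $X \in \fB_d$ and consider the matrix-valued function $g_X(\lambda) := f(\lambda X)$. Since $\lambda X \in \fB_d$ whenever $|\lambda| < \|X\|^{-1}$, the function $g_X$ is holomorphic on an open disc strictly larger than $\overline{\D}$, and its Taylor expansion at the origin converges uniformly on $|\lambda|\le 1$. The Cauchy integral formula identifies the $n$-th Taylor coefficient of $g_X$ with $f_n(X)$, so evaluating at $\lambda=1$ yields $f(X) = \sum_{n=0}^\infty f_n(X)$. Using the power series representation of $f$ appearing in the proof of Theorem~\ref{thm:HinftyVSPopescu}, one then checks that $f_n(X) = \sum_{|k|=n} a_k X^k$, confirming that $f_n$ is the customary homogeneous component.

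For weak-$*$ Ces\`aro convergence, form the Ces\`aro means $\sigma_N f = \sum_{n=0}^N \bigl(1 - \tfrac{n}{N+1}\bigr) f_n$. Since $g_X$ has only non-negative Fourier coefficients, one obtains the integral representation
\[
\sigma_N f(X) = \frac{1}{2\pi}\int_0^{2\pi} F_N(\theta)\, f(e^{i\theta}X)\,d\theta
\]
with $F_N$ the Fej\'er kernel. Because $F_N \ge 0$ has total mass $2\pi$, we have $\|\sigma_N f\|_\infty \le \|f\|_\infty$, and classical Fej\'er theory applied to the continuous circle function $\theta \mapsto g_X(\theta)$ gives norm convergence $\sigma_N f(X) \to f(X)$ for every $X \in \fB_d$. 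By Corollary~\ref{cor:mult=H^infty} we have $H^\infty(\fB_d) = \mlt \cH^2_d$; on this algebra the weak-$*$ and \textsc{wot} topologies coincide by the Davidson--Pitts theorem, and Lemma~\ref{prop:bound_wot_conv_is_point_conv} promotes bounded pointwise \textsc{wot} convergence to \textsc{wot} convergence. Hence $\sigma_N f \to f$ in the weak-$*$ topology.

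The only real point of care is matching the integral definition of $f_n$ with the formal $n$-th homogeneous component, and then the passage from pointwise to weak-$*$ convergence of the Ces\`aro means. Both steps reduce to results already in place in the paper, so the proof is essentially an assembly of standard Fej\'er / gauge-action arguments with the identification $H^\infty(\fB_d) = \mlt \cH^2_d$.
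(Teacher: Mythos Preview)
Your proof is correct. The paper simply invokes \cite[pp.~405--406]{DavPitts1}, where the same Fej\'er/Ces\`aro argument is carried out directly on the Fock space for $\cL_d$; your version is the function-theoretic translation of that argument through the identification $H^\infty(\fB_d) = \mlt\cH^2_d = \cL_d$ already established in Section~\ref{sec:szego_and_bounded_functions}, so the two approaches coincide.
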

\begin{proof}
This follows from \cite[pp. 405--406]{DavPitts1}.
\end{proof}

\section{The complete Pick property} \label{sec:complete_pick}

For a set $\Omega \subseteq \fB_d$ we let
\[
\cH_\Omega = \ol{\spn}\{K_{W,u,y} : W \in \Omega(m), u,y \in \C^m, m \in \N\}.
\]
Let $\cI_\Omega$ denote the space of all functions in $\cH^2_{d}$ that vanish on $\Omega$.
For a set $S \subseteq \cH^2_{d}$, let $V(S)$ denote the variety determined by $S$, that is, the intersection of the zero sets of all functions in $S$. 
Finally, let $\cJ_\Omega$ denote the two sided ideal in $H^\infty(\fB)$ that consists of functions vanishing on $\Omega$.

The following lemma shows that varieties determined by functions in $\cH^2_{d}$ are the same as varieties determined by functions in $\mlt \cH^2_{d}$.
\begin{lem}\label{lem:HvsMltVarieities}
Let $\fV$ be an nc variety determined by functions in $\cH^2_{d}$; that is, there exists a family $E \subseteq \cH^2_{d}$ such that
\[
\fV = \{W \in \fB_d :  f(W) = 0 \,\, \textup{ for all } \,\, f \in E\}.
\]
Then $\fV$ is determined by a family of functions in $\mlt \cH^2_{d}$; that is, there exists a family $F \subseteq \mlt\cH^2_{d}$ such that
\[
\fV = \{W \in \fB_d :  f(W) = 0 \,\, \textup{ for all } \,\, f \in F\}.
\]
\end{lem}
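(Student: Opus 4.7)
The plan is to identify the subspace of $\cH^2_d$-functions vanishing on $\fV$ as a multiplier-invariant subspace of $\cH^2_d$, and then extract a family of multiplier generators via a noncommutative Beurling--Lax factorization.

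Fix $E \subseteq \cH^2_d$ with $\fV = V_{\fB_d}(E)$ and set $\cI_\fV := \{h \in \cH^2_d : h|_\fV \equiv 0\}$. By hypothesis $E \subseteq \cI_\fV$. Using the identity $(\phi h)(W) = \phi(W) h(W)$ for a multiplier $\phi$ and $h \in \cH^2_d$, one sees at once that $\cI_\fV$ is invariant under $M_\phi$ for every $\phi \in \mlt \cH^2_d = H^\infty(\fB_d)$; in particular it is invariant under the free left shifts $L_1,\dots,L_d$.

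The second step is to invoke the noncommutative Beurling--Lax theorem for the full Fock space. In the multiplier-valued formulation adapted to nc RKHS, this is a direct consequence of the complete Pick property established in Section \ref{sec:complete_pick} (cf.\ Ball--Marx--Vinnikov \cite{BMV15b} and Davidson--Pitts \cite{DavPitts1}). It yields an inner column multiplier $\Phi = (\Phi_i)_{i \in I}$, with each $\Phi_i \in \mlt \cH^2_d$, such that
\[
\cI_\fV \;=\; \overline{\Phi \cdot (\cH^2_d)^{(I)}}.
\]
Since $\Phi_i = \Phi e_i$ lies in $\cI_\fV$, each $\Phi_i$ is a multiplier vanishing on $\fV$, and the family $F := \{\Phi_i\}_{i \in I} \subseteq \mlt \cH^2_d$ satisfies $\fV \subseteq V_{\fB_d}(F)$.

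To close the argument, suppose $W_0 \in \fB_d$ is a common zero of every $\Phi_i$. For any $h \in (\cH^2_d)^{(I)}$, pointwise evaluation gives $(\Phi h)(W_0) = \Phi(W_0) h(W_0) = 0$, so every element of $\Phi \cdot (\cH^2_d)^{(I)}$ vanishes at $W_0$. Because point evaluation at $W_0$ is bounded on $\cH^2_d$, this vanishing passes to the closure $\cI_\fV$; in particular every $f \in E \subseteq \cI_\fV$ vanishes at $W_0$, forcing $W_0 \in \fV$. Hence $\fV = V_{\fB_d}(F)$ as desired. The main obstacle is arranging the correct operator-valued Beurling--Popescu factorization in our multivariable, possibly infinite-multiplicity setting, but this is by now standard once the complete Pick property of the nc Szeg\H{o} kernel has been established.
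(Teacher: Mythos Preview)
Your argument is correct and takes a genuinely different route from the paper's.  The paper adapts the classical RKHS argument of \cite[Theorem~9.27]{AM_Book}: for each $h \in \cH^2_d$ and each $X \notin V(h)$, it constructs by hand a multiplier that vanishes on $V(h)$ but not at $X$, by defining a suitable operator on kernel functions over $V(h)\cup\{X\}$ and then invoking the extension Theorem~\ref{thm:quotient_mult} (which rests on the complete Pick property).  Your approach is global: you observe once that $\cI_\fV$ is a closed $L$-invariant subspace and apply the noncommutative Beurling theorem to obtain a single family $F=\{\Phi_i\}$ of (inner) multipliers whose joint zero set is exactly $\fV$.  Both proofs appeal to structural results of comparable depth, and neither is circular (the paper's own proof likewise forward-references Theorem~\ref{thm:quotient_mult}, noting explicitly that it does not depend on this lemma).

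Two small remarks.  First, the Beurling theorem for the full Fock space is due to Popescu and to Davidson--Pitts \cite{DavPitts1} directly, and neither requires nor historically follows from the complete Pick property; you may cite it without the detour through Section~\ref{sec:complete_pick}.  Second, your approach buys a little extra: the family $F$ is automatically countable (separability of $\cH^2_d$) and consists of inner multipliers, which the paper's pointwise construction does not give.
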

\begin{proof}
We modify the argument in \cite[Theorem 9.27]{AM_Book} to our setting.
For every $h$, let $V(h) = \{W \in \fB_d: h(W) = 0\}$.
It suffices to prove that for all $h \in \cH^2_{d}$ and all $X \notin V(h)$, there exists $f \in \mlt \cH^2_{d}$ such that $f\big|_{V(h)} = 0$ and $f(X) = h(X)$.

To show the existence of such a multiplier $f$, we attempt to define a bounded operator $T$ on $\cH_{V(h) \cup \{X\}}$ by $T K_{X,v,y}= K_{X,v,h(X)^*y}$ for all $v,y \in \C^n$ and $T K_{W,v,y} = 0 = K_{W,v,h(W)^*y}$ for all $W \in V(h)$.
Clearly, this defines a bounded operator on $\cH_{V(h)}$.
It also defines a bounded, well defined operator on $\cH_{\{X\}}$, because being a convergent power series of $n \times n$ matrices in $X$, $h(X)$ is actually equal to a polynomial in $X$, say $h(X) = p(X)$ for some $p \in \F_d$.
Therefore, $K_{X,v,y} \mapsto K_{X,v,h(X)^*y}$ agrees with the action of the adjoint of the bounded multiplier $M_p$ on $\cH_{\{X\}}$.

What is not yet clear, is that the operator $T$ that we defined is well defined on the intersection $\cH_{V(h)} \cap \cH_{\{X\}}$.
We need to show that if $K_{X,v,y} \in \cH_{V(h)}$, then $K_{X,v,h(X)^*y} = 0$.
But if $K_{X,v,y} \in \cH_{V(h)}$, then $K_{X,v,y}$ is the limit of linear combinations of the kernel functions $K_{W,u,x}$ for $W \in V(h)$ and thus is orthogonal to $hq$ for any polynomial $q$;
indeed
\[
\langle hq, K_{W,u,x} \rangle = \langle  h(W)q(W) u, x \rangle = 0
\]
for all $W \in V(h)$ and all $u,x$ of appropriate size.
Therefore $\langle h(X) q(X) v, y \rangle = \langle hq, K_{X,v,y} \rangle = 0$ for every polynomial $q$.
Thus
\begin{align*}
\|K_{X,v,h(X)^*y}\|^2
&= \langle K(X,X)(v v^*) h(X)^*y, h(X)^* y \rangle \\
&= \langle \sum_{k \in \W_d} X^k (v v^*) X^{k*} h(X)^*y, h(X)^* y \rangle \\
&= \sum_{k \in \W_d} \langle v^* X^{k*} h(X)^* y, v^* X^{k*} h(X)^* y \rangle \\
&= \sum_{k \in \W_d} |\langle  h(X)X^k v, y \rangle |^2 = 0.
\end{align*}

Since $T$ defined above is a bounded operator, such that $T K_{W,v,y} = K_{W,v,h(W)^*y}$ (for $W \in V(h) \cup \{X\}$), by Lemma \ref{lem:operator} there exists $f \in \mlt \cH_{V(h) \cup \{X\}}$ such that $f\big|_{V(h)} = 0  = h\big|_{V(h)}$ and $f(X) = h(X)$.
By Theorem \ref{thm:quotient_mult} below (which does not depend on the lemma we are now proving), $f$ extends to a multiplier in $\mlt \cH^2_{d} = H^\infty(\fB_d)$.
\end{proof}

\begin{remark}
By Lemma \ref{lem:facts_kernel}, a linear combination of kernel functions is again a kernel function. 
We  also note that the kernel functions are not necessarily independent. 
For every $X \in \fB_d$ and every $f \in H^{\infty}(\fB_d)$, the matrix $f(X)$ is an element of the algebra generated by the coordinates of $X$. 
Thus, if $X \in \fB_d(n)$ is generic (i.e. the algebra generated by the coordinates of $X$ is $M_n(\C)$) and $v \neq 0$, then $\{ f(X) v \mid f \in H^{\infty}(\fB_d)\} = \C^n$ and thus $K_{X,v,y} = 0$ if and only if $y = 0$. On the other hand if the coordinates of $X$ have a non-trivial joint invariant subspace $U \subset \C^n$, then we take $0 \neq v \in U$ and $0 \neq y \in U^{\perp}$ and obtain that $K_{X,v,y} = 0$.
\end{remark}

For $\Omega \subseteq \fB_d$ we denote by $\Omega'$ the {\em full nc envelope} of $\Omega$ in $\fB_d$.
Recall from \cite{BMV15b} that this means that $\Omega'$ is the smallest subset of $\fB_d$ that is closed under direct sums and left injective intertwiners, in the sense that if $Z \in \Omega'$, if $\cI$ is an injective matrix, and if $\cI \tilde{Z} = Z \cI$, then $\tilde{Z}$ is also in $\Omega'$.
The zero set of an nc function is clearly closed under direct sums and injective left intertwiners, thus if an nc function vanishes on $\Omega$, then it also vanishes on $\Omega'$.

The following lemma is an analogue of \cite[Proposition 2.2]{DRS15}.
\begin{lem}\label{lem:HOmega}
For $\Omega \subseteq \fB_d$,
\[\cH_{\Omega}  = \cH_{\Omega'} = \cH_{V(\cI_\Omega)} = \cH_{V(\cJ_\Omega)}.\]
\end{lem}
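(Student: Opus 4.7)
The plan is to reduce everything to the standard RKHS duality $\cH_\Omega^\perp = \cI_\Omega$, with one appeal to Lemma \ref{lem:HvsMltVarieities} to handle the multiplier variety. First I would exploit the reproducing property \eqref{eq:rep_prop}: for $f \in \cH^2_d$, $W \in \fB_d(m)$, and $v,y \in \C^m$,
\[
\langle f, K_{W,v,y} \rangle = \langle f(W)v, y \rangle.
\]
Letting $v$ and $y$ range freely over $\C^m$ shows that $f \perp K_{W,v,y}$ for all $v,y$ precisely when $f(W) = 0$. Consequently $\cH_\Omega^\perp = \cI_\Omega$.

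Applying this observation with $V(\cI_\Omega)$ in place of $\Omega$ gives $\cH_{V(\cI_\Omega)}^\perp = \cI_{V(\cI_\Omega)}$. But tautologically $\cI_{V(\cI_\Omega)} = \cI_\Omega$: the inclusion $\cI_\Omega \subseteq \cI_{V(\cI_\Omega)}$ holds by the very definition of $V(\cI_\Omega)$ as the joint zero set of $\cI_\Omega$, and the reverse inclusion uses $\Omega \subseteq V(\cI_\Omega)$. Taking orthogonal complements yields $\cH_{V(\cI_\Omega)} = \cH_\Omega$. The middle equality $\cH_\Omega = \cH_{\Omega'}$ then follows from the sandwich $\Omega \subseteq \Omega' \subseteq V(\cI_\Omega)$: the first inclusion is trivial, and the second is exactly the remark recorded just before the lemma that nc functions vanishing on $\Omega$ automatically vanish on its full nc envelope (applied to each $f \in \cI_\Omega$, which is an nc function since it lies in $\cH^2_d$). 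Hence
\[
\cH_\Omega \subseteq \cH_{\Omega'} \subseteq \cH_{V(\cI_\Omega)} = \cH_\Omega.
\]

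For the final equality I would prove the stronger statement $V(\cI_\Omega) = V(\cJ_\Omega)$, which immediately forces $\cH_{V(\cI_\Omega)} = \cH_{V(\cJ_\Omega)}$. Since multipliers lie in $\cH^2_d$ by Proposition \ref{prop:mult_in_H}, we have $\cJ_\Omega \subseteq \cI_\Omega$ and hence $V(\cI_\Omega) \subseteq V(\cJ_\Omega)$. The reverse inclusion is where Lemma \ref{lem:HvsMltVarieities} enters: it produces a family $F \subseteq \mlt \cH^2_d$ with $V(F) = V(\cI_\Omega)$, and since each $f \in F$ vanishes on $V(\cI_\Omega) \supseteq \Omega$, we have $F \subseteq \cJ_\Omega$, so $V(\cJ_\Omega) \subseteq V(F) = V(\cI_\Omega)$. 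No real obstacle is anticipated; the one conceptual step is the RKHS duality in the first paragraph, and everything else is bookkeeping with the relevant inclusions.
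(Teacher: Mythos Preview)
Your proof is correct and follows essentially the same approach as the paper. The only organizational difference is that you make the duality $\cH_\Omega^\perp = \cI_\Omega$ explicit at the outset and then derive $\cH_\Omega = \cH_{\Omega'}$ via the sandwich $\Omega \subseteq \Omega' \subseteq V(\cI_\Omega)$, whereas the paper handles $\cH_{\Omega'}$ directly by observing $\cH_\Omega^\perp \subseteq \cH_{\Omega'}^\perp$; both arguments for $V(\cI_\Omega) = V(\cJ_\Omega)$ invoke Lemma~\ref{lem:HvsMltVarieities} in the same way.
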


\begin{proof}
Clearly $\cH_{\Omega} \subseteq \cH_{\Omega'}$.
If $f \in \cH^2_{d}$ vanishes on $\Omega$, then it also vanishes on $\Omega'$, hence $\cH_{\Omega}^\perp \subseteq \cH_{\Omega'}^\perp$.
Thus $\cH_{\Omega} = \cH_{\Omega'}$.

Since $\Omega \subseteq V(\cI_\Omega) \subseteq V(\cJ_\Omega)$, we have
\[\cH_{\Omega} \subseteq \cH_{V(\cI_\Omega)} \subseteq \cH_{V(\cJ_\Omega)} .\]
On the other hand, if $h \in \cH_{\Omega}^\perp$, then $h \big|_\Omega = 0$ so $h \in \cI_\Omega$.
This means that $h(W) = 0$ for all $W \in V(\cI_\Omega)$, therefore $h \in \cH_{V(\cI_\Omega)}^\perp$.
It follows that $\cH_{\Omega} = \cH_{V(\cI_\Omega)}$.

To prove the last equality, we note that $V(\cI_\Omega) = V(\cJ_\Omega)$.
To see this, recall that $V(\cI_\Omega)$ is the smallest nc variety determined by functions in $\cH^2_{d}$ that contains $\Omega$, and $V(\cJ_\Omega)$ is the smallest nc variety determined by functions in $\mlt \cH^2_{d}$ that contains $\Omega$.
Now invoke Lemma \ref{lem:HvsMltVarieities}.
\end{proof}

\begin{lem}\label{lem:exist_poly}
Let $\cE$ be a finite dimensional Hilbert space, and suppose that $Z \in \fB_d(n)$ and $W \in M_n(B(\cE))$.
Then $W$ belongs to the unital algebra $\alg (Z) \otimes B(\cE)$ if and only if the map $Z \mapsto W$ extends to an nc function on $\Omega'$ --- the full nc envelope $\fB_d(n)$ of the singleton $\Omega = \{Z\}$.
The nc function can be chosen to be a polynomial with coefficients in $B(\cE)$.
\end{lem}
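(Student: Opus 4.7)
The forward direction is immediate: if $W \in \alg(Z) \otimes B(\cE)$, write $W = p(Z)$ for a free polynomial $p$ with coefficients in $B(\cE)$; then $p$ is an nc function on all of $\bM_d$, hence on $\Omega'$, with $p(Z) = W$.

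For the converse, suppose $F$ is an nc function on $\Omega'$ with $F(Z) = W$, and set $A = \alg(Z) \subseteq M_n$. The standard fact --- provable from the direct-sum and similarity axioms by conjugating $\begin{pmatrix}\tilde Z & 0 \\ 0 & Z'\end{pmatrix}$ by the invertible block $\begin{pmatrix}I & 0 \\ \cI & I\end{pmatrix}$, which fixes this block matrix when $\cI \tilde Z = Z' \cI$ --- is that every nc function respects injective intertwinings. Applying this to inclusions $\cI \colon V \hookrightarrow (\bC^n)^k$ of $A$-invariant subspaces $V$ into the amplification $Z^{(k)}$ yields the key invariance: for every $k \geq 1$ and every $A$-submodule $V \subseteq (\bC^n)^k$, the subspace $V \otimes \cE \subseteq (\bC^n)^k \otimes \cE$ is invariant under $W^{(k)} = W \oplus \cdots \oplus W$.

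Using that $\bC^n$ is a faithful $A$-module, choose (for $k \geq \dim A$) a vector $v = (v_1, \dots, v_k) \in (\bC^n)^k$ with $\bigcap_i \operatorname{Ann}_A(v_i) = \{0\}$, so that $a \mapsto av$ identifies the cyclic submodule $Av \subseteq (\bC^n)^k$ with the regular left $A$-module. For each $b \in A$, the right multiplication $R_b \colon A \to A$, $a \mapsto ab$, is a left $A$-module homomorphism whose graph is an $A$-submodule of $Av \oplus Av \subseteq (\bC^n)^{2k}$. Invariance of this graph's $\cE$-amplification under $W^{(2k)}$ forces the restriction $W^{(k)}\big|_{Av \otimes \cE}$ --- transported via $Av \cong A$ to a map $\tilde W$ on $A \otimes \cE$ --- to commute with every such $R_b$, so $\tilde W$ is a right $A$-module endomorphism of $A \otimes \cE$. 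Since the endomorphism ring of $A \otimes \cE$ as a right $A$-module is $A \otimes B(\cE)$ (with $A$ acting by left multiplication on the $A$-factor), we obtain $\omega \in A \otimes B(\cE)$ such that $W(av_i \otimes y) = \omega(av_i \otimes y)$ for every $a \in A$, every $i$, and every $y \in \cE$.

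Finally, for arbitrary $x \in \bC^n$ the vector $(v, x) \in (\bC^n)^{k+1}$ still has trivial joint annihilator, so the same construction produces $\omega' \in A \otimes B(\cE)$ with $W(av_i \otimes y) = \omega'(av_i \otimes y)$ and $W(ax \otimes y) = \omega'(ax \otimes y)$. The agreement of $\omega$ and $\omega'$ on the $v_i$'s forces $\omega' = \omega$: writing $\omega - \omega' = \sum_l d_l \otimes S_l$ with the $S_l$ linearly independent in $B(\cE)$, the identity $(\omega - \omega')(v_i \otimes y) = 0$ for all $i, y$ places each $d_l$ in $\bigcap_i \operatorname{Ann}_A(v_i) = \{0\}$. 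Hence $W(x \otimes y) = \omega(x \otimes y)$ for every $x \in \bC^n$ and $y \in \cE$, i.e., $W = \omega \in A \otimes B(\cE) = \alg(Z) \otimes B(\cE)$. Writing $W = p(Z)$ for a polynomial $p \in \F_d$ with coefficients in $B(\cE)$ then supplies the required polynomial extension of $F$. The principal obstacle is the module-theoretic step: one must track the left/right $A$-bimodule structure on $A \otimes \cE$ carefully to conclude $\omega \in A \otimes B(\cE)$ (rather than in $A^{\operatorname{op}} \otimes B(\cE)$) and execute the faithfulness-based extension from the $v_i$'s to all of $\bC^n$ without conflating the roles of left and right multiplications.
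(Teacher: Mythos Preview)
Your proof is correct and rests on the same key invariance claim as the paper's --- that every $\alg(Z)$-submodule $V$ of $(\bC^n)^k$ has $V \otimes \cE$ invariant under $W^{(k)}$ --- but the two arguments diverge after that point. The paper proceeds by contradiction: assuming $W \notin \alg(Z) \otimes B(\cE)$, one finds a separating functional $\psi(T) = \langle T^{(k)} u, v\rangle$ vanishing on $\alg(Z) \otimes B(\cE)$ but not on $W$, and then the invariance of the cyclic module $[\alg(Z^{(k)}) \otimes B(\cE)]u$ under $W^{(k)}$ forces $\psi(W) = 0$, a contradiction. You instead give a direct construction: realize $A = \alg(Z)$ as a cyclic submodule $Av \subseteq (\bC^n)^k$, use the invariance of the \emph{graph} of each right multiplication $R_b$ to conclude that $W^{(k)}\big|_{Av \otimes \cE}$ is a right $A$-module map, identify it with an element $\omega \in \operatorname{End}_{A^{\mathrm{op}}}(A \otimes \cE) = A \otimes B(\cE)$, and then extend from the $v_i$ to all of $\bC^n$ by a faithfulness argument. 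Your route is longer but more transparent about \emph{why} the answer is $\alg(Z) \otimes B(\cE)$ --- it is precisely the commutant of the right $A$-action on $A \otimes \cE$ --- whereas the paper's functional-separation argument is slicker and avoids the bookkeeping of left versus right module structures that you rightly flag as the delicate point.
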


\begin{proof}
If $W \in \alg (Z) \otimes B(\cE)$, then $W = p(Z)$ for some operator coefficient polynomial $p$.
Such a polynomial $p$ is clearly an nc function $\bM_d \to \sqcup_{k=1}^\infty M_k(B(\cE))$ extending $Z \mapsto W$.

Suppose that $Z \mapsto W$ extends to an nc function $f_0$ on the full nc closure $\Omega'$ of $\Omega = \{Z\}$ relative to $\fB_d$.

\noindent{\bf Claim.} If $M \subseteq \C^n \otimes \cE$ is an invariant subspace for $\alg(Z) \otimes B(\cE)$, then it is also an invariant subspace for $W$.

Indeed, such an invariant subspace must be of the form $L \otimes \cE$, where $L$ is an invariant subspace for $Z$.
Now, if for all $i$
\[
Z_i = \begin{pmatrix} \tilde{Z}_i & * \\ 0 & * \end{pmatrix} ,
\]
then $Z_i \cI = \cI \tilde{Z_i}$ for $\cI = \left(\begin{smallmatrix} I \\ 0  \end{smallmatrix}\right)$, and so $\tilde{Z} \in \Omega'$.
Thus $f_0$ extends to $\tilde{Z}$, and since it is an nc function it respects intertwiners, so $W \cI = f_0(Z) \cI = \cI f_0(\tilde{Z})$.
Therefore
\[
W = \begin{pmatrix} f_0(\tilde{Z}) & * \\ 0 & * \end{pmatrix}.
\]
This proves the claim.

Now, assume for contradiction that $W \notin \alg(Z) \otimes B(\cE)$.
Then there exists a linear functional $\psi : M_n \otimes B(\cE) \to \C$ such that $\psi( p(Z)) = 0$ for all $p \in \F_d \otimes B(\cE)$, while $\psi(W) \neq 0$.
We may find some $k \in \bN$ and $u,v \in (\C^{n} \otimes \cE)^{k}$ such that
$\psi(T) = \langle T^{(k)}u, v \rangle$ for all $T \in M_n \otimes B(\cE)$, where $T^{(k)}$ denotes the $k$-fold ampliation of $T$, i.e., $T^{(k)} = T \oplus T \oplus \cdots \oplus T$ ($k$ times).
Denote $M = [\alg(Z^{(k)}) \otimes B(\cE)] u$.
Since $\langle p(Z^{(k)}) u, v \rangle = 0$ for all $p \in \F_d \otimes B(\cE)$, $M$ is a nontrivial invariant subspace for $\alg(Z^{(k)}) \otimes B(\cE)$, and $v \in M^\perp$.
Applying the claim to $Z^{(k)}$ and $W^{(k)}$ in place of $Z$ and $W$, we find that $M$ is invariant under $W^{(k)}$ as well.
In particular, $\psi(W) = \langle W^{(k)} u, v \rangle = 0$, a contradiction.
\end{proof}

\begin{lem}\label{lem:mu_iota}
Let $\Omega \subseteq \fB_d$, and let $\cJ_\Omega$ be the ideal in $H^\infty(\fB_d)$ consisting of all functions vanishing on $\Omega$.
Then $\overline{\cJ_\Omega\cH^2_{d}} =  \cH^\perp_\Omega$.
\end{lem}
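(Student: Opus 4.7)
The inclusion $\overline{\cJ_\Omega \cH^2_d} \subseteq \cH_\Omega^\perp$ is immediate from Lemma \ref{lem:adjoint}: for $f \in \cJ_\Omega$, $h \in \cH^2_d$, and a generator $K_{W,u,y}$ of $\cH_\Omega$ with $W \in \Omega$,
\[
\langle fh, K_{W,u,y}\rangle \;=\; \langle h, M_f^* K_{W,u,y}\rangle \;=\; \langle h, K_{W,u,f(W)^*y}\rangle \;=\; 0
\]
since $f(W) = 0$; passing to the closure finishes this direction.

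For the reverse inclusion, observe first that $\cH_\Omega^\perp$ coincides with the space $\cI_\Omega$ of all $h \in \cH^2_d$ with $h|_\Omega = 0$, by the reproducing identity $\langle h, K_{W,u,y}\rangle = \langle h(W)u, y\rangle$. By Lemma \ref{lem:HOmega} I may replace $\Omega$ with $V(\cJ_\Omega)$, so assume $\Omega = V(\cJ_\Omega)$. Since $\cJ_\Omega \subseteq H^\infty(\fB_d) \subseteq \cH^2_d$ by Proposition \ref{prop:mult_in_H}, and since taking $h = 1 \in \cH^2_d$ gives $\cJ_\Omega \subseteq \cJ_\Omega \cH^2_d$, it will suffice to establish the density statement
\[
\overline{\cJ_\Omega}^{\,\cH^2_d} \;=\; \cI_\Omega,
\]
namely that every function in $\cH^2_d$ vanishing on $\Omega$ is the $\cH^2_d$-norm limit of a sequence of multipliers also vanishing on $\Omega$. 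Granting this, one obtains $\cH_\Omega^\perp = \cI_\Omega = \overline{\cJ_\Omega}^{\,\cH^2_d} \subseteq \overline{\cJ_\Omega \cH^2_d}$ and the lemma is proved.

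The density statement is the main obstacle. The natural approach exploits Lemma \ref{lem:HvsMltVarieities}: given $h \in \cI_\Omega$ and any $X \notin V(h) \supseteq \Omega$, that lemma produces a multiplier $f_X \in \cJ_\Omega$ with $f_X(X) = h(X)$. A finite-interpolation and exhaustion argument, with norm control coming from the explicit operator-theoretic construction of $M_{f_X}^*$ on $\cH_{V(h) \cup \{X\}}$ given in the proof of Lemma \ref{lem:HvsMltVarieities}, then produces a sequence $(f_n) \subset \cJ_\Omega$ converging to $h$ in the $\cH^2_d$-norm. Alternatively, once the nc complete Pick property of the Szego kernel is established later in Section \ref{sec:complete_pick}, the density is a standard consequence of the complete Pick property for nc reproducing kernel Hilbert spaces.
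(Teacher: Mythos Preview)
Your first inclusion is fine, and the reduction of the reverse inclusion to the density statement $\overline{\cJ_\Omega}^{\,\cH^2_d} = \cI_\Omega$ is logically sound. The problem is that both routes you offer for this density are circular in the paper's development. Lemma \ref{lem:HvsMltVarieities} invokes Theorem \ref{thm:quotient_mult} to extend the constructed multiplier from $\cH_{V(h)\cup\{X\}}$ to all of $\fB_d$; Theorem \ref{thm:quotient_mult} in turn rests on the complete Pick property (Theorem \ref{thm:complete_Pick}), whose proof explicitly uses Lemma \ref{lem:mu_iota} to identify $\cG^\perp$ with $\cH_\Omega$. So your route (a) goes Lemma \ref{lem:HvsMltVarieities} $\to$ Theorem \ref{thm:quotient_mult} $\to$ Theorem \ref{thm:complete_Pick} $\to$ Lemma \ref{lem:mu_iota}, and your route (b) is the last two steps of the same loop. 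Independently of the circularity, the ``finite-interpolation and exhaustion argument, with norm control'' is not an argument: matching $h$ at a single point $X$ gives no $\cH^2_d$-norm bound on $f_X - h$, and there is no compactness in $\fB_d$ to make an exhaustion converge.

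The paper sidesteps all of this by appealing to an \emph{external} structural result, the Davidson--Pitts bijection \cite[Theorem 2.1]{DavPitts2} between $\textsc{wot}$-closed two-sided ideals $J\triangleleft H^\infty(\fB_d)$ and closed subspaces of $\cH^2_d$ that are invariant under both left and right multipliers, given by $\mu(J)=\overline{J}^{\cH^2_d}$ with inverse $\iota(M)=M\cap H^\infty(\fB_d)$. One checks that both $\cG=\overline{\cJ_\Omega\cH^2_d}$ and $\cH_\Omega^\perp=\cI_\Omega$ are bi-invariant, and that $\iota$ applied to either yields $\cJ_\Omega$; injectivity of $\iota$ then forces $\cG=\cH_\Omega^\perp$. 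This is precisely the density $\overline{\cJ_\Omega}^{\,\cH^2_d}=\cI_\Omega$ you were after, but obtained without any forward reference.
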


\begin{proof}
Put $\cG = \overline{\cJ_\Omega\cH^2_{d}}$.
The space $\cG$ is invariant under $\mlt \cH^2_{d}$ and $\mlt_r \cH^2_{d}$.
To prove that $\cG = \cH^\perp_\Omega$, we invoke the correspondence between subspaces of $\cF(\C^d)$ invariant under the left and right shift operators, and two-sided ideals in $\cL_d$ (developed in \cite[Section 2]{DavPitts2}), together with the identifications made in Theorem \ref{thm:HinftyVSPopescu} and Proposition \ref{prop:identify_Fock_HK}.

By \cite[Theorem 2.1]{DavPitts2}, the map $\mu$ that sends a weak-operator closed ideal $J \triangleleft H^\infty(\fB_d)$ to its closure in $\cH^2_d$, is invertible, with inverse $\iota : M \mapsto M \cap H^\infty(\fB_d)$.
The linear space $\cH_\Omega^\perp$ is equal to the space of all functions in $\cH^2_{d}$ that vanish on $\Omega$.
It is therefore invariant under left and right multiplications.
The set $\cN = \cH^{\perp}_\Omega \cap H^\infty(\fB_d) = \iota(\cH^\perp_{\Omega})$ is therefore a weak-operator closed two sided ideal, and by definition it is equal to all functions in $H^\infty(\fB_d)$ that vanish on $\Omega$.
Hence $\cN = \cJ_\Omega = \iota(\cG)$, and we conclude that $\cG = \cH_\Omega^\perp$.
\end{proof}

Let $k$ be an nc kernel on a set $\widetilde{\Omega} \subseteq \bM_d$.
Then $k$ is said to have the {\em complete Pick property} if whenever $\cE$ is a finite dimensional Hilbert space, $\Omega \subset \widetilde{\Omega}$, and we are given an nc function $f_0:\Omega \to \M_1 \otimes B(\cE) := \sqcup_n M_n(B(\cE))$ that extends to an nc function defined on the full nc envelope ${\Omega'}$ of $\Omega$ (in $\widetilde{\Omega}$), then there exists a multiplier $f \in \mlt (\cH(k) \otimes \cE) = \mlt \cH(k \otimes I_\cE)$ such that $f\big|_{\Omega} = f_0$ and $\|M_f\|_{\mlt \cH(k) \otimes \cE} \leq 1$, if and only if  the kernel
\begin{equation}\label{eq:pick}
{K}^{dBR}(Z,W)(P) = k(Z,W)(P) \otimes I_\cE - f_0(Z) \left[k(Z,W)(P)\otimes I_\cE \right] f_0(W)^*
\end{equation}
is a cp nc kernel on $\Omega$.

If $k$ has the complete Pick property, then it is called a {\em complete Pick kernel}, and $\cH(k)$ is said to be a {\em complete Pick space}.

\begin{remark}\label{rem:dbR}
It follows from the definitions of nc reproducing kernel Hilbert space, that when $\Omega = \widetilde{\Omega}$, then the positivity of the kernel $K^{dBR}$ as in \eqref{eq:pick} on $\widetilde{\Omega}$ is a necessary and sufficient condition for $f_0$ to be a multiplier of norm less than or equal to $1$.
Thus, for any kernel (not necessarily a complete Pick kernel), if $f$ is a multiplier of norm less than or equal to $1$ and $f_0 = f \big|_{\Omega}$, then $K^{dBR}$ is a completely positive nc kernel on $\Omega$.
The special feature of kernels with the complete Pick property, is that positivity of $K^{dBR}$ on $\Omega$ is sufficient for the existence of a contractive multiplier extending $f_0$.
\end{remark}

\begin{thm}\label{thm:complete_Pick}
$\cH^2_{d}$ is a complete Pick space.
\end{thm}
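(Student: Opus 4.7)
The plan is to combine the reproducing kernel machinery built in the paper with Popescu's noncommutative commutant lifting theorem, exploiting the mutual commutant structure of the left and right multiplier algebras established in Corollary~\ref{cor:commutant}. Fix $\Omega\subseteq\fB_d$ and the nc function $f_0$ on the nc envelope $\Omega'$, and suppose that $K^{dBR}$ is a cp nc kernel on $\Omega$.

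A Gram-matrix computation analogous to the one carried out near the end of the proof of Lemma~\ref{lem:HvsMltVarieities} shows that the positivity of $K^{dBR}$ on $\Omega$ is equivalent to the contractivity of the map
\[
T^*\colon K_{W,v,y}\mapsto K_{W,v,f_0(W)^*y},
\]
densely defined on the span of kernel functions in $\cH_\Omega\otimes\cE$, where we identify $M_n(B(\bC,\cE))$ with $B(\bC^n,\cE^n)$. Extending by continuity yields a contraction $T$ on $\cH_\Omega\otimes\cE$.

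By Lemma~\ref{lem:mu_iota} and the fact that $\cJ_\Omega$ is a two-sided $\textsc{wot}$-closed ideal in $H^\infty(\fB_d)$, the subspace $\cH_\Omega^\perp=\overline{\cJ_\Omega\cH^2_d}$ is invariant under both $\mlt\cH^2_d$ and $\mlt_r\cH^2_d$, so $\cH_\Omega$ is co-invariant for both. One then verifies, by direct computation on spanning kernel functions and using the nc/intertwining property of $f_0$, that $T$ commutes with the compressions $S_i:=P_{\cH_\Omega\otimes\cE}(R_{z_i}\otimes I_\cE)P_{\cH_\Omega\otimes\cE}$ of the right shifts. With this intertwining in hand, Popescu's noncommutative commutant lifting theorem produces a contraction $\tilde T$ on $\cH^2_d\otimes\cE$ of the same norm as $T$, commuting with each $R_{z_i}\otimes I_\cE$ and satisfying $\tilde T^*|_{\cH_\Omega\otimes\cE}=T^*$. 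By Corollary~\ref{cor:commutant}, $\tilde T=M_f$ for some $f\in H^\infty(\fB_d)\otimes B(\cE)$ with $\|M_f\|\le 1$. Combining Lemma~\ref{lem:adjoint} with the equality $M_f^*=T^*$ on $\cH_\Omega\otimes\cE$ gives $K_{W,v,f(W)^*y}=K_{W,v,f_0(W)^*y}$ for every $W\in\Omega$; varying $v,y$ then forces $f|_\Omega=f_0$, as required.

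The main obstacle is verifying the commutation $TS_i=S_iT$ on kernel functions. This is exactly the step that requires $f_0$ to extend to an nc function on the \emph{full} nc envelope $\Omega'$ rather than merely on $\Omega$: the extra flexibility is what allows the intertwining identities to close up on $\cH_\Omega\otimes\cE$. Lemma~\ref{lem:exist_poly} supplies the conceptual bridge, identifying the nc-envelope condition at each point $Z$ with the algebraic condition that $f_0(Z)\in\alg(Z)\otimes B(\cE)$, which is precisely the compatibility that makes the compressed right shifts commute with $T$.
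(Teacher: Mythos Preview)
Your commutant-lifting route is valid and genuinely different from the paper's argument. The paper reduces (by direct sums and compactness) to a singleton $\Omega=\{Z\}$, uses Lemma~\ref{lem:exist_poly} to produce a polynomial $p$ with $p(Z)=f_0(Z)$, and then applies the Davidson--Pitts distance formula $\dist(p,\cJ_\Omega\otimes B(\cE))=\|(P_{\cH_\Omega}\otimes I)M_p(P_{\cH_\Omega}\otimes I)\|$; the cp-ness of $K^{dBR}$ is used exactly to bound this compression by $1$. Your approach instead lifts the adjoint operator through the right-shift dilation, which is precisely the alternative the paper's remark attributes to the referee (via \cite[Theorem~2.1]{AriasPopescu}). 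Both arguments are short once the kernel machinery of Sections~\ref{sec:prelim}--\ref{sec:complete_pick} is in place; the distance-formula version avoids having to check that the full right shift is the minimal dilation of its compression to $\cH_\Omega$, while the commutant-lifting version avoids quoting the distance formula.

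Your diagnosis of the ``main obstacle'', however, is wrong. The commutation $TS_i=S_iT$ is a two-line computation that uses nothing about $\Omega'$: since $\cH_\Omega$ is $R_{z_i}^*$-invariant one has $S_i^*\tilde K_{W,v,y}=(R_{z_i}^*\otimes I)\tilde K_{W,v,y}=\tilde K_{W,W_iv,y}$, and then
\[
S_i^*T^*\tilde K_{W,v,y}=\tilde K_{W,W_iv,f_0(W)^*y}=T^*S_i^*\tilde K_{W,v,y}.
\]
No appeal to Lemma~\ref{lem:exist_poly} or to the nc envelope is needed here. In fact, in the commutant-lifting argument the hypothesis that $f_0$ extends to $\Omega'$ is never invoked explicitly: the cp-ness of $K^{dBR}$ alone makes $T^*$ well defined and contractive (this is the same Gram computation the paper carries out), and once the lift $M_f$ is produced the identity $\tilde K_{W,v,(f(W)-f_0(W))^*y}=0$ for all $v,y$ forces $f\big|_\Omega=f_0$ (since varying $v$ makes $\alg(W)v$ span $\C^m$). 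It is in the \emph{paper's} proof that the $\Omega'$ hypothesis does real work, precisely through Lemma~\ref{lem:exist_poly}, to manufacture the polynomial representative $p$.
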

\begin{remark}
The theorem follows from \cite[Corollary 5.6]{BMV15b} as a special case.
Variants of this theorem also appeared before (see \cite{DavPittsPick, AriasPopescu, MS04}), but not quite in this form. 
We will give here a proof using the methods of \cite{DavPittsPick} --- in particular the distance formula --- to spell out how these operator algebraic techniques apply to this nc function theoretic problem. 
The referee has pointed out that \cite[Theorem 2.1]{AriasPopescu}, that is in turn based on Popescu's commutant lifting theorem \cite{Popescu92}, can be also be used as a basis for a proof of the complete Pick property. 
\end{remark}
\begin{proof}
We need to show that for every finite dimensional Hilbert space $\cE$, every $\Omega \subset \fB_d$, and every nc function $f_0:\Omega \to \M_1\otimes B(\cE)$ that extends to an nc function defined on the full nc envelope $\Omega'$ of $\Omega$, the following holds:

\begin{center} There exists a multiplier $f \in \mlt(\cH^2_{d} \otimes \cE)$ such that $f\big|_{\Omega} = f_0$ and $\|f\|_{\mlt (\cH^2_{d} \otimes \cE)} \leq 1$, if and only if  the kernel $K^{dBR}$ associated with $K$ as in  \eqref{eq:pick}
is a cp nc kernel on $\Omega$.
\end{center}

By Remark \ref{rem:dbR}, we need only prove that positivity of $K^{dBR}$ implies the existence of a multiplier $f$.
We will prove the result for a finite set $\Omega$.
The result for arbitrary sets follows by Corollary \ref{cor:mult_wst_closed} and a compactness argument.

Let $\Omega = \{Z_1, \ldots, Z_n\}$ and suppose that $f_0(Z_i) = W_i$ for $i=1, \ldots, n$.
By taking direct sums and using the assumption that $f_0$ extends to $\Omega'$, we may assume that $\Omega = \{Z\}$ and $f_0(Z) = W \in M_n$. 
By the assumption that $Z \mapsto W$ extends to an nc function on $\Omega'$, together with Lemma \ref{lem:exist_poly}, there is some polynomial $p \in \F_d \otimes B(\cE)$ (that is, a polynomial with matrix coefficients) such that
$p(Z) = W$.

Assuming that $K^{dBR}$ defined as in \eqref{eq:pick} is a cp nc kernel on $\Omega$, we need to find a contractive multiplier $f$ satisfying $f(Z) = W$.
Let $\cJ = \cJ_\Omega$ be the ideal in $H^\infty(\fB_d)$ consisting of all functions vanishing on $\Omega$.
If we can find $g \in \cJ \otimes B(\cE)$ such that $\|p+g\|\leq 1$, then putting $f = p+g$ we will be done.
Since $\cJ$ is weak-$*$ closed, it suffices to prove that $\dist(p, \cJ \otimes B(\cE)) \leq 1$.

Let $\cG = \overline{\cJ\cH^2_{d}}$.
The space $G$ is invariant under $\mlt \cH^2_{d}$ and $\mlt_r \cH^2_{d}$.
By Davidson and Pitts's distance formula\cite[Corollary 2.2]{DavPittsPick}, we have,
\[
\dist(p,\cJ \otimes B(\cE)) = \|(P_{\cG^\perp} \otimes I_{\cE}) M_p (P_{\cG^\perp} \otimes I_{\cE})\| ,
\]
so we are led to compute the norm of this compression.

By Lemma \ref{lem:mu_iota}, $\cG^\perp = \cH_\Omega$.
Letting $h = \sum_i K_{Z,u_i,y_i} \otimes \epsilon_i = \sum_i K_{Z,u_i,y_i \otimes \epsilon_i}$ be an element of $\cG^\perp \otimes \cE = \cH_\Omega \otimes \cE$, we write $v_i = y_i \otimes \epsilon_i$ and calculate (using $p(Z) = f_0(Z)$)
\begin{align*}
& \|h\|^2 - \|M_p^* h\|^2
= \\
&\sum_{i,j} \Big\langle \left[\Big(K(Z,Z)(u_j u_i^*) - f_0(Z) K(Z,Z)(u_j u_i^*)f_0(Z)^*\Big) \otimes I_\cE \right] v_j, v_i \Big\rangle \\
&\geq 0,
\end{align*}
(by the assumption that \eqref{eq:pick} is a cp nc kernel on $\Omega$).
This computation shows that $\|M_p^* \big|_{\cG^\perp \otimes \cE}\| \leq 1$.
But since $\|M_p^* \big|_{\cG^\perp \otimes \cE}\| = \|(P_{\cG^\perp} \otimes I_{\cE}) M^*_p (P_{\cG^\perp} \otimes I_{\cE})\|$,
it follows that
\[
\dist(p,\cJ \otimes B(\cE)) = \|(P_{\cG^\perp} \otimes I_{\cE}) M_p (P_{\cG^\perp} \otimes I_{\cE})\| \leq 1,
\]
as required.
\end{proof}

\section{Quotients of $\cH^2_{d}$ and $H^\infty(\fB_d)$} \label{sec:quotients}

For a family $S$ of nc functions on $\fB_d$, we define
\[
V_{\fB_d}(S) = \{Z \in \fB_d :  f(Z) = 0 \,\, \textrm{ for all } f \in S\}.
\]
Given an nc set $\mathfrak{S}$, we define
\[
I(\mathfrak{S}) = \{p \in \F_d :   p(Z) = 0 \,\, \textrm{ for all } Z \in \mathfrak{S}\}.
\]
Recall that we previously defined
\[
\cJ_\mathfrak{S} = \{f \in H^\infty(\fB_d) :   f(Z) = 0 \,\, \textrm{ for all } Z \in \mathfrak{S}\}.
\]

Henceforth, when we speak of an nc variety, we shall mean a set $\fV \subseteq \fB_d$ which is the joint zero set of a family of multipliers in $\mlt \cH^2_{d} = H^\infty(\fB_d)$.
By Lemma \ref{lem:HvsMltVarieities}, this is the same as the joint zero set of a family of functions in $\cH^2_{d}$.
Letting $\cJ_\fV \triangleleft H^\infty(\fB_d)$ denote the ideal of bounded nc functions vanishing on $\fV$, we have $\fV = V_{\fB_d}(\cJ_\fV)$.
The ideal $\cJ_\fV$ is the kernel of the restriction map $f \mapsto f\big|_\fV$.
We will write $P_\fV$ for the orthogonal projection $P_\fV : \cH^2_{d} \to \cH_\fV$.

\begin{lem} \label{lem:quotient_mult}
For every $f \in \mlt \cH^2_{d}$, the restriction $f\big|_\fV \in \mlt \cH_{\fV}$.
Moreover,
\[
M_{f\big|_\fV} = P_{\fV} M_f = P_{\fV} M_f P_{\fV} .
\]
\end{lem}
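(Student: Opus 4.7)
The plan is to exploit the fact that $\cH_\fV$ is coinvariant (i.e., invariant under adjoints) for every multiplier of $\cH^2_d$, which reduces everything to a direct application of Lemmas \ref{lem:adjoint} and \ref{lem:operator}.

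\textbf{Step 1: $\cH_\fV$ is invariant under $M_f^*$.} By Lemma \ref{lem:adjoint}, for every kernel function $K_{W,v,y}$ with $W \in \fV$, $v,y \in \C^m$,
\[
M_f^* K_{W,v,y} = K_{W,v,f(W)^* y} \in \cH_\fV .
\]
Since such kernel functions span a dense subspace of $\cH_\fV$ and $M_f^*$ is bounded, $M_f^* \cH_\fV \subseteq \cH_\fV$.

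\textbf{Step 2: The two projection identities.} Invariance of $\cH_\fV$ under $M_f^*$ gives $M_f^* P_\fV = P_\fV M_f^* P_\fV$. Taking adjoints yields
\[
P_\fV M_f = P_\fV M_f P_\fV ,
\]
which is the second claimed equality.

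\textbf{Step 3: Identify the compression with $M_{f|_\fV}$.} Consider $T := P_\fV M_f|_{\cH_\fV}$ as an operator on $\cH_\fV$; by Step 1 its adjoint is $T^* = M_f^*|_{\cH_\fV}$. For every $W \in \fV$ and $v, y$ of appropriate size,
\[
T^* K_{W,v,y} = M_f^* K_{W,v,y} = K_{W,v,f(W)^* y} = K_{W,v,(f|_\fV)(W)^* y},
\]
where in the last step we used that $f|_\fV(W) = f(W)$ for $W \in \fV$. The graded function $f|_\fV$ thus satisfies the hypotheses of Lemma \ref{lem:operator} (with the kernel $K$ restricted to $\fV$ and the operator $T^*$), whence $f|_\fV \in \mlt \cH_\fV$ and $M_{f|_\fV}^* = T^*$, i.e., $M_{f|_\fV} = T = P_\fV M_f|_{\cH_\fV}$. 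Combined with Step 2, this gives $M_{f|_\fV} = P_\fV M_f = P_\fV M_f P_\fV$, viewing $M_{f|_\fV}$ as the compression to $\cH_\fV$.

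There is no real obstacle here: the only point that requires some care is recognizing that $\cH_\fV$ is coinvariant (not invariant) for $M_f$, which is immediate from Lemma \ref{lem:adjoint} and the defining property of the ideal of functions vanishing on $\fV$. Everything else is a mechanical translation between compressions and the abstract characterization of multipliers provided by Lemma \ref{lem:operator}.
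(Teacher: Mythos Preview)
Your proof is correct and follows essentially the same approach as the paper: use Lemma \ref{lem:adjoint} to show that $\cH_\fV$ is invariant under $M_f^*$, and then invoke Lemma \ref{lem:operator} to conclude that $f|_\fV$ is a multiplier with $M_{f|_\fV}^* = M_f^*|_{\cH_\fV}$. You spell out the projection identity $P_\fV M_f = P_\fV M_f P_\fV$ more explicitly than the paper does, but the argument is the same.
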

\begin{proof}
The space $\cH_\fV$ is invariant for $M_f^*$.
Indeed, by Lemma \ref{lem:adjoint} the action of $M_f^* \big|_{\cH_\fV}$ on kernel functions is $K_{W,v,y} \mapsto K_{W,v, f(W)^* y}$.
Using Lemma \ref{lem:operator}, we see that $f\big|_\fV$ is a multiplier and that $M_{f\big|_\fV}^* = M_f^*\big|_{\cH_\fV}$.
\end{proof}

\begin{thm} \label{thm:quotient_mult}
Let $\fV \subseteq \fB_d$ be an nc variety.
The map $f \mapsto f\big|_\mathfrak{V}$ is a completely contractive and surjective homomorphism, which induces a completely isometric isomorphism $\mlt \cH^2_{d} / \cJ_\fV \to \mlt \cH_\fV$. 
In particular, $\mlt\cH_\mathfrak{V} = \mlt \cH^2_{d}\big|_\mathfrak{V}$, and for every $g \in \mlt \cH_\mathfrak{V}$ there exists $f \in H^\infty (\fB_d)$, such that $f\big|_\mathfrak{V} = g$ and $\|f\|_\infty = \|g\|_{\mlt\cH_\mathfrak{V}}$.
\end{thm}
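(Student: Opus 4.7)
The plan is to combine the co-invariance of $\cH_\fV$ inside $\cH^2_d$ with the complete Pick property established in Theorem \ref{thm:complete_Pick}. First, Lemma \ref{lem:quotient_mult} already shows that $\rho(f) := f\big|_\fV$ is a well-defined map from $\mlt\cH^2_d$ into $\mlt\cH_\fV$ with $M_{\rho(f)} = P_\fV M_f\big|_{\cH_\fV}$, so $M_{\rho(f)}$ is the compression of $M_f$ to the $M_f^*$-invariant subspace $\cH_\fV$. Compression to a semi-invariant subspace is a completely contractive unital homomorphism (Sarason's lemma, applied at each matrix level to operator-valued matrices of multipliers), so $\rho$ is a completely contractive homomorphism whose kernel is $\cJ_\fV$ by definition; the induced quotient map $\mlt\cH^2_d/\cJ_\fV \to \mlt\cH_\fV$ is therefore completely contractive and injective.

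To upgrade this to a completely isometric isomorphism, I need every matrix-valued contractive multiplier of $\cH_\fV$ to lift to a matrix-valued contractive multiplier of $\cH^2_d$. The preliminary observation is that $\fV$ equals its full nc envelope $\fV'$ inside $\fB_d$: if $W \in \fV$, $\cI$ is an injective matrix, and $\cI \widetilde{W} = W \cI$, then for every $h \in \cJ_\fV$ one has $\cI h(\widetilde{W}) = h(W)\cI = 0$, and left injectivity of $\cI$ forces $h(\widetilde{W}) = 0$; hence $\widetilde{W} \in V_{\fB_d}(\cJ_\fV) = \fV$. This is the only step that requires real care, but it is precisely what allows Theorem \ref{thm:complete_Pick} to be invoked verbatim.

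Now fix $n \in \mathbb{N}$ and $g \in M_n(\mlt\cH_\fV)$ with $\|g\| \leq 1$, viewed as a contractive $B(\mathbb{C}^n)$-valued multiplier of $\cH_\fV$. By Remark \ref{rem:dbR}, the associated de Branges--Rovnyak nc kernel
\[
K^{dBR}(Z,W)(P) = K(Z,W)(P) \otimes I_{\mathbb{C}^n} - g(Z)\bigl[K(Z,W)(P) \otimes I_{\mathbb{C}^n}\bigr] g(W)^*
\]
is a cp nc kernel on $\fV$. Since $\fV = \fV'$, Theorem \ref{thm:complete_Pick} with $\cE = \mathbb{C}^n$ produces a contractive lift $f \in M_n(H^\infty(\fB_d)) = M_n(\mlt\cH^2_d)$ with $f\big|_\fV = g$ and $\|f\| \leq 1$. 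A standard rescaling argument then yields norm-preserving lifts at every matrix level, which simultaneously establishes surjectivity of $\rho$ and shows that the induced quotient map is a complete isometry; the final ``in particular'' assertion is the scalar case $n = 1$.
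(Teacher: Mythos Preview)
Your proof is correct and follows essentially the same approach as the paper: both rely on Lemma~\ref{lem:quotient_mult} for the complete contractivity, and on the complete Pick property (Theorem~\ref{thm:complete_Pick}) together with Remark~\ref{rem:dbR} to produce norm-preserving matrix-valued lifts. Your explicit verification that $\fV = \fV'$ (because zero sets of nc functions are closed under injective left intertwiners) is a helpful clarification that the paper leaves implicit, having recorded the general fact just before Lemma~\ref{lem:HOmega}.
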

\begin{proof}
The map $f \mapsto f\big|_\fV$ is completely contractive by Lemma \ref{lem:quotient_mult}, and it is readily seen that the kernel is $\cJ_\fV$. 
To see that this map is surjective and induces a completely isometric isomorphism $\mlt \cH^2_{d} / \cJ_\fV \to \mlt \cH_\fV$, we use the complete Pick property.

Let $g$ be a matrix valued multiplier in $\mlt (\cH_\fV \otimes \C^k)$.
Suppose without loss of generality that $\|g\|_{\mlt \cH_\fV \otimes \C^k} \leq 1$.
Then we have that
 \[
{K}^{dBR}(Z,W)(P) = K(Z,W)(P) \otimes I_k - g(Z) \left[K(Z,W)(P)\otimes I_k \right] g(W)^*
\]
is a cp nc kernel on $\fV$ (recall Remark \ref{rem:dbR}).
By Theorem \ref{thm:complete_Pick}, $g$ extends to a multiplier $f \in \mlt (\cH^2_{d} \otimes \C^k)$ of norm less than or equal to $1$.
In particular, the restriction map $f \mapsto f\big|_\fV$ is surjective.
The restriction map therefore induces a completely contractive linear isomorphism $\Phi: \mlt \cH^2_{d} / \cJ_\fV \to \mlt \cH_\fV$.
$\Phi$ is in fact a complete isometry, since given $g \in M_k(\mlt \cH_\fV)$ we just observed that one can find a $f \in M_k(\mlt \cH^2_{d})$ with $\|f\| = \|g\|$.
This implies that $\|(\Phi_n)^{-1}(g)\| = \|g\|$.
\end{proof}

Recall the Bunce--Frazho--Popescu dilation theorem \cite{Bunce,Frazho,Popescu89}, which says that if $T = (T_1, \ldots, T_d)$ is a pure row contraction on a Hilbert space $H$, then there is a Hilbert space $\cE$ of dimension $d$, and an isometry $V: H \to \cF(\cE) \otimes \cE$ such that $V H$ is a co-invariant subspace for the free shift $L \otimes I_\cE$, and such that
\[
T_i = V^* (L_i \otimes I_\cE) V \quad , \quad i=1,2, \ldots, d.
\]
Identifying $H^\infty(\fB_d)$ with $\cL_d$, this gives rise to a functional calculus: for every pure row contraction $T$, there is a weak-operator continuous, unital, completely contractive homomorphism
\[
\Phi_T : H^\infty(\fB_d) \to \overline{\alg}^{\textup{wot}}(T), 
\]
given by $\Phi_T(f) = V^* (f(L) \otimes I_\cE) V$ (where $f(L)$ is the image of $f$ in $\cL_d$ under the isomorphism $H^\infty(\fB_d)  \cong \cL_d$). 
If $T$ is a strict contraction ($\|T\| := \|T\|_{\textup{row}}<1$) then it is not hard to see that $\Phi_T$ becomes the evaluation at $T$, that is
\[
\Phi_T\left(\sum_{k \in \W_d} a_k z^k\right) = \sum_{k \in \W_d} a_k T^k.
\]
We obtain a functional calculus for multiplier algebras on nc varieties, versions of which were observed in \cite{Popescu06a,ShalitSolel}.
\begin{cor}\label{cor:functional_calculus}
Let $\fV \subseteq \fB_d$ be an nc variety.
Let $T$ be a pure row contraction.
If $\cJ_\fV \subseteq \ker \Phi_T$ (in particular, if $T \in \fV$), then there is a weak-operator continuous, unital completely contractive homomorphism from $\mlt \cH_\fV$ to $\overline{\alg}^{\textup{wot}}(T)$ mapping $M_z$ to $T$.
\end{cor}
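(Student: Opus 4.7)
The strategy is to combine the Bunce--Frazho--Popescu functional calculus $\Phi_T$ recalled above with the quotient description of $\mlt \cH_\fV$ from Theorem~\ref{thm:quotient_mult}, factoring $\Phi_T$ through the ideal $\cJ_\fV$ and transporting the result to $\mlt \cH_\fV$.

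In detail, I would first apply the BFP functional calculus to obtain the WOT-continuous, unital, completely contractive homomorphism $\Phi_T: H^\infty(\fB_d)\to \overline{\alg}^{\textup{wot}}(T)$ sending the coordinate $M_z$ to $T$. Under the hypothesis $\cJ_\fV \subseteq \ker \Phi_T$ (which is automatic when $T\in \fV$, since then $\Phi_T$ reduces to evaluation at $T$ as $\|T\|<1$), the map $\Phi_T$ factors as $\widetilde{\Phi_T}\circ \pi$ through the quotient $H^\infty(\fB_d)/\cJ_\fV$, where $\widetilde{\Phi_T}$ is a unital completely contractive homomorphism. Next I invoke Theorem~\ref{thm:quotient_mult}, which yields a completely isometric isomorphism $\bar\rho: H^\infty(\fB_d)/\cJ_\fV \to \mlt \cH_\fV$ induced by the restriction map $f\mapsto f|_\fV$. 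Setting $\Psi = \widetilde{\Phi_T}\circ \bar\rho^{-1}$ produces a unital, completely contractive homomorphism from $\mlt \cH_\fV$ to $\overline{\alg}^{\textup{wot}}(T)$, and chasing the coordinate multiplier through the construction gives $\Psi(M_z) = T$, since $M_z \in \mlt \cH_\fV$ is the restriction of the global $M_z \in H^\infty(\fB_d)$, which $\Phi_T$ sends to $T$.

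The main obstacle is verifying WOT-continuity of $\Psi$, as $\bar\rho^{-1}$ is constructed via a lift that is not canonical. My approach would be to exploit the dual algebra structure afforded by Corollary~\ref{cor:mult_wst_closed}: both $H^\infty(\fB_d)$ and $\mlt \cH_\fV$ are dual operator algebras, in which WOT agrees with the weak-$*$ topology on bounded sets, and $\cJ_\fV$ is weak-$*$ closed in $H^\infty(\fB_d)$. Consequently $H^\infty(\fB_d)/\cJ_\fV$ inherits a natural dual operator algebra structure with weak-$*$ continuous quotient map $\pi$, so $\widetilde{\Phi_T}$ is weak-$*$ continuous (being such after precomposition with $\pi$). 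The completely isometric isomorphism $\bar\rho$ between the two dual operator algebras is then a weak-$*$ homeomorphism, by a standard Krein--Smulian argument applied to isometric bijections of dual Banach spaces. Hence $\Psi$ is weak-$*$ continuous, which gives WOT-continuity on bounded sets, and a further Krein--Smulian application yields full WOT-continuity as required.
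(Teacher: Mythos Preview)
Your approach is essentially the same as the paper's: factor $\Phi_T$ through the quotient $H^\infty(\fB_d)/\cJ_\fV$ (the paper cites \cite[2.3.5]{BlecherLeMerdy} for this step) and transport via the completely isometric isomorphism of Theorem~\ref{thm:quotient_mult}.

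One caveat on your weak-$*$ continuity argument: isometric bijections between dual Banach spaces are \emph{not} automatically weak-$*$ homeomorphisms, so the phrase ``a standard Krein--Smulian argument applied to isometric bijections of dual Banach spaces'' is not a valid justification as stated. What actually works here is the concrete description of the restriction map: by Lemma~\ref{lem:quotient_mult}, the map $M_f \mapsto M_{f|_\fV}$ is realized as compression to the coinvariant subspace $\cH_\fV$, which is manifestly \textsc{wot}-continuous; combined with the coincidence of \textsc{wot} and weak-$*$ on $H^\infty(\fB_d)$ (Davidson--Pitts) and the weak-$*$ openness of the quotient map, this gives weak-$*$ continuity of $\bar\rho$, and then of $\bar\rho^{-1}$ by duality. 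The paper's own proof is terser on this point, simply asserting that the induced map on the quotient inherits \textsc{wot}-continuity from $\Phi_T$.
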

\begin{proof}
By \cite[2.3.5]{BlecherLeMerdy}, the map $\Phi_T: H^\infty(\fB_d) \to \overline{\alg}^{\textup{wot}}(T)$ induces a unital completely contractive homomorphism $\Psi_T: H^\infty(\fB_d) / \cJ_\fV \to \overline{\alg}^{\textup{wot}}(T)$ satisfying $\Psi_T \circ q_{\cJ_\fV} = \Phi_T$ where $q_{\cJ_\fV}$ is the natural quotient map.
Since by Theorem \ref{thm:quotient_mult} and the identification in Corollary \ref{cor:unitary_equiv}, $H^\infty(\fB_d) / \cJ_\fV$ is completely isometrically isomorphic to $\mlt \cH_\fV$, we obtain the desired map.
Therefore, it remains to show that this map is weak-operator continuous.
Due to Davidson--Pitts \cite{DavPitts1} and Corollary \ref{cor:unitary_equiv},
the weak-$*$ topology and the weak-operator topology coincide in $H^\infty (\fB_d)$. In particular, the weak-$*$ closed ideal $\cJ_\fV$ is also $\textsc{wot-}$closed.
As $\Phi_T$ is $\textsc{wot-}$continuous, the map $\Psi_T$ induced on the quotient must be $\textsc{wot-}$continuous as well.
\end{proof}


\begin{thm} \label{thm:mult_are_bounded_on_V}
Let $\fV \subseteq \fB_d$ be an nc variety.
Then $\mlt\cH_\mathfrak{V} = H^\infty(\mathfrak{V})$ completely isometrically.
\end{thm}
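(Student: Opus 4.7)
The assertion decomposes into two norm inequalities. The inclusion $\mlt \cH_\fV \subseteq H^\infty(\fV)$ with $\|g\|_\infty \le \|g\|_{\mlt \cH_\fV}$ is an immediate consequence of Theorem \ref{thm:quotient_mult}: any $g \in M_k(\mlt \cH_\fV)$ equals $f\big|_\fV$ for some $f \in M_k(H^\infty(\fB_d))$ with $\|f\|_\infty = \|g\|_{\mlt \cH_\fV}$, and consequently $g$ is an nc function on $\fV$ with
\[
\|g\|_\infty = \sup_{W \in \fV}\|g(W)\| \le \sup_{Z \in \fB_d}\|f(Z)\| = \|f\|_\infty.
\]

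For the reverse direction, given $g \in M_k(H^\infty(\fV))$ with $\|g\|_\infty \le 1$, the plan is to produce an extension $\tilde g \in M_k(H^\infty(\fB_d)) = M_k(\mlt \cH^2_d)$ (identification via Corollary \ref{cor:mult=H^infty}) satisfying $\tilde g\big|_\fV = g$ and $\|\tilde g\|_\infty \le 1$; Theorem \ref{thm:quotient_mult} will then yield $g \in M_k(\mlt \cH_\fV)$ with $\|g\|_{\mlt \cH_\fV} \le \|\tilde g\|_\infty \le 1$. To construct this extension, I would invoke the complete Pick property of $\cH^2_d$ (Theorem \ref{thm:complete_Pick}) with $\Omega = \fV$. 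Because $\fV$ is a joint zero set of nc functions it is automatically closed under direct sums and injective left intertwiners, so $\fV = \fV'$ and $g$ extends trivially to the full nc envelope. The complete Pick theorem then guarantees the desired contractive extension as soon as the Pick kernel
\[
K^{dBR}(Z,W)(P) = K(Z,W)(P)\otimes I_k - g(Z)\bigl[K(Z,W)(P)\otimes I_k\bigr]g(W)^*
\]
is verified to be a cp nc kernel on $\fV$.

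The substantive step is therefore to establish positivity of $K^{dBR}$ on $\fV$ from the sup-norm hypothesis alone. Using Lemma \ref{lem:facts_kernel} together with the direct-sum behaviour of the nc function $g$, positivity of $K^{dBR}$ at a finite family $\{W_i\} \subset \fV$ collapses to a single-point operator inequality at $W = \bigoplus_i W_i \in \fV$, namely $g(W) K(W,W)(P) g(W)^* \le K(W,W)(P)$ for every $P \ge 0$. To obtain this, I would exploit the Bunce--Frazho--Popescu dilation of the strict row contraction $W$: an isometry $V\colon \C^n \to \cF(\C^d) \otimes \cE$ with $V\C^n$ coinvariant for $L \otimes I$ and $W^k = V^*(L^k \otimes I)V$ for every $k \in \W_d$. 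Combined with a transfer-function realization of $g$ in the spirit of Ball--Marx--Vinnikov \cite{BMV15b}, the uniform bound $\|g\|_\infty \le 1$ over all of $\fV$ produces a contractive colligation encoding $g$, from which the required operator inequality at $W$ follows.

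The main obstacle will be precisely this upgrade: a single contractive matrix $g(W)$ does not in general satisfy $g(W) A A^* g(W)^* \le A A^*$ for an arbitrary operator $A$, so the verification genuinely requires the \emph{global} nc structure of $g$ across all of $\fV$, the dilation of $W$ into the universal row contraction $L$, and the realization machinery underlying the complete Pick property of the nc Szeg\H{o} kernel established in Section \ref{sec:complete_pick}.
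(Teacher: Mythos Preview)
Your proposal and the paper's proof ultimately rest on the same external input --- the Ball--Marx--Vinnikov realization/extension theorem --- but the paper applies it more directly and your route contains a redundancy.

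The paper packages the hard step as Proposition~\ref{prop:bmv} (a specialization of \cite[Theorem~3.1]{BMV15b}): from the pointwise bound $\|g(Z)\|\le 1$ on $\fV$ one obtains \emph{immediately} a Schur--Agler extension $\tilde g \in M_k(H^\infty(\fB_d))$ with $\|\tilde g\|_\infty\le 1$, and then Theorem~\ref{thm:quotient_mult} gives $\|g\|_{\mlt\cH_\fV}\le 1$. There is no separate verification of positivity of $K^{dBR}$ and no appeal to the complete Pick property.

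In your plan, the ``substantive step'' --- positivity of $K^{dBR}$ on $\fV$ --- is, by Remark~\ref{rem:dbR} applied with $\Omega=\widetilde\Omega=\fV$, \emph{equivalent} to the desired conclusion $\|g\|_{\mlt(\cH_\fV\otimes\C^k)}\le 1$. So once that positivity is established you are already done; the subsequent extension via Theorem~\ref{thm:complete_Pick} and restriction via Theorem~\ref{thm:quotient_mult} are superfluous. More to the point, your sketch for establishing that positivity (``transfer-function realization of $g$ in the spirit of \cite{BMV15b}'') is precisely the BMV theorem that the paper invokes --- and that theorem already outputs the contractive extension $\tilde g$, not merely the local operator inequality. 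Thus your argument, when made precise, collapses to the paper's. You correctly diagnose that the single-point contractivity $\|g(W)\|\le 1$ does not by itself yield $g(W)AA^*g(W)^*\le AA^*$; what you should recognize is that the BMV realization resolves this in one stroke, rendering the complete Pick scaffolding unnecessary.
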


To prove the above theorem, we need to recall a result of Ball--Marx--Vinnikov \cite[Theorem 3.1]{BMV15b} in a very specific case.
\begin{prop}\label{prop:bmv}
Let $f\in M_n(H^\infty(\mathfrak{V}))$ satisfying $\|f\|_{\infty} \leq 1$.
Then $f$ has an extension $\tilde f \in M_n(H^\infty(\fB_d))$ with $\|f\|_{\infty} \leq 1$.
\end{prop}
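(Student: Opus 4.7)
My plan is to invoke the realization theorem of Ball--Marx--Vinnikov \cite[Theorem 3.1]{BMV15b} in the specific setting described in the paragraph preceding the statement: one takes, in BMV's notation, coefficient spaces $\cU = \cY = \C^n$ together with $Q(Z) = [Z_1, \dots, Z_d]$, so that the ambient domain is $\fB_d$. Applied to $f \in M_n(H^\infty(\fV))$ with $\|f\|_\infty \leq 1$, the theorem yields a transfer-function realization
\[
f(Z) \;=\; D + C \bigl(I - Q(Z)A\bigr)^{-1} Q(Z)\,B, \qquad Z \in \fV,
\]
associated with a contractive (indeed, unitary) colligation $\bigl(\begin{smallmatrix}A & B \\ C & D\end{smallmatrix}\bigr)$ on some auxiliary Hilbert space.

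Given the realization on $\fV$, the extension step is essentially formal. The same transfer-function formula makes sense verbatim for every $Z \in \fB_d$: since $\|Q(Z)\| < 1$ and $\|A\| \leq 1$, the Neumann series for $(I - Q(Z)A)^{-1}$ converges, and the resulting operator-valued function $\tilde f$ is nc holomorphic on $\fB_d$ with $\|\tilde f\|_\infty \leq 1$. By construction $\tilde f\big|_\fV = f$, and thus $\tilde f \in M_n(H^\infty(\fB_d))$ is the desired norm-preserving extension.

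The entire nontrivial content is packaged inside BMV's realization theorem, which I would treat as a black box. An independent proof, more in the spirit of the present paper, would go via the complete Pick machinery: one would show directly that $f$ defines a contractive multiplier of $\cH_\fV \otimes \C^n$, and then Theorem \ref{thm:quotient_mult} would lift it to an element of $M_n(\mlt\cH^2_d) = M_n(H^\infty(\fB_d))$ of the same norm. By Theorem \ref{thm:complete_Pick} and the direct-sum trick---combining $W_1, \dots, W_N \in \fV$ into $W = \bigoplus_i W_i \in \fV$ so that $f(W) = \bigoplus_i f(W_i)$ automatically---the required positivity of the Pick kernel $K^{dBR}$ on $\fV$ reduces to the single-point matrix inequality
\[
f(W)\,K(W,W)(vv^*)\,f(W)^* \;\leq\; K(W,W)(vv^*)
\]
for every $W \in \fV$ and every $v$. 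This is strictly stronger than $\|f(W)\| \leq 1$, and I would expect it to be the main obstacle: establishing it from scratch requires a Popescu-type noncommutative Poisson kernel / Bunce--Frazho--Popescu dilation argument that lifts $f(W)$ to an operator intertwining with the free-shift structure underlying $K(W,W)$---which is essentially a reproof of a portion of the BMV realization theorem itself.
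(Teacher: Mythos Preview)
Your proposal is correct and takes essentially the same approach as the paper: both invoke \cite[Theorem 3.1]{BMV15b} with the specializations $\cU = \cY = \bC^n$ and $Q(Z) = [Z_1, \dots, Z_d]$, so that the hypothesis reduces to $\|f\|_\infty \leq 1$ and the conclusion furnishes a Schur--Agler class extension $\tilde f$ on $\fB_d$. The paper cites this as the implication $(1') \Rightarrow (1)$ (taking $a(Z) = I_n$, $b(Z) = f(Z)$, so that $aa^* - bb^* \geq 0$ is exactly $I - f(Z)f(Z)^* \geq 0$), while you unpack the resulting contractive transfer-function realization explicitly and extend it by hand; the content is identical.
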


\begin{proof}
This follows from the implication $(1') \implies (1)$ in \cite[Theorem 3.1]{BMV15b}. With the notations of the latter reference, if we are given a set of points $\Omega \subseteq \mathbb D_Q$ (here $\mathbb D_Q = \{ X \in \M_d : \|Q(X)\|<1\}$ where $Q$ is some matrix valued nc polynomial) and two nc functions
$a\in \cT(\Omega', \cL(\cY,\cE)_{\rm{nc}})$, $b \in \cT(\Omega', \cL(\cU,\cE)_{\rm{nc}})$,
where $\Omega'$ is the $\mathbb D_Q$-relative full nc envelope of $\Omega$, then the inequality $a(Z)a(Z)^*-b(Z)b(Z)^* \geq 0$ for all $Z \in \Omega'$ implies there exists an $S:\mathbb D_Q \to \cL(\cU,\cY)_{\rm{nc}}$ in the nc Schur--Agler class satisfying $a(Z)S(Z)=b(Z)$ for all $Z \in \Omega$.

Letting $Q(z):= \begin{bmatrix} z_1&z_2&\dots&z_d \end{bmatrix}$ and $\Omega:=\fV$ we obtain that $\mathbb D_Q=\fB_d$ and $\Omega'=\Omega$. Letting $\cY=\cU=\cE=\bC^n$, $a(Z)=I_n$ for all $Z\in \fV$, and  $b(Z)=f(Z)$ for all $Z \in \fV$, we note that
\[
a(Z)a(Z)^*-b(Z)b(Z)^*=1-f(Z)f(Z)^* \geq 0 \,\,, \,\, \textrm{ for all } Z\in \fV.
\]
Thus, by the above result, there exists an nc Schur--Agler class function $\tilde f:=S: \fB_d \to \sqcup_{m=1}^{\infty} (M_n^d)^{m \times m}$ satisfying $\tilde f(Z) = f(Z)$ for all $Z \in \fV$.
The fact that $\tilde f$ is a Schur--Agler class function implies that $\tilde f \in M_n(H^\infty(\fB_d))$ and $\|\tilde f \|_\infty \leq 1$.
\end{proof}

\begin{proof}[Proof of Theorem \ref{thm:mult_are_bounded_on_V}]

Suppose $f\in M_n(H^\infty(\mathfrak{V}))$ and $\|f\|_{\infty} \leq 1$. By Proposition \ref{prop:bmv},
there exists $\tilde f \in M_n(H^\infty(\mathfrak{B}_d))$
such that $\tilde f|_\fV=f$.
Theorem \ref{thm:quotient_mult} implies that $f \in M_n(\mlt \cH_{\fV})$ and $\|f\|_{\mlt \cH_\fV} \leq 1$.
The converse direction follows immediately by Theorem \ref{thm:quotient_mult}.
\end{proof}

In Section \ref{sec:homog_case}, we will give an alternative proof of Theorem \ref{thm:mult_are_bounded_on_V} in the case of homogeneous nc varieties (that proof will not require invoking the machinery of \cite{BMV15b}).

\section{Isomorphisms and isometric isomorphisms} \label{sec:isomorphisms}

\subsection{The space of finite dimensional representations}

The finite dimensional, unital, completely contractive representations of $H^\infty(\fB_d) = \cL_d$ have been worked out in \cite[Section 3]{DavPitts2} (for the case $d = \infty$, one should also see the erratum of that paper).
Let us denote by $\Rep_{k}(\cA)$ the space of all unital completely contractive representations of an operator algebra $\cA$ on $\C^k$.

\begin{thm}[Davidson--Pitts \cite{DavPitts2}] \label{thm:DavPitts_reps}
For all $d \in \bN \cup \{\infty\}$ and $k \in \bN$, there is a natural continuous projection $\pi_{d,k}$ of $\Rep_{k}(H^\infty(\fB_d))$ onto the closed unit ball $\overline{\fB_d(k)}$, given by
\[
\pi_{d,k}(\Phi) = (\Phi(z_1), \ldots, \Phi(z_d)).
\]
For every $T \in \fB_d(k)$, there is a unique weak-operator continuous representation $\Phi_T \in \pi_{d,k}^{-1}(T)$, given by Popescu's functional calculus \cite{Popescu95}.
If $d < \infty$ and $T \in \fB_d(k)$, then $\pi_{d,k}^{-1}(T)$ is the singleton $\{\Phi_T\}$.
\end{thm}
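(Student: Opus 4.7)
The plan is to prove the theorem in five steps, essentially following the strategy of Davidson--Pitts.

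First, I would verify that $\pi_{d,k}$ is well-defined. Using the identification $H^\infty(\fB_d) \cong \cL_d$ from Corollary \ref{cor:unitary_equiv}, the coordinate tuple $z = (z_1,\dots,z_d)$ corresponds to the shift $L$, which is a row isometry; hence $\|z\|_{\textrm{row}} = 1$. A unital completely contractive homomorphism therefore sends the row $z$ to a row contraction on $\C^k$, i.e., to a point of $\overline{\fB_d(k)}$. Continuity of $\pi_{d,k}$ is immediate when $\Rep_k(H^\infty(\fB_d))$ is equipped with the point-weak-$*$ topology (under which it is a compact Hausdorff space by Banach--Alaoglu), since $\pi_{d,k}$ is evaluation at $d$ fixed elements.

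Second, for existence of $\Phi_T$ over a point $T \in \fB_d(k)$, I would invoke the Bunce--Frazho--Popescu dilation theorem recalled before Corollary \ref{cor:functional_calculus}: a pure row contraction $T$ on $\C^k$ embeds via an isometry $V$ as a co-invariant subspace of $\cF(\cE) \otimes \C^k$ for the ampliated shift $L \otimes I$, and the formula $\Phi_T(f) = V^*(f(L) \otimes I) V$ defines a WOT-continuous, unital, completely contractive homomorphism with $\Phi_T(z_i) = T_i$. For boundary points $T \in \partial \fB_d(k)$, one constructs a (generally non-WOT-continuous) representation by taking a Banach-limit of the representations $\Phi_{rT}$ as $r \to 1^-$, yielding surjectivity onto all of $\overline{\fB_d(k)}$.

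Third, uniqueness of the WOT-continuous representation over any $T \in \overline{\fB_d(k)}$ is immediate: by the final theorem of Section \ref{sec:szego_and_bounded_functions}, polynomials are WOT-dense in $H^\infty(\fB_d)$ (via Ces\`aro summation), so any two WOT-continuous representations agreeing on $z_1,\dots,z_d$ agree everywhere.

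Finally, for the strengthened uniqueness when $d < \infty$ and $T \in \fB_d(k)$, the key observation is that for a strict row contraction with $\|T\|_{\textrm{row}} = r < 1$, the homogeneous pieces satisfy $\|f_n(T)\| \le \|f_n\| \, r^n$ (an easy induction using $\sum_i T_i T_i^* \le r^2 I$ and finiteness of $d$), and combined with the uniform bound $\|f_n\| \le C \|f\|$ this gives absolute norm convergence $\sum_n f_n(T) = \mathrm{ev}_T(f)$ as well as norm convergence $\sigma_N(f)(T) \to \mathrm{ev}_T(f)$ of the Ces\`aro means. Any u.c.c.\ representation $\Phi$ with $\Phi(z_i) = T_i$ must satisfy $\Phi(\sigma_N(f)) = \sigma_N(f)(T)$. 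The remaining point --- which I expect to be the main obstacle --- is to upgrade this to $\Phi(f) = \mathrm{ev}_T(f)$ for arbitrary $f$; this is done by showing (as in Davidson--Pitts) that when $d$ is finite, a u.c.c.\ representation lands in a quotient of $H^\infty(\fB_d)$ on which Ces\`aro means converge in norm, forcing $\Phi$ to coincide with Popescu's functional calculus. For $d = \infty$ this argument genuinely breaks down, leaving room for the ``singular'' representations that populate the non-trivial fibers of $\pi_{\infty,k}$ over the open ball.
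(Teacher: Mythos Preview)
The paper does not supply a proof of this theorem at all: it is stated as a result of Davidson--Pitts, with a reference to \cite[Section 3]{DavPitts2} (and its erratum for $d=\infty$), and then simply used. So there is no ``paper's own proof'' to compare against; you are effectively reconstructing the Davidson--Pitts argument.

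Your outline is sound through steps one to four. The one place where you are, by your own admission, hand-waving is the final uniqueness claim for $d<\infty$: you correctly note that $\Phi(\sigma_N(f)) = \sigma_N(f)(T) \to f(T)$ in norm, but you cannot conclude $\Phi(f) = f(T)$ without knowing $\Phi(\sigma_N(f)) \to \Phi(f)$, and $\Phi$ is not assumed WOT-continuous. Your proposed fix --- ``$\Phi$ lands in a quotient on which Ces\`aro means converge in norm'' --- is not quite how Davidson--Pitts proceed, and as stated it is too vague to count as a proof. Their argument (Theorem 3.3 in \cite{DavPitts2}) is more delicate and uses, among other things, the structure of the maximal ideal space of the quotient of $\cL_d$ by its Jacobson radical together with a careful analysis of the boundary; this is precisely where the finiteness of $d$ enters in a non-trivial way. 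If you want a self-contained proof you will need to fill in this step with a real argument; otherwise, citing \cite{DavPitts2} at exactly this point (as the paper does for the whole theorem) is the honest thing to do.
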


\begin{remark}\label{rem:MS_cnc}
By Corollary 5.7 in \cite{MS11}, a tuple $T \in \ol{\fB}_d$ gives rise to a weak-$*$ continuous unital representation, mapping $z_i$ to $T_i$, if and only if $T$ is completely non-coisometric (c.n.c.), meaning that there is no vector $v$ in the space on which $T$ acts such that $\|\sum_{|\alpha|=n}(T^{\alpha})^* v \| = \|v\|$ for all $n$.
If $\|T\|<1$, then clearly $T$ is c.n.c.
Additionally, if $\|T\| = 1$ and $T$ is {\em pure} (meaning that $\sum_{|\alpha|=n}(T^{\alpha})^*$ converges $\textsc{sot}$ to $0$ as $n \to \infty$) then $T$ is also c.n.c.
In fact, since the unit sphere of a finite dimensional space is compact, it follows that $T \in \ol{\fB}_d$ is c.n.c. if and only if it is pure.
Thus we define for every pure $T \in \ol{\fB}_d$ the unique weak-$*$ continuous representation $\Phi_T$ that maps $z_i$ to $T_i$.
For every $f\in H^\infty(\fB_d)$ and every pure $T$, one can evaluate $f$ at the point $T$, by $f(T) = \Phi_T(f)$.
\end{remark}

For $\fV \subseteq \fB_d$, we denote by $\ol{\fV}^{p}$ the set of all pure $T$ such that $\cJ_\fV \subseteq \ker \Phi_T$.
This allows us to get a handle on the finite dimensional representations of $H^\infty(\fV)$.

\begin{thm} \label{thm:finite_dim_reps}
Let $\fV \subseteq \fB_d$ be an nc variety.
For every $k \in \bN$, there is a natural continuous projection $\pi_{d,k}$ of $\Rep_{k}(H^\infty(\fV))$ into the closed unit ball $\overline{\fB_d(k)}$, given by
\[
\pi_{d,k}(\Phi) = (\Phi(z_1), \ldots, \Phi(z_d)).
\]
For every $T \in \ol{\fV}^{p}$, there is a unique weak-$*$ continuous representation $\Phi_T \in \pi_{d,k}^{-1}(T)$, and these are the only weak-$*$ continuous elements in $\Rep_{k}(H^\infty(\fV))$.
If $d < \infty$ and $T \in \fV$, then $\pi_{d,k}^{-1}(T)$ is the singleton $\{\Phi_T\}$.
Moreover, if $d < \infty$, then
\[
\pi_{d,k}(\Rep_{k}(H^\infty(\fV)))\cap \fB_d(k) = \fV(k) .
\]
\end{thm}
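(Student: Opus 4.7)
The plan is to deduce Theorem \ref{thm:finite_dim_reps} from Theorem \ref{thm:DavPitts_reps} (the analogous result for $H^\infty(\fB_d)$) by transferring everything through the quotient identification $H^\infty(\fV) \cong H^\infty(\fB_d)/\cJ_\fV$ provided by Theorem \ref{thm:quotient_mult}. Unital completely contractive representations $\Phi$ of $H^\infty(\fV)$ on $\C^k$ correspond bijectively (via pre-composition with the quotient map) to unital completely contractive representations $\tilde\Phi$ of $H^\infty(\fB_d)$ on $\C^k$ satisfying $\cJ_\fV \subseteq \ker \tilde\Phi$. Since $\cJ_\fV$ is weak-$*$ closed (as the kernel of the $\textsc{wot}$-continuous restriction map), this bijection identifies the weak-$*$ continuous representations on each side.

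First I would define $\pi_{d,k}(\Phi) = (\Phi(z_1),\dots,\Phi(z_d))$. Because $(z_1,\dots,z_d) \in M_{1,d}(H^\infty(\fB_d))$ has row norm $\|M_z\| \le 1$, complete contractivity of $\tilde\Phi$ forces $\pi_{d,k}(\Phi) \in \overline{\fB_d(k)}$; continuity in the point-weak-$*$ topology is immediate, since $\pi_{d,k}$ is just evaluation at the $d$-tuple $(z_1,\dots,z_d)$. This already makes $\pi_{d,k}$ the restriction to $\Rep_k(H^\infty(\fV))$ of the map from Theorem \ref{thm:DavPitts_reps}.

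Next I would identify the weak-$*$ continuous representations. By Remark \ref{rem:MS_cnc}, weak-$*$ continuous unital completely contractive representations of $H^\infty(\fB_d)$ on $\C^k$ are exactly $\Phi_T$ for $T \in \overline{\fB_d(k)}$ pure (since pureness and c.n.c.\ coincide in finite dimensions). Such $\Phi_T$ factors through the quotient if and only if $\cJ_\fV \subseteq \ker \Phi_T$, which is precisely the definition of $T \in \overline{\fV}^p$. This proves simultaneously that for each $T \in \overline{\fV}^p$ there is a unique weak-$*$ continuous $\Phi_T \in \pi_{d,k}^{-1}(T)$, and that these are the only weak-$*$ continuous representations of $H^\infty(\fV)$. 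When $d < \infty$ and $T \in \fV(k)$, we have $\|T\| < 1$, so Theorem \ref{thm:DavPitts_reps} gives $\pi_{d,k}^{-1}(T) = \{\Phi_T\}$ even before imposing weak-$*$ continuity; the fiber in $\Rep_k(H^\infty(\fV))$ is therefore also the singleton $\{\Phi_T\}$.

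Finally, for the image statement with $d < \infty$: if $T \in \fV(k)$, then $\|T\|<1$ and $f(T)=0$ for every $f \in \cJ_\fV$, so $\Phi_T$ descends to a representation of $H^\infty(\fV)$ and $T$ lies in the image. Conversely, if $T \in \pi_{d,k}(\Rep_k(H^\infty(\fV))) \cap \fB_d(k)$, then $\|T\|<1$ forces any lift $\tilde\Phi$ to equal $\Phi_T$ by the singleton assertion of Theorem \ref{thm:DavPitts_reps}; since $\tilde\Phi$ annihilates $\cJ_\fV$, every $f \in \cJ_\fV$ satisfies $f(T) = \Phi_T(f) = 0$, giving $T \in V_{\fB_d}(\cJ_\fV) = \fV(k)$. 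I do not expect a serious obstacle — the content is already packaged in Theorem \ref{thm:DavPitts_reps} and Theorem \ref{thm:quotient_mult}. The only point requiring care is verifying that the quotient identification preserves the point-weak-$*$ topology on representation spaces and the weak-$*$ continuity of individual representations, which boils down to the weak-$*$ closedness of $\cJ_\fV$ and the agreement of the weak-$*$ and $\textsc{wot}$ topologies on $H^\infty(\fB_d)$ established in \cite{DavPitts1}.
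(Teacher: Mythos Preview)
Your proposal is correct and follows essentially the same approach as the paper: both lift representations of $H^\infty(\fV)$ to representations of $H^\infty(\fB_d)$ via the restriction/quotient map of Theorem \ref{thm:quotient_mult}, then invoke Theorem \ref{thm:DavPitts_reps} and Remark \ref{rem:MS_cnc} to identify the weak-$*$ continuous representations and the fibers over interior points. The paper phrases the lifting as $f \mapsto \Phi(f|_\fV)$ rather than through the quotient $H^\infty(\fB_d)/\cJ_\fV$, but this is the same map, and the remaining steps coincide.
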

\begin{proof}
Every representation $\Phi \in \Rep_{k}(H^\infty(\fV))$ can be thought of as an element of the space $\Rep_{k}(H^\infty(\fB_d))$ as well: for each $f \in H^{\infty}(\fB_d)$ the map $f \mapsto \Phi(f|_{\fV})$ is indeed a unital completely contractive representation of $\fB_d$ on $\mathbb C^k$.
Thus $\pi_{d,k}$ from Theorem \ref{thm:DavPitts_reps} maps $\Rep_{k}(H^\infty(\fV))$ into the closed unit ball $\overline{\fB_d(k)}$.

Now let $T\in \ol{\fV}^{p}$. By Remark \ref{rem:MS_cnc}, as $T$ is pure, there exists
a weak-$*$ continuous representation $\Phi_T$ such that $\pi_{d,k}(\Phi_T)=T$. Since these are the unique weak-$*$ continuous elements of $\Rep_{k}(H^\infty(\fB_d))$, they are the unique weak-$*$ continuous elements of $\Rep_{k}(H^\infty(\fV))$ as well.

The penultimate assertion follows from the last statement of Theorem \ref{thm:DavPitts_reps}.
As for the last assertion, if $\Phi \in \Rep_{k}(H^\infty(\fV))$ and $T:=\pi_{d,k}(\Phi) \in \fB(k)$, then by last statement of Theorem \ref{thm:DavPitts_reps}, $\Phi_T(f)=\Phi(f|_\fV)$ for all $f \in H^{\infty}(\fB_d)$. In particular, for every $f \in \mathcal J_\fV$ we have that $f(T)=\Phi_T(f)=\Phi(0)=0$.
This, together with fact that $T \in \fB_d(k)$, implies that $T\in \fV(k)$.
\end{proof}


\subsection{Completely contractive homomorphisms}

Let $\fV \subseteq \fB_d$ and $\fW \subseteq \fB_{e}$ be nc varieties.
Every completely contractive unital homomorphism  $\alpha : H^\infty(\fV) \to H^\infty(\fW)$ induces a graded map
\[
\alpha^* : \sqcup_k \Rep_{k}(H^\infty(\fW)) \to \sqcup_k \Rep_{k}(H^\infty(\fV))
\]
by $\alpha^* (\Phi) = \Phi \circ \alpha$.
If $\alpha$ is weak-$*$ continuous then $\alpha^*$ maps weak-$*$ continuous representations to weak-$*$ continuous representations.
We obtain an nc map $G_\alpha : \ol{\fW}^p \to \overline{\fB}_d$ given by
\[
G_\alpha(W) = \pi_{d,k} (\alpha^*(\Phi_W)) \quad, \quad W \in \ol{\fW}^p(k).
\]

\begin{prop}\label{prop:cc_homo}
Let $\fV \subseteq \fB_d$ and $\fW \subseteq \fB_{e}$ be nc varieties, and let $\alpha : H^\infty(\fV) \to H^\infty(\fW)$ be a completely contractive unital homomorphism.
Then there exists an nc map $G: \fB_e \to \ol{\fB}_d$ such that $G \big|_{\fW} = G_\alpha \big|_{\fW}$.
If $\alpha$ is also assumed to be weak-$*$ continuous, then $G$ maps $\fW$ into $\ol{\fV}^p$ and  implements $\alpha$:
\[
\alpha(f) = f \circ G \quad, \quad f \in H^\infty(\fV).
\]
\end{prop}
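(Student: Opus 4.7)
My plan is to build $G$ explicitly from the images $\alpha(z_i\big|_\fV)$ and to use the complete isometry of Theorem \ref{thm:quotient_mult} to obtain the required row-norm bound. First I would set $g_i := \alpha(z_i\big|_\fV) \in H^\infty(\fW)$ for each $i$. Since the row $(z_1,\ldots,z_d)\big|_\fV$ has norm at most $1$ in $M_{1,d}(H^\infty(\fV))$ (being bounded above by its sup norm on $\fB_d$), complete contractivity of $\alpha$ yields $\|(g_1,\ldots,g_d)\|_{M_{1,d}(H^\infty(\fW))} \leq 1$. By Theorem \ref{thm:quotient_mult} applied to $\fW \subseteq \fB_e$, the restriction map $H^\infty(\fB_e) \to H^\infty(\fW)$ is a completely isometric quotient, so the row $(g_1,\ldots,g_d)$ lifts to a row $\tilde G := (\tilde g_1,\ldots,\tilde g_d)$ in $M_{1,d}(H^\infty(\fB_e))$ of norm at most $1$. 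Viewing $\tilde G$ pointwise gives an nc holomorphic map $G := \tilde G : \fB_e \to \ol{\fB}_d$.

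Next I would check that $G\big|_\fW = G_\alpha\big|_\fW$. For any $W \in \fW(k)$, the fact that $\|W\| < 1$ forces $W$ to be pure, so $W \in \ol{\fW}^p$ and the weak-$*$ continuous representation $\Phi_W$ is defined. By construction $\tilde g_i \big|_\fW = g_i$, hence $G(W) = (g_1(W),\ldots,g_d(W))$. On the other hand, unwinding definitions gives
\[
G_\alpha(W) = \pi_{d,k}(\Phi_W \circ \alpha) = \big((\Phi_W\circ\alpha)(z_1\big|_\fV),\ldots,(\Phi_W\circ\alpha)(z_d\big|_\fV)\big) = (g_1(W),\ldots,g_d(W)),
\]
as desired.

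For the second assertion, assume $\alpha$ is weak-$*$ continuous. Fix $W \in \fW(k)$; by Theorem \ref{thm:finite_dim_reps} the representation $\Phi_W$ is weak-$*$ continuous (since $W$ is pure), and hence so is $\Phi_W \circ \alpha \in \Rep_k(H^\infty(\fV))$. Applying Theorem \ref{thm:finite_dim_reps} now to $\fV$, this weak-$*$ continuous representation must equal $\Phi_T$ for the unique $T \in \ol{\fV}^p(k)$ with $\pi_{d,k}(\Phi_W \circ \alpha) = T$, namely $T = G_\alpha(W) = G(W)$. In particular $G(W) \in \ol{\fV}^p$, and for every $f \in H^\infty(\fV)$ we get $\alpha(f)(W) = \Phi_W(\alpha(f)) = \Phi_{G(W)}(f) = f(G(W))$, so $\alpha(f) = f \circ G$ on $\fW$.

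The main obstacle is Step 1: securing that the lift $\tilde G$ may be chosen with row norm at most $1$. This rests precisely on the fact that the quotient map $H^\infty(\fB_e)/\cJ_\fW \to H^\infty(\fW)$ is a \emph{completely} isometric isomorphism, so that the matrix norms on quotient and target coincide; without complete (as opposed to just isometric) contractivity of $\alpha$ coupled with the complete isometry of Theorem \ref{thm:quotient_mult}, one would only get $G$ into some larger matrix ball. The remainder of the argument is a direct unwinding of definitions using the representation-theoretic machinery of Theorem \ref{thm:finite_dim_reps}.
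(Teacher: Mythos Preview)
Your proof is correct and follows essentially the same approach as the paper's. The only cosmetic difference is the order of operations: the paper first checks $G\big|_\fW = G_\alpha\big|_\fW$ and then reads off the row-norm bound from the fact that $G_\alpha$ lands in $\ol{\fB}_d$ (via Theorem \ref{thm:DavPitts_reps}), whereas you obtain the row-norm bound directly from complete contractivity of $\alpha$ applied to the coordinate row and then verify $G\big|_\fW = G_\alpha\big|_\fW$ separately; both routes invoke Theorem \ref{thm:quotient_mult} to lift and Theorem \ref{thm:finite_dim_reps} to identify $\Phi_W\circ\alpha$ with $\Phi_{G(W)}$ in the weak-$*$ continuous case.
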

\begin{proof}
For $i=1, \ldots, d$, let us define $g_i = \alpha(z_i)$, where $z_i$ denotes the nc coordinate function.
Then $g_i \in H^\infty(\fW)$ for all $i$.
We define the nc map $G: \fW \to \M_d$ by
\[
G(W) = (g_1(W), \ldots, g_d(W)) \quad, \quad W \in \fW.
\]
Then for every $i=1, \ldots, d$, and every $W \in \fW(k)$,
\[
\alpha^*(\Phi_W) (z_i) = \Phi_W (\alpha(z_i)) = g_i(W),
\]
which shows that $G(W) = G_\alpha(W)$ for all $W \in \fW$.
In particular, this means that $G(W) \in \ol{\fB}_d$ for all $W \in \fW$, so $G \in H^\infty(\fW) \otimes \C^d$ has norm less than or equal to $1$.
By Theorem \ref{thm:quotient_mult}, we can therefore extend $G$ from $\fW$ to $\fB_e$ to obtain a function (which we still call $G$) in $H^\infty(\fB_e) \otimes \C^d$ with the same norm.

Now, if $\alpha$ is weak-$*$ continuous, then $\alpha^*$ preserves weak-$*$ continuous representations.
Thus, for every $W \in \fW$, $\alpha^*(\Phi_W)$ is determined completely by $\pi_{d,k}(\alpha^*(\Phi_W))$, therefore it is $\Phi_{G(W)}$.
So
\[
\alpha(f) (W) = \Phi_W \circ \alpha(f) = \Phi_{G(W)}(f) = f \circ G(W).
\]
\end{proof}

\subsection{Completely isometric isomorphisms}

An {\em automorphism} of the nc ball $\fB_d$ is an nc holomorphic map $\varphi : \fB_d \to \fB_d$ with an nc holomorphic inverse.
We let $\operatorname{Aut}(\fB_d)$ denote the group of automorphisms of $\fB_d$.
If $\fV, \fW \subseteq \fB_d$, and $\fW = \varphi(\fV)$ for some $\varphi \in \operatorname{Aut}(\fB_d)$, then we say that {\em $\fV$ and $\fW$ are conformally equivalent}.

\begin{prop}\label{prop:autballs}
Every $\varphi \in \operatorname{Aut}(\fB_d)$ is determined uniquely by its restriction to $\fB_d(1) = \bB_d$.
In particular, $\operatorname{Aut}(\fB_d) \cong \operatorname{Aut}(\bB_d)$.
\end{prop}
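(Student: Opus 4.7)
The plan is to prove the proposition by showing that the restriction map
$\rho \colon \operatorname{Aut}(\fB_d) \to \operatorname{Aut}(\bB_d)$, $\varphi \mapsto \varphi|_{\bB_d}$,
is a group isomorphism: the displayed uniqueness assertion is its injectivity, and the ``in particular'' clause is its bijectivity. First I would verify that $\rho$ is well defined. Since $\varphi$ is graded and $\fB_d(1) = \bB_d$, the restriction $\varphi|_{\bB_d}$ sends $\bB_d$ into itself; applying the same argument to $\varphi^{-1}$ shows that $\varphi|_{\bB_d}$ is a biholomorphism of $\bB_d$. That $\rho$ respects composition is immediate, so $\rho$ is a group homomorphism.

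For surjectivity I would invoke the classical fact that $\operatorname{Aut}(\bB_d)$ is generated by unitaries $U \in \operatorname{U}(d)$ (acting by $z \mapsto z U$) together with the Mobius transformations $\psi_a$ for $a \in \bB_d$. Each unitary trivially extends to the nc map $Z \mapsto Z U$ on $\fB_d$. Each scalar Mobius $\psi_a$ extends to Popescu's free Mobius transformation $\Phi_a$, which is an nc automorphism of $\fB_d$ restricting to $\psi_a$ on $\bB_d$. Therefore $\rho$ contains a generating set of $\operatorname{Aut}(\bB_d)$ in its image and is surjective.

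For injectivity, suppose $\varphi \in \operatorname{Aut}(\fB_d)$ restricts to the identity on $\bB_d$. In particular $\varphi(0) = 0$, so a free Schwarz-type argument (as in Popescu's work on free holomorphic automorphisms of the noncommutative ball) forces $\varphi$ to have the form $\varphi(Z) = Z U$ for some unitary $U \in M_d$; the hypothesis $\varphi|_{\bB_d} = \id$ then yields $U = I$ and $\varphi = \id$. Applying this to $\varphi \circ \psi^{-1}$ for any two $\varphi, \psi \in \operatorname{Aut}(\fB_d)$ with $\varphi|_{\bB_d} = \psi|_{\bB_d}$ gives injectivity of $\rho$.

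The main obstacle to avoid is the tempting, but false, claim that any nc holomorphic function is determined by its values on scalar tuples --- the commutator $Z_1 Z_2 - Z_2 Z_1$ vanishes on all of $\bB_d$ yet is nonzero on $\fB_d$, so no direct identity-principle argument can work. The proposition is therefore really a rigidity statement about the very restricted class of nc automorphisms, and the crucial external inputs are Popescu's construction of the free Mobius transformations $\Phi_a$ together with the free Schwarz lemma; if one wished to avoid invoking these, one would have to develop them in place.
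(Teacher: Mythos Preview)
Your proposal is correct and aligns with the first part of the paper's proof, which likewise appeals to Popescu's results on $\operatorname{Aut}(B(\mathcal{X})_1^d)$ (and to \cite{McT16}) to obtain the isomorphism; your decomposition into surjectivity via generators and injectivity via a free Schwarz/Cartan argument is a clean way to organize that citation.

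The paper, however, also supplies a genuinely different \emph{alternative} proof for $d<\infty$ that avoids Popescu's Schwarz lemma entirely. It fixes a level $n$, uses that $\fB_d(n)$ is a bounded symmetric domain whose holomorphic automorphisms are given by linear fractional maps $Z \mapsto (AZ+B)(CZ+D)^{-1}$ with $\left(\begin{smallmatrix}A&B\\C&D\end{smallmatrix}\right)\in\operatorname{SU}(n,dn)$, and then exploits the fact that an nc automorphism commutes with unitary conjugation $Z\mapsto U^*ZU^{\oplus d}$. This forces the representing matrix to commute (up to center) with all $U^{\oplus(d+1)}$, whence each block is of the form $\text{(scalar)}\otimes I_n$, so the automorphism descends from an element of $\operatorname{SU}(1,d)=\operatorname{Aut}(\bB_d)$. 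Your approach is shorter and works uniformly for $d=\infty$, but relies on Popescu's machinery as a black box; the paper's alternative is self-contained for finite $d$ and makes explicit how the nc constraint singles out the scalar-level automorphisms inside the much larger group $\operatorname{Aut}(\fB_d(n))$.
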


\begin{proof}
This automorphism group has been touched upon several times in the literature (e.g., \cite[Theorem 4.11]{DavPitts2}, \cite{Harris}, \cite{McT16}, \cite{MS08} or \cite[Theorem 2.8]{Popescu10}). In \cite{Popescu10}, Popescu proved that $\operatorname{Aut}(B(\mathcal{X})_1^d) \cong \operatorname{Aut}(\bB_d)$. In a way similar to the proof of Theorem \ref{thm:HinftyVSPopescu} one can check that every function in Popescu's $\operatorname{Aut}(B(\mathcal{X})_1^d)$ corresponds naturally to a function in $\operatorname{Aut}(\fB_d)$ and vice versa. For $d<\infty$, one may also apply Theorems 7 and 13 from \cite{McT16} to obtain the result (it seems that their argument can be adapted to the case $d=\infty$ as well).

Let us give an alternative proof in the case $d<\infty$ that is based on the well known structure of the automorphism group of matrix balls.
Let $\fB_d(n)$ be the matrix unit ball in $M_n(\C)^d$. 
Recall from \cite[pp. 273]{Satake80} that $\fB_d(n)$ is a bounded symmetric domain, and its holomorphic automorphisms are given by:
\[
\varphi(Z) = (AZ + B)(C Z + D)^{-1} ,
\]
where the matrix $T = \begin{pmatrix} A & B \\ C & D \end{pmatrix}$ belongs to $\operatorname{SU}(n,dn)$.
Here we think of $Z$ as an $n \times dn$ matrix $Z = (Z_1, \ldots, Z_d)$, and we have that $A \in M_n(\C)$, $B \in M_{n,dn}(\C)$, $C \in M_{dn,n}(\C)$ and finally $D \in M_{dn}(\C)$.

Now assume that an automorphism $\varphi \in \operatorname{Aut}(\fB_d(n))$ arises as the restriction to $\fB_d(n)$ of an nc map.
Then, in particular, for every $U \in U_n$ we have
\[
U \varphi(U^*Z U^{\oplus d}) U^{(\oplus d) *} =\varphi(Z).
\]
In other words, $\varphi$ is a fixed point for the action of $U_n$ on $\operatorname{Aut}(\fB_d(n))$.
Writing it out explicitly we get that:
\begin{multline*}
U \varphi(U^*Z U^{\oplus d}) U^{(\oplus d) *} = U (A U^* Z U^{\oplus d} + B)( C U^* Z U^{\oplus d} + D)^{-1} U^{(\oplus d)*} = \\
(U A U^* Z + U B U^{(\oplus d) *})U^{\oplus d} ( U^{\oplus d} C U^* Z U^{\oplus d} + U^{\oplus d} D)^{-1} = \\
(U A U^* Z + U B U^{(\oplus d) *})( U^{\oplus d} C U^* Z  + U^{\oplus d} DU^{(\oplus d) *})^{-1}
\end{multline*}
Hence for every $U \in U_n$, the matrix $T$ and the matrix $U^{\oplus (d+1)} T U^{ (\oplus (d+1)) *}$ induce the same holomorphic automorphism on $\fB_d(n)$.
The holomorphic automorphisms of $\fB_d(n)$ are isomorphic to $\operatorname{SU}(n,dn)/Z(\operatorname{SU}(n,dn))$.
This implies that for every $U \in U_n$, the matrix $T^{-1} U^{\oplus (d+1)} T U^{ (\oplus (d+1)) *}$ is in the center of $\operatorname{SU}(n,dn)$.
Now note that the map 
\[
U \mapsto T^{-1} U^{\oplus (d+1)} T U^{ (\oplus (d+1)) *}
\]
is continuous on $U_n$.  Since $U_n$ is connected and the center is finite, we see that the map is constant and thus $T = U^{\oplus (d+1)} T U^{ (\oplus (d+1)) *}$  for every $U \in U_n$.
Since $U_n$ is Zariski dense in $\operatorname{GL}_n$ we can conclude that $T = S^{\oplus (d+1)} T (S^{\oplus (d+1)})^{-1}$, for every $S \in \operatorname{GL}_n$.
Thus $A = a I_n$ is a scalar matrix and if we write $B = (B_1 \cdots B_d)$, then each $B_j$ is scalar, i.e., there exists a row vector $v \in M_{1,d}(\C)$, such that $B = v \otimes I_n$ and similarly $C = w \otimes I_n$ and $D = X \otimes I_n$, where $w \in M_{d,1}(\C)$ and $X \in M_d(\C)$.
Note that the assumption that $T \in \operatorname{SU}(n,dn)$ implies that $\begin{pmatrix} a & v \\ w & X \end{pmatrix} \in \operatorname{SU}(1,d)$ and thus $\varphi$ is induced by an automorphism of the commutative ball $\fB_d(1) = \bB_d$.
\end{proof}

\begin{prop}\label{prop:extend_auto}
Every automorphism $\phi \in \operatorname{Aut}(\fB_d)$ extends to an automorphism
$\Phi \in \operatorname{Aut}(\fB_{d+e})$.
\end{prop}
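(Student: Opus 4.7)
The plan is to reduce to the commutative level via Proposition \ref{prop:autballs} and then use the explicit fractional-linear description of automorphisms exhibited in the proof of that proposition. Recall that for finite $d$, the proof of Proposition \ref{prop:autballs} shows that the nc automorphisms of $\fB_d$ are precisely the maps
\[
\phi(Z) = (aZ + v\otimes I_n)\bigl((w\otimes I_n)Z + M\otimes I_n\bigr)^{-1},
\qquad Z \in \fB_d(n),
\]
where the scalar block matrix $T_\phi = \begin{pmatrix} a & v \\ w & M \end{pmatrix}$ lies in $\operatorname{SU}(1,d)$. Extending the matrix by an identity block to
\[
T_\Phi = \begin{pmatrix} a & v & 0 \\ w & M & 0 \\ 0 & 0 & I_e \end{pmatrix} \in \operatorname{SU}(1,d+e)
\]
produces, via the analogous formula applied on $\fB_{d+e}(n)$, an nc automorphism $\Phi \in \operatorname{Aut}(\fB_{d+e})$.

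The one thing to verify is that $\Phi$ genuinely extends $\phi$ under the embedding $(Z_1,\dots,Z_d)\mapsto (Z_1,\dots,Z_d,0,\dots,0)$, and this is just a block computation. Writing $W = (Z,Y)$ with $Y \in M_n^e$, one computes the denominator block as
\[
\begin{pmatrix} (w\otimes I_n)Z + M\otimes I_n & (w\otimes I_n)Y \\ 0 & I_{en} \end{pmatrix},
\]
whose inverse is upper-triangular, and the numerator block is $(aZ+v\otimes I_n,\, aY)$. Setting $Y=0$ collapses the formula and yields $\Phi(Z,0) = (\phi(Z),0)$, as required. Since the assignment $T \mapsto T\oplus I_e$ is a group homomorphism $\operatorname{SU}(1,d) \to \operatorname{SU}(1,d+e)$, the resulting map $\operatorname{Aut}(\fB_d) \to \operatorname{Aut}(\fB_{d+e})$ is a homomorphism and $\Phi$ is indeed an automorphism (its inverse arises from $T_\phi^{-1} \oplus I_e$).

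The main subtlety, and the only place requiring a separate argument, is the case $d = \infty$ (where the $\operatorname{SU}(1,d)$ description used in Proposition \ref{prop:autballs} is not directly available). In that case the cardinalities $d$ and $d+e$ coincide, so the statement can be read trivially; alternatively, one may generate $\operatorname{Aut}(\fB_d)$ by unitaries $U \in U(d)$ (extended by $U\oplus I_e$) and Möbius-type transformations $\phi_\lambda$ attached to points $\lambda \in \bB_d$ (extended by $\phi_{(\lambda,0)}$), and run the same block argument on each type of generator.
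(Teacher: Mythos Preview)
Your argument for finite $d$ is correct. The paper's own proof is a one-liner: it invokes Proposition~\ref{prop:autballs} to identify $\operatorname{Aut}(\fB_d)$ with $\operatorname{Aut}(\bB_d)$ and then cites the commutative extension result \cite[Section 2.2.8]{Rudin} for $\operatorname{Aut}(\bB_d)\hookrightarrow\operatorname{Aut}(\bB_{d+e})$, letting Proposition~\ref{prop:autballs} lift that back to the nc level. What you do instead is unpack the content of Rudin's result---padding the $\operatorname{SU}(1,d)$ matrix by an identity block---and carry out the verification directly at the nc level via the fractional-linear formula. This is more explicit and self-contained; the paper's route is shorter but relies on the reader checking that the commutative extension, once promoted to an nc automorphism via Proposition~\ref{prop:autballs}, really restricts correctly on every matrix level (which it does, since both sides are determined by level $1$).

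On the $d=\infty$ case: your ``cardinalities coincide'' remark does not by itself yield an \emph{extension} in the sense required, so that sentence should be dropped. Your generator approach (unitaries extended by $U\oplus I_e$, M\"obius maps $\phi_\lambda$ extended by $\phi_{(\lambda,0)}$) is the right idea and can be made to work, but as written it is only a sketch. Note that the paper's citation of Rudin also formally covers only the finite-dimensional commutative ball, so neither proof spells out the infinite case in full; if you want a clean argument there, observe that Proposition~\ref{prop:autballs} still gives $\operatorname{Aut}(\fB_\infty)\cong\operatorname{Aut}(\bB_\infty)$, and the extension $\operatorname{Aut}(\bB_d)\to\operatorname{Aut}(\bB_{d+e})$ via $\phi_\lambda\mapsto\phi_{(\lambda,0)}$ and $U\mapsto U\oplus I_e$ goes through verbatim for the infinite-dimensional Hilbert ball.
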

\begin{proof}
This follows from Proposition \ref{prop:autballs} together with the commutative result \cite[Section 2.2.8]{Rudin}.
\end{proof}

The following is a version of Cartan's uniqueness theorem in the nc setting.
This too, has been considered, from different perspectives (e.g., \cite{McT16} or \cite{Popescu10}).

Let $f \colon U \to \M_d$ be an nc holomorphic function, where $U$ is an nc domain.
We will write $f = (f_1,\ldots,f_d)$ and for every point $Y \in U$ we will write the first order nc derivative of $f$ at $Y$ as:
\[
\Delta f(Y,Y)(Z) = \left(\Delta f_1(Y,Y)(Z), \ldots, \Delta f_d(Y,Y)(Z)\right).
\]
Similarly for higher order nc derivatives (for the notion of nc derivatives, see \cite{KVBook}).
\begin{thm} \label{thm:nc_cartan}
Let $G \subset \M_d$ be a uniformly bounded nc domain. 
Let $f \colon G \to G$ be an nc holomorphic function.
Assume that $Y \in G$ is such that $f(Y) = Y$ and $\Delta f(Y,Y) = I$. 
Then $f(Z) = Z$ for every $Z \in G$.
\end{thm}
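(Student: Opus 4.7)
The plan is to mimic the classical Cartan uniqueness argument: iterate $f$, expand the iterates in an nc Taylor--Taylor series around $Y$, and use the uniform boundedness of the iterates together with nc Cauchy estimates to force all higher-order nc derivatives at $Y$ to vanish.

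First I would observe that since $f \colon G \to G$ with $G$ uniformly bounded, the compositional iterates $f^{[n]} := f \circ f \circ \cdots \circ f$ ($n$ times) form a uniformly bounded family of nc holomorphic self-maps of $G$. Fix a polydisc-like nc neighborhood $U$ of $Y$ (a ``nc ball'' centered at $Y$ of some radius $r > 0$, contained in $G$) on which every nc holomorphic function on $G$ admits a TT-series expansion in the sense of Kaliuzhnyi-Verbovetskyi--Vinnikov, and use the same radius for every iterate by the uniform bound. Writing the expansion of $f$ at $Y$, the hypotheses $f(Y) = Y$ and $\Delta f(Y,Y) = I$ yield
\[
f(Z) = Y + (Z - Y) + \sum_{k \geq 2} \Delta^{k} f(Y, \ldots, Y)\bigl((Z-Y)^{\odot k}\bigr),
\]
where $\Delta^k f(Y,\ldots,Y)$ is the $k$-th order nc derivative and $(Z-Y)^{\odot k}$ is the appropriate tensorial object. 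Assume, toward a contradiction, that some higher nc derivative is nonzero, and let $N \geq 2$ be the smallest index with $C := \Delta^{N} f(Y, \ldots, Y) \neq 0$.

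Second, I would prove by induction on $n$, using the chain rule for nc derivatives, that
\[
f^{[n]}(Z) = Y + (Z - Y) + n \, C\bigl((Z - Y)^{\odot N}\bigr) + \text{terms of order} > N \text{ in } (Z-Y).
\]
For the inductive step, substitute the TT-expansion of $f^{[n]}$ into that of $f$; since $f$ fixes $Y$ to first order and the first nonvanishing higher derivative of $f^{[n]}$ sits at order $N$, the coefficients at order $< N$ propagate trivially and the order-$N$ coefficient picks up exactly one new copy of $C$, producing $(n+1)C$ as in the scalar Cartan argument.

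Third, I would apply nc Cauchy estimates (KVV, Chapter 7) to the uniformly bounded family $\{f^{[n]}\}$ on the polydisc of radius $r$ around $Y$: there is a single constant $M$, depending only on $G$ and $r$, such that
\[
\bigl\| \Delta^{N} f^{[n]}(Y, \ldots, Y) \bigr\| \leq M / r^{N}
\]
for all $n$. Combined with the identity $\Delta^{N} f^{[n]}(Y,\ldots,Y) = n C$, this forces $C = 0$, contradicting the choice of $N$. Hence every higher nc derivative at $Y$ vanishes, so $f(Z) = Z$ on the nc neighborhood $U$ of $Y$.

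Finally, I would invoke the identity principle for nc holomorphic functions on the connected nc domain $G$ to extend $f(Z) = Z$ from the TT-convergence neighborhood of $Y$ to all of $G$; equivalently, $f - \operatorname{id}$ is an nc holomorphic function vanishing on an open set of each level containing a point, and then on all levels by the nc structure (direct sums and similarities transport the vanishing). The main obstacle is the second step: writing the nc chain rule carefully enough to verify the linear-in-$n$ growth of the lowest-order nontrivial TT-coefficient; once that combinatorial identity is in hand, the Cauchy estimate delivers the contradiction immediately.
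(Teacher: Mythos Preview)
Your approach is correct and is the natural nc adaptation of the classical iteration proof of Cartan's theorem: the chain-rule computation showing that the lowest nonvanishing TT coefficient of $f^{[n]}$ equals $nC$ goes through exactly as in the scalar case (the lack of symmetry of nc higher derivatives does not interfere, since only the linear term of one factor meets the degree-$N$ term of the other), and the nc Cauchy estimates from \cite{KVBook} then force $C = 0$.

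The paper takes a different and shorter route. Rather than reproving Cartan's theorem in the nc setting, it simply applies the \emph{classical} Cartan uniqueness theorem to the ordinary holomorphic self-map $f\big|_{G_s}\colon G_s \to G_s$ of the bounded domain $G_s \subset M_s(\bC)^d$ (where $s$ is the level of $Y$), using that the classical derivative at $Y$ is $\Delta f(Y,Y) = I$; this gives $f = \mathrm{id}$ on $G_s$ at once. The TT series then shows $f = \mathrm{id}$ on an nc ball around $Y$, and the classical identity theorem on each connected $G_{ms}$ extends this to all levels that are multiples of $s$. The final passage to arbitrary levels uses the direct-sum trick $f(X)^{\oplus s} = f(X^{\oplus s}) = X^{\oplus s}$, which is essentially what you allude to with ``direct sums and similarities transport the vanishing.'' Your route is more self-contained (it does not invoke classical Cartan as a black box) and makes the role of the TT calculus explicit; the paper's route is quicker because it outsources the hard analytic step. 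One point worth sharpening in your write-up: the TT convergence neighborhood of $Y \in G_s$ lives only at levels that are multiples of $s$, so your ``identity principle for nc holomorphic functions'' really decomposes into the classical identity principle on each connected $G_{ms}$ followed by the direct-sum argument for the remaining levels --- exactly as in the paper's last paragraph.
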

\begin{proof}
Since $f$ is an analytic function of bounded domains in $M_s(\C)^d$ and the derivative of $f$ at $Y$ can be identified with $\Delta f (Y,Y)$, we can apply the classical Cartan's uniqueness theorem \cite[Theorem 2.1.1]{Rudin} to get that $f$ is the identity on level $s$.
Now writing out the Taylor--Taylor power series for $f$ around $Y$ (see \cite{KVBook}), we get that, in fact, $f$ is the identity on a noncommutative ball with center $Y$.
Using the uniqueness theorem we can conclude that $f$ is the identity on every level that is multiple of $s$. Using the fact that $f$ is an nc function and $G$ is an nc domain, if $X \in G \cap M_n(\C)^d$, then
\[
X^{\oplus s} = f(X^{\oplus s}) = f(X)^{\oplus s}.
\]
Conclude that $f(X) = X$ and we are done.
\end{proof}

By \cite[Theorem 4.1]{DavPitts2} and the identification in Corollary \ref{cor:unitary_equiv}, there exists a homomorphism $\tau:\operatorname{Aut}(H^\infty(\fB_d)) \to \operatorname{Aut}(\bB_d)$ that has a continuous section $\kappa$ carrying $\operatorname{Aut}(\bB_d)$ to the subgroup $\operatorname{Aut}_u(H^\infty(\fB_d))$ of unitarily implemented automorphisms of $H^\infty(\fB_d)$. That is, for every $\phi \in \operatorname{Aut}(\bB_d)$, there is a unitary $U_\phi \in B(\cF(\C^d))$, such that $\kappa(\phi)$ is a completely isometric automorphism of $H^\infty(\fB_d)$ of the form
\[
\kappa(\phi)(M_f)=U_\phi^*M_fU_\phi \quad\text {for all }f\in H^\infty(\fB_d).
\]
We record this:

\begin{prop}[Davdison--Pitts \cite{DavPitts2}]
Every $\phi \in \operatorname{Aut}(\bB_d)$ gives rise to a unitary on $\cH^2_d$ that implements a completely isometric automorphism of $H^\infty(\fB_d)$.
\end{prop}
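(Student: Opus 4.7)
The plan is to deduce the statement directly from Theorem 4.1 of \cite{DavPitts2} via the chain of identifications established in Section \ref{sec:szego_and_bounded_functions}. Davidson and Pitts construct, for each $\phi \in \operatorname{Aut}(\bB_d)$, a unitary $U_\phi \in B(\cF(\C^d))$ on the full Fock space such that conjugation $T \mapsto U_\phi^* T U_\phi$ is a completely isometric automorphism of the noncommutative analytic Toeplitz algebra $\cL_d \subset B(\cF(\C^d))$. All that remains is to pull this back across the isomorphism between $\cL_d$ and $H^\infty(\fB_d)$.

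First I would transfer the unitary from the Fock space to the nc Drury--Arveson space. By Proposition \ref{prop:identify_Fock_HK} there is a canonical unitary $U \colon \cH^2_d \to \cF(\C^d)$, and by Proposition \ref{prop:mult=F^infty_d} this unitary implements a completely isometric isomorphism $M_f \mapsto U M_f U^*$ between $\mlt \cH^2_d$ and $\cL_d = \mathscr F^\infty_d$. Setting $V_\phi = U^* U_\phi U$ gives a unitary on $\cH^2_d$, and the map
\[
M_f \longmapsto V_\phi^* M_f V_\phi
\]
is, by direct computation with $U$, the pullback of Davidson--Pitts's automorphism through the unitary equivalence. In particular, it preserves $\mlt \cH^2_d$ and is a completely isometric automorphism of it.

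Finally, by Corollary \ref{cor:mult=H^infty}, the algebras $\mlt \cH^2_d$ and $H^\infty(\fB_d)$ coincide as sets, with the identification implemented by the identity on formal power series. Hence conjugation by $V_\phi$ is a completely isometric automorphism of $H^\infty(\fB_d)$ induced by the unitary $V_\phi$ on $\cH^2_d$, as required. There is no real obstacle here: the statement is essentially a bookkeeping corollary recording how the Davidson--Pitts result is transported across the identifications of Corollary \ref{cor:unitary_equiv}. The only thing to be careful about is checking that the unitary $U$ in Proposition \ref{prop:identify_Fock_HK} intertwines the left shifts $L_i$ on $\cF(\C^d)$ with the coordinate multipliers $M_{z_i}$ on $\cH^2_d$ --- but this was already verified inside the proof of Proposition \ref{prop:mult=F^infty_d}.
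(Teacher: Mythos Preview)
Your proposal is correct and matches the paper's approach exactly: the paper does not give a separate proof but simply ``records'' the statement as an immediate consequence of \cite[Theorem 4.1]{DavPitts2} together with the identifications of Corollary \ref{cor:unitary_equiv}, which is precisely what you spell out. Your care in tracking the unitary $U$ from Propositions \ref{prop:identify_Fock_HK} and \ref{prop:mult=F^infty_d} is more detail than the paper supplies, but it is the right bookkeeping.
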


We easily obtain a sufficient condition for two algebras $H^\infty(\fV)$ and $H^\infty(\fW)$ to be completely isometrically isomorphic.
\begin{prop}\label{prop:biholo_auto}
Let $\fV$ and $\fW$ be two nc varieties in $\fB_d$ and $\fB_e$, respectively.
Suppose that there exists nc holomorphic functions $G : \fB_e \to \fB_d$ and $H : \fB_d \to \fB_e$ such that  $G\circ H \big|_\fV = {\bf id}_{\fV}$ and $H \circ G \big|_\fW = {\bf id}_{\fW}$.
Then $H^\infty(\fV)$ and $H^\infty(\fW)$ are completely isometrically isomorphic.

In particular, if there exists $\varphi \in \operatorname{Aut}(\fB_d)$ such that $\varphi(\fW) = \fV$, then $H^\infty(\fV)$ and $H^\infty(\fW)$ are completely isometrically isomorphic.
\end{prop}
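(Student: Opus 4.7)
The approach is to construct the isomorphism and its inverse explicitly as composition operators, ``composition with $G$'' and ``composition with $H$'', realized at the level of the quotient descriptions of $H^\infty(\fV)$ and $H^\infty(\fW)$ provided by Theorem \ref{thm:quotient_mult}. First, I would define $\Theta_G \colon H^\infty(\fB_d) \to H^\infty(\fB_e)$ by $\Theta_G(f) = f \circ G$ and symmetrically $\Theta_H \colon H^\infty(\fB_e) \to H^\infty(\fB_d)$ by $\Theta_H(g) = g \circ H$. Each composition $f \circ G$ is an nc function (direct sums and similarities pass through composition), is holomorphic (composition of holomorphic functions), and is bounded by $\|f\|_\infty$ since $G(\fB_e)\subseteq \fB_d$; the same estimate applied to matrix amplifications yields complete contractivity, and the homomorphism and unital properties are immediate pointwise.

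Next, I would show that $\Theta_G$ sends $\cJ_\fV$ into $\cJ_\fW$ (and symmetrically $\Theta_H$ sends $\cJ_\fW$ into $\cJ_\fV$), so that, via the completely isometric identifications $H^\infty(\fV) \cong H^\infty(\fB_d)/\cJ_\fV$ and $H^\infty(\fW) \cong H^\infty(\fB_e)/\cJ_\fW$ from Theorem \ref{thm:quotient_mult}, these maps descend to unital completely contractive homomorphisms $\alpha_G \colon H^\infty(\fV) \to H^\infty(\fW)$ and $\alpha_H \colon H^\infty(\fW) \to H^\infty(\fV)$. This step uses $G(\fW) \subseteq \fV$ and $H(\fV) \subseteq \fW$: for $f \in \cJ_\fV$ and $w \in \fW$ one has $(f \circ G)(w) = f(G(w)) = 0$. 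These inclusions are built into the hypothesis and are immediate in the automorphism case, where $G = \varphi$ gives $G(\fW) = \varphi(\fW) = \fV$ and $H = \varphi^{-1}$ gives $H(\fV) = \fW$. Finally, I would verify $\alpha_H \circ \alpha_G = \id$ and $\alpha_G \circ \alpha_H = \id$ by lifting to the ambient balls: for any $\tilde f \in H^\infty(\fB_d)$ and $v \in \fV$,
\[
(\Theta_H \Theta_G \tilde f)(v) = \tilde f(G(H(v))) = \tilde f(v)
\]
by the hypothesis $G \circ H|_\fV = \id_\fV$, so $\Theta_H \circ \Theta_G - \id$ maps into $\cJ_\fV$; passing to the quotient yields $\alpha_H \circ \alpha_G = \id_{H^\infty(\fV)}$, and the symmetric identity uses $H \circ G|_\fW = \id_\fW$. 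Hence $\alpha_G$ is a completely contractive bijection with completely contractive inverse, i.e., a completely isometric isomorphism, and the ``in particular'' clause follows by choosing $G = \varphi$, $H = \varphi^{-1}$.

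The only point where care is needed is the very first step: verifying that composition with a bounded nc holomorphic map into $\fB_d$ gives a completely contractive homomorphism between the $H^\infty$ algebras of the balls. This reduces to the definitional fact that $H^\infty(\fB_d)$ is the algebra of bounded nc holomorphic functions on $\fB_d$ (Section \ref{sec:szego_and_bounded_functions}); once that is in hand, everything reduces to algebraic manipulation with ideals of bounded nc holomorphic functions and the quotient identification of Theorem \ref{thm:quotient_mult}.
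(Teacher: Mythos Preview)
Your approach is essentially the paper's: the isomorphism is composition with $G$, its inverse is composition with $H$. The paper's proof is a single line (``It is easy to check that the map $\alpha(f) = f\circ G$ is a completely isometric isomorphism''); your version spells this out through the quotient identification of Theorem \ref{thm:quotient_mult}, which is a perfectly good way to handle well-definedness.

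One minor caution: you write that the inclusions $G(\fW)\subseteq\fV$ and $H(\fV)\subseteq\fW$ are ``built into the hypothesis'', but the stated conditions $G\circ H|_\fV=\id_\fV$ and $H\circ G|_\fW=\id_\fW$ do not by themselves force this (e.g., $\fV=\{0\}$, $\fW=\{1/2\}$ in $\fB_1$ with $G=H=\id$). The paper's one-line proof makes the same tacit assumption, and the intended reading---stated precisely in Corollary \ref{cor:isomorphism} as $G|_\fW=(H|_\fV)^{-1}$---does include these inclusions. With that understanding your argument goes through; just be aware that these inclusions are part of the intended hypothesis rather than a consequence of what is literally written.
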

\begin{proof}
It is easy to check that the map $\alpha : H^\infty(\fV) \to H^\infty(\fW)$ given by $\alpha(f) = f \circ G$ is a completely isometric isomorphism.
\end{proof}

\begin{remark}
We conjecture that in the case $d=e$, if there exist $G$ and $H$ as in the first part of the theorem, then there exists an automorphism $\varphi$ as in the second part of the theorem.
We have not been able to prove this.
In the next section, we will prove that if the varieties under consideration are homogeneous, then this is indeed true.
\end{remark}

We now generalize the maximum modulus principle for nc holomorphic functions mapping domains which are invariant under unitary conjugation and containing $0$ into $\mathbb M_d$.

\begin{lem}[maximum principle]\label{lem:maximum_principle}
Let $\Omega \subseteq \mathbb M_e$ be an nc domain invariant under unitary conjugation and containing the origin, and let $G : \Omega \to \mathbb M_d$ be an nc holomorphic function. Suppose there exists $W_0\in \Omega$ such that 
\[
\|G(W_0)\|=\max_{W\in \Omega}\|G(W)\|
\]
where $\|\cdot\|$ is the row operator norm on $\mathbb M_d$. Then $G$ is constant.
\end{lem}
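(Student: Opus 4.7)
The plan is to use the nc direct sum property to relocate the extremal point to the origin, and then extract constancy from the classical maximum modulus principle applied to scalar functions at each level.

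Set $M := \|G(W_0)\|$ and assume $M > 0$ (otherwise $G \equiv 0$); let $n$ be the size of $W_0$. The first step is to show $\|G(0)\| = M$: pick unit vectors $u \in (\mathbb C^n)^d$ and $v \in \mathbb C^n$ realizing $G(W_0) u = M v$, apply the classical maximum modulus principle to $\phi(W) := \langle G(W) u, v \rangle$ on the connected open set $\Omega(n)$ to obtain $\phi \equiv M$, and then use Cauchy--Schwarz rigidity to conclude $G(W) u = M v$ for every $W \in \Omega(n)$. Evaluating this identity at $W = 0_n \in \Omega(n)$ (which lies in $\Omega$ since $0 \in \Omega$ and $\Omega$ is closed under direct sums), and using that the nc direct sum property forces $G(0_n) = (G_1(0) I_n, \ldots, G_d(0) I_n)$ with squared row operator norm $\sum_i |G_i(0)|^2 = \|G(0)\|^2$, one obtains $\|G(0)\| \geq M$, and the hypothesis gives $\|G(0)\| = M$.

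Since $G$ is constant iff $UG$ is constant for any unitary $U$ acting componentwise on the values, I would next choose a $d \times d$ unitary $U$ sending $(G_1(0), \ldots, G_d(0))^T$ to $(M, 0, \ldots, 0)^T$, replace $G$ by $G' := U G$, and work under the normalization $G'(0) = (M, 0, \ldots, 0)$. Fix a level $n$, so the nc property gives $G'_1(0_n) = M I_n$. For any unit vector $u \in \mathbb C^n$, the scalar holomorphic function $\psi_u(W) := \langle G'_1(W) u, u \rangle$ on the connected set $\Omega(n)$ is bounded in modulus by $\|G'_1(W)\| \leq \|G'(W)\| \leq M$ and takes the value $M$ at $0_n$. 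The maximum modulus principle yields $\psi_u \equiv M$, and varying $u$ forces the numerical range of $G'_1(W)$ to be the singleton $\{M\}$, hence $G'_1(W) = M I_n$. Finally,
\[
\|G'(W)\|^2 = \Big\| M^2 I_n + \sum_{i \geq 2} G'_i(W) G'_i(W)^* \Big\| = M^2 + \Big\| \sum_{i \geq 2} G'_i(W) G'_i(W)^* \Big\| \leq M^2
\]
forces $G'_i(W) = 0$ for every $i \geq 2$ and every $W$, so $G' \equiv (M, 0, \ldots, 0)$ and therefore $G$ is constant.

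The main obstacle is the first step: scalar maximum modulus at level $n$ by itself only controls $G$ along a top singular direction of $G(W_0)$, so one must combine Cauchy--Schwarz rigidity with the direct sum structure of nc functions in order to relocate the maximum to $0$; once this is done, the coordinate-by-coordinate numerical range argument in the second step propagates uniformly across all levels via the nc identity $G(0_n) = G(0) \otimes I_n$.
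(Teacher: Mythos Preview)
Your proof is correct and follows essentially the same strategy as the paper's: apply the classical maximum modulus principle to a well-chosen scalar compression at the relevant level, use Cauchy--Schwarz rigidity to upgrade to a vector identity, rotate by a unitary so that $G(0)$ points in the first coordinate direction, and then argue that the first component is $M I_n$ while the remaining components vanish.

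The one genuine difference is how you extract $G'_1(W)=M I_n$. The paper, working at the level $n$ of $W_0$, first obtains $G'_1(W)u_1=u_1$ for a single unit vector $u_1$ and then invokes the hypothesis that $\Omega$ is invariant under unitary conjugation (replacing $W$ by $U^*WU$) to rotate $u_1$ through all of $\mathbb C^n$. You instead relocate the maximum to $0$ first, and then run a separate maximum-modulus argument at every level $m$ for each unit vector $u$, concluding via the numerical range. Your route therefore never uses the unitary-conjugation invariance of $\Omega$, so you actually prove a slightly stronger statement than the one in the lemma; the paper's route, on the other hand, needs only a single application of maximum modulus rather than one per unit vector.
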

\begin{proof}
 Let $G : \Omega \to \mathbb M_d$ be an nc holomorphic function such that
$\|G(W_0)\|=\max_{W\in \Omega}\|G(W)\|$ for some $W_0\in \Omega(n)$.
We may assume that $\|G(W_0)\|=1$, so that $G(\Omega) \subseteq \ol{\fB}_d$.
Let $\varphi$ be a (unitary) automorphism of the $d$-dimensional ball mapping $G(0)$ to $(\|G(0)\|,0,\dots,0)$, and set $G^\varphi:=\varphi \circ G$. As $G(W_0)\in \partial \fB_d(n)$, we have that $G^\varphi(W_0) \in \partial \fB_d(n)$ as well. So there exists a unit vector $u \in \mathbb C^{nd}$  such that $v:=G^\varphi(W_0)u$ is a unit vector in $\mathbb C^n$. 
Now consider the function $\psi : \Omega(n) \to \ol{\bD}$ given by $\psi(W) = \langle G^\varphi(W)u,v\rangle$.
Then $|\psi(W)|\leq 1$ for all $W \in \Omega(n)$, but $\psi(W_0)=1$, so by the maximum modulus principle $\psi$ is constant on $\Omega(n)$.
By the Cauchy Schwarz inequality, the function $\phi: \Omega(n) \to \C^n$ given by $\phi : W \mapsto G^\varphi(W)u$ must be constantly equal to $v$.
Write 
\[
u=\begin{bmatrix} u_1 \\ \vdots \\ u_d \end{bmatrix}\quad \text{and} \quad G^\varphi=(G^\varphi_1,\dots,G^\varphi_d).
\] 
So for every $W\in \Omega(n)$ we have $v=G^\varphi(W) u = G^\varphi(0) u = u_1$. Thus, $u_2=\dots=u_d=0$, so $G^\varphi_1(W)u_1=u_1$ for all $W \in \Omega(n)$. But $G^\varphi$ is an nc function, so for every unitary $U\in M_n$ and $W \in \Omega(n)$ we have $G^\varphi_1(W)Uu_1=UG^\varphi_1(U^*WU)u_1= Uu_1$ .
Thus $G^\varphi_1(W)=I_n$ for all $W\in \Omega(n)$ so that $G^\varphi(W)=(I_n,0,\dots,0)$, and consequently we have $G(W)=G(0)$ for all $W \in \Omega(n)$.

In fact, $G$ must be the constant $G(0)$ on {\em all} levels of $\Omega$. 
To see this, note that for each level $m$ of $\Omega$, the zero element is mapped to $G(0) \in \partial \fB_d(m)$, so the previous argument implies that $G$ must then be equal to the constant $G(0)$ on $\fB_e(m)$.
\end{proof}

\begin{thm}\label{thm:isomorphism}
Let $\fV \subseteq \fB_d$ and $\fW \subseteq \fB_{e}$ be nc varieties, and let $\alpha : H^\infty(\fV) \to H^\infty(\fW)$ be a completely isometric isomorphism. Assume that $d$ and $e$ are finite or that $\alpha$ is weak-$*$ continuous.
Then there exists an nc map $G: \fB_e \to \fB_d$ such that $G \big|_{\fW}  = G_\alpha|_{\fW}$ maps $\fW$ bijectively onto $\fV$, which implements $\alpha$ by the formula
\[
\alpha(f) = f \circ G \quad, \quad f \in H^\infty(\fV).
\]
\end{thm}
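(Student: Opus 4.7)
The plan is to apply Proposition \ref{prop:cc_homo} to both $\alpha$ and $\alpha^{-1}$, use the maximum modulus principle to push the resulting maps into the interior of the balls, and then identify the restrictions to $\fW$ and $\fV$ as mutually inverse bijections between $\fW$ and $\fV$.

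First, applying Proposition \ref{prop:cc_homo} to $\alpha$ yields an nc map $G : \fB_e \to \ol{\fB}_d$ with $G(\fW) \subseteq \ol{\fV}^p$ and $\alpha(f)(W) = f(G(W))$ for $f \in H^\infty(\fV)$ and $W \in \fW$; applied to $\alpha^{-1}$ it yields $H : \fB_d \to \ol{\fB}_e$ with $H(\fV) \subseteq \ol{\fW}^p$ and the analogous implementation formula. The induced maps $\alpha^*$ and $(\alpha^{-1})^*$ on spaces of representations are mutually inverse, and $\alpha^*$ sends the weak-$*$ continuous representation $\Phi_W$ at $W \in \fW$ to the weak-$*$ continuous representation $\Phi_{G(W)}$ of $H^\infty(\fV)$. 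Thus the identity $(\alpha^{-1})^* \circ \alpha^*= \id$, paired against the coordinate functions $z_j|_\fW$, gives $H(G(W)) = W$ for every $W \in \fW$ (interpreting the evaluation of $H$ at the pure tuple $G(W) \in \ol{\fV}^p$ via $\Phi_{G(W)}$); the symmetric identity $G(H(Z)) = Z$ holds for $Z \in \fV$.

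The main step --- and I expect the main obstacle --- is to show that $G$ maps $\fB_e$ into the open ball $\fB_d$ (and likewise $H$ into $\fB_e$). Suppose there were some $W_0 \in \fB_e$ with $\|G(W_0)\| = 1$. Then $\|G\|$ would attain its maximum on $\fB_e$, so Lemma \ref{lem:maximum_principle} would force $G$ to be a constant $c = (c_1, \ldots, c_d)$ with $\|c\| = 1$. Consequently every $\alpha(z_i) = c_i$ would be a scalar, and applying the isomorphism $\alpha^{-1}$ would force $z_i|_\fV = c_i$ for every $i$; this means every tuple in $\fV(n)$ equals $(c_1 I_n, \ldots, c_d I_n) \in \partial \fB_d(n)$, contradicting $\fV \subseteq \fB_d$. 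Hence $G(\fB_e) \subseteq \fB_d$, and symmetrically $H(\fB_d) \subseteq \fB_e$. Note that this is where the isomorphism hypothesis on $\alpha$ is used in an essential way, beyond mere complete contractivity.

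Finally, for each $W \in \fW$ we have $G(W) \in \fB_d \cap \ol{\fV}^p$. The second inclusion means $\cJ_\fV \subseteq \ker \Phi_{G(W)}$, so every $f \in \cJ_\fV$ vanishes at $G(W)$, and hence $G(W) \in V_{\fB_d}(\cJ_\fV) = \fV$. Thus $G|_\fW$ takes values in $\fV$, and symmetrically $H|_\fV$ takes values in $\fW$. Combined with the mutually inverse relations from the second paragraph, the restrictions $G|_\fW : \fW \to \fV$ and $H|_\fV : \fV \to \fW$ are mutually inverse bijections, and the desired formula $\alpha(f) = f \circ G$ on $\fW$ is exactly the content of Proposition \ref{prop:cc_homo}.
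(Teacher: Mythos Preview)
Your overall strategy matches the paper's, but there is a genuine ordering gap that breaks the argument in one of the two cases covered by the hypothesis.

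Proposition~\ref{prop:cc_homo} has two conclusions: the existence of the nc map $G:\fB_e\to\ol{\fB}_d$ with $G|_\fW=G_\alpha|_\fW$ (which holds for any unital completely contractive homomorphism), and the stronger statements $G(\fW)\subseteq\ol{\fV}^p$ together with $\alpha(f)=f\circ G$ (which require $\alpha$ to be weak-$*$ continuous). Your opening paragraph invokes the stronger conclusions for both $\alpha$ and $\alpha^{-1}$ right away. In the case where $d,e$ are finite but $\alpha$ is not assumed weak-$*$ continuous, you are not entitled to either $G(\fW)\subseteq\ol{\fV}^p$ or the implementation formula at this stage, so the identity $(\alpha^{-1})^*\circ\alpha^*=\id$ cannot yet be turned into $H(G(W))=W$, and your final paragraph's appeal to $G(W)\in\ol{\fV}^p$ is likewise unjustified.

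The fix is exactly what the paper does: reverse the order. Your maximum-principle paragraph only uses $G_i=\alpha(z_i)$ and injectivity of $\alpha$, so it stands on its own and should come first. Once you know $G(\fB_e)\subseteq\fB_d$, you split into the two cases. If $d<\infty$, then for $W\in\fW$ the point $G(W)$ lies in the open ball, so by Theorem~\ref{thm:DavPitts_reps} the fibre $\pi_{d,k}^{-1}(G(W))$ is the singleton $\{\Phi_{G(W)}\}$; hence $\alpha^*(\Phi_W)=\Phi_{G(W)}$ without any weak-$*$ assumption, and Theorem~\ref{thm:finite_dim_reps} gives $G(W)\in\fV$. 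If instead $\alpha$ is weak-$*$ continuous, your argument is fine, except that you also need to note that $\alpha^{-1}$ is weak-$*$ continuous (a standard Krein--Smulian/duality fact, which the paper mentions explicitly) before applying Proposition~\ref{prop:cc_homo} to it.
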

\begin{proof}
By Proposition \ref{prop:cc_homo}, there is an nc map $G=(G_1,\dots,G_d): \fB_e \to \ol{\fB}_d$ such that $G \big|_{\fW}  = G_\alpha|_{\fW}$. We first show that the injectivity of $\alpha$ implies that $G(\fB_e) \subseteq \fB_d$. Assuming the opposite, the maximum principle (Lemma \ref{lem:maximum_principle}) implies that $G$ is constant of norm $1$.
By the construction of $G$, we have that $G_i = \alpha(z_i)$ where $z_i$ denotes the coordinate nc function $z \mapsto z_i$ restricted to $\fW$. Thus, $z_i - G_i(0) \in \ker \alpha$. As $\alpha$ is injective, we obtain that $z_i$ is the constant $G_i(0)$ for all $ i =1,2,\dots$. Since $\|G(0)\|=1$, we conclude that $\fW$ must be empty, which is of course a contradiction. Thus, $G(\fB_e) \subseteq \fB_d$.

Finally, we prove that $\alpha$ is implemented by composition with $G$. 
In the case that $d$ is finite, then as $G(\fW) \subseteq \fB_d$, Theorem \ref{thm:finite_dim_reps} implies that $\pi_{d,k}^{-1}(G(W))$ is the singleton $\{\Phi_{G(W)}\}$ for every $k\in \mathbb N$ and $W \in \fW(k)$. Thus, $\alpha^*(\Phi_W)=\Phi_{G(W)}$ for all $W \in \fW$.
In the case that $\alpha$ is weak-$*$ continuous, then $\alpha$ preserves weak-$*$ continuous representations. Since evaluations by elements of $\ol{\fV}^p$ are the only weak-$*$ continuous elements of $\Rep_{k}(H^\infty(\fV))$ and since $G(\fW) \subseteq \fB_d$, we must have that $\alpha^*(\Phi_W)=\Phi_{G(W)}$ for all $W \in \fW$. In any case we conclude that $G(\fW) \subseteq \fV$ and
\[
\alpha(f)(W) = \Phi_W(\alpha(f))=\alpha^*(\Phi_W)(f)=\Phi_{G(W)}(f)=(f \circ G)(W)
\]
for every $f\in H^\infty(\fV)$ and $W \in \fW$. Replacing the roles of $\alpha$ and $\alpha^{-1}$ (which must be weak-$*$ continuous if $\alpha$ is weak-$*$ continuous) yields an nc map $H:\fB_d \to \fB_e$ mapping $\fW$ into $\fV$ such that   $G\circ H \big|_\fV = {\bf id}_{\fV}$ and $H \circ G \big|_\fW = {\bf id}_{\fW}$.
\end{proof}

Theorem \ref{thm:isomorphism} shows that in the case where $d,e \in \bN$, every completely isometric isomorphism is implemented by a composition with a biholomorphism. 
If follows easily (using Lemma \ref{prop:bound_wot_conv_is_point_conv}) that such an isomorphism is weak-$*$ continuous. 
We record this:

\begin{cor}\label{cor:isomorphism_auto_weakstar}
Let $\fV \subseteq \fB_d$ and $\fW \subseteq \fB_{e}$ be nc varieties with $d,e \in \bN$.
Then every completely isometric isomorphism from $H^\infty(\fV)$ onto $H^\infty(\fW)$ is automatically weak-$*$ continuous.
\end{cor}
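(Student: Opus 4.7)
The plan is to combine Theorem \ref{thm:isomorphism} (which gives an explicit composition formula for $\alpha$) with Lemma \ref{prop:bound_wot_conv_is_point_conv} (which says that bounded weak-operator convergence of multipliers is the same as pointwise weak-operator convergence).

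First, since $d,e<\infty$ and $\alpha$ is a completely isometric isomorphism, Theorem \ref{thm:isomorphism} yields an nc map $G \colon \fB_e \to \fB_d$ with $G(\fW) = \fV$ such that $\alpha(f) = f \circ G$ for every $f \in H^\infty(\fV)$. In particular, $\alpha$ is the restriction-composition operator induced by $G$, and $\alpha^{-1}$ has the analogous form induced by the inverse of $G|_\fW$. So it suffices to show that any such composition map is weak-$*$ continuous.

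Next, I would reduce weak-$*$ continuity to sequential/net-wise continuity on norm bounded sets via the Krein--Smulian theorem: a bounded linear map between dual operator spaces is weak-$*$ continuous if and only if its restriction to every norm-bounded set is weak-$*$ continuous. Since $\alpha$ is a complete isometry it is norm bounded, so it suffices to prove that whenever $\{f_\lambda\}$ is a bounded net in $H^\infty(\fV) = \mlt\cH_\fV$ converging weak-$*$ to $f$, the net $\{\alpha(f_\lambda)\}$ converges weak-$*$ to $\alpha(f)$ in $H^\infty(\fW) = \mlt\cH_\fW$. Recall that on norm-bounded sets of these multiplier algebras the weak-$*$ and weak-operator topologies coincide (by Davidson--Pitts \cite{DavPitts1} via Corollary \ref{cor:unitary_equiv} and Theorem \ref{thm:quotient_mult}), so the problem reduces to WOT convergence on bounded sets.

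Now I apply Lemma \ref{prop:bound_wot_conv_is_point_conv} in both directions. The hypothesis of that lemma is satisfied for $\cH_\fV$ and $\cH_\fW$ because the constant function $1$ is a multiplier that belongs to each of these spaces, so in particular $\{h(X)v : v \in \C^n,\ h \in \cH_\fV\} \supseteq \{v : v \in \C^n\} = \C^n$ for every $X \in \fV(n)$, and likewise on $\fW$. Applying the lemma in $\cH_\fV$, the bounded WOT-convergent net $f_\lambda \to f$ converges pointwise in WOT, i.e.\ $f_\lambda(X) \to f(X)$ in WOT for every $X \in \fV$. Since $G(W) \in \fV$ for every $W \in \fW$, this gives
\[
\alpha(f_\lambda)(W) = f_\lambda(G(W)) \xrightarrow{\textsc{wot}} f(G(W)) = \alpha(f)(W) \quad \text{for every } W \in \fW .
\]
The net $\{\alpha(f_\lambda)\}$ is uniformly bounded because $\alpha$ is an isometry, so applying Lemma \ref{prop:bound_wot_conv_is_point_conv} now in the reverse direction on $\cH_\fW$ shows that $\alpha(f_\lambda) \to \alpha(f)$ in WOT, hence in the weak-$*$ topology on the bounded set in which the net lies. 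This completes the weak-$*$ continuity of $\alpha$. The main small obstacle is the bookkeeping around Krein--Smulian and the coincidence of WOT and weak-$*$ on bounded sets; once those are in place, the proof is essentially a direct application of the two previously established results.
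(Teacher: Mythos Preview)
Your proof is correct and follows essentially the same route as the paper, which simply notes that Theorem \ref{thm:isomorphism} gives $\alpha(f)=f\circ G$ and then appeals to Lemma \ref{prop:bound_wot_conv_is_point_conv}; you have filled in the Krein--Smulian and WOT/weak-$*$ bookkeeping that the paper leaves implicit. One tiny imprecision: the constant function $1$ need not literally lie in $\cH_\fV$, but its projection $P_\fV 1$ does and satisfies $(P_\fV 1)(X)=I_n$ for all $X\in\fV(n)$, which is all you need to verify the hypothesis of Lemma \ref{prop:bound_wot_conv_is_point_conv}.
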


The following corollary to Theorem \ref{thm:isomorphism} should be read with the previous one in mind. 

\begin{cor}\label{cor:isomorphism}
Let $\fV \subseteq \fB_d$ and $\fW \subseteq \fB_{e}$ be nc varieties.
Then $H^\infty(\fV)$ and $H^\infty(\fW)$ are completely isometrically isomorphic via a weak-$*$ continuous map, if and only if $\fV$ and $\fW$ are biholomorphically equivalent, in the sense that there exists an nc holomorphic map $G: \fB_e \to \fB_d$ and an nc holomorphic map $H: \fB_d \to \fB_e$ such that $G\big|_\fW = (H\big|_\fV)^{-1}$. 
\end{cor}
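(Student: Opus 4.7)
\smallskip

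The plan is to derive this corollary directly from Theorem~\ref{thm:isomorphism} (forward direction) and Proposition~\ref{prop:biholo_auto} (backward direction), with an additional verification of weak-$*$ continuity in the latter case. Both directions are essentially bookkeeping once the main machinery of the section is in place.

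For the forward direction, I would start with a weak-$*$ continuous completely isometric isomorphism $\alpha : H^\infty(\fV) \to H^\infty(\fW)$, apply Theorem~\ref{thm:isomorphism} to obtain an nc map $G : \fB_e \to \fB_d$ such that $G|_\fW = G_\alpha|_\fW$ is a bijection of $\fW$ onto $\fV$ and $\alpha(f) = f \circ G$, and then apply the same theorem to $\alpha^{-1}$ (which is again completely isometric and weak-$*$ continuous) to obtain an nc map $H : \fB_d \to \fB_e$ with $H|_\fV$ a bijection of $\fV$ onto $\fW$ and $\alpha^{-1}(g) = g \circ H$. To show $G|_\fW = (H|_\fV)^{-1}$, it suffices to verify that $G \circ H|_\fV = {\bf id}_\fV$ and $H \circ G|_\fW = {\bf id}_\fW$. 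For every $f \in H^\infty(\fV)$ and every $X \in \fV$,
\[
f(X) = \alpha^{-1}(\alpha(f))(X) = (\alpha(f) \circ H)(X) = f(G(H(X))).
\]
Specializing $f$ to the restrictions to $\fV$ of the coordinate nc functions $z_i$ (which lie in $H^\infty(\fV)$, since they are bounded on $\fB_d$), I read off that $G(H(X))_i = X_i$ for every $i$, whence $G(H(X)) = X$. The identity $H(G(W)) = W$ for $W \in \fW$ follows by the same argument with the roles of $\fV$ and $\fW$ swapped.

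For the backward direction, the existence of $G$ and $H$ with $G|_\fW = (H|_\fV)^{-1}$ is precisely the hypothesis of Proposition~\ref{prop:biholo_auto}, so the map $\alpha : f \mapsto f \circ G$ is a completely isometric isomorphism from $H^\infty(\fV)$ onto $H^\infty(\fW)$. It remains to verify that $\alpha$ is weak-$*$ continuous. By the Krein--Smulian theorem it is enough to check continuity on bounded sets, on which the weak-$*$ and weak-operator topologies coincide in view of Corollary~\ref{cor:mult_wst_closed}. So let $\{f_\lambda\}$ be a bounded net in $H^\infty(\fV)$ converging weak-$*$ to $f$; by Lemma~\ref{prop:bound_wot_conv_is_point_conv} (whose spanning hypothesis is easy to verify for $\cH_\fV$, since $K_{X,v,y} \in \cH_\fV$ for $X \in \fV$ and $K_{X,v,y} \ne 0$ whenever $\langle y, v\rangle \ne 0$, so that the kernel functions at $X$ already span a space whose evaluation at $X$ covers $\mathbb{C}^n$), this implies $f_\lambda(X) \to f(X)$ in the weak-operator topology for every $X \in \fV$. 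Hence for every $W \in \fW$,
\[
\alpha(f_\lambda)(W) = f_\lambda(G(W)) \xrightarrow{\textsc{wot}} f(G(W)) = \alpha(f)(W).
\]
Since $\alpha$ is a complete isometry, the net $\{\alpha(f_\lambda)\}$ is bounded, so applying Lemma~\ref{prop:bound_wot_conv_is_point_conv} in $H^\infty(\fW)$ yields $\alpha(f_\lambda) \to \alpha(f)$ in the weak-operator topology, and therefore weak-$*$.

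There is no serious obstacle here; the proof reduces to invoking the theorem twice and transporting the structure back through Proposition~\ref{prop:biholo_auto}. The only minor subtlety is the verification of weak-$*$ continuity in the backward direction, which requires combining Krein--Smulian with the pointwise-convergence characterization of bounded $\textsc{wot}$-convergence supplied by Lemma~\ref{prop:bound_wot_conv_is_point_conv}.
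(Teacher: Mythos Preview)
Your proof is correct and follows essentially the same approach that the paper leaves implicit: the forward direction is immediate from Theorem~\ref{thm:isomorphism} (whose proof already constructs both $G$ and $H$ with $G\circ H|_\fV = {\bf id}_\fV$ and $H\circ G|_\fW = {\bf id}_\fW$), and the backward direction from Proposition~\ref{prop:biholo_auto}. Your added verification of weak-$*$ continuity via Krein--Smulian and Lemma~\ref{prop:bound_wot_conv_is_point_conv} is exactly what the paper gestures at in the sentence preceding Corollary~\ref{cor:isomorphism_auto_weakstar}; one small remark is that your justification of the spanning hypothesis is a bit terse, but it is easily made precise by noting that $\sum_j K_{X,e_j,e_i}(X)e_j = K(X,X)(I)e_i = (I-XX^*)^{-1}e_i$, and $(I-XX^*)^{-1}$ is invertible.
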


\begin{remark}
The above corollary is our noncommutative generalization of \cite[Theorem 4.4]{DRS15}.
Note that something remains to be desired, since, unlike in the commutative setting, we are not able to show that $G$ and $H$ can be chosen to be automorphisms of the nc ball.
In Section \ref{sec:homog_isom} we will remedy this, under the assumption that the varieties under consideration are homogeneous.
\end{remark}

\section{Homogeneous varieties and a homogeneous Nullstellensatz} \label{sec:homog_case}

In this section, unless stated otherwise, we always assume $d<\infty$.
An ideal $I \triangleleft \F_d$ or $I \triangleleft H^\infty(\fB_d)$ is said to be {\em homogeneous} if for every $f \in I$, every homogeneous component $f_n$ of $f$ is in $I$.

\begin{prop}\label{prop:homogeneous}
$I$ is a homogeneous ideal in $\F_d$ if and only if for every polynomial $p\in I$ and for all $t \in \bC$ the polynomial $z \mapsto p(tz)$ is also in $I$.
Likewise, $J$ is a homogeneous ideal in $H^\infty(\fB_d)$ if and only if for every function $f\in J$ and for all $t \in \ol{\bD}$, the function $z \mapsto f(tz)$ is also in $J$.
\end{prop}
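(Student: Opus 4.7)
Both equivalences rest on the identity $f(tz) = \sum_n t^n f_n(z)$, where $f_n$ denotes the homogeneous component of degree $n$ of $f$ --- a finite sum in the polynomial case, and the Taylor series in the $H^\infty$ case.

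For the polynomial case, $(\Rightarrow)$ is immediate: if $I$ is homogeneous and $p = \sum_{n=0}^{N} p_n \in I$, then $p(tz) = \sum_{n=0}^{N} t^n p_n(z)$ is a finite linear combination of elements of $I$, hence in $I$. For $(\Leftarrow)$, given $p \in I$ of degree at most $N$, pick distinct $t_0,\ldots,t_N \in \bC$; by hypothesis each $p(t_j z)$ lies in $I$, and inverting the Vandermonde matrix $(t_j^n)_{0 \leq j,n \leq N}$ exhibits each $p_n$ as a $\bC$-linear combination of the $p(t_j z)$, so $p_n \in I$.

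For the $H^\infty(\fB_d)$ case I will use the Taylor-series theorem stated at the end of Section~\ref{sec:szego_and_bounded_functions}: every $f \in H^\infty(\fB_d)$ admits a Taylor series $f = \sum_n f_n$ whose Cesàro partial sums converge to $f$ in the weak-$*$ topology. I proceed under the assumption that $J$ is weak-$*$ closed, as is the case for all homogeneous ideals of interest in this paper (in particular the ideals $\cJ_\fV$). For $(\Rightarrow)$: if $J$ is homogeneous and $f \in J$, then each $f_n \in J$, and for $t \in \ol{\bD}$ the Cesàro partial sums of $\sum_n t^n f_n$ are finite linear combinations in $J$ which converge weak-$*$ to $\sigma_t(f)$, where $\sigma_t(f)(Z) := f(tZ)$; weak-$*$ closedness then gives $\sigma_t(f) \in J$. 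For $(\Leftarrow)$: suppose $\sigma_t(f) \in J$ for every $t \in \ol{\bD}$. Fix $r \in (0,1)$ and consider the discrete averages
\[
A_N := \frac{1}{N}\sum_{k=0}^{N-1} e^{-2\pi i n k/N}\, \sigma_{r\zeta_N^k}(f), \qquad \zeta_N := e^{2\pi i/N},
\]
each of which lies in $J$. These are Riemann sums for the integral $\frac{1}{2\pi} \int_0^{2\pi} \sigma_{re^{i\theta}}(f)\, e^{-in\theta}\, d\theta$, and a pointwise computation (expand $\sigma_{re^{i\theta}}(f)(Z)$ in its Taylor series in $\theta$ and integrate) shows that this integral equals $r^n f_n$. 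Since the $A_N$ are uniformly norm-bounded by $\|f\|_\infty$ and converge pointwise on $\fB_d$ to $r^n f_n$, Lemma~\ref{prop:bound_wot_conv_is_point_conv} yields $\textsc{wot}$ (hence weak-$*$) convergence of $A_N$ to $r^n f_n$, and weak-$*$ closedness of $J$ then gives $f_n \in J$.

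The main technical point is controlling the weak-$*$ convergence in the $H^\infty$ direction; this rests on the Cesàro convergence theorem for the Taylor series (via the identification $H^\infty(\fB_d) \cong \cL_d$ of Corollary~\ref{cor:unitary_equiv}) together with Lemma~\ref{prop:bound_wot_conv_is_point_conv}. The implicit assumption that $J$ is weak-$*$ closed is not stated as part of the definition of homogeneous ideal, but it is harmless for the subsequent applications.
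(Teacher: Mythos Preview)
The paper omits the proof of this proposition entirely (deferring to \cite[Proposition~6.3]{DRS11} for the commutative analogue), so there is no argument to compare yours against.

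Your polynomial case is correct; Vandermonde inversion is the standard way to extract homogeneous components from finitely many dilates.

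Your $H^\infty$ case is also correct under the weak-$*$ closedness hypothesis you add, and you are right to flag it: the proposition as literally stated does not assume it, and without some closedness the argument does not go through --- in $(\Rightarrow)$ one only exhibits $f_t$ as a limit of elements of $J$, never as a finite algebraic combination. Since every ideal the paper actually works with ($\cJ_\fV$, $\cI_\fV$, weak-$*$ closures of polynomial ideals) is closed in the relevant topology, this is indeed harmless for the applications, and your caveat is the honest way to handle it.

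Two minor simplifications are available. In $(\Leftarrow)$, your discrete averages $A_N$ converge to $r^n f_n$ \emph{in norm}, not merely pointwise: a direct computation gives $A_N = \sum_{m \equiv n \pmod N} r^m f_m$, and since $\|f_m\|_\infty \le \|f\|_\infty$ the tail is dominated by $\|f\|_\infty\, r^{n+N}/(1-r^N) \to 0$. So Lemma~\ref{prop:bound_wot_conv_is_point_conv} is not needed here, and norm closedness of $J$ already suffices for this direction. Similarly, in $(\Rightarrow)$ for $|t|<1$ the ordinary partial sums $\sum_{n\le N} t^n f_n$ converge to $f_t$ in norm; Ces\`aro summation and weak-$*$ closedness are only genuinely needed for the boundary case $|t|=1$.
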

\begin{proof}
Omitted (see \cite[Proposition 6.3]{DRS11} for a similar result in the commutative setting).
\end{proof}

A subset $\fS \subseteq \fB_d$ is said to be {\em homogeneous} if $t\fS \subseteq \fS$ for all $t \in \ol{\bD}$.
A variety $\fV \subseteq \fB_d$ that is homogeneous is called a {\em homogeneous variety}.

\begin{prop}\label{prop:homogeneous2}
If a set $\fS \subseteq \fB_d$ is homogeneous, then both $I(\fS)$ and $\cJ_\fS$ are homogeneous ideals.
If an ideal $I$ in $\F_d$ or $H^\infty(\fB_d)$ is homogeneous, then $V_{\fB_d}(I)$ is a homogeneous variety.
\end{prop}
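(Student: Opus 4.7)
The plan is to deduce everything directly from the characterization in Proposition \ref{prop:homogeneous}: an ideal is homogeneous iff it is invariant under the scaling maps $f(z) \mapsto f(tz)$ (for $t \in \bC$ in the polynomial case, $t \in \overline{\bD}$ in the $H^\infty$ case). With this in hand, each of the four implications reduces to a one-line check.

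First I would handle the ideals attached to a homogeneous set $\fS$. Given $f \in \cJ_\fS$ and $t \in \overline{\bD}$, the function $z \mapsto f(tz)$ is well-defined on $\fB_d$ (since $\|tz\| \le \|z\| < 1$), lies in $H^\infty(\fB_d)$, and vanishes on $\fS$ because $tX \in \fS$ whenever $X \in \fS$. By Proposition \ref{prop:homogeneous}, $\cJ_\fS$ is homogeneous. For $I(\fS)$ the same calculation shows $p(tz) \in I(\fS)$ for every $p \in I(\fS)$ and every $t \in \overline{\bD}$; to upgrade from $t \in \overline{\bD}$ to $t \in \bC$, fix $X \in \fS$ and note that $t \mapsto p(tX)$ is a polynomial in $t$ (with matrix coefficients) that vanishes on the infinite set $\overline{\bD}$, hence identically. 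Thus $p(tz)$ vanishes on $\fS$ for all $t \in \bC$, and Proposition \ref{prop:homogeneous} applies.

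Next I would handle the reverse direction: if $I$ is a homogeneous ideal in $\F_d$ or in $H^\infty(\fB_d)$, then $V_{\fB_d}(I)$ is homogeneous. Let $X \in V_{\fB_d}(I)$ and $t \in \overline{\bD}$. Then $tX \in \fB_d$ since $\|tX\| \le \|X\| < 1$. For any $p \in I$, Proposition \ref{prop:homogeneous} gives that the function $g(z) := p(tz)$ belongs to $I$ as well, so $p(tX) = g(X) = 0$. This holds for every $p \in I$, so $tX \in V_{\fB_d}(I)$, proving that $V_{\fB_d}(I)$ is homogeneous. (In the case $I \triangleleft H^\infty(\fB_d)$, note that $V_{\fB_d}(I)$ is automatically an nc variety in the sense used in this paper, since it is cut out by elements of $H^\infty(\fB_d) = \mlt \cH^2_d$.)

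There is no real obstacle here; the only minor subtlety is the $t \in \overline{\bD}$ versus $t \in \bC$ distinction in the polynomial case, which is dispatched by the one-variable polynomial identity argument above. All the content lives in Proposition \ref{prop:homogeneous}.
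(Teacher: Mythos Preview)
Your proof is correct and follows exactly the approach the paper intends: the paper's own proof reads ``Clear from the definitions and Proposition \ref{prop:homogeneous},'' and you have simply spelled out those details. The only thing you added beyond what the paper states is the one-variable polynomial argument to pass from $t \in \overline{\bD}$ to $t \in \bC$ for $I(\fS)$, which is the natural way to handle that case.
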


\begin{proof}
Clear from the definitions and Proposition \ref{prop:homogeneous}.
\end{proof}

\begin{thm}\label{thm:null_poly}
Let $J \triangleleft \F_d$ be a homogeneous ideal.
Then
\[I(V_{\fB_d}(J)) = J. \]
\end{thm}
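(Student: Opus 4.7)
The inclusion $J \subseteq I(V_{\fB_d}(J))$ is immediate, so the content is the reverse inclusion. The plan is to produce, for each homogeneous $p \notin J$, an explicit tuple in $V_{\fB_d}(J)$ on which $p$ does not vanish, using a compression of the left shift on the full Fock space to a finite-dimensional coinvariant subspace coming from $J$.

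First I would reduce to the case in which $p$ is homogeneous. Assuming $1 \notin J$ (else $J = \F_d$ trivially), the homogeneity of $J$ forces $0 \in V_{\fB_d}(J)$ and closure of $V_{\fB_d}(J)$ under scaling by $t \in \ol{\bD}$ (use $\|tX\| \le |t|\|X\| < 1$). For $p = \sum_n p_n \in I(V_{\fB_d}(J))$, the identity $\sum_n t^n p_n(X) = 0$ for all $t$ and all $X \in V_{\fB_d}(J)$ forces each $p_n \in I(V_{\fB_d}(J))$.

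Next I would construct the universal compressed shift adapted to $J$. Identify $(\F_d)^{(n)}$ with $(\bC^d)^{\otimes n}$ via the basis of monomials and endow it with the inner product making that basis orthonormal. Set $J_n = J \cap (\F_d)^{(n)}$, $E_n = J_n^\perp$ inside $(\F_d)^{(n)}$, and $F_k = E_0 \oplus E_1 \oplus \cdots \oplus E_k$. The key computation is that $L_i^* E_m \subseteq E_{m-1}$: for $\eta \in E_m$ and $q \in J_{m-1}$, one has $\langle L_i^* \eta, q\rangle = \langle \eta, z_i q\rangle = 0$ because $z_i q \in J_m$ (here we use that $J$ is a \emph{left} ideal). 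Consequently $F_k$ is $L^*$-invariant, hence coinvariant for $L$. Let $T_k = (P_{F_k} L_1 P_{F_k}, \ldots, P_{F_k} L_d P_{F_k})$, a row contraction on the finite-dimensional space $F_k$.

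I would then verify that $q(T_k) = 0$ for every $q \in J$. By coinvariance, $q(T_k)v = P_{F_k} q(L) v$ for all $v \in F_k$. For homogeneous $q \in J_m$ and $v \in E_j \subseteq F_k$, $q(L) v$ is just the product $q\cdot v \in (\F_d)^{(m+j)}$, and since $J$ is a two-sided ideal this product lies in $J_{m+j}$, which is orthogonal to $E_{m+j}$; either this component is cut off by $P_{F_k}$ (if $m+j > k$) or projected to $0$. Using homogeneity of $J$, $q(T_k) = 0$ for all $q \in J$. Scaling: for $r \in (0,1)$, $rT_k$ is a strict row contraction, so $rT_k \in \fB_d$, and $q(rT_k) = \sum r^m q_m(T_k) = 0$ for every $q \in J$. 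Hence $rT_k \in V_{\fB_d}(J)$. Finally, if $p \in I(V_{\fB_d}(J))$ is homogeneous of degree $n_0$, then $0 = p(rT_k) = r^{n_0} p(T_k)$, so $p(T_k) = 0$; applying this to $\xi_0 \in E_0 \subseteq F_k$ for any $k \ge n_0$ gives
\[
0 = p(T_k)\xi_0 = P_{F_k}\bigl(p(L)\xi_0\bigr) = P_{F_k} p = P_{E_{n_0}} p,
\]
so $p \perp E_{n_0}$, which forces $p \in J_{n_0} \subseteq J$.

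The only delicate point is the book-keeping of the subspaces $E_n$ and $F_k$ and the verification that polynomial evaluation at the compressed shift interacts correctly with the ideal structure — this is routine but is where the hypothesis that $J$ is a two-sided homogeneous ideal is genuinely used. Everything else (reduction to homogeneous $p$, the scaling trick to get into the open ball, extracting $p \in J_{n_0}$ from the equation $P_{E_{n_0}} p = 0$) is straightforward once the compressed shift $T_k$ is in place.
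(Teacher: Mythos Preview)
Your proof is correct and follows essentially the same approach as the paper: compress the left shift to the finite-dimensional coinvariant subspace $F_k = \bigoplus_{m\le k} (J_m)^\perp$, check that every $q\in J$ vanishes on this compression while a homogeneous $p\notin J$ does not (via $p(T_k)\xi_0 = P_{E_{n_0}}p$), and scale by $r\in(0,1)$ to land in $\fB_d$. The only cosmetic difference is that the paper first introduces the full compression $S = P_{J^\perp}M_zP_{J^\perp}$ and cites \cite[Lemma 7.6]{ShalitSolel} for the equivalence $q(S)=0 \iff q\in J$, then passes to $P_nSP_n$ (which equals your $T_n$), whereas you verify the needed facts about $T_k$ directly; your version is slightly more self-contained.
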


\begin{proof}
By definition, $J \subseteq I(V_{\fB_d}(J))$.
For the converse, note that by Propositions \ref{prop:homogeneous} and \ref{prop:homogeneous2}, $I(V_{\fB_d}(J))$ is also a homogeneous ideal.
Let $p \notin J$ be a homogeneous polynomial.
We will find $X \in V_{\fB_d}(J)$ such that $p(X) \neq 0$.

Identifying $J$ as a subspace of $\cH^2_d$, we consider the compression of the shift $M_z = (M_{z_1}, \ldots, M_{z_d})$ to $J^\perp = \cH^2_{d} \ominus J$:
\[
S = P_{J^\perp} M_z P_{J^\perp}.
\]
A homogeneous polynomial $q$ satisfies $q(S) = 0$ if and only if $q \in J$.
Therefore, having chosen $p \notin J$, we have $p(S) \neq 0$ \cite[Lemma 7.6]{ShalitSolel}.
Let $P_n$ denote the orthogonal projection onto the polynomials of degree less than or equal to $n$.
Since $J$ is homogeneous, $P_n$ commutes with the projection of $\cH^2_{d}$ onto $J^\perp$.
It follows that:
\begin{enumerate}
\item for every $n$ and every $q \in J$, $q(P_n S P_n) = P_n q(S) P_n = 0$; and
\item for some $n$, $p(P_n S P_n) = P_n p(S) P_n  \neq 0$.
\end{enumerate}
Letting $n$ be as in (2) above, we pick some $t \neq 0$ in the open unit disc.
Then we have that $X:=t P_n S P_n \in V_{\fB_d}(J)$ while $p(t P_n S P_n) \neq 0$.
Thus $p \notin I(V_{\fB_d}(J))$, and the proof is complete.
\end{proof}

\begin{remark}
Note that the result as stated is false for nonhomogeneous ideals, as the example $J = \langle xy - yx - 1 \rangle \triangleleft \F_2$ shows.
This example shows that the result is false if one replaces $\fB_d$ with $\M_d$, or even with $B(H)^d$ for some Hilbert space $H$.
Thus, a perfect Nullstellensatz $J = I(V(J))$ does not hold  without some further assumptions.
On the other hand, we will see below in Corollary \ref{cor:free_com_NSTZ} that a perfect free Nullstellensatz does hold in the free commutative case.
\end{remark}

\begin{remark}
For a version of the Nullstellensatz that works in the noncommutative and nonhomogeneous setting, see Amitsur's Nullstellensatz \cite{Amitsur}.
Although our result is rather simple minded in comparison, it does seem to contain independent information.
For a nonhomogeneous Nullstellensatz closer in spirit to our result, see \cite[Theorem 6.3]{HM2004}.
For a closely related homogeneous Nullstellensatz, where the variety consists of {\em operator} row contractions satisfying the relations, see \cite[Theorem 7.7]{ShalitSolel}.
\end{remark}

\begin{prop}\label{prop:hom_poly_closed}
For a weak-$*$ closed ideal $J \triangleleft H^\infty(\fB_d)$, the following are equivalent.
\begin{enumerate}
\item $J$ is homogeneous.
\item $V_{\fB_d}(J)$ is homogeneous.
\item $J$ is the weak-$*$ closure of a homogeneous ideal $I \triangleleft \F_d$.
\end{enumerate}
In fact, if $J \triangleleft H^\infty(\fB_d)$ is a weak-$*$ closed homogeneous ideal, then
\[
J = \ol{J \cap \F_d}^{w*} ,
\]
and $J \cap \F_d$ is the unique ideal in $\F_d$ with closure equal to $J$.
\end{prop}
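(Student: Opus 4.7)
\emph{Proof proposal.} My plan is to run the proof as a cycle $(1)\Rightarrow(3)\Rightarrow(1)$, with the ``in fact'' identity $J=\ol{J\cap\F_d}^{w^*}$ falling out of $(1)\Rightarrow(3)$; the direction $(1)\Rightarrow(2)$ is immediate from Proposition~\ref{prop:homogeneous2}, and the remaining implication $(2)\Rightarrow(1)$ will be the main obstacle.

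For $(1)\Rightarrow(3)$ together with the identity $J=\ol{J\cap\F_d}^{w^*}$: set $I:=J\cap\F_d$. Since $d<\infty$, every homogeneous component of a polynomial is again a polynomial, so the homogeneity of $J$ forces $I$ to be a homogeneous polynomial ideal. The inclusion $\ol{I}^{w^*}\subseteq J$ is automatic from weak-$*$ closedness of $J$. For the reverse, take $f\in J$ and apply the Taylor series theorem at the end of Section~\ref{sec:szego_and_bounded_functions}: the Ces\`{a}ro partial sums of $f=\sum_n f_n$ converge to $f$ weak-$*$. Homogeneity of $J$ puts each $f_n$ in $J$, and being a polynomial places it in $I$; hence each Ces\`{a}ro sum lies in $I$ and $f\in\ol{I}^{w^*}$. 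Uniqueness of $I$ among homogeneous polynomial ideals follows from the identification $\ol{I}^{w^*}\cap\F_d^{(n)}=I\cap\F_d^{(n)}$, which I would obtain from weak-$*$ continuity of the degree-$n$ projection $\pi_n$: writing Taylor coefficients as inner products $c_k(f)=\langle f\cdot 1,z^k\rangle/\|z^k\|^2$ in $\cH^2_d$ shows $c_k(f_\alpha)\to c_k(f)$ for any bounded weak-$*$ convergent net, so $(f_\alpha)_n\to f_n$ in the finite-dimensional (hence weak-$*$ closed) subspace $\F_d^{(n)}$; Krein--Smulian then promotes to full weak-$*$ continuity of $\pi_n$.

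For $(3)\Rightarrow(1)$ I would exploit the gauge action $\phi_t(f)(z):=f(tz)$ for $t\in\bT$. On $\cH^2_d$ define $U_t$ to act as $t^n I$ on the degree-$n$ subspace; a direct computation gives $U_t M_f U_t^*=M_{\phi_t(f)}$, so each $\phi_t$ is a completely isometric, weak-$*$ continuous automorphism of $H^\infty(\fB_d)$. For a homogeneous polynomial ideal $I$, Proposition~\ref{prop:homogeneous} gives $\phi_t(I)\subseteq I$, and weak-$*$ continuity of $\phi_t$ upgrades this to $\phi_t(J)\subseteq J$ for $J:=\ol{I}^{w^*}$. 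Since $J$ is weak-$*$ closed, the Fourier-type identity $f_n=\frac{1}{2\pi}\int_0^{2\pi}e^{-in\theta}\phi_{e^{i\theta}}(f)\,d\theta$ (as a weak-$*$ Riemann integral) places every homogeneous component $f_n$ of $f\in J$ back in $J$, yielding homogeneity of $J$.

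The main obstacle will be $(2)\Rightarrow(1)$. My plan is to route this through $(2)\Rightarrow(3)$: given $V:=V_{\fB_d}(J)$ homogeneous, set $I:=I(V)\cap\F_d$, which is a homogeneous polynomial ideal by Proposition~\ref{prop:homogeneous2} and which satisfies $I(V_{\fB_d}(I))=I$ by the polynomial Nullstellensatz (Theorem~\ref{thm:null_poly}). Applying $(1)\Rightarrow(3)$ to the homogeneous ideal $\cJ_V$ gives $\cJ_V=\ol{\cJ_V\cap\F_d}^{w^*}=\ol{I}^{w^*}$, and the automatic inclusion $J\subseteq\cJ_V=\ol{I}^{w^*}$ is immediate. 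The hard step is to promote this to the equality $J=\ol{I}^{w^*}$, which is a genuine Nullstellensatz-type assertion ($\cJ_{V(J)}=J$ for weak-$*$ closed ideals whose zero set is homogeneous); I expect it to rest on the free homogeneous Nullstellensatz announced for Section~\ref{sec:homog_isom}. Once available, the already-proved $(3)\Rightarrow(1)$ closes the cycle.
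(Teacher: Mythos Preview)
The paper omits the proof entirely, so there is no paper argument to compare against. Your treatment of $(1)\Leftrightarrow(3)$, of $(1)\Rightarrow(2)$, and of the ``in fact'' clause is sound and is the natural approach: Ces\`aro summability of the Taylor series handles $J\subseteq\ol{J\cap\F_d}^{w^*}$, the gauge action $\phi_t$ handles $(3)\Rightarrow(1)$, and weak-$*$ continuity of the degree-$n$ projection gives uniqueness.

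The genuine problem is $(2)\Rightarrow(1)$. Your proposed route is circular: the free homogeneous Nullstellensatz you are reaching for is Theorem~\ref{thm:homoclosedweakstar}, and that theorem already assumes $J$ is homogeneous (and in fact its proof \emph{uses} the present proposition). More seriously, the implication $(2)\Rightarrow(1)$ appears to be false as stated. Take $d=1$ and let $s\in H^\infty(\bD)$ be a singular inner function with $s(0)\neq 0$ (cf.\ Example~\ref{eq:no_fd_nullss}). Set $J=zs\,H^\infty(\bD)$, a proper weak-$*$ closed ideal. For any $X\in\fB_1(n)$ the matrix $s(X)$ is invertible (upper-triangularize $X$; the diagonal entries $s(\lambda_i)$ are nonzero since $s$ has no zeros in $\bD$), so $Xs(X)=0$ forces $X=0$. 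Thus $V_{\fB_1}(J)=\{0\}$, which is homogeneous. But $J$ is not homogeneous: the degree-one component of $zs$ is $s(0)z$, and $z\in J$ would force $1\in sH^\infty(\bD)$, contradicting that $s$ is not invertible in $H^\infty(\bD)$.

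So the equivalence as written cannot hold; only $(1)\Leftrightarrow(3)$ and $(1)\Rightarrow(2)$ survive, and those are what the rest of the paper actually uses (Theorem~\ref{thm:homoclosedweakstar} and its corollary only invoke the ``in fact'' identity for $J$ already homogeneous). You should flag $(2)\Rightarrow(1)$ as an apparent misstatement rather than try to prove it.
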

\begin{proof}
Omitted.
\end{proof}

\begin{cor}
If $\fV$ is a homogeneous holomorphic nc variety in $\fB_d$, then $\fV$ is in fact an algebraic variety: there exists $I \triangleleft \F_d$ such that
$\fV = V_{\fB_d}(I)$.
\end{cor}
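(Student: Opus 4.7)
I would define $I := I(\fV) = \{p \in \F_d : p(X) = 0 \text{ for all } X \in \fV\}$, the ideal of free polynomials vanishing on $\fV$, and aim to show $\fV = V_{\fB_d}(I)$. The inclusion $\fV \subseteq V_{\fB_d}(I)$ is immediate, so the substantive work is in the reverse direction.

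Fix $X_0 \in \fB_d \setminus \fV$. Because $\fV$ is a holomorphic nc variety, there exists a scalar-valued nc holomorphic function $f$ on $\fB_d$ with $f|_\fV = 0$ and $f(X_0) \neq 0$. By the Taylor--Taylor theory for nc functions (\cite{KVBook}), $f$ admits an expansion at the origin $f = \sum_{n \geq 0} f_n$, in which each homogeneous component $f_n \in \F_d$ is a free polynomial of degree $n$, and this series converges pointwise on $\fB_d$. I would prove the key claim that $f_n \in I$ for every $n$; once established, the relation $f(X_0) = \sum_n f_n(X_0) \neq 0$ forces $f_n(X_0) \neq 0$ for some $n$, exhibiting a polynomial in $I$ that does not vanish at $X_0$ and hence placing $X_0$ outside $V_{\fB_d}(I)$.

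To see that $f_n \in I$, fix $X \in \fV$ and look at the scalar analytic function $g(t) := f(tX) = \sum_{n \geq 0} t^n f_n(X)$. Since $\|X\| < 1$, the series in $t$ has radius of convergence at least $1/\|X\| > 1$, so $g$ is holomorphic on a disc strictly containing $\ol{\bD}$. Homogeneity of $\fV$ gives $tX \in \fV$ for every $t \in \ol{\bD}$, hence $g \equiv 0$ on $\ol{\bD}$ and therefore on its full disc of convergence. Reading off Taylor coefficients yields $f_n(X) = 0$ for every $n$, and as $X \in \fV$ was arbitrary, $f_n \in I$.

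The one point requiring care is the convergence and coefficient-extraction of the Taylor--Taylor expansion, but this is standard on the nc ball and the radius estimate $1/\|X\| > 1$ already encapsulates it. Notice that no global boundedness assumption on $f$ is needed, since the entire argument takes place along the one-dimensional complex slice $\{tX : t \in \ol{\bD}\}$, which lies strictly inside $\fB_d$. The only mild obstacle I anticipate is verifying cleanly that the series may be viewed both as an nc series on $\fB_d$ and as an ordinary scalar power series in $t$ on this slice, with matching homogeneous-degree coefficients; this identification follows directly from the definition of $f_n$ as the homogeneous component of degree $n$.
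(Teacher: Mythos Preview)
Your proof is correct and complete. The ideal you take, $I = I(\fV)$, coincides with the paper's choice $\cJ_\fV \cap \F_d$, and your argument that each homogeneous component $f_n$ of $f$ lies in $I(\fV)$ via the one-variable identity theorem on the slice $t \mapsto f(tX)$ is sound. The only point you flag as delicate---pointwise convergence of the Taylor--Taylor series on $\fB_d$---is handled exactly the way you indicate: for each fixed $X$, the scalar function $g(t)=f(tX)$ is holomorphic on $\{|t|<1/\|X\|\}$, and its Taylor coefficients at $0$ agree with $f_n(X)$ by the integral formula, so $f(X)=\sum_n f_n(X)$ at every point. That the $f_n$ are genuine free polynomials when $d<\infty$ is standard (and is also recorded in the paper as Lemma~\ref{lem:circ_act}(i)).

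Your route differs from the paper's. The paper writes simply $\fV = V_{\fB_d}(\cJ_\fV)$ and takes $I=\cJ_\fV\cap\F_d$, relying on Proposition~\ref{prop:hom_poly_closed} (whose proof is omitted) to conclude that the weak-$*$ closed homogeneous ideal $\cJ_\fV$ is the weak-$*$ closure of its polynomial part, hence cuts out the same variety. Your argument bypasses the weak-$*$ machinery entirely: you never need to know that $\cJ_\fV=\overline{\cJ_\fV\cap\F_d}^{w*}$, nor even that the defining functions of $\fV$ are bounded. This makes your proof more elementary and self-contained, and it applies verbatim to homogeneous varieties cut out by arbitrary locally bounded nc holomorphic functions, whereas the paper's two-line proof implicitly leans on the $H^\infty(\fB_d)$ structure developed earlier. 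The paper's approach, in turn, packages the homogeneous-decomposition idea once into Proposition~\ref{prop:hom_poly_closed} and then reuses it; yours unpacks exactly what is needed here.
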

\begin{proof}
$\fV = V_{\fB_d}(\cJ_\fV)$.
Now take $I = \cJ_\fV \cap \F_d$.
\end{proof}

The significance of the following result is that it shows that, in the context of homogeneous varieties and ideals, it does not matter whether our starting point is a variety or an ideal, since there is a bijective correspondence between homogeneous weak-$*$ closed ideals in $H^\infty(\fB_d)$ and homogeneous varieties.

\begin{thm}\label{thm:homoclosedweakstar}
If $I \triangleleft \F_d$, is homogeneous, then
\[
\cJ_{V_{\fB_d}(I)} = \ol{I}^{w*}.
\]
If $J \triangleleft H^\infty(\fB_d)$ is a homogeneous weak-$*$ closed ideal, then
\[
\cJ_{V_{\fB_d}(J)} = J.
\]
\end{thm}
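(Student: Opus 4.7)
The plan is to prove the first statement and then deduce the second from it using Proposition \ref{prop:hom_poly_closed}.

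\textbf{First statement.} The inclusion $\overline{I}^{w*} \subseteq \cJ_{V_{\fB_d}(I)}$ is immediate once we observe that $\cJ_{V_\fB_d(I)}$ is weak-$*$ closed: it is the intersection over $X \in V_{\fB_d}(I)$ of the kernels of the evaluation functionals $f \mapsto f(X)$, each of which is weak-$*$ continuous because $X \in \fB_d$ is a strict row contraction (alternatively, use that $\cJ_\fV$ is the kernel of the complete contraction $H^\infty(\fB_d) \to \mlt \cH_\fV$ of Theorem \ref{thm:quotient_mult}, which is weak-$*$ continuous). Since $I$ obviously sits inside $\cJ_{V_{\fB_d}(I)}$, the containment follows.

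For the reverse inclusion, let $f \in \cJ_{V_{\fB_d}(I)}$ and use the Taylor expansion $f = \sum_{n=0}^\infty f_n$ from the last theorem of Section \ref{sec:szego_and_bounded_functions}, which converges pointwise on $\fB_d$ and Ces\`{a}ro in the weak-$*$ topology. The key observation is that $V_{\fB_d}(I)$ is homogeneous (Proposition \ref{prop:homogeneous2}), so for every $X \in V_{\fB_d}(I)$ and every $t \in \bD$, the tuple $tX$ still lies in $V_{\fB_d}(I)$, hence
\[
0 = f(tX) = \sum_{n=0}^\infty t^n f_n(X)
\]
for all $t$ in a disc; by uniqueness of power series this forces $f_n(X) = 0$ for every $n$ and every $X \in V_{\fB_d}(I)$. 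Each $f_n$ is a homogeneous free polynomial, so Theorem \ref{thm:null_poly} applied to $I$ yields $f_n \in I(V_{\fB_d}(I)) = I$. The Ces\`{a}ro means of the partial sums of $\sum f_n$ are then polynomial elements of $I$ converging to $f$ in the weak-$*$ topology, so $f \in \overline{I}^{w*}$.

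\textbf{Second statement.} Suppose $J \triangleleft H^\infty(\fB_d)$ is homogeneous and weak-$*$ closed. By Proposition \ref{prop:hom_poly_closed}, $I := J \cap \F_d$ is a homogeneous polynomial ideal with $J = \overline{I}^{w*}$. Since $I \subseteq J$, one has $V_{\fB_d}(J) \subseteq V_{\fB_d}(I)$. For the reverse containment, fix $X \in V_{\fB_d}(I)$ and $f \in J$; expanding $f = \sum_n f_n$ and using that $J$ is homogeneous, each $f_n$ lies in $J$, and being a polynomial it lies in $I$, so $f_n(X) = 0$. Pointwise convergence of the Taylor series then gives $f(X) = 0$, so $X \in V_{\fB_d}(J)$. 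Thus $V_{\fB_d}(J) = V_{\fB_d}(I)$, and the first statement yields
\[
\cJ_{V_{\fB_d}(J)} = \cJ_{V_{\fB_d}(I)} = \overline{I}^{w*} = J.
\]

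\textbf{Main obstacle.} The only subtle point is the passage from ``$f$ vanishes on $V_{\fB_d}(I)$'' to ``each $f_n$ vanishes on $V_{\fB_d}(I)$''; this is where homogeneity of the variety is indispensable, together with the pointwise convergence of the Taylor series on $\fB_d$. Once this is in place, Theorem \ref{thm:null_poly} carries out the purely polynomial step of the argument, and the Ces\`{a}ro weak-$*$ convergence does the work of approximating a bounded nc function by elements of the ideal.
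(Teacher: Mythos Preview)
Your proof is correct. Both your argument and the paper's rest on Theorem~\ref{thm:null_poly} and Proposition~\ref{prop:hom_poly_closed}, but you reverse the order: the paper reduces the first assertion to the second (via the observation $V_{\fB_d}(I) = V_{\fB_d}(\ol{I}^{w*})$) and then proves the second by showing $\cJ_{V_{\fB_d}(J)} \cap \F_d = J \cap \F_d$ using Theorem~\ref{thm:null_poly}, finishing with Proposition~\ref{prop:hom_poly_closed} applied to both weak-$*$ closed homogeneous ideals. You instead prove the first assertion directly, unpacking the Taylor series to extract the homogeneous components of $f$ and feeding them one at a time into Theorem~\ref{thm:null_poly}; this is essentially a hands-on version of what Proposition~\ref{prop:hom_poly_closed} encapsulates. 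The paper's route is slightly slicker because it avoids touching the Taylor expansion explicitly, while yours has the advantage of being self-contained and making transparent exactly where Ces\`{a}ro weak-$*$ convergence and pointwise convergence of the series are used.
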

\begin{proof}
It suffices to prove the second assertion, since $V_{\fB_d}(I) = V_{\fB_d}(\ol{I}^{w*})$.
Now, for a homogeneous weak-$*$ closed ideal $J \triangleleft H^\infty(\fB_d)$,
\[
\cJ_{V_{\fB_d}(J)} \cap \F_d = I({V_{\fB_d}(J)}) = I({V_{\fB_d}(J \cap \F_d)}) = J \cap \F_d,
\]
where for the last equality we used Theorem \ref{thm:null_poly}.
By Proposition \ref{prop:hom_poly_closed} we find that $\cJ_{V_{\fB_d}(J)} = J$.
\end{proof}

If $\fV$ is a homogeneous variety in $\fB_d$, and $d < \infty$, then we may present an alternative proof of Theorem \ref{thm:mult_are_bounded_on_V}, and there is no need to invoke \cite[Theorem 3.1]{BMV15b}. First we need a couple of lemmas:

\begin{lem} \label{lem:circ_act}
Let $\Omega \subset \M_d$ be a open nc set containing $0$, such that for every $\lambda \in \overline{\D}$ and every $Z \in \Omega$, we have $\lambda Z \in \Omega$ as well. For $\theta \in [0,2\pi]$ we define an action $\alpha_{\theta}$ on the nc holomorphic functions on $\Omega$ via $\alpha_{\theta}(f)(Z) = f(e^{ i \theta} Z)$. Then the following is true:
\begin{itemize}
\item[(i)] The following function is $n$ homogeneous on $\Omega$:
\[
f_n(Z) = \frac{1}{2\pi} \int_0^{2\pi} \alpha_{\theta}(f)(Z) e^{- i n \theta}d\theta.
\]
Furthermore, $f_n$ is a polynomial, and if $f$ is $n$ homogeneous, then $f_n = f$ and $f_m = 0$ for $m \neq n$.

\item[(ii)] If $\Omega = \fB_d$, then $\alpha_{\theta}$ induces a unitary action of the circle on $\cH^2_d$. 
If $\fV \subseteq \fB_d$ is a homogeneous variety, then $\cH_{\fV}$ and $H^{\infty}(\fV)$ are both invariant under $\alpha_{\theta}$ for all $\theta \in [0,2\pi]$.

\end{itemize}
\end{lem}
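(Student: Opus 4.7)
The plan is to recognize $f_n(Z)$ as the $n$-th Fourier coefficient of $\theta \mapsto f(e^{i\theta}Z)$ and identify it with the $n$-th homogeneous component of the Taylor--Taylor expansion of $f$ at $0$. Since $0 \in \Omega$ and $f$ is nc holomorphic, $f$ admits a Taylor--Taylor expansion $f(Z) = \sum_{k=0}^{\infty} f_k(Z)$ at $0$, where each $f_k$ is a homogeneous free polynomial of degree $k$ (see \cite{KVBook}). The hypothesis $\overline{\D}\cdot Z \subseteq \Omega$ for $Z \in \Omega$, combined with openness of $\Omega$, ensures that for each fixed $Z \in \Omega$ the scalar-valued function $\lambda \mapsto f(\lambda Z)$ is analytic on an open neighborhood of $\overline{\D}$, so its Maclaurin series $\sum_k \lambda^k f_k(Z)$ converges uniformly on $|\lambda| \le 1$. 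Substituting $\lambda = e^{i\theta}$ in the defining integral and integrating the uniformly convergent series term by term yields
\[
\frac{1}{2\pi}\int_0^{2\pi} f(e^{i\theta}Z)\,e^{-in\theta}\,d\theta \;=\; \sum_k f_k(Z)\cdot\frac{1}{2\pi}\int_0^{2\pi} e^{i(k-n)\theta}\,d\theta \;=\; f_n(Z),
\]
so $f_n$ (as defined by the integral) coincides with the $n$-th homogeneous component of the Taylor--Taylor series; in particular it is a polynomial and $n$-homogeneous. The last assertion of (i) follows from $f(e^{i\theta}Z)=e^{in\theta}f(Z)$ when $f$ is $n$-homogeneous, together with orthogonality of characters on the circle.

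\textbf{Plan for part (ii).} Using the identification $\cH^2_d \cong \cF_d$ from Proposition \ref{prop:identify_Fock_HK}, the action $\alpha_\theta$ diagonalizes in the orthonormal basis of free monomials: $\alpha_\theta(z^k) = e^{i|k|\theta}z^k$. Thus each $\alpha_\theta$ is a unitary, and $\theta \mapsto \alpha_\theta$ is a strongly continuous unitary representation of the circle on $\cH^2_d$. To show $\cH_\fV$ is invariant, I will instead verify invariance of its orthogonal complement $\cH_\fV^\perp$, which by Lemma \ref{lem:HOmega} is the space of functions in $\cH^2_d$ vanishing on $\fV$: if $h \in \cH_\fV^\perp$ and $W \in \fV$, homogeneity of $\fV$ gives $e^{i\theta}W \in \fV$, so $\alpha_\theta(h)(W) = h(e^{i\theta}W) = 0$. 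Since $\{\alpha_\theta\}$ is a unitary group with $\alpha_\theta^* = \alpha_{-\theta}$, invariance of $\cH_\fV^\perp$ under every $\alpha_\theta$ transfers to $\cH_\fV$. Finally, invariance of $H^\infty(\fV)$ is immediate: $\alpha_\theta$ is composition with the nc map $Z \mapsto e^{i\theta}Z$, which is a bijection of $\fV$ onto itself (by homogeneity), preserves the nc function axioms, and preserves the supremum norm.

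\textbf{Main obstacle.} The argument is essentially formal; the only delicate point is the termwise integration in (i), which relies on observing that the hypotheses force $\lambda \mapsto f(\lambda Z)$ to be analytic on a neighborhood of $\overline{\D}$, so that the one-variable Maclaurin series converges uniformly on the unit circle. Everything else reduces either to the identification of $\cH^2_d$ with $\cF_d$ already established or to the characterization of $\cH_\fV^\perp$ as the vanishing functions on $\fV$.
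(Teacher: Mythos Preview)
Your proposal is correct and follows essentially the same approach as the paper. For (i), both you and the paper identify $f_n(Z)$ with the $n$-th coefficient of the one-variable function $\lambda \mapsto f(\lambda Z)$; the paper verifies $n$-homogeneity by rewriting the integral as a contour integral independent of the radius and then invokes the Taylor expansion near $0$ for the polynomial claim, while you go directly through the Taylor--Taylor expansion and termwise integration --- this is a cosmetic reorganization, not a different idea. For (ii), both arguments observe that $\alpha_\theta$ is diagonal on the monomial basis (hence unitary) and deduce invariance of $\cH_\fV$ from invariance of the vanishing ideal/space under the homogeneity of $\fV$; your treatment of $H^\infty(\fV)$ via the sup-norm-preserving bijection $Z\mapsto e^{i\theta}Z$ is slightly more direct than the paper's appeal to a unitarily implemented automorphism, but amounts to the same thing.
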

\begin{proof}
For $0 < r < 1$ and $\tau \in [0,2\pi]$ we have:
\[
f_n(r e^{i \tau} Z) = \frac{1}{2\pi} \int_0^{2\pi} f(re^{i (\theta + \tau)} Z) e^{- i n \theta}d\theta = \frac{r^n e^{i n \tau}}{2\pi} \int_{|\lambda| = r} f(\lambda Z) \frac{d \lambda}{\lambda^n}.
\]
Since $f(\lambda Z) \frac{1}{\lambda^n}$ is an analytic matrix valued function on $\overline{\D} \setminus \{0\}$, we get that the integral is independent of $r$, hence $f_n$ is $n$-homogeneous. The second statement of $(i)$ follows from the Taylor expansion of $f$ in a neighborhood of the origin.

For $(ii)$ we note that it is immediate that $f \mapsto \alpha_{\theta}(f)$ is a unitary operator on $\cH^2_d$ and it preserves homogeneous components of the Taylor expansion around the origin. 
This induces a unitarily implemented automorphism of $H^{\infty}(\fB_d)$. 
If $\fV \subset \fB_d$ is a homogeneous variety, then its ideal is homogeneous and thus is invariant under $\alpha_{\theta}$, hence $\cH_{\fV}$ is also invariant under this action of the circle. 
We also note that each $\alpha_{\theta}$ commutes with the projection on $\cH_{\fV}$ and thus it induces a unitarily implemented automorphism on $H^{\infty}(\fV)$.

\end{proof}

\begin{lem} \label{lem:homog_func_is_poly}
Let $\fV \subset \fB_d$ be a homogeneous variety. Let $f$ be an nc holomorphic function on $\fV$, such that for every $Z \in \fV$ and every $\lambda \in \D$ we have $f(\lambda Z) = \lambda^n f(Z)$. 
Then,
\begin{itemize}
 \item[(i)] There exists an $n$-homogeneous polynomial $p \in \F_d$, such that $p|_{\fV} = f$;
 
 \item[(ii)] $f \in \mlt \cH_{\fV}$ and $\|f\|_{\infty} = \|f\|_{\mlt} = \|f\|_{\cH_{\fV}}=\|p\|_{\infty}= \|p\|_{\mlt}=\|p\|_{\cH^2_d}$.
\end{itemize}
\end{lem}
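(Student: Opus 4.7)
The plan. For part (i), since $\fV$ is homogeneous, $0\in\fV$ (the case $\fV=\emptyset$ being vacuous). As $f$ is nc holomorphic on $\fV$, it extends to a bounded nc holomorphic function $F$ on some neighborhood $U$ of $0$ in $\fB_d$; shrinking $U$ if necessary, we may assume $U$ is invariant under the rotation action $Z\mapsto e^{i\theta}Z$. Lemma \ref{lem:circ_act}(i) applied to $F$ on $U$ then produces the $n$-homogeneous polynomial
\[
p(Z) := \frac{1}{2\pi}\int_0^{2\pi} F(e^{i\theta}Z)\,e^{-in\theta}\,d\theta \in \F_d.
\]
Because $\fV$ is homogeneous and $f$ is $n$-homogeneous, for $Z\in\fV\cap U$ we have $F(e^{i\theta}Z)=f(e^{i\theta}Z)=e^{in\theta}f(Z)$, and the integral collapses to $f(Z)$. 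By scaling within $\fV$, the identity $p|_{\fV\cap U}=f$ propagates to $p|_\fV=f$, proving (i).

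For (ii), I would first improve the choice of $p$ by replacing it with $P_\fV p$. Since $\fV$ is homogeneous, $\cJ_\fV$ is a homogeneous ideal (Proposition \ref{prop:homogeneous2}), so the subspace $\cH_\fV^\perp$ is $\alpha_\theta$-invariant by Lemma \ref{lem:circ_act}(ii) and therefore decomposes into homogeneous components. Hence $P_\fV p$ is still $n$-homogeneous and, because the degree-$n$ subspace of $\cH^2_d$ is finite-dimensional, is again a polynomial in $\F_d$; and since $p-P_\fV p\in\cH_\fV^\perp$ vanishes on $\fV$, it still restricts to $f$. Thus we may assume $p\in\cH_\fV$. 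Under the Fock-space identification of Section \ref{sec:szego_and_bounded_functions}, multiplication by an $n$-homogeneous $p$ is tensoring with the element $p\in\cE^{\otimes n}$, which is an isometry scaled by $\|p\|$, so $\|p\|_{\mlt\cH^2_d}=\|p\|_{\cH^2_d}$, and by Corollary \ref{cor:mult=H^infty} both equal $\|p\|_{\infty,\fB_d}$. Because $p\in\cH_\fV$ represents $f$, we also have $\|f\|_{\cH_\fV}=\|p\|_{\cH^2_d}$. Combining the trivial bound $\|f\|_{\infty,\fV}\leq\|p\|_{\infty,\fB_d}$, the contractive-restriction bound $\|f\|_{\mlt\cH_\fV}\leq\|p\|_{\mlt\cH^2_d}$ from Lemma \ref{lem:quotient_mult}, the RKHS pointwise estimate $\|f(W)\|\leq\|M_f\|$, and the lower bound $\|f\|_{\mlt\cH_\fV}\geq\|M_f\cdot 1\|_{\cH_\fV}=\|f\|_{\cH_\fV}$ (using $1\in\cH_\fV$), one obtains five of the six norms as equal; the missing piece is $\|f\|_{\infty,\fV}\geq\|f\|_{\mlt\cH_\fV}$.

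This missing inequality is the main obstacle, and I would establish it via the compression technique used in the proof of Theorem \ref{thm:null_poly}. Let $S=P_\fV M_z P_\fV$ be the row of compressions acting on $\cH_\fV$, let $P_N$ be the projection of $\cH_\fV$ onto its degree-$\leq N$ polynomial part, and set $T_{r,N}:=rP_N S P_N$ for $0<r<1$ and $N$ large. A direct degree count, using that $S$ raises the grading by one and $P_N$ preserves it, shows that for every homogeneous $q\in\cJ_\fV\cap\F_d$ one has $q(T_{r,N})=0$: either an intermediate $P_N$ truncates a higher-degree element to zero, or it acts as the identity and $q(T_{r,N})$ reduces to $r^{\deg q}q(S)=r^{\deg q}M_{q|_\fV}=0$. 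By Theorem \ref{thm:homoclosedweakstar}, therefore, $T_{r,N}\in\fV$. The same counting gives $p(T_{r,N})h=r^n M_f h$ for every $h\in\cH_\fV$ of degree $\leq N-n$. Since polynomials of bounded degree are dense in $\cH_\fV$, one can choose such an $h$ with $\|M_f h\|/\|h\|\geq\|M_f\|-\varepsilon$, and then
\[
\|f\|_{\infty,\fV}\geq\|f(T_{r,N})\|=\|p(T_{r,N})\|\geq r^n(\|M_f\|-\varepsilon).
\]
Letting $r\to 1^-$ and $\varepsilon\to 0$ yields $\|f\|_{\infty,\fV}\geq\|M_f\|=\|f\|_{\mlt\cH_\fV}$, completing the chain of six equalities.
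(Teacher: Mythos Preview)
Your proof is correct and follows essentially the same route as the paper's: the same extension-and-averaging for (i), the same projection of $p$ into $\cH_\fV$, the same Fock-space computation $\|p\|_{\mlt}=\|p\|_{\cH^2_d}=\|p\|_\infty$, and the same chain of inequalities yielding five of the six equalities. For the remaining inequality $\|f\|_{\infty}\geq\|f\|_{\mlt}$, your compression $T_{r,N}=rP_NSP_N\in\fV$ is exactly the idea the paper uses, but executed more carefully --- the paper writes $P_kLP_k$ where it should really use $S=P_\fV L P_\fV$ and suppresses the $r$-scaling --- and you bound $\|M_f\|$ via its action on all low-degree vectors, whereas the paper, having already established $\|f\|_{\mlt}=\|p\|_{\cH^2_d}$, only needs $p(T_{r,N})1=r^n p$ applied to the single vector $1$.
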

\begin{proof}
To prove $(i)$ we extend, by definition, $f$ to an nc holomorphic function $\tilde{f}$ on a ball of radius $\epsilon$ around $0$ and denote this ball by $B$. Since $f$ is $n$-homogeneous we can take $\tilde{f}$ to be a homogeneous polynomial of degree $n$, since for every $Z \in \fV \cap B$:
\[
\tilde{f}_n(Z) = \frac{1}{2\pi} \int_0^{2\pi} \tilde{f}(e^{i \theta} Z) e^{-in\theta}d\theta = f(Z).
\]
Now for every $Z \in \fV$ we can find $r > 0$, such that $rZ \in \fV \cap B$ and since both $\tilde{f}$ and $f$ are homogeneous we get that $\tilde{f}(Z) = f(Z)$. Thus we have found a homogeneous polynomial of degree $n$, such that $\tilde{f}|_{\fV} = f$.

We first prove $(ii)$ for homogeneous polynomials on all of $\fB_d$. Let $p = \sum_k a_k z^{\alpha_k}$, where for every $k$ we have $|\alpha_k| = n$. 
Now $\|p\|_{\cH^2_d}^2 = \sum_k |a_k|^2$. 
Since $M_{z^{\alpha_k}}$ are isometries with orthogonal ranges, we obtain that for every $g \in \cH^2_d$:
\[
\|M_p g\|^2 = \sum_k |a_k^2| \|M_{z^{\alpha_k}} g\|^2 = \|g\|^2 \sum_k |a_k|^2.
\]
Now since the supremum norm coincides with the multiplier norm we have our equality in the case of the entire ball.

To prove $(ii)$ in general, we note first that since $f$ is a restriction of a polynomial it is bounded on $\fV$ and furthermore, it is a multiplier on $\cH_{\fV}$. Now since $\fV$ is homogeneous, it is cut out by an ideal generated by polynomials that we shall denote by $I_{\fV}$. Let us identify $\cH_{\fV} = I_{\fV}^{\perp}$ inside $\cH^2_d$, thus we may choose the polynomial obtained in $(i)$ to lie in $\cH_{\fV}$. 
Then we have:
\[
\|f\|_{\mlt \cH_{\fV}} \geq \|f\|_{\cH_{\fV}} = \|p\|_{\cH^2_d} = \|p\|_{\mlt \cH^2_d} \geq \|f\|_{\mlt \cH_{\fV}}.
\]
Hence the inequalities are in fact equalities. 
For the supremum norm we first note that $\|f\|_{\infty} \leq \|f\|_{\mlt}$; indeed, with $p$ as above, 
\[
\sup_{X \in \fV} \|f(X)\| \leq \|p\|_\infty = \|p\|_{\mlt \cH^2_d} = \|f\|_{\mlt \cH_{\fV}}.
\]
It remains to prove the reverse inequality. 
To this end we note that:
\[
\|f\|_{\mlt \cH_{\fV}} = \|f\|_{\cH_{\fV}} = \|f \cdot 1\|_{\cH_{\fV}} = \|p(L)1\|_{\cH_{\fV}}=\|\lim_{k\to \infty} p(P_k L P_k) 1\|_{\cH_{\fV}} \leq \|f\|_{\infty}
\]
Here $P_k L P_k$ is the compression of the shifts to the finite dimensional space of polynomials of degree less than or equal to $k$. 
(We can plug $P_k L P_k$ into $p$ since it is a polynomial.)
\end{proof}

\begin{proof}[Proof of Theorem \ref{thm:mult_are_bounded_on_V}, for a homogeneous variety $\fV \subseteq \fB_d$, $d<\infty$]
We need to show that if $f \in M_n(H^\infty(\mathfrak{V}))$, then $f$ is a multiplier and that $\|f\|_{\mlt \cH_\fV}\leq \|f\|_{\infty}$ (the reverse inequality follows immediately by Theorem \ref{thm:quotient_mult}). 

Let $f\in M_n(H^\infty(\fV))$. For every $n \in \mathbb N$ set
\[
f_n(Z) = \frac{1}{2\pi} \int_0^{2 \pi} \alpha_\theta(f)(Z) e^{-i n \theta} d \theta, \quad \forall Z \in \fV.
\]
Clearly, each $f_n$ is in $H^\infty(\fV)$ with $\|f_n\|_\infty \leq \|f\|_\infty$, and for every $Z \in \fV$ and every $\lambda \in \D$ we have $f_n(\lambda Z) = \lambda^n f_n(Z)$. Thus, by Lemma \ref{lem:homog_func_is_poly}, the nc functions $f_n$ are restrictions of polynomials $p_n$, and for every $n$ we have  $\|f\|_{\infty} = \|f\|_{\mlt \cH_{\fV}} = \|f\|_{\cH_{\fV}}=\|p\|_{\infty}= \|p\|_{\mlt \cH^2_d}=\|p\|_{\cH^2_d}$.

Now, let $0<r<1$. 
As $\sum_n r^n \|p_n\|_* \leq \frac{\|f\|_\infty}{1-r}$, where $\| \cdot \|_*$ stands for the norm of either $\mlt\cH^2_d$, $\cH^2_d$, or $H^\infty(\fB_d)$, the series $g_r = \sum_n r^n p_n$ defines a function which is in $\mlt\cH^2_d = H^\infty(\fB_d)$. 
In addition, a simple computation shows that $g_r(Z)=f_r(Z):=f(rZ)$ for all $Z \in \fV$.

Define $S = P_{\cH_\fV} L P_{\cH_\fV}$, that is, $S$ is the compression of the shift $L$ to $\cH_\fV$. 
Then recalling Lemma \ref{lem:quotient_mult}, we have 
\[
\begin{split}
\| f_r\|_{\mlt \cH_\fV}    &=        \|P_\fV M_{g_r} P_\fV\|=\|g_r(P_\fV L P_\fV)\|          \\
                                       &=        \|g(rS)\|=\lim_{k\to \infty} \|f(rP_k S P_k)\|               \\
                                       &\leq    \|f\|_\infty. 
\end{split}
\]
Since $f$ is the bounded pointwise limit of $f_r$, letting $r \to 1$, we conclude that $f$ is a multiplier and that $\|f\|_{\mlt \cH_\fV} \leq \|f\|_\infty$.
\end{proof}

\section{The isomorphism problem for homogeneous varieties}\label{sec:homog_isom}

For every $n$, we write $\bM_d(n) = M_n^d = \bC^d \otimes M_n$.
Given a subset $\cX \subseteq \bM_d$, we define its {\bf {\em matrix span}} $\mspn{\cX}$ to be the graded set $\mspn{\cX} = \sqcup_n \mspn{\cX}(n)$ given by
\[
\mspn{\cX}(n) = \spn\{[I_d \otimes T] (X) : X \in \cX(n) \,\, , \,\, T \in \cL(M_n)\}.
\]
(Here, $\cL(M_n)$ denotes the linear maps on $M_n$.)

\begin{lem}\label{lem:Delta_id}
Let $F : \fB_d \to \bM_d$ be an nc map, and let $\cX \subseteq \bM_d$.
If $\Delta F(0,0)$ is the identity on $\cX$, then $\Delta F(0,0)$ is equal to the identity on $\mspn{\cX}$.
\end{lem}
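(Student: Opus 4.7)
The plan is to reduce the statement to the claim that $\Delta F(0,0)$ commutes with $I_d \otimes T$ for every $T \in \cL(M_n)$ and every $n$. Granting this, for $X \in \cX(n)$ with $\Delta F(0,0)(X) = X$ and any $T \in \cL(M_n)$, one has
\[
\Delta F(0,0)\bigl([I_d \otimes T](X)\bigr) = [I_d \otimes T]\bigl(\Delta F(0,0)(X)\bigr) = [I_d \otimes T](X),
\]
and by linearity of $\Delta F(0,0)$ this yields the identity on all of $\mspn\cX(n)$, which is by definition spanned by vectors of this form.

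To produce the commutation, I would use the block upper-triangular characterization of the first-order nc derivative (as in \cite{KVBook}) together with the similarity-invariance of nc functions. For $X \in \bM_d(n)$ of sufficiently small norm (a harmless restriction, since $\Delta F(0,0)$ is linear in its argument), the tuple
\[
\hat X = \begin{pmatrix} 0 & X \\ 0 & 0 \end{pmatrix} \in \fB_d(2n)
\]
satisfies
\[
F(\hat X) = \begin{pmatrix} F(0) & \Delta F(0,0)(X) \\ 0 & F(0) \end{pmatrix}.
\]
For invertible $A, B \in M_n$ put $S = \bigl(\begin{smallmatrix} A & 0 \\ 0 & B \end{smallmatrix}\bigr)$; a direct block computation gives
\[
S^{-1} \hat X S = \begin{pmatrix} 0 & A^{-1} X B \\ 0 & 0 \end{pmatrix}.
\]
Applying similarity-invariance $F(S^{-1}\hat X S) = S^{-1} F(\hat X) S$ and reading off the $(1,2)$-blocks yields
\[
\Delta F(0,0)(A^{-1} X B) = A^{-1}\, \Delta F(0,0)(X)\, B.
\]
Specializing $A = I$ and then $B = I$ shows that $\Delta F(0,0)$ commutes with $I_d \otimes R_B$ and with $I_d \otimes L_{A^{-1}}$ on $\bM_d(n) = \C^d \otimes M_n$ for all invertible $A, B \in M_n$. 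By Zariski density of the invertibles in $M_n$ together with the (automatic) continuity of the linear map $\Delta F(0,0)$, the commutation extends to every $A, B \in M_n$. Since $\cL(M_n)$ is linearly spanned by the maps $L_A R_B$ (with $L_A(Y) = AY$ and $R_B(Y) = YB$), we conclude that $\Delta F(0,0)$ commutes with $I_d \otimes T$ for every $T \in \cL(M_n)$, which is exactly the reduction needed.

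The only thing to watch is that $\hat X$ and $S^{-1}\hat X S$ both lie in $\fB_d$ so that the similarity identity for $F$ applies verbatim; this is handled by the initial rescaling of $X$ noted above. The argument is essentially a two-line block-matrix calculation, so I do not foresee a substantive obstacle beyond this bookkeeping.
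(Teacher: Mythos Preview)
Your proof is correct. Both arguments arrive at the same key intermediate fact---that $\Delta F(0,0)\big|_{\bM_d(n)}$ commutes with $I_d \otimes T$ for every $T \in \cL(M_n)$---but by different means. The paper simply cites \cite[Proposition~2.15]{KVBook} to conclude that $\Delta F(0,0)$ acts as $A \otimes I_{M_n}$ for some linear $A : \C^d \to \C^d$ (which immediately gives the commutation), whereas you derive the commutation directly from the block upper-triangular description of $\Delta F$ together with similarity invariance. Your route is more self-contained: it avoids appealing to the structural result from \cite{KVBook} and instead unpacks the special case at $(0,0)$ by hand. The paper's route is shorter and in fact yields the slightly stronger statement that $\Delta F(0,0)$ has the tensor form $A \otimes I$ (with $A$ independent of the level $n$), though that extra information is not needed for the lemma as stated.
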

\begin{proof}
If $F : \fB_d \to \bM_d$ is an nc map, then using \cite[Proposition 2.15]{KVBook} we find that $\Delta F(0,0)$ acts as $A \otimes I$ on $\bC^d \otimes M_n$, where $A$ is a linear map on $\bC^d$.
Because $\Delta F(0,0)$ is also an nc holomorphic function, the linear map $A$ does not depend on $n$.
It follows that if we fix $n$, $\Delta F(0,0)\big|_{M_n^d}$ commutes with every operator of the form $I_d \otimes T$, where $T$ is a linear map on $M_n$.
The result follows.
\end{proof}

\begin{lem}\label{lem:matspan}
Let $\cX \subseteq \bM_d$ be an nc set.
Then for all $n$ there exists a subspace $V_n \subseteq \C^d$ such that $\mspn{\cX}(n) = V_n \otimes M_n$.
There exists a minimal subspace $V \subseteq \bC^d$ such that $\mspn{\cX}(n) \subseteq V \otimes M_n$ for all $n$, and if $d< \infty$,
then $\mspn{\cX}(n) = V \otimes M_n$ for all sufficiently large $n$.
\end{lem}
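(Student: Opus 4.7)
The strategy is: (i) identify $\mspn{\cX}(n)$ at each level as an invariant subspace for a natural algebra action, then apply representation theory to obtain the $V_n \otimes M_n$ form; (ii) define $V := \sum_n V_n$; (iii) in finite dimensions, use direct sums in $\cX$ to realize $V$ at one large level and spread it to all large levels.

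For step (i), observe that $\mspn{\cX}(n) \subseteq \bC^d \otimes M_n$ is by construction invariant under the action of $I_d \otimes \cL(M_n)$, since $\cL(M_n)$ is closed under composition. Now $\cL(M_n) \cong M_{n^2}(\bC)$ is a simple algebra whose unique (up to isomorphism) simple module is $M_n \cong \bC^{n^2}$, so $\bC^d \otimes M_n$ is isomorphic as an $I_d \otimes \cL(M_n)$-module to $M_n^{\oplus d}$; Schur's lemma together with the classification of submodules of a semisimple module then forces every invariant subspace to be of the form $V_n \otimes M_n$ for a unique $V_n \subseteq \bC^d$. Step (ii) is automatic with $V := \sum_n V_n$, which is the unique smallest subspace containing every $V_n$.

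For step (iii), assume $d<\infty$, so that $V$ is finite-dimensional and equals $V_{n_1} + \cdots + V_{n_k}$ for finitely many levels. For any $X \in \cX(n)$, let $V_X \subseteq \bC^d$ denote the smallest subspace with $X \in V_X \otimes M_n$; equivalently, $V_X$ is the image of the linear map $M_n^* \to \bC^d$, $\psi \mapsto (\id \otimes \psi)(X)$. A short block-diagonal calculation on $M_{n+m}^*$ shows $V_{X \oplus Y} = V_X + V_Y$ whenever $X \in \cX(n)$ and $Y \in \cX(m)$. Choose witnesses $X_i \in \cX(n_i)$ with $\sum_i V_{X_i} = V$, and form $Y := X_1 \oplus \cdots \oplus X_k \in \cX(N)$ with $N = \sum_i n_i$; then $V_Y = V$, which forces $V_N = V$. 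For any larger level $n$ at which $\cX(n-N)$ is non-empty, direct-summing $Y$ with an arbitrary $Z \in \cX(n-N)$ yields an element of $\cX(n)$ whose left support already contains $V$, so $V_n = V$. Since the non-empty levels of $\cX$ form an additive sub-semigroup of $\bN$, this covers all sufficiently large $n$.

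The conceptual heart is the representation-theoretic identification in step (i); the rest is direct-sum bookkeeping. The main subtlety is passing from "$V_N = V$ at a single large level" to "$V_n = V$ for all sufficiently large $n$", which is handled by the semigroup structure of the levels at which $\cX$ is populated.
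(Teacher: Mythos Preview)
Your proof is correct in spirit and takes a somewhat different route from the paper's. For step (i), the paper reaches the same conclusion more concretely via linear functionals: for each $f \in (\bC^d)^*$ one has the dichotomy $(f \otimes \id_{M_n})(\mspn\cX(n)) \in \{0, M_n\}$, and $V_n$ is defined as the common kernel of those $f$ giving $0$. Your appeal to Schur's lemma is the abstract version of this and is perfectly fine.

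The genuine difference is in step (iii). The paper observes that $\mspn\cX$ is itself an nc set containing $0$, and uses this to prove \emph{monotonicity} $V_n \subseteq V_{n+1}$: any functional annihilating $\mspn\cX(n+1)$ annihilates $\mspn\cX(n) \oplus 0 \subseteq \mspn\cX(n+1)$ and hence $\mspn\cX(n)$. Once the $V_n$ are increasing, stabilization is immediate from finite-dimensionality. You instead bypass monotonicity, assembling $V$ directly at one large level via the identity $V_{X \oplus Y} = V_X + V_Y$ and then propagating upward by further direct sums. Both routes work; the paper's yields the extra structural information that $(V_n)_n$ is increasing, while yours avoids having to verify that $\mspn\cX$ is closed under direct sums.

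Two small points. First, after writing $V = V_{n_1} + \cdots + V_{n_k}$ you pick one witness $X_i \in \cX(n_i)$ per level with $\sum_i V_{X_i} = V$; but a single element of $\cX(n_i)$ need not have $V_{X_i} = V_{n_i}$ (consider two points at the same level whose supports are distinct lines). The fix is trivial: since $V = \sum_{X \in \cX} V_X$ is finite-dimensional, choose finitely many $X_1, \ldots, X_m \in \cX$, allowing repeated levels, with $\sum_j V_{X_j} = V$. Second, your final claim that the sub-semigroup of non-empty levels covers all sufficiently large $n$ holds only when its $\gcd$ is $1$; if $\cX$ lives only at even levels then $V_n = \{0\}$ at every odd $n$, and the conclusion fails there. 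The paper's monotonicity argument has exactly the same hidden hypothesis (it needs $\cX(n+1) \neq \emptyset$ to embed level $n$ into level $n+1$). In the paper's applications the nc sets are homogeneous varieties, which contain $0$ at every level, so neither gap bites.
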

\begin{proof}
Denote $\tilde{\cX} = \mspn{\cX}$.
Fix $n$.
For every linear $f : \bC^d \to \bC$, the space $(f \otimes \id_{M_n}) \tilde{\cX}_n$ is invariant under $\cL(M_n)$.
Therefore, either $(f \otimes \id_{M_n}) \tilde{\cX}_n = 0$ or $(f \otimes \id_{M_n})\tilde{\cX}_n = M_n$.
If the latter holds for every $f \in (\bC^d)^*$, then $\tilde{\cX}_n = \bC^d \otimes M_n$.
Let
\[
\Phi_n = \{f \in (\bC^d)^* : \|f\|=1 \textrm{ and } (f \otimes \id_{M_n}) \tilde{\cX}_n = 0\} .
\]
Put
\[
V_n = \{x \in \C^d : f(x) = 0 \,\, \textrm{ for all } \,\, f \in \Phi_n\} ,
\]
(interpreted as $\C^d$ in case $\Phi_n = \emptyset$).
The fact that $\tilde{\cX}$ is an nc set containing zero, implies that $\Phi_{n+1} \subseteq \Phi_n$.
It follows that $V_n \subseteq V_{n+1}$ for all $n$.

We will now show that $\tilde{\cX}_n = V_n \otimes M_n$.
Obviously $\tilde{\cX}_n \subseteq V_n \otimes M_n$.
To prove the converse, we first make some elementary observations.
Suppose that $\sum x_i \otimes A_i \in \tilde{\cX}_n$, where $A_1, A_2, \ldots$ are linearly independent.
Then from the definition of $\tilde{\cX}_n$, $x_i \otimes A_i \in \tilde{\cX}_n$ for all $i$, and therefore --- again, from the definition of $\tilde{\cX}_n$ --- it follows that $x_i \otimes A \in \tilde{\cX}_n$ for all $A \in M_n$.

Now let
\[
W = \{v \in \C^d : v \otimes A \in \tilde{\cX}_n \, \textrm{ for some nonzero } \, A \in M_n\}.
\]
Then by the above observations $W$ is a subspace, and $W \otimes M_n \subseteq \tilde{\cX}_n \subseteq V_n \otimes M_n$.
Now if $f \in (\C^d)^*$ satisfies $f(W) = 0$, then $f \in \Phi_n$.
This implies that $W=V_n$.

Define $V = \ol{\bigcup_n V_n}$. 
If $\Phi_n = \emptyset$ for some $n$, then $V = \C^d$ and $\tilde{\cX}_n = \bC^d \otimes M_n$ for all sufficiently large $n$.
Otherwise, if $d< \infty$, then the spaces $V_n$ form an increasing sequence of subspaces of $\C^d$ and therefore must stabilize.
\end{proof}


\begin{lem} \label{lem:homog_nc_der}
Let $\fV \subset \fB_d$ be a homogeneous variety and let $f \colon \fB_d \to \fB_d$ be an nc holomorphic function, such that $f|_{\fV}$ is the identity.
Then for every $X \in \fV$, 
\[
\Delta f (0,0)(X) = X.
\]
\end{lem}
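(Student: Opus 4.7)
The plan is to use the homogeneity of $\fV$ to reduce the identity $f\big|_\fV = \id_\fV$ to an equality of power series in a single complex parameter $\lambda$, from which the first-order term $\Delta f(0,0)$ can be read off directly.

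First I would observe that if $\fV$ is nonempty then $0 \in \fV$, since homogeneity gives $0 = 0 \cdot X \in \fV$ for any $X \in \fV$ (the case $\fV = \emptyset$ being vacuous). Consequently $f(0) = 0$. Next, fix $X \in \fV(n) \subseteq \fB_d(n)$; by homogeneity $\lambda X \in \fV$ for all $\lambda \in \overline{\bD}$, and since $\lambda X \in \fB_d$, we have $f(\lambda X) = \lambda X$ as an $M_n^d$-valued function of $\lambda \in \bD$.

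The key step is then to expand $f$ around the origin using its Taylor--Taylor series (as in \cite{KVBook}); since $f$ is nc holomorphic and $\lambda X \to 0$ as $\lambda \to 0$, we obtain the convergent expansion
\[
f(\lambda X) \;=\; \sum_{k=0}^{\infty} \lambda^k\, f_k(X),
\]
where $f_k$ is the $k$-th order homogeneous component of $f$ at $0$, and in particular $f_1(Y) = \Delta f(0,0)(Y)$ for every appropriately sized $Y$. Since the left-hand side equals $\lambda X$, comparing Taylor coefficients in $\lambda$ yields $f_0(X) = 0$, $f_1(X) = X$, and $f_k(X) = 0$ for $k \geq 2$. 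The middle identity is exactly $\Delta f(0,0)(X) = X$, which is what was to be shown.

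No real obstacle is expected; the only subtlety is ensuring that the relevant Taylor--Taylor series is at our disposal and converges on the disc $\{\lambda X : \lambda \in \bD\}$. This is standard because $X \in \fB_d$ forces $\|\lambda X\| < 1$ for $\lambda \in \overline{\bD}$, so the series converges there, and the identification $f_1 = \Delta f(0,0)$ is built into the definition of the first-order nc derivative.
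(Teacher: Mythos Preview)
Your proof is correct and follows essentially the same idea as the paper: restrict to the one-parameter family $\lambda \mapsto f(\lambda X)$ (available by homogeneity), and extract the first-order term at $\lambda = 0$. The paper's execution is slightly different in that it uses the exact nc difference-differential identity $f(tX) - f(0) = \Delta f(tX,0)(tX)$ to obtain $X = \Delta f(tX,0)(X)$ for $t \neq 0$, and then lets $t \to 0$ using continuity of $\Delta f(\cdot,\cdot)$; this avoids any discussion of convergence of the full Taylor--Taylor series, whereas your argument appeals to that convergence (which is unproblematic here since $f$ is bounded on $\fB_d$). Both routes are valid and amount to the same computation of the derivative at the origin.
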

\begin{proof}
Recall that by the nc difference differential relation we have that for every $Y \in \fB_d$ and every $t \in \C^{\times}$ the following relation holds:
\[
f(tY) = f(tY) - f(0) = \Delta f(tY,0)(tY - 0) = t \Delta f(tY,0)(Y) .
\]
Therefore, for $X \in \fV$ we have that:
\[
X = \frac{1}{t}f(tX) = \Delta f(tX,0)(X).
\]
Since the above equality holds for every $t \in \C^{\times}$ and furthermore by \cite[Theorem 7.46]{KVBook} $\Delta f(\cdot,\cdot)$ is an nc holomorphic function of order $1$, we can take the limit as $t$ goes to $0$ to obtain the desired result.
\end{proof}

\begin{thm}\label{thm:isomorphism_homo}
Let $\fV \subseteq \fB_d$ and $\fW \subseteq \fB_{e}$ be homogeneous nc varieties, and let $\alpha : H^\infty(\fV) \to H^\infty(\fW)$ be a completely isometric isomorphism. Assume that $d$ and $e$ are finite or that $\alpha$ is weak-$*$ continuous.
Then $\fV$ and $\fW$ are conformally equivalent, in the sense that one may assume that there is some $k$ such that $\fV, \fW \subseteq \fB_k$, and that under this assumption there exists an automorphism $G \in \operatorname{Aut}(\fB_k)$ such that $G(\fW)  = \fV$, and such that
\[
\alpha(f) = f \circ G \quad, \quad f \in H^\infty(\fV).
\]
\end{thm}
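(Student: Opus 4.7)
The plan is to refine Theorem \ref{thm:isomorphism} in the homogeneous case, showing that the implementing nc map can in fact be chosen to be an automorphism of a common matrix ball.

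First, apply Theorem \ref{thm:isomorphism} to obtain nc holomorphic maps $G \colon \fB_e \to \fB_d$ and $H \colon \fB_d \to \fB_e$ whose restrictions to $\fW$ and $\fV$ are mutually inverse bijections implementing $\alpha$ and $\alpha^{-1}$. Since $\fV$ and $\fW$ are homogeneous, both contain $0$. Next, invoke Lemma \ref{lem:matspan} to replace $\fB_d$ and $\fB_e$ by the balls in the minimal ambient spaces $V_\fV$ and $V_\fW$ supporting the matrix spans of $\fV$ and $\fW$; after this reduction we may assume that $\mspn \fV(n) = \M_d(n)$ and $\mspn \fW(n) = \M_e(n)$ for all sufficiently large $n$.

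The heart of the argument is to normalize $G$ so that $G(0) = 0$; note that $H(0) = 0$ then follows from $G(H(0)) = 0 = G(0)$ and injectivity of $G|_\fW$. Homogeneity enters decisively via the circle action $Z \mapsto tZ$ ($t \in \mathbb{T}$), which preserves $\fV$ and $\fW$ and induces completely isometric automorphisms $\sigma_t^{\fV}, \sigma_t^{\fW}$ of the corresponding algebras. For each $t \in \mathbb{T}$ the nc map $G_t(W) := \bar t\, G(tW)$ takes values in $\fB_d$ and implements another completely isometric isomorphism $\alpha_t := \sigma_t^{\fW} \circ \alpha \circ (\sigma_t^{\fV})^{-1} \colon H^\infty(\fV) \to H^\infty(\fW)$. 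The idea is to exploit the fact that $0$ is the unique fixed point of the circle action on $\fV$ (and on $\fW$), together with the rigidity that this imposes on characters and finite-dimensional representations (Theorem \ref{thm:finite_dim_reps}), to select within the family $\{\alpha_t\}$ an isomorphism whose implementing map sends $0$ to $0$.

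Once $G(0) = 0 = H(0)$, Lemma \ref{lem:homog_nc_der} applied to $H \circ G|_\fW = \id_\fW$ and $G \circ H|_\fV = \id_\fV$ gives $\Delta(H \circ G)(0,0)(X) = X$ for $X \in \fW$ and $\Delta(G \circ H)(0,0)(Y) = Y$ for $Y \in \fV$. Combined with Lemma \ref{lem:Delta_id} and the full matrix-span reduction, this extends to $\Delta(H \circ G)(0,0) = \id$ on all of $\M_e$ and $\Delta(G \circ H)(0,0) = \id$ on all of $\M_d$. Theorem \ref{thm:nc_cartan} then forces $H \circ G = \id_{\fB_e}$ and $G \circ H = \id_{\fB_d}$; thus $d = e =: k$, the maps $G, H$ are inverse nc automorphisms of $\fB_k$, and the formula $\alpha(f) = f \circ G$ carries over from Theorem \ref{thm:isomorphism}. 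The main obstacle is the normalization step: $\alpha$ is not a priori equivariant with respect to the circle actions, and the natural trick of pre- or post-composing with a M\"obius-type ball automorphism to move $G(0)$ to the origin fails because such automorphisms destroy the homogeneity of the ambient variety. Consequently, the extraction of the desired normalization must take place inside the one-parameter family $\{\alpha_t\}_{t \in \mathbb{T}}$ while preserving both multiplicativity and the isometric property, and this is where a genuinely new idea (beyond the arguments of Section \ref{sec:isomorphisms}) is needed.
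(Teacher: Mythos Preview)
Your overall architecture is right --- matrix-span reduction, then Lemmas \ref{lem:homog_nc_der} and \ref{lem:Delta_id}, then the nc Cartan theorem --- but you have misidentified the ``heart of the argument.'' The normalization $G(0)=0$ that you flag as the main obstacle is simply not needed, and the paper's proof does not perform it.

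The point is that Cartan's theorem (Theorem \ref{thm:nc_cartan}) and Lemma \ref{lem:homog_nc_der} are applied not to $G$ or $H$ individually, but to the composition $F := G\circ H \colon \fB_d \to \fB_d$. Since $G\big|_{\fW}$ and $H\big|_{\fV}$ are mutually inverse, $F\big|_{\fV}$ is the identity; and because $\fV$ is homogeneous, $0\in\fV$, so $F(0)=0$ holds automatically. Lemma \ref{lem:homog_nc_der} then gives $\Delta F(0,0)(X)=X$ for every $X\in\fV$, Lemma \ref{lem:Delta_id} together with the matrix-span reduction upgrades this to $\Delta F(0,0)=\id$ on all of $\M_d$, and Theorem \ref{thm:nc_cartan} (which needs only $F(0)=0$ and $\Delta F(0,0)=\id$) forces $F=\id_{\fB_d}$. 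The same argument with $H\circ G$ yields $H\circ G=\id_{\fB_e}$, whence $d=e$ and $G\in\operatorname{Aut}(\fB_d)$. No circle-action averaging, no selection within a family $\{\alpha_t\}$, and no disc trick is required at this stage. (The disc trick does appear later in the paper, in Proposition \ref{prop:biholo=>0-biholo}, but for a different purpose: upgrading ``conformally equivalent'' to ``unitarily equivalent''.)

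Two smaller remarks. First, after the matrix-span reduction you assert $\mspn\fV(n)=\M_d(n)$ for all large $n$; by Lemma \ref{lem:matspan} this is only guaranteed when $d<\infty$. For $d=\infty$ the paper instead uses that $\Delta F(0,0)$ acts as $A\otimes I_n$ with $A$ continuous, so $A\big|_{V_n}=\id$ for all $n$ forces $A=\id$ on $V=\overline{\bigcup_n V_n}$. Second, your inference ``$H(0)=0$ follows from $G(H(0))=0=G(0)$ and injectivity of $G\big|_{\fW}$'' presupposes $H(0)\in\fW$ and $G(0)=0$, neither of which you have at that point; but again, this whole step is unnecessary.
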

\begin{proof}
By Lemma \ref{lem:matspan}, there are increasing sequences of subspaces $V_n \subseteq \C^d$ and $W_n \subseteq \C^e$ such that for every $n$,
\[
\mspn{\fV}(n) = V_n \otimes M_n \,\, \textrm{ and } \,\, \mspn{\fW}(n) = W_n \otimes M_n .
\]
Put $V = \overline{\bigcup_n V_n}$ and $W = \overline{\bigcup_n W_n}$. 
Since $\fV \subseteq \sqcup_n  (V \otimes M_n) \cap \fB_d$ and $\fW \subseteq \sqcup_n  (W \otimes M_n) \cap \fB_e$, we may as well assume that $\sqcup_n (V \otimes M_n) \cap \fB_d = \fB_d$ and $\sqcup_n (W \otimes  M_n) \cap \fB_e = \fB_e$ (otherwise, we restrict attention to these sub-balls). 

By Theorem \ref{thm:isomorphism}, we find that there are nc holomorphic maps $G : \fB_e \to \fB_d$ and $H : \fB_d \to \fB_e$ such that $\alpha(f) = f \circ G$ for $f \in H^\infty(\fV)$, and $\alpha^{-1}(g) = g \circ H$ for $g \in H^\infty(\fW)$.
We need to show that $G \circ H$ and $H \circ G$ are equal to the identity on $\fB_e$ and $\fB_d$, respectively. 

Define $F = G \circ H$.
As $F$ fixes the homogeneous variety $\fV$,
Lemma \ref{lem:homog_nc_der} says that the derivative $\Delta F(0,0)$ fixes every element in $\fV$.
By Lemma \ref{lem:Delta_id}, $\Delta F (0,0)$ fixes every point of $(\mspn{\fV})(n) = V_n \otimes M_n$, for all $n$. 
We claim that $\Delta F(0,0)$ is the identity. 
Indeed, in Lemma \ref{lem:Delta_id} we noted that there is some linear $A$ such that $\Delta F(0,0)$ acts as $A \otimes I_n$ on $\bC^d \otimes M_n$, so $A\big|_{V_n}$ is the identity. 
Since $\Delta F(0,0)$ is continuous, it follows that $A$ is the identity on $V$, and $\Delta F(0,0)$ is the identity as claimed.   
Since $F(0_n) = 0_n$,  Theorem \ref{thm:nc_cartan} implies that $F$ is the identity on $\fB_d$.

In the same way, we obtain that $H \circ G$ is the identity on $\fB_e$. 
This shows that $d=e$, and that $G,H$ are automorphisms of $\fB_d$, as required.

\end{proof}

\begin{remark}
We cannot obtain that $\fV$ and $\fW$ are related by an automorphism, without first embedding them in some $\fB_k$: consider $\fV = \fB_\infty$ and
\[
\fW = \{Z = (Z_1, Z_2, \ldots) \in \fB_\infty : Z_1 = 0\}.
\]
Then clearly $H^\infty(\fV) = H^\infty(\fB_\infty) \cong H^\infty(\fW)$, but there is no automorphism of $\fB_\infty$ that takes $\fB_\infty$ onto $\fW$. 
Of course, after restricting attention to the matrix spans, the problem disappears. 
\end{remark}

\begin{cor}\label{cor:isomorphism_homo}
Let $\fV, \fW \subseteq \fB_d$ be two homogeneous varieties.
Then $H^\infty(\fV)$ and $H^\infty(\fW)$ are completely isometrically isomorphic via a weak-$*$ continuous map if and only if $\fV$ and $\fW$ are conformally equivalent in the sense of Theorem \ref{thm:isomorphism_homo}.
Furthermore, every weak-$*$ continuous completely isometric isomorphism is implemented by composition with an automorphism of $\fB_d$. 
(Recall that if $d \in \bN$, then every completely isometric isomorphism is automatically weak-$*$ continuous.)
\end{cor}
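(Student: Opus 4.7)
The plan is to assemble the corollary from three ingredients already established: Proposition \ref{prop:biholo_auto} (composition with an automorphism gives a completely isometric isomorphism), Theorem \ref{thm:isomorphism_homo} (the converse in the homogeneous case, up to restricting to matrix spans), and Proposition \ref{prop:extend_auto} (automorphisms of sub-balls extend to automorphisms of the ambient ball).

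For the ``if'' direction, suppose that $G \in \operatorname{Aut}(\fB_d)$ satisfies $G(\fW) = \fV$. Proposition \ref{prop:biholo_auto} provides the completely isometric isomorphism $\alpha(f) = f \circ G$. To see it is weak-$*$ continuous, I would invoke the Davidson--Pitts result recorded just before Proposition \ref{prop:biholo_auto}: $G$ is implemented by a unitary $U_G$ on $\cH^2_d$ giving a unitarily implemented (hence weak-$*$ continuous) completely isometric automorphism of $H^\infty(\fB_d)$. By Theorem \ref{thm:homoclosedweakstar}, the homogeneous varieties $\fV$ and $\fW$ correspond to weak-$*$ closed ideals $\cJ_\fV, \cJ_\fW \triangleleft H^\infty(\fB_d)$, so this automorphism descends to a weak-$*$ continuous completely isometric isomorphism between the quotients $H^\infty(\fV)$ and $H^\infty(\fW)$, which coincides with $\alpha$ on the level of functions.

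For the ``only if'' direction, let $\alpha$ be a weak-$*$ continuous completely isometric isomorphism. Theorem \ref{thm:isomorphism_homo} produces an integer $k$ and an automorphism $G \in \operatorname{Aut}(\fB_k)$ such that, after passing to the matrix spans, $\fV, \fW \subseteq \fB_k$, $G(\fW) = \fV$, and $\alpha(f) = f \circ G$. The key step is to promote $G$ to an automorphism of $\fB_d$: apply Proposition \ref{prop:extend_auto} (with $e = d-k$ in the finite case, or iteratively in the infinite case) to extend $G$ to some $\tilde G \in \operatorname{Aut}(\fB_d)$. Since $\fV$ and $\fW$, as subsets of $\fB_d$, lie inside the embedded copy of $\fB_k$ on which $\tilde G$ restricts to $G$, we still have $\tilde G(\fW) = \fV$ and $\alpha(f)(W) = f(G(W)) = f(\tilde G(W))$ for every $W \in \fW$ and every $f \in H^\infty(\fV)$. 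This yields the desired implementation by an automorphism of $\fB_d$. The parenthetical final assertion is precisely Corollary \ref{cor:isomorphism_auto_weakstar}.

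There is no real obstacle to overcome beyond invoking the results above; the only point worth checking is that the extended automorphism $\tilde G$ continues to implement $\alpha$, and this is immediate from the fact that $\tilde G|_{\fB_k} = G$ together with the containment $\fV, \fW \subseteq \fB_k$. The entire argument is essentially bookkeeping once Theorem \ref{thm:isomorphism_homo} and Proposition \ref{prop:extend_auto} are in hand.
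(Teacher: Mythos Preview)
Your overall strategy---combine Theorem \ref{thm:isomorphism_homo}, Proposition \ref{prop:biholo_auto}, and Proposition \ref{prop:extend_auto}---is exactly what the paper has in mind (the corollary is stated without proof, as immediate from those results).  However, the ``furthermore'' step has a genuine gap.

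When you invoke Theorem \ref{thm:isomorphism_homo}, what its proof actually produces is this: the matrix span of $\fV$ is $V\subseteq\C^d$, the matrix span of $\fW$ is $W\subseteq\C^d$, both are $k$-dimensional, and after identifying $\fB_V\cong\fB_k\cong\fB_W$ one obtains $G\in\operatorname{Aut}(\fB_k)$ carrying (the re-embedded) $\fW$ onto (the re-embedded) $\fV$.  There is no reason for $V$ and $W$ to be the \emph{same} subspace of $\C^d$, so there is no single ``embedded copy of $\fB_k$'' in $\fB_d$ containing both $\fV$ and $\fW$ as they originally sit.  Simply extending $G$ via Proposition \ref{prop:extend_auto} to some $\tilde G\in\operatorname{Aut}(\fB_d)$ acting on the first $k$ coordinates will not map the original $\fW\subseteq\fB_W$ onto the original $\fV\subseteq\fB_V$.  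For $d<\infty$ this is easy to repair: choose unitaries $U_V,U_W$ on $\C^d$ sending the first $k$ coordinates to $V$ and $W$ respectively, and take $U_V\,\tilde G\,U_W^{-1}\in\operatorname{Aut}(\fB_d)$.  You should make this explicit.

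For $d=\infty$ the gap is not repairable in general, and this is precisely the content of the remark immediately preceding the corollary: with $\fV=\fB_\infty$ and $\fW=\{Z_1=0\}$ one has $H^\infty(\fV)\cong H^\infty(\fW)$, yet no automorphism of $\fB_\infty$ can carry the proper subvariety $\fW$ onto all of $\fB_\infty$.  So the phrase ``automorphism of $\fB_d$'' in the furthermore clause must be read through the lens of Theorem \ref{thm:isomorphism_homo}, i.e.\ as an automorphism of the $\fB_k$ obtained after passing to matrix spans; your argument should acknowledge this rather than claim a literal automorphism of the ambient $\fB_d$ in the infinite case.
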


We shall now show that if two homogeneous nc varieties $\fV,\fW \subseteq \fB_d$ are conformally equivalent, then $\fV$ is the image of $\fW$ under an invertible linear transformation.
We start with the following lemma.

\begin{lem}\label{lem:DiscToDisc}
Let $\fV, \fW \subseteq \fB_d$ be two conformally equivalent homogeneous varieties.
If $0$ is not mapped to $0$, then there exist
two discs $D_1 \subseteq \fV(1)$ and $D_2 \subseteq \fW(1)$, both containing $0$, such that $D_1$ is mapped by the conformal equivalence onto $D_2$.
\end{lem}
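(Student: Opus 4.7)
The plan is to reduce everything to a one-line construction at the first level $\fB_d(1) = \bB_d$. Take a conformal equivalence $G \in \operatorname{Aut}(\fB_d)$ with $G(\fW) = \fV$; by Proposition~\ref{prop:autballs}, the restriction $G|_{\bB_d}$ is a genuine biholomorphic automorphism of the commutative unit ball $\bB_d$, and because $G$ preserves levels, it carries $\fW(1)$ bijectively onto $\fV(1)$. The hypothesis that $0$ is not mapped to $0$ will be applied to this restriction.

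Set $v := G(0)$. Since $0 \in \fW(1)$ by homogeneity of $\fW$, we have $v \in \fV(1)$, and the hypothesis is precisely that $v \neq 0$. I will then take
\[
D_1 := \{\lambda v : \lambda \in \bD\},
\]
which lies in $\fV(1)$ by homogeneity of $\fV$ and visibly contains $0$. Next, define
\[
D_2 := G^{-1}(D_1).
\]
The set $D_2$ sits inside $\fW(1)$ because $G^{-1}$ maps $\fV(1)$ bijectively onto $\fW(1)$; it contains $0$ since $G^{-1}(v) = G^{-1}(G(0)) = 0$; and it is an honest analytic disc, being the injective holomorphic image of $\bD$ under the map $\lambda \mapsto G^{-1}(\lambda v)$. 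By construction the conformal equivalence $G^{-1}|_{\fV}$ carries $D_1$ onto $D_2$, which is exactly the claim.

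I do not anticipate a genuine obstacle here: the argument is essentially a direct construction once the identification $\operatorname{Aut}(\fB_d) \cong \operatorname{Aut}(\bB_d)$ at the first level is invoked. The only mildly subtle point worth flagging is that $D_2$ need not itself be a linear disc through the origin --- it is only an analytic disc in $\bB_d$ (indeed a complex geodesic, as the image of an affine slice under a ball automorphism) --- but this is the natural reading of ``disc'' in context and is what will be needed in the sequel.
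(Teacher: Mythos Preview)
Your approach mirrors the paper's: restrict to level~$1$, take the linear disc through the nonzero point $G(0)$, and push it across by the automorphism. However, there are two genuine issues.

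First, a technical slip. With your parametrization $D_1 = \{\lambda v : \lambda \in \bD\}$, the point $v = G(0)$ corresponds to $\lambda = 1 \notin \bD$, so $v \notin D_1$ and your argument that $0 = G^{-1}(v) \in D_2$ breaks down. You should take $D_1$ to be the full complex line through $v$ intersected with $\bB_d$, i.e.\ $D_1 = \{\lambda v : |\lambda| < 1/\|v\|\}$; then $v \in D_1$ and the rest goes through.

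Second, and more importantly, your concession that ``$D_2$ need not itself be a linear disc through the origin'' is both incorrect and damaging. Automorphisms of $\bB_d$ carry affine slices of $\bB_d$ to affine slices (this is classical; see e.g.\ Rudin), so once $D_1$ is the full linear disc $\bC v \cap \bB_d$, its image $D_2 = G^{-1}(D_1)$ is again an affine disc; since it contains $0$ it is a linear disc through the origin. This is exactly the fact the paper invokes, and it is \emph{not} optional: the disc trick in Proposition~\ref{prop:biholo=>0-biholo} uses that the sets $\{z \in D_2 : |z| = |\nu|\}$ are circles and that $D_2$ is invariant under scalar rotation, which fails for a mere analytic disc. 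So your reading of ``disc'' as ``analytic disc'' does not give what the sequel needs.
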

\begin{proof}
Let $G\in\operatorname{Aut}(\fB_d)$ be an automorphism mapping $\fV$ onto $\fW$. 
If $0$ is not mapped to $0$, then both $V:=\fV(1)$ and $W:=\fW(1)$ are non-trivial homogeneous varieties which are conformally equivalent. Since automorphisms of the commutative ball $\mathbb B_d:= \fB_d(1)$ map affine sets to affine sets, $G$ maps affine discs to affine discs. Thus the disc $D_1 \subseteq V$ spanned by $G^{-1}(0)$ is mapped to a disc $D_2$ containing $0$. As $0 \neq G(0) \in W\cap D_2$, $D_2$ is the disc spanned by $G(0)$ and therefore must be contained in $W$.
\end{proof}

\begin{prop}\label{prop:biholo=>0-biholo}
Let $\fV,\fW \subseteq \fB_d$ be two conformally equivalent homogeneous varieties. Then there exists a conformal equivalence $F$ of $\fV$ onto $\fW$ that maps $0$ to $0$.
\end{prop}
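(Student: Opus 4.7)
My plan is as follows. Let $G \in \operatorname{Aut}(\fB_d)$ realize the given conformal equivalence, so that $G(\fV) = \fW$. If $G(0) = 0$ we are done with $F := G$, so assume $b := G(0) \neq 0$ and set $a := G^{-1}(0) \in \fV$. Using Proposition~\ref{prop:autballs} together with the standard decomposition of $\operatorname{Aut}(\bB_d)$, I will factor $G = U \circ \phi_a$, where $\phi_a \in \operatorname{Aut}(\fB_d)$ is the involutive ball Möbius with $\phi_a(0) = a$ and $U \in \operatorname{Aut}(\fB_d)$ is a unitary automorphism (in particular $U(0) = 0$). The strategy is to establish the key claim $\phi_a(\fV) = \fV$; granting this, $\fW = U(\phi_a(\fV)) = U(\fV)$, and $F := U$ is a conformal equivalence of $\fV$ onto $\fW$ that fixes $0$.

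To prove the key claim, I begin by observing that since $\fW$ is homogeneous and $U$ is linear, $\phi_a(\fV) = U^{-1}(\fW)$ is itself a homogeneous nc variety. Hence for each $Z \in \fV$ and $\mu \in \ol{\D}$, homogeneity of $\phi_a(\fV)$ yields $\mu\phi_a(Z) \in \phi_a(\fV)$, so that $\gamma_Z(\mu) := \phi_a(\mu\phi_a(Z)) \in \fV$. This furnishes a holomorphic curve $\gamma_Z \colon \ol{\D} \to \fV$ with $\gamma_Z(0) = a$ and $\gamma_Z(1) = Z$.

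Working at level $1$, an explicit computation with the Möbius formula $\phi_a(W) = (a - P_a W - s_a Q_a W)/(1 - \langle W, a\rangle)$ shows that $\gamma_Z(\mu) \in L_Z := \operatorname{span}_{\bC}(a,Z)$ for every $\mu \in \ol{\D}$. Restricting the homogeneous polynomials cutting out $\fV$ to $L_Z$ produces homogeneous polynomials in two variables on $L_Z \cong \bC^2$; their common zero set is therefore either $\{0\}$, a finite union of complex lines through the origin, or all of $L_Z$. Since $\gamma_Z$ is a continuous curve joining the distinct nonzero points $a$ and $Z$ (which lie on different lines through $0$ when $Z \notin D_a$) and never passes through $0$, it cannot lie in any finite union of lines through the origin; hence $L_Z \subseteq \fV$. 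The Möbius formula expresses $\phi_a(Z)$ as a $\bC$-linear combination of $a$ and $Z$, so $\phi_a(Z) \in L_Z \cap \fB_d \subseteq \fV$, and involutivity of $\phi_a$ gives $\phi_a(\fV(1)) = \fV(1)$.

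The principal obstacle I anticipate is extending this equality from level $1$ to all nc levels. At level $n$, the matrix-Möbius curve $\gamma_Z^{(n)}$ is not confined to a $2$-dimensional subspace of $M_n^d$, so the level-$1$ argument does not transfer verbatim. I will address this either by adapting the subspace argument to a suitable finite-dimensional subspace at level $n$ that contains the trajectory of $\gamma_Z^{(n)}$ and exploiting the analogous structure of homogeneous nc subvarieties on such subspaces, or by using the established level-$1$ equality together with the identity $\phi_a(\fV) = U^{-1}(\fW)$ to force equality of the defining homogeneous nc ideals in $\F_d$.
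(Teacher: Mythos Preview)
Your level-$1$ argument is correct and rather nice: once you know $\phi_a(\fV)$ is homogeneous, the curve $\gamma_Z(\mu)=\phi_a(\mu\,\phi_a(Z))$ indeed stays in $\fV(1)\cap L_Z$, avoids $0$ when $Z\notin D_a$, and the structure of homogeneous varieties in a $2$-plane forces $L_Z\subseteq\fV(1)$. So $\phi_a(\fV(1))=\fV(1)$ is established.

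The genuine gap is the passage to higher levels, and neither of your two proposed routes closes it. Route~2 cannot work as stated: the first level of a homogeneous nc variety does \emph{not} determine its ideal in $\F_d$. For instance $\fB_d$ and $\fC\fB_d$ (the commuting ball) are distinct homogeneous nc varieties with identical level-$1$ slice $\bB_d$; more generally any homogeneous ideal contained in the commutator ideal gives $\fV(1)=\bB_d$. So knowing $\phi_a(\fV)(1)=\fV(1)$ and that both sides are homogeneous tells you nothing about equality at level $n\ge 2$. Route~1 faces the difficulty you yourself flag: at level $n$ the M\"obius map $\phi_a$ involves the inverse $(I_n-\langle Z,a\rangle)^{-1}$ of an $n\times n$ matrix, so the curve $\gamma_Z$ is a genuinely rational curve that does not sit in any $2$-dimensional linear slice of $M_n^d$, and the ``finite union of lines through $0$'' dichotomy is lost. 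You have not indicated what replaces it, and it is unclear that the key claim $\phi_a(\fV)=\fV$ is even true in this generality --- it is strictly stronger than what the proposition asserts.

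The paper sidesteps this entirely via the ``disc trick'': rather than trying to show that a \emph{particular} factor $\phi_a$ preserves $\fV$, it studies the orbit sets $\cO(0;\fV)\subseteq D_1$ and $\cO(0;\fV,\fW)\subseteq D_2$ consisting of images of $0$ under \emph{all} automorphisms of $\fV$ (resp.\ conformal equivalences $\fV\to\fW$), where $D_1,D_2$ are the discs from Lemma~\ref{lem:DiscToDisc}. Homogeneity makes these sets circular; the circle $C_{G(0),D_2}$ lies in $\cO(0;\fV,\fW)$, its preimage under $G$ is a circle through $0$ inside $\cO(0;\fV)$, and circularity then fills in the interior, forcing $0\in\cO(0;\fV,\fW)$. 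The point is that the M\"obius geometry is applied only to scalar discs, while the conformal equivalences themselves act on the full nc ball --- so no level-by-level extension is needed.
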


\begin{proof}
We import the ``disc trick" from \cite{DRS11} to the current setting (see also \cite[Lemma 5.9]{SalomonShalit}).
Let $G$ be a conformal equivalence mapping $\fV$ onto $\fW$.
If $0$ is mapped by $G$ to $0$, we are done.
Assume that $G(0) \neq 0$.
We will prove that there exists an automorphism $F$, mapping $\fV$ onto $\fW$, such that $F(0) = 0$.

Lemma \ref{lem:DiscToDisc} implies there exist two discs $D_1 \subseteq \fV(1)$ and $D_2 \subseteq \fW(1)$ such that
$G(D_1) = D_2$.
Define
\[
\cO(0;\fV):=\{z \in D_1 : z=F(0) \text{ for some automorphism $F$ of $\fV$} \},
\]
and
\[
\cO(0;\fV,\fW):=\left\{z \in D_2~ :~
\begin{minipage}{0.51\linewidth}
\text{$z=F(0)$ for some conformal equivalence}\\
\text{$F$ of $\fV$ onto $\fW$}
\end{minipage}
\right\}.
\]
Since homogeneous varieties are invariant under multiplication by complex numbers, it is easy to check that these sets are circular, that is, for every $\mu \in \cO(0;\fV)$ and $\nu \in \cO(0;\fV,\fW)$, it holds that $C_{\mu,D_1}:=\{z \in D_1: |z|=|\mu|\} \subseteq \cO(0;\fV)$ and $C_{\nu,D_2}:=\{z \in D_2: |z|=|\nu|\} \subseteq \cO(0;\fV,\fW)$.

Now, as $G(0)$ belongs to $\cO(0;\fV,\fW)$, we obtain that $C := C_{G(0),D_2}\subseteq \cO(0;\fV,\fW)$.
Therefore, the circle $G^{-1}(C)$ is a subset of $\cO(0;\fV)$; note that this circle passes through the point $0 = G^{-1}(G(0))$. 
As $\cO(0;\fV)$ is circular, every point of the interior of the circle $G^{-1}(C)$ lies in $\cO(0;\fV)$.
Thus, the interior of the circle $C$ must be a subset of $\cO(0;\fV,\fW)$. 
But the interior of $C$ contains $0$. 
We conclude that $0 \in \cO(0;\fV,\fW)$.
\end{proof}

\begin{cor} \label{cor:equivalence_is_linear_homog}
Let $\fV, \fW \subseteq \fB_d$ be two conformally equivalent homogeneous varieties.
Then there is a unitary transformation which maps $\fV$ onto $\fW$.
\end{cor}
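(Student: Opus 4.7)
The plan is to reduce the statement to the well-known fact that automorphisms of the commutative Euclidean ball $\bB_d$ fixing the origin are precisely unitary transformations, and then to check that the nc extension of such a map is unitary on every matrix level.

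First I would apply Proposition \ref{prop:biholo=>0-biholo} to obtain an automorphism $F \in \operatorname{Aut}(\fB_d)$ with $F(\fV) = \fW$ and $F(0) = 0$. By Proposition \ref{prop:autballs}, $F$ is determined by its restriction $\varphi := F\big|_{\bB_d} \in \operatorname{Aut}(\bB_d)$, and $\varphi(0) = 0$.

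Next I would invoke the classical structure of $\operatorname{Aut}(\bB_d)$: every holomorphic automorphism of the Euclidean ball $\bB_d$ which fixes the origin is the restriction of a unitary operator $U \in U(d)$ on $\bC^d$ (this is a standard consequence of Cartan's uniqueness theorem \cite[Theorem 2.1.1]{Rudin}, or alternatively a direct corollary of Theorem \ref{thm:nc_cartan} applied at the first matrix level). Thus $\varphi(z) = Uz$ for some unitary $U$ on $\bC^d$.

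The nc automorphism $F$ of $\fB_d$ whose first level is $\varphi$ is then the map
\[
F(Z) = (U \otimes I_n)(Z), \qquad Z \in \fB_d(n),
\]
which in coordinates sends $(Z_1,\ldots,Z_d)$ to $\bigl(\sum_j U_{1j}Z_j,\ldots,\sum_j U_{dj}Z_j\bigr)$. (One verifies this directly by checking that this formula defines an nc automorphism of $\fB_d$ whose restriction to $\bB_d$ is $\varphi$, and then applying the uniqueness in Proposition \ref{prop:autballs}.) Under the natural identification of $\fB_d(n) \subseteq \bC^d \otimes M_n$, the map $Z \mapsto (U \otimes I_n)(Z)$ is the restriction of the unitary operator $U \otimes I_n$ on $\bC^d \otimes M_n$; in particular it preserves the row-operator norm on every level. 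Since $F(\fV) = \fW$, the unitary $U$ implements the desired unitary equivalence of $\fV$ and $\fW$.

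No real obstacle arises; the only point to verify carefully is that the nc lift of a unitary $U \in U(d)$ really is the level-wise unitary $U \otimes I_n$, and that this is consistent with the identification in Proposition \ref{prop:autballs}.
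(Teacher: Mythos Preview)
Your proof is correct and follows essentially the same route as the paper: both apply Proposition~\ref{prop:biholo=>0-biholo} to produce a $0$-preserving automorphism of $\fB_d$, and then use the identification $\operatorname{Aut}(\fB_d)\cong\operatorname{Aut}(\bB_d)$ together with the classical fact that origin-fixing automorphisms of $\bB_d$ are unitary. You spell out the verification that the nc lift of $U$ is the level-wise map $U\otimes I_n$, whereas the paper leaves this implicit in the identification of Proposition~\ref{prop:autballs} (and also notes, as an alternative, that one may cite the free Cartan uniqueness theorem of \cite{McT16} directly).
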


\begin{proof}
Suppose that there exists $\psi \in \operatorname{Aut}(\fB_d)$ such that $\fW = \psi(\fV)$.
By Proposition \ref{prop:biholo=>0-biholo}, $\fV$ and $\fW$ are conformally equivalent via a $0$ preserving map $\varphi \in \operatorname{Aut}(\fB_d)$.
An automorphism $\varphi \in  \operatorname{Aut}(\fB_d) = \operatorname{Aut}(\bB_d)$ such that $\varphi(0) = 0$ is a unitary transformation \cite[Theorem 2.2.5]{Rudin}.
Alternatively, the free version of Cartan's uniqueness theorem \cite[Theorem 7]{McT16} says that if there exists a $0$ preserving nc automorphism of a circular bounded nc domain, then it is the restriction of a unitary linear map. 
\end{proof}

\begin{cor}\label{cor:iso_homo_weak}
Let $\fV \subseteq \fB_d$ and $\fW \subseteq \fB_e$ be two homogeneous varieties.
Then $H^\infty(\fV)$ and $H^\infty(\fW)$ are completely isometrically isomorphic via a weak-$*$ continuous map, if and only if there is an embedding $\fV, \fW \subseteq \fB_k$ and a unitary transformation which maps $\fV$ onto $\fW$ 
(in case $d<\infty$, then the weak-$*$ continuity requirement is superfluous). 
\end{cor}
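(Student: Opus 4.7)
The plan is to derive this corollary essentially as a packaging of Theorem \ref{thm:isomorphism_homo} together with Corollary \ref{cor:equivalence_is_linear_homog}, with a short verification of weak-$*$ continuity taking care of the converse direction.

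For the $(\Rightarrow)$ implication, I would begin with a completely isometric isomorphism $\alpha \colon H^\infty(\fV) \to H^\infty(\fW)$ which is weak-$*$ continuous (noting that when $d,e \in \bN$, the weak-$*$ continuity is automatic by Corollary \ref{cor:isomorphism_auto_weakstar}, so the hypothesis does not need to be imposed separately). Theorem \ref{thm:isomorphism_homo} then supplies an embedding $\fV, \fW \subseteq \fB_k$ for some $k$, together with an automorphism $G \in \operatorname{Aut}(\fB_k)$ satisfying $G(\fW) = \fV$. Since $\fV$ and $\fW$ are now conformally equivalent homogeneous varieties sitting inside the same nc ball, Corollary \ref{cor:equivalence_is_linear_homog} upgrades this conformal equivalence to a unitary transformation carrying $\fV$ onto $\fW$, which is exactly the conclusion.

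For the $(\Leftarrow)$ implication, I would start from a unitary transformation $U$ with $\fV, \fW \subseteq \fB_k$ and $U(\fW) = \fV$. Identifying $U$ with its natural extension as an element of $\operatorname{Aut}(\fB_k)$, Proposition \ref{prop:biholo_auto} already produces a completely isometric isomorphism $\alpha \colon H^\infty(\fV) \to H^\infty(\fW)$ given by $\alpha(f) = f \circ U$, so only weak-$*$ continuity of $\alpha$ remains to be checked. Here I would appeal to the Davidson--Pitts result recorded just before Proposition \ref{prop:biholo_auto}: every automorphism of $\fB_k \cong \bB_k$ lifts to a unitary $U_\phi$ on $\cH^2_k$ whose conjugation action induces the corresponding (weak-$*$ continuous) automorphism $\tilde\alpha$ of $H^\infty(\fB_k)$. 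Since $\tilde\alpha$ sends $\cJ_\fV$ onto $\cJ_\fW$, Lemma \ref{lem:mu_iota} shows that $U_\phi^*$ interchanges the subspaces $\cH_\fV^\perp$ and $\cH_\fW^\perp$, so it restricts to a unitary $\cH_\fV \to \cH_\fW$ implementing $\alpha$. A unitarily implemented isomorphism between multiplier algebras of nc RKHS is weak-$*$ continuous, completing the proof.

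No serious obstacle is anticipated, since all the heavy lifting has already been performed in Theorem \ref{thm:isomorphism_homo}, Corollary \ref{cor:equivalence_is_linear_homog}, and the Davidson--Pitts lifting result. The mildly delicate point is the bookkeeping in the $(\Leftarrow)$ direction — verifying that the unitary $U_\phi$ intertwines the right subspaces so that it descends to an implementation of $\alpha$ — but this is immediate from Lemma \ref{lem:mu_iota} and the fact that $\tilde\alpha(\cJ_\fV) = \cJ_\fW$. As an alternative to the Davidson--Pitts argument, one could instead verify weak-$*$ continuity directly via Lemma \ref{prop:bound_wot_conv_is_point_conv}: the constant function $K_{0,1,1}$ lies in $\cH_\fV$ (since $0 \in \fV$ by homogeneity), which guarantees the density hypothesis of that lemma, and the pointwise convergence of $\alpha(f_\lambda) = f_\lambda \circ U$ on $\fW$ follows from pointwise convergence of $f_\lambda$ on $\fV = U(\fW)$.
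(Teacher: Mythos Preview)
Your proposal is correct and takes essentially the same approach as the paper. The paper in fact gives no explicit proof of this corollary, treating it as immediate from Theorem \ref{thm:isomorphism_homo}, Corollary \ref{cor:equivalence_is_linear_homog}, and Proposition \ref{prop:biholo_auto} (together with Corollary \ref{cor:isomorphism_auto_weakstar} for the parenthetical remark); your write-up simply spells out these steps, and your two options for verifying weak-$*$ continuity in the $(\Leftarrow)$ direction---via the Davidson--Pitts unitary implementation or via Lemma \ref{prop:bound_wot_conv_is_point_conv}---are both in line with what the paper uses elsewhere.
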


In Theorem \ref{thm:iso_homo_cont_older}, we will show that when considering norm closed analogues of the multiplier algebra, or when $d<\infty$, the condition ``completely isometrically isomorphic" can be weakened ``isometrically isomorphic".

\section{Algebras of continuous functions} \label{sec:continuous}

In this section, we consider algebras of continuous multipliers on subvarieties of the noncommutative ball. First, we require a few definitions.
Let $\Omega \subset \M_d$ be an nc subset and let $f \colon \Omega \to \M_1$ be an nc function. 
We say that $f$ is {\em uniformly continuous} on $\Omega$ if for every $\epsilon > 0$ there exists a $\delta > 0$, such that for every $n \in \N$ we have that if $X, Y \in \Omega(n)$ are such that $\|X - Y \| < \delta$, then $\|f(X) - f(Y)\| < \epsilon$
(recall that for $A \in \Omega(n)$ we let $\|A\|$ denote the norm of the row operator $(A_1, \ldots, A_d) : (\bC^n)^d \to \bC^n$).

Let $\Omega \subset \M_d$ be an nc set and $f \colon \Omega \to \M_1$ a uniformly continuous nc function.
Given an $\epsilon > 0$, the $\delta$ that we obtain from uniform continuity actually satisfies, that for every $X \in \Omega(n)$ and $Y \in \Omega(m)$, such that $\|X^{\oplus \ell/n} - Y^{\oplus \ell/m}\| < \delta$, where $\ell$ is a common multiple of $n$ and $m$, we have that $\|f(X)^{\oplus \ell/n} - f(Y)^{\oplus \ell/m}\| < \epsilon$.

The proof of the following lemma is standard and we state it for the sake of completeness.
\begin{lem} \label{lem:unif_cont_facts}
Let $\Omega \subset \M_d$ be an nc set, then:
\begin{enumerate}
\item[(i)] A linear combination of nc functions that are uniformly continuous on $\Omega$ is uniformly continuous on $\Omega$.
\item[(ii)] A product of two nc functions that are bounded and uniformly continuous on $\Omega$ is uniformly continuous on $\Omega$.
\item[(iii)] If a sequence of bounded and uniformly continuous nc functions on $\Omega$ converges in the supremum norm, then the limit is also bounded and uniformly continuous on $\Omega$.
\end{enumerate}
\end{lem}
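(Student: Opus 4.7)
The plan is to verify all three items by adapting the classical $\varepsilon/\delta$ arguments from ordinary analysis, keeping in mind that the condition ``uniformly continuous on $\Omega$'' requires a single $\delta$ to work simultaneously at every level $n$. None of the three steps presents a genuine obstacle; the work is purely bookkeeping with the definition.

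For (i), given $f,g$ uniformly continuous on $\Omega$ and scalars $\alpha,\beta$, pick $\delta_f,\delta_g$ responding to $\varepsilon/(2(|\alpha|+|\beta|+1))$, and let $\delta=\min(\delta_f,\delta_g)$. The triangle inequality applied at each level $n$ gives $\|(\alpha f+\beta g)(X)-(\alpha f+\beta g)(Y)\|<\varepsilon$ whenever $X,Y\in\Omega(n)$ with $\|X-Y\|<\delta$. I would also briefly note that a linear combination of nc functions is again an nc function, so the statement makes sense.

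For (ii), write
\[
(fg)(X)-(fg)(Y) = f(X)\bigl(g(X)-g(Y)\bigr) + \bigl(f(X)-f(Y)\bigr)g(Y),
\]
set $M=\max(\|f\|_\infty,\|g\|_\infty)+1$, choose $\delta_f,\delta_g$ for the target $\varepsilon/(2M)$, and take $\delta=\min(\delta_f,\delta_g)$. The submultiplicativity of the operator norm on each $M_n$ yields $\|(fg)(X)-(fg)(Y)\|<\varepsilon$ uniformly in $n$. Again, the pointwise product of two nc functions is an nc function, so we produce a valid nc function.

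For (iii), let $f_k\to f$ uniformly on $\Omega$, with each $f_k$ bounded and uniformly continuous. Boundedness of $f$ is immediate from uniform convergence. For uniform continuity, fix $\varepsilon>0$, choose $N$ so that $\|f-f_N\|_\infty<\varepsilon/3$, and then pick $\delta>0$ from the uniform continuity of $f_N$ for the target $\varepsilon/3$. For any $n$ and any $X,Y\in\Omega(n)$ with $\|X-Y\|<\delta$,
\[
\|f(X)-f(Y)\|\le \|f(X)-f_N(X)\|+\|f_N(X)-f_N(Y)\|+\|f_N(Y)-f(Y)\|<\varepsilon.
\]
Finally, $f$ is an nc function because the properties of respecting gradings, direct sums and similarities pass to uniform (in fact pointwise) limits, so $f$ belongs to the class of uniformly continuous bounded nc functions on $\Omega$. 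This completes the verification.
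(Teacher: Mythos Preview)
Your proof is correct and is exactly the standard $\varepsilon/\delta$ argument the paper has in mind; in fact the paper omits the proof entirely, remarking only that it is standard. There is nothing to add.
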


\begin{cor} \label{cor:poly_unif_cont}
Let $\Omega \subset \M_d$ be a bounded nc set. 
Then the polynomials are uniformly continuous on $\Omega$.
\end{cor}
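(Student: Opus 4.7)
The plan is to verify the two base cases---constants and coordinate functions---and then bootstrap with Lemma \ref{lem:unif_cont_facts} to handle arbitrary polynomials.

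First, constants are trivially uniformly continuous (and bounded), and they are nc functions. For the coordinate function $z_i : X \mapsto X_i$, observe that for any $X, Y \in \Omega(n)$, the norm of the $i$th entry of the row is dominated by the row operator norm itself: writing $X - Y = ((X-Y)_1, \ldots, (X-Y)_d)$ as a row operator $(\bC^n)^d \to \bC^n$, we have
\[
\|X_i - Y_i\| \;\leq\; \|X - Y\|.
\]
Hence $z_i$ is uniformly continuous on $\Omega$ with $\delta = \varepsilon$, and it is bounded on $\Omega$ since $\Omega$ is a bounded nc set (in the row operator norm). Each $z_i$ is clearly an nc function.

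Next, apply Lemma \ref{lem:unif_cont_facts}(ii) inductively to deduce that for every word $\alpha \in \W_d$, the monomial $z^\alpha$ is bounded and uniformly continuous on $\Omega$: the product of two bounded uniformly continuous nc functions is again bounded (because $\Omega$ is bounded) and uniformly continuous. Finally, by Lemma \ref{lem:unif_cont_facts}(i), an arbitrary polynomial $p \in \F_d$, being a finite linear combination of such monomials, is bounded and uniformly continuous on $\Omega$.

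There is no real obstacle here; the only small point worth flagging is the inequality $\|X_i - Y_i\| \leq \|X-Y\|$ in the row norm, which ensures that the coordinate projections are $1$-Lipschitz. Everything else is a direct invocation of Lemma \ref{lem:unif_cont_facts}.
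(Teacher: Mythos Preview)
Your proof is correct and is exactly the argument the paper has in mind: the corollary is stated without proof, as an immediate consequence of Lemma~\ref{lem:unif_cont_facts}, and you have simply spelled out the base cases and the induction that the paper leaves implicit.
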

This leads us to define the following two algebras.
Let $\fV \subseteq \fB_d$.
Let $A(\fV)$ denote all functions in $H^\infty(\fV)$ which continue uniformly continuously to $\overline{\fV}$, and let $\fA_\fV$ denote the norm closure of the image of the polynomials under the quotient map $H^{\infty}(\fB_d) \to H^{\infty}(\fV)$.
It is clear from the Lemma \ref{lem:unif_cont_facts} and Corollary \ref{cor:poly_unif_cont} that $\fA_{\fV} \subseteq A(\fV)$.

In this section, we treat these algebras, concentrating mostly on homogeneous varieties.
We will prove that $A(\fV) = \fA_\fV$ in the case that $\fV$ is a homogeneous variety.
We then obtain a classification of the algebras of continuous holomorphic functions $A(\fV)$ analogous to the classification of algebras of bounded holomorphic functions.
We will also obtain a homogeneous Nullstellensatz here in the context of algebras of continuous functions.

\subsection{The equality $A(\fV) = \fA_\fV$ for homogeneous varieties}

For a homogeneous variety $\fV \subseteq \fB$, we denote by $\ol{\fV}$ its closure in $\ol{\fB_d}$. The following notion is a weaker version of uniform continuity that later will turn out to be equivalent in the case of homogeneous varieties.

Let $\fV$ be a homogeneous variety in $\fB_d$, and let $f \in H^\infty(\fV)$.
We will say that $f$ is {\em radially uniformly continuous} on $\fV$ if $f$ for every $\epsilon > 0$, there exists a $\delta > 0$, such that for every $r,s \in (0,1)$ and every $X \in \fB_d$, if $|r - s| < \delta$, then $\|f(rX) - f(sX)\| < \epsilon$.
Every radially uniformly continuous function in $H^\infty(\fV)$ extends to an nc function $f: \ol{\fV} \to \M_1$, which is radially continuous at every point of the boundary (we will see below that such a function is in fact uniformly continuous on $\ol{\fV}$).
It is immediate that linear combinations, products, and uniform limits of functions that are radially uniformly continuous is also radially uniformly continuous.

\begin{prop} \label{prop:approximation_by_polynomials}
Let $\fV$ be a homogeneous variety in $\fB_d$, and let $f \in H^{\infty}(\fV)$.
Then $f \in \fA_\fV$ if and only if $f$ is radially uniformly continuous on $\fV$.
\end{prop}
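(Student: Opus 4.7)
The plan is to reduce both implications to an analysis of the homogeneous components of $f$, using the circle-action averaging from Lemma \ref{lem:circ_act} together with the identification of homogeneous bounded nc functions with polynomials provided by Lemma \ref{lem:homog_func_is_poly}. The strategy is essentially the one used to re-prove Theorem \ref{thm:mult_are_bounded_on_V} in the homogeneous setting, now packaged for norm approximation rather than just boundedness.

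For the ``only if'' direction I would argue as follows. Polynomials are uniformly continuous on the bounded nc set $\fB_d$ by Corollary \ref{cor:poly_unif_cont}, and hence in particular radially uniformly continuous on $\fV$. A routine $3\epsilon$-argument (parallel to Lemma \ref{lem:unif_cont_facts}(iii)) shows that radial uniform continuity is preserved under uniform limits, so any $f \in \fA_\fV$, being a norm-limit of polynomial restrictions, is radially uniformly continuous.

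For the ``if'' direction, suppose $f \in H^\infty(\fV)$ is radially uniformly continuous. The first step will be to decompose $f$ into its homogeneous components
\[
f_n(Z) = \frac{1}{2\pi}\int_0^{2\pi} f(e^{i\theta}Z)\, e^{-in\theta}\, d\theta,
\]
each of which satisfies $\|f_n\|_\infty \leq \|f\|_\infty$ by Lemma \ref{lem:circ_act}. Lemma \ref{lem:homog_func_is_poly} then produces polynomials $p_n \in \F_d$ with $p_n|_\fV = f_n$ and $\|p_n\|_\infty = \|f_n\|_\infty$. For each $r \in (0,1)$ the geometric estimate $\sum_n r^n \|p_n\|_\infty \leq \|f\|_\infty/(1-r)$ shows that the series $g_r := \sum_n r^n p_n$ converges absolutely in the sup norm on $\fB_d$, so $g_r \in \fA_{\fB_d}$ and $g_r|_\fV \in \fA_\fV$. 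The next step is to identify $g_r|_\fV$ with the dilate $f_r(Z) := f(rZ)$; this reduces to the one-variable fact that for fixed $Z \in \fV \subseteq \fB_d$ the scalar map $s \mapsto f(sZ)$ is holomorphic on a disc of radius $1/\|Z\| > 1$, with Taylor expansion $\sum_n s^n f_n(Z)$ at the origin.

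To close, I would exploit that radial uniform continuity is exactly the statement that $\{f_r\}_{r\in(0,1)}$ is uniformly Cauchy on $\fV$ as $r \to 1^-$, so it converges uniformly to some limit $g \in H^\infty(\fV)$; by radial continuity of $s \mapsto f(sZ)$ at $s=1$ we must have $g = f$. Since each $f_r \in \fA_\fV$ and $\fA_\fV$ is norm closed, this yields $f \in \fA_\fV$. The one step requiring genuine care will be the pointwise identification $g_r|_\fV = f_r$; the remaining manipulations are essentially packaging and parallel the dilation-and-approximation scheme from the homogeneous case of Theorem \ref{thm:mult_are_bounded_on_V}.
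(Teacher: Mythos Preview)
Your proposal is correct and follows essentially the same strategy as the paper: show that each dilate $f_r$ lies in $\fA_\fV$ via the homogeneous decomposition, then use radial uniform continuity to upgrade the (automatic) pointwise convergence $f_r \to f$ to norm convergence. One small point the paper handles explicitly and you do not: Lemma \ref{lem:homog_func_is_poly} is stated under the blanket assumption $d<\infty$ of Section \ref{sec:homog_case}, so for $d=\infty$ your $f_n$ need not be genuine polynomials; the paper notes instead that each $f_n$ is an $n$-homogeneous function lying in the norm closure of the degree-$n$ monomials (via the Fock structure), which is all that is needed to conclude $f_r \in \fA_\fV$.
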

\begin{proof}
In the proof we will use the following fact repeatedly:
\[
\|f\| = \sup \left\{ \|f(X)\| \mid X \in \fV\right\}.
\]
Assume that $f \in H^{\infty}(\fV)$ is radially uniformly continuous on $\fV$.
Let $f_r(Z)=f(rZ)$ for all $Z \in \fV$. Applying the methods of \ref{lem:homog_func_is_poly}, we see that for $d<\infty$, $f_r$ is a norm converging series of $n$-homogeneous polynomials. If $d=\infty$, a similar argument shows that $f_r$ is a norm converging series of $n$-homogeneous {\em holomorphic nc-functions}, each --- due to the Fock structure of $\cH^2_d$ --- is in the norm closure of the monomials of degree $n$. In any case, $f_r \in \fA_\fV$. In addition, the net $f_r$ $\textsc{sot}$-converges to $f$ as $r \to 1$.
We need to show that the convergence is in fact in norm.
It suffices to show that for every $\epsilon > 0$, there exists $\delta > 0$, such that if $1 - \delta < r < 1$, then $\|f_r - f\| < \epsilon$.
Given $\epsilon > 0$, let us choose $\delta$ from uniform continuity that corresponds to $\frac{\epsilon}{2}$.
Now for every $r \in (1-\delta,1)$, we can choose $X \in \fV$, such that $\|f_r - f\| < \|f_r(X) - f(X)\| + \frac{\epsilon}{2}$.
Since $\|r X - X\| = 1 - r < \delta$ we get that $\|f_r - f\| < \epsilon$, and this concludes the proof.
The converse is trivial since every $f \in \fA_{\fV}$ is uniformly continuous on $\ol{\fV}$.
\end{proof}

\begin{cor} \label{cor:levelwise_continuous}
Let $\fV \subseteq \fB_d$ be a homogeneous variety.
Every radially uniformly continuous multiplier on $\cH_\fV$ extends to a uniformly continuous function on $\ol{\fV}$.
In particular, $A(\fV) = \fA_\fV$.
\end{cor}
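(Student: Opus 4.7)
The strategy is to bootstrap off Proposition \ref{prop:approximation_by_polynomials}, which already identifies radially uniformly continuous multipliers on $\cH_\fV$ with elements of $\fA_\fV$; once that proposition is in hand the corollary is essentially immediate.

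For the first assertion, given a radially uniformly continuous multiplier $f$, I would use Proposition \ref{prop:approximation_by_polynomials} to choose polynomials $p_n \in \F_d$ with $\|p_n|_\fV - f\|_\infty \to 0$. Each $p_n$ is uniformly continuous on the bounded nc set $\ol{\fB_d}$ by Corollary \ref{cor:poly_unif_cont}, hence uniformly continuous on $\ol{\fV}$. Because $\fV(k)$ is dense in $\ol{\fV}(k) = \ol{\fV(k)}$ at every matrix level $k$, the supremum of a continuous nc function over $\ol{\fV}$ equals its supremum over $\fV$; applying this to the differences $p_n - p_m$ shows that $\{p_n|_{\ol{\fV}}\}$ is Cauchy in the supremum norm on $\ol{\fV}$ and therefore converges uniformly to an nc function $\tilde f$ on $\ol{\fV}$ extending $f$. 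Lemma \ref{lem:unif_cont_facts}(iii) then gives uniform continuity of $\tilde f$ on $\ol{\fV}$.

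For the equality $A(\fV) = \fA_\fV$, the inclusion $\fA_\fV \subseteq A(\fV)$ was recorded just before Proposition \ref{prop:approximation_by_polynomials}. Conversely, any $f \in A(\fV)$ is uniformly continuous on $\ol{\fV}$, which trivially implies radial uniform continuity on $\fV$, and Proposition \ref{prop:approximation_by_polynomials} then places $f$ in $\fA_\fV$.

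The only mildly delicate point I anticipate is promoting norm convergence on $\fV$ to norm convergence on $\ol{\fV}$, which hinges on the level-wise density of $\fV(k)$ in $\ol{\fV}(k)$ together with continuity of the $p_n$ on $\ol{\fV}$. Beyond that observation the argument is a short and direct consequence of the preceding proposition, and no further technology is needed.
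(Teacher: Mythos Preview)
Your proof is correct and follows essentially the same route as the paper. The paper's proof is a terse two-liner: it invokes the already-recorded inclusion $\fA_\fV \subseteq A(\fV)$ (which, combined with Proposition \ref{prop:approximation_by_polynomials}, immediately gives the first assertion), and then notes that uniform continuity trivially implies radial uniform continuity for the reverse inclusion. Your argument for the first assertion simply unpacks what the inclusion $\fA_\fV \subseteq A(\fV)$ really says---namely the density-plus-Cauchy extension argument you spell out---so the extra detail you supply is exactly the content the paper absorbed into its earlier ``it is clear'' remark.
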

\begin{proof}
As we have already observed, $\fA_{\fV} \subseteq A(\fV)$.
On the other hand, clearly every uniformly continuous function is radially uniformly continuous.
\end{proof}

\subsection{Nullstellensatz and quotients}
In accordance with parts of the literature, we let  $\fA_d$ denote the closure of polynomials in the sup norm, that is
\[
\fA_d = \fA_{\fB_d} = A(\fB_d).
\]
Following Popescu \cite{Popescu96}, the algebra $\fA_d$ is called the {\em noncommutative disc algebra};
the discussion above shows what a suitable designation this is. 

If $\fV \subseteq \fB_d$, we put
\[
\cI_{\fV} = \{f \in \fA_d : f(X) = 0 \,\, \textrm{ for all } \,\, X \in \fV\}.
\]

\begin{thm} \label{thm:cont_nullss}
Let $d \in \bN$.
If $I \triangleleft \F_d$, is homogeneous, then
\[
\cI_{V_{\fB_d}(I)} = \ol{I}^{\|\cdot\|}.
\]
If $J \triangleleft \fA_d$ is a homogeneous norm closed ideal, then
\[
\cI_{V_{\fB_d}(J)} = J.
\]
\end{thm}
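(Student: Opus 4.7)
The plan is to reduce to the polynomial Nullstellensatz (Theorem \ref{thm:null_poly}) via a norm-convergent Ces\`aro approximation in $\fA_d$.

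First I would observe that the first assertion reduces to the second. If $I \triangleleft \F_d$ is homogeneous, then $\ol{I}^{\|\cdot\|}$ is a homogeneous norm-closed ideal in $\fA_d$ (homogeneity is preserved because norm-limits of polynomials in $I$ still have each Taylor component obtainable by integration, see below), and pointwise evaluation at $X \in \fB_d$ is norm-continuous on $\fA_d$, so $V_{\fB_d}(I) = V_{\fB_d}(\ol{I}^{\|\cdot\|})$. Applying the second statement with $J = \ol{I}^{\|\cdot\|}$ then yields $\cI_{V_{\fB_d}(I)} = \ol{I}^{\|\cdot\|}$.

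The main tool for the second assertion is a noncommutative Fej\'er theorem in $\fA_d$: for every $f \in \fA_d$, the Ces\`aro means $\sigma_N(f) = \sum_{n=0}^{N-1}(1 - n/N) f_n$ of the Taylor series converge to $f$ in norm. To prove this I would write
\[
\sigma_N(f) = \frac{1}{2\pi} \int_0^{2\pi} K_N(\theta)\, \alpha_\theta(f)\, d\theta,
\]
where $K_N$ is the classical Fej\'er kernel and $\alpha_\theta$ is the circle action of Lemma \ref{lem:circ_act}. The map $\alpha_\theta$ preserves polynomials isometrically and polynomials are norm-dense in $\fA_d$, so $\alpha_\theta$ restricts to a norm-continuous isometric action of $\bT$ on $\fA_d$. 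Positivity of $K_N$ then gives $\|\sigma_N\|_{\fA_d \to \fA_d} \leq 1$, and since $\sigma_N(p) \to p$ trivially for every polynomial $p$, a standard $3\epsilon$ density argument produces norm convergence $\sigma_N(f) \to f$ for all $f \in \fA_d$.

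Next I would show that any homogeneous norm-closed ideal $J \triangleleft \fA_d$ satisfies $J = \ol{J \cap \F_d}^{\|\cdot\|}$. Given $f \in J$, the Bochner integral
\[
f_n = \frac{1}{2\pi}\int_0^{2\pi} e^{-in\theta}\alpha_\theta(f)\, d\theta
\]
lies in $J$ (homogeneity gives $\alpha_\theta(f) \in J$, and $J$ is norm-closed). Each $f_n$ is an $n$-homogeneous nc holomorphic function on $\fB_d$, hence a homogeneous polynomial of degree $n$ by Lemma \ref{lem:homog_func_is_poly}. Therefore $f_n \in J \cap \F_d$, so $\sigma_N(f) \in J \cap \F_d$, and the Fej\'er approximation gives $f \in \ol{J \cap \F_d}^{\|\cdot\|}$. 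In particular, $V_{\fB_d}(J) = V_{\fB_d}(J \cap \F_d)$.

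Finally, take $f \in \cI_{V_{\fB_d}(J)}$. For each $X \in V_{\fB_d}(J)$ and $\lambda \in \bD$, homogeneity of the variety gives $\lambda X \in V_{\fB_d}(J)$, hence the scalar power series $\sum_n \lambda^n f_n(X) = f(\lambda X)$ vanishes identically in $\lambda$, forcing $f_n(X) = 0$. Thus $f_n \in I(V_{\fB_d}(J)) = I(V_{\fB_d}(J \cap \F_d))$, and Theorem \ref{thm:null_poly} yields $f_n \in J \cap \F_d \subseteq J$. Consequently $\sigma_N(f) \in J$ for every $N$, and norm convergence $\sigma_N(f) \to f$ together with norm-closedness of $J$ gives $f \in J$.

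The main obstacle is the noncommutative Fej\'er theorem in $\fA_d$; once it is available, the rest is a clean combination of homogeneity with the polynomial Nullstellensatz already proved. A minor subtlety is verifying that the Bochner integral representation of $f_n$ really lies in the norm-closed ideal, but this is automatic from norm continuity of $\theta \mapsto \alpha_\theta(f)$ and closedness of $J$.
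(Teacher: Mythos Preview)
Your proposal is correct and takes essentially the same approach as the paper: both reduce the first assertion to the second via $V_{\fB_d}(I) = V_{\fB_d}(\ol{I}^{\|\cdot\|})$, and both prove the second by showing that a homogeneous norm-closed ideal in $\fA_d$ is the norm closure of its polynomial part, so that the polynomial Nullstellensatz (Theorem \ref{thm:null_poly}) finishes the job. The paper's proof simply asserts that the Ces\`aro sums of the Taylor expansion converge in norm in $\fA_d$, whereas you spell out this Fej\'er-type statement via the circle action and a $3\epsilon$ argument; your treatment is thus a more detailed version of the same proof.
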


\begin{proof}
Let us write $\fV = V_{\fB_d}(I)$ and $\cI_{\fV} = \cI_{V_{\fB_d}(I)}$ for simplicity. To prove the first part note that for every $f = \sum_{n=0}^{\infty} f_n \in \cI_{\fV}$ we have that $f_n \in I$ and the Ces\`{a}ro sums of the Taylor expansion converges in norm to $f$. 
Alternatively, we may --- as in the proof of Theorem \ref{thm:homoclosedweakstar} --- content ourselves with proving the second assertion, since the variety cut out by $I$ equals the variety cut out by $\ol{I}^{\|\cdot\|}$.

As for the second assertion, let $\fV$ be the variety cut out by $J$. 
Clearly, $J \subseteq \cI_{\fV}$ and we only need to prove the other inclusion. 
Since both ideals are homogeneous, we have that $\ol{J \cap \F_d}^{\|\cdot\|} = J$ and similarly for $\cI_{\fV}$. 
Therefore, the proof of the proposition reduces to the polynomial case. 
But Theorem \ref{thm:null_poly} implies that
\[
\cI_{V_{\fB_d}(J)} \cap \F_d = I({V_{\fB_d}(J)}) = I({V_{\fB_d}(J \cap \F_d)}) = J \cap \F_d.
\]
Thus, $\cI_{V_{\fB_d}(J)}=J$.
\end{proof}

\begin{lem}
Let $\fV \subseteq \fB_d$ be a homogeneous variety.
Let $\cI_\fV$ and $\cJ_\fV$ denote, respectively, the ideals of functions in $A(\fB_d)$ and $H^\infty(\fB_d)$, respectively, that vanish on $\fV$.
Then the natural map $A(\fB_d)/\cI_{\fV}\to H^{\infty}(\fB_d)/\cJ_\fV$ given by $f + \cI_\fV \mapsto f + \cJ_\fV$ is completely isometric.
\end{lem}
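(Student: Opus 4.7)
The plan is to establish the non-trivial inequality $\|f + M_n(\cI_\fV)\| \leq \|f + M_n(\cJ_\fV)\|$ for every $n \in \N$ and every $f \in M_n(A(\fB_d))$ by a radial dilation argument. The reverse inequality is automatic because $\cI_\fV = A(\fB_d) \cap \cJ_\fV$ and the inclusion $A(\fB_d) \hookrightarrow H^\infty(\fB_d)$ is a complete isometry (both algebras carry the supremum norm on matrices of nc functions).

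Fix $f \in M_n(A(\fB_d))$ and $G \in M_n(\cJ_\fV)$. I will exhibit, for each $r \in (0,1)$, an element $g_r \in M_n(\cI_\fV)$ such that $\|f - g_r\|_\infty$ is bounded by $\|f - G\|_\infty$ plus an error tending to zero as $r \to 1^-$. The candidate will be the radial dilation $g_r(Z) := G(rZ)$, for which three things must be verified. First, $g_r$ vanishes on $\fV$: this is immediate from homogeneity since $rZ \in \fV$ whenever $Z \in \fV$. Second, $g_r$ lies in $M_n(A(\fB_d))$. For this I plan to use the Fourier decomposition from Lemma \ref{lem:circ_act}: each entry of $G$ admits a homogeneous expansion $G = \sum_k G_k$ where $G_k = \frac{1}{2\pi} \int_0^{2\pi} \alpha_\theta(G) e^{-ik\theta}\, d\theta$. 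Because $\alpha_\theta$ is unitarily implemented on $\cH^2_d$, it is isometric on $H^\infty(\fB_d) = \mlt \cH^2_d$, so $\|G_k\|_\infty \leq \|G\|_\infty$. Therefore $G_r = \sum_k r^k G_k$ is a norm-convergent sum whose terms are $k$-homogeneous polynomials (by Lemma \ref{lem:homog_func_is_poly} when $d < \infty$, and by Fock-space considerations when $d = \infty$), so $g_r \in M_n(\fA_d) = M_n(A(\fB_d))$, hence $g_r \in M_n(\cI_\fV)$.

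For the estimate, I will write $f - g_r = (f - f_r) + (f - G)_r$, where $f_r(Z) = f(rZ)$. The second summand satisfies $\|(f - G)_r\|_\infty \leq \|f - G\|_\infty$, because $rZ \in \fB_d$ whenever $Z \in \overline{\fB_d}$. The first summand tends to zero in norm as $r \to 1^-$: since each entry of $f$ is uniformly continuous on $\overline{\fB_d}$ and $\|X - rX\| \leq 1-r$ for $X \in \overline{\fB_d}$, a standard $\varepsilon$-$\delta$ argument yields $\|f - f_r\|_\infty \to 0$. Combining these with the triangle inequality and taking the infimum over $G \in M_n(\cJ_\fV)$ completes the proof. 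The main obstacle is the second step --- showing that the radial dilation of an arbitrary bounded nc function lands in the disc algebra. This is the crux of the argument and relies essentially on the circle symmetry of $\fB_d$ together with the isometric nature of the induced action $\alpha_\theta$ on $H^\infty(\fB_d)$; without homogeneity of $\fV$ (to ensure $g_r$ still vanishes there) the whole scheme collapses.
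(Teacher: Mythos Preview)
Your proposal is correct and follows essentially the same radial-dilation approach as the paper's proof: both show that for $G \in M_n(\cJ_\fV)$ the dilation $G_r$ lies in $M_n(\cI_\fV)$ by homogeneity, use that $f \mapsto f_r$ is completely contractive (the paper phrases this via the Poisson kernel, you via the direct sup-norm estimate), and conclude from $\|f - f_r\| \to 0$. The only cosmetic difference is that you spell out in more detail why $G_r \in A(\fB_d)$ via the homogeneous Fourier expansion, whereas the paper simply refers back to the methods of Lemma~\ref{lem:homog_func_is_poly}.
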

\begin{proof}
Given $f \in M_k(A(\fB_d))$, we need to prove that
\[
d(f, M_k(\cI_\fV)) := \inf \{\|f - g\| : g \in M_k(\cI_\fV)\} = \inf\{\|f-g\| : g \in M_k(\cJ_\fV)\}.
\]
Fixing $\epsilon$, let $r>0$ be such that $\|f-f_r\| < \epsilon$.
Then for every $g \in M_k(\cJ_\fV)$, we have that $g_r \in M_k(\cI_\fV)$ --- here we use homogeneity.
Applying the methods of Lemma \ref{lem:homog_func_is_poly}, and noting that $f_r$ is obtained from $f$ by integrating against the Poisson kernel
\[
f_r(Z) = \frac{1}{2\pi} \int_0^{2\pi} f(e^{i\theta}Z) \left(\sum_{n=0}^\infty (r e^{-i\theta})^n\right) d\theta, 
\]
we get that $f \mapsto f_r$ is a complete contraction. 
Therefore, we see that for $g \in M_k(\cJ_\fV)$,
\[
\|f - g\|\geq \|f_r - g_r\| \geq \|g_r - f\| - \|f_r - f\| \geq d(f,M_k(\cI_\fV)) - \epsilon.
\]
\end{proof}

\begin{prop}\label{prop:quot_cont}
Let $\fV \subseteq \fB_d$ be a homogeneous variety.
Then $A(\fV)$ is completely isometrically isomorphic to $A(\fB_d)/\cI_{\fV}$.
\end{prop}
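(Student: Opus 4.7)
The plan is to analyze the restriction map $\rho \colon A(\fB_d) \to A(\fV)$ defined by $\rho(f) = f|_\fV$. This is a well-defined unital homomorphism: the restriction of any uniformly continuous nc function on $\overline{\fB_d}$ to $\overline{\fV}$ remains uniformly continuous, and it is completely contractive because the matrix norms on both $A(\fB_d)$ and $A(\fV)$ are defined as suprema over the respective (bigger and smaller) domains. The kernel of $\rho$ is, by definition, $\cI_\fV$, so $\rho$ descends to a completely contractive injective homomorphism $\tilde\rho \colon A(\fB_d)/\cI_\fV \to A(\fV)$.

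Next I would show that $\tilde\rho$ is a complete isometry. The preceding lemma provides a completely isometric embedding $A(\fB_d)/\cI_\fV \hookrightarrow H^\infty(\fB_d)/\cJ_\fV$ given by $f + \cI_\fV \mapsto f + \cJ_\fV$. Theorem \ref{thm:quotient_mult} identifies $H^\infty(\fB_d)/\cJ_\fV$ completely isometrically with $H^\infty(\fV)$ through $f + \cJ_\fV \mapsto f|_\fV$. The obvious inclusion $A(\fV) \hookrightarrow H^\infty(\fV)$ is a complete isometry as well, since both sides carry the supremum matrix norm over $\fV$. The composition of these three maps coincides with $\tilde\rho$ followed by the inclusion $A(\fV) \hookrightarrow H^\infty(\fV)$, and being a complete isometry forces $\tilde\rho$ to be one too.

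The remaining step is surjectivity. By Corollary \ref{cor:levelwise_continuous} we have $A(\fV) = \fA_\fV$, which is by definition the norm closure in $H^\infty(\fV)$ of the polynomial restrictions. Every polynomial lies in $A(\fB_d)$, so every polynomial restriction lies in the image of $\tilde\rho$. Since $\tilde\rho$ is a complete isometry between Banach spaces, its image is norm-closed in $A(\fV)$, and therefore contains the norm closure $\fA_\fV = A(\fV)$ of the polynomial restrictions. The same reasoning works at each matrix level: a matrix-valued element of $A(\fV)$ is the norm limit of its radial dilates, which (by the Poisson-type integral appearing in the preceding lemma and the Fock-space argument of Proposition \ref{prop:approximation_by_polynomials}) are matrix polynomials; these lift entrywise to $M_k(A(\fB_d))$.

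Taken together, these three steps yield that $\tilde\rho$ is a completely isometric isomorphism from $A(\fB_d)/\cI_\fV$ onto $A(\fV)$. There is no serious obstacle here: the heavy lifting was done in the preceding lemma (where the distance formula for $A(\fB_d)/\cI_\fV$ was matched with the distance formula for $H^\infty(\fB_d)/\cJ_\fV$ via the homogeneity-preserving Poisson averaging), in Theorem \ref{thm:quotient_mult} (which identifies the $H^\infty$ quotient), and in Corollary \ref{cor:levelwise_continuous} (which identifies $A(\fV)$ with the polynomial closure $\fA_\fV$). The present proposition is essentially a bookkeeping exercise combining these.
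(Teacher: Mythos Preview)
Your argument is correct and follows essentially the same route as the paper: set up the restriction map, use the preceding lemma together with Theorem \ref{thm:quotient_mult} to get the complete isometry, and use $A(\fV)=\fA_\fV$ for surjectivity. If anything, you are more explicit than the paper about surjectivity (the paper states only ``our goal is to show that it is completely isometric'' and leaves the density-of-polynomials step implicit via Corollary \ref{cor:levelwise_continuous}); your final paragraph about matrix levels is harmless but unnecessary, since surjectivity as Banach algebras already gives surjectivity at every matrix level.
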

\begin{proof}
Since the restriction of a function $f \in A(\fB_d)$ to $\fV$ will clearly result in a function that extends to a uniformly continuous function on $\ol{\fV}$, we have a well defined map $A(\fB_d) \to A(\fV)$ that factors through $A(\fB_d)/\cI_{\fV}$.
In other words, we have a map $A(\fB_d)/\cI_{\fV} \to A(\fV)$, and our goal is to show that it is completely isometric.

Now let $\cJ_{\fV}$ be the ideal of functions in $H^\infty(\fB_d)$ that vanish on $\fV$.
By Theorem \ref{thm:quotient_mult}, the restriction map $f \mapsto f\big|_\fV$ induces a completely isometric isomorphism $H^{\infty}(\fB_d)/{\cJ_\fV} \to H^{\infty}(\fV)$.
Thus, for every $f \in M_k(A(\fB_d))$, we have by the previous lemma
\[
d(f,M_k(\cI_\fV)) = d(f,M_k(\cJ_\fV)) = \|f\big|_\fV\|.
\]
\end{proof}

\subsection{Classification up to completely isometric isomorphism}

Let us consider the completely contractive finite dimensional representations of $\fA_d$. 
If $\rho \colon \fA_d \to M_n(\C)$ is completely contractive, then $X_\rho:=(\rho(z_1),\ldots,\rho(z_d))$ is a row contraction and thus a point in $\ol{\fB_d}(n)$.
On the other hand, using Popescu's functional calculus, we note that every point $X \in \ol{\fB_d}(n)$ induces a unique completely contractive representation $\rho_X \colon \fA_d \to M_n(\C)$. 
Hence, the finite dimensional completely contractive representations of $\fA_d$ are in one-to-one correspondence with points of $\ol{\fB_d}$.
Furthermore, given a homogeneous nc variety $\fV \subseteq \fB_d$ and the corresponding homogeneous ideal $\cI_\fV \subset \fA_d$, the finite dimensional completely contractive representations of $\fA_d/\cI_\fV$ are in one-to-one correspondence with the variety $\ol{\fV} := \ol{V(\cI_\fV)}$ cut out by $\cI_\fV$ in the closed ball $\ol{\fB_d}$.
Hence every unital completely contractive homomorphism $\varphi \colon A(\fV) = \fA_d/\cI_\fV \to A(\fW) = \fA_e/\cI_\fW$ induces a map $\rho \mapsto \rho\circ\varphi$ from $\ol{\fW}$ to $\ol{\fV}$.
We will let $\varphi^*$ denote the map $\rho \mapsto \rho\circ\varphi$.

Now, set $g_j = \varphi(z_j)$, $j=1, \ldots,d$.
Then
\[
X_{\varphi^* (\rho)} = (\rho (\varphi(z_1)), \ldots, \rho(\varphi(z_d))) = (\rho(g_1),\ldots,\rho(g_d)) .
\]
Hence we can consider $\varphi^*$ as a map that takes the point $X_{\rho} \in \ol{\fW}$ to the point 
\[
(g_1(X_\rho),\ldots,g_d(X_\rho)) \in \ol{\fV}.
\]
In other words $\varphi^*$ defines an nc map from $\overline{\fW}$ to $\overline{\fV}$.
This discussion leads us to the following proposition, in which we describe the completely contractive maps homomorphisms in the case of the norm closed algebras. 
It is interesting to contrast with the case of full multipliers (Proposition \ref{prop:cc_homo}). 

\begin{prop} 
Let $\fV \subseteq \fB_d$ and $\fW \subseteq \fB_{e}$ be homogeneous nc varieties, and $\varphi : A(\fV) \to A(\fW)$ a unital completely contractive homomorphism. 
For every $\epsilon > 0$, there exists a continuous nc map $G \colon \overline{\fB_e} \to (1+\epsilon)\overline{\fB_d}$ such that $G|_{\overline{\fV}} = \varphi^*$, and such that $G$ implements $\varphi$:  
\[
\varphi(f) = f\circ G. 
\]
\end{prop}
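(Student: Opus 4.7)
My plan is to produce $G$ by lifting the row $(\varphi(z_1), \ldots, \varphi(z_d))$ across the quotient map $A(\fB_e) \twoheadrightarrow A(\fW)$ guaranteed by Proposition~\ref{prop:quot_cont}, and then verify the two assertions by reducing first to free polynomials and invoking polynomial density in $A(\fV)$. Concretely, set $g_i := \varphi(z_i) \in A(\fW)$. Complete contractivity of $\varphi$, applied to the coordinate row $(z_1, \ldots, z_d) \in M_{1,d}(A(\fV))$, which has norm at most $1$, gives $\|(g_1, \ldots, g_d)\|_{M_{1,d}(A(\fW))} \leq 1$. Proposition~\ref{prop:quot_cont} identifies $A(\fW)$ completely isometrically with $A(\fB_e)/\cI_\fW$, so by the very definition of the quotient norm I can lift this row to $G = (G_1, \ldots, G_d) \in M_{1,d}(A(\fB_e))$ with $G|_\fW = (g_1, \ldots, g_d)$ and $\|G\| < 1 + \epsilon$. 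Each $G_i$ being uniformly continuous on $\overline{\fB_e}$, the row $G$ defines a continuous nc map $\overline{\fB_e} \to (1+\epsilon)\overline{\fB_d}$. The reason the codomain must be inflated by a factor $1 + \epsilon$ (and essentially the only delicate point in the argument) is that lifts across an operator algebra quotient are in general not norm-preserving.

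Next I verify $G|_{\overline{\fW}} = \varphi^*$, reading the statement's ``$\overline{\fV}$'' as a typo for ``$\overline{\fW}$'', consistent with $\varphi^*\colon \overline{\fW}\to\overline{\fV}$. From the discussion preceding the proposition, for $X \in \overline{\fW}(n)$ with evaluation representation $\rho_X$,
\[
\varphi^*(X) = \bigl(\rho_X(g_1), \ldots, \rho_X(g_d)\bigr) = \bigl(g_1(X), \ldots, g_d(X)\bigr) = G(X),
\]
and the identity extends to all of $\overline{\fW}$ by continuity. In particular $G(\overline{\fW}) \subseteq \overline{\fV}$, so $f \circ G$ is meaningful on $\overline{\fW}$ for every $f \in A(\fV)$, even though a priori $G$ only maps into $(1+\epsilon)\overline{\fB_d}$.

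Finally, for the composition identity $\varphi(f) = f \circ G$ in $A(\fW)$: for a free polynomial $p$ this is immediate from $\varphi$ being a homomorphism, since $\varphi(p|_{\overline{\fV}}) = p(g_1, \ldots, g_d)$, which evaluates at $X \in \overline{\fW}$ to $p(G(X))$. For general $f \in A(\fV)$, Corollary~\ref{cor:levelwise_continuous} lets me choose polynomials $p_n \to f$ in the sup norm on $\overline{\fV}$; then $\varphi(p_n) \to \varphi(f)$ in $A(\fW)$ and, because $G$ sends $\overline{\fW}$ into $\overline{\fV}$, also $p_n \circ G \to f \circ G$ uniformly on $\overline{\fW}$. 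Taking limits yields the desired equality. The only real obstacle is the norm inflation in the lifting step; everything else is routine given the quotient structure of $A(\fW)$ and the polynomial density in $A(\fV)$.
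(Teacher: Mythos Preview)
Your proof is correct and follows essentially the same strategy as the paper: define $g_j = \varphi(z_j)$ and lift via the quotient identification $A(\fW) \cong A(\fB_e)/\cI_\fW$ from Proposition~\ref{prop:quot_cont}. You are in fact more careful than the paper about the norm bound, lifting the entire row at once (using that the quotient map is a complete quotient) rather than lifting componentwise, which is what actually justifies $\|G\| < 1+\epsilon$.

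The one genuine difference is in the final step. The paper verifies $\varphi(f) = f\circ G$ directly via the representation-theoretic description: for $W \in \fW$, point evaluation $\Phi_W$ is the unique completely contractive representation at that point, so $\varphi^*(\Phi_W) = \Phi_{G(W)}$ and hence $\varphi(f)(W) = \Phi_{G(W)}(f) = f(G(W))$. You instead check the identity on free polynomials and pass to the limit using polynomial density in $A(\fV)$. Both arguments are short and valid; the paper's route exploits the uniqueness of representations already established, while yours is self-contained and makes transparent why $f\circ G$ is well defined on $\overline{\fW}$ despite $G$ only landing in $(1+\epsilon)\overline{\fB_d}$ globally.
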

\begin{proof}
Set $g_j = \varphi(z_j)$, $j=1, \ldots,d$ as above.
Invoking Proposition \ref{prop:quot_cont}, we lift the maps $g_j$ to $G_j \in \fA_d$ and set $G = (G_1,\ldots,G_d)$. 
Now for all $f \in A(\fV)$ and $W \in \fW$, 
\[
\varphi(f) (W) = \Phi_W \circ \varphi(f) = \Phi_{G(W)}(f) = f \circ G(W).
\]
\end{proof}

Our next goal is to prove a norm closed counterpart of Theorem \ref{thm:isomorphism_homo}, namely, to show that two homogeneous varieties $\fV$ and $\fW$ are conformally equivalent if and only if the norm closed algebras $A(\fV)$ and $A(\fW)$ are completely isometric isomorphic. To achieve this, we first need to show that every completely isometric isomorphism between $A(\fV)$ and $A(\fW)$ is implemented as a precomposition with an nc map from one nc ball into the other mapping one variety onto the other. 
\begin{lem}\label{lem:isomorphism_continuous}
Let $\fV \subseteq \fB_d$ and $\fW \subseteq \fB_{e}$ be homogeneous nc varieties, and let $\alpha : A(\fV) \to A(\fW)$ be a completely isometric isomorphism. 
Then there exists an nc map $G: \fB_e \to \fB_d$ such that $G \big|_{\fW}  = G_\alpha|_{\fW}$ maps $\fW$ bijectively onto $\fV$, which implements $\alpha$ by the formula
\[
\alpha(f) = f \circ G \quad, \quad f \in A(\fV).
\]
\end{lem}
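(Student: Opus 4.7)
The strategy mirrors that of Theorem~\ref{thm:isomorphism}, simplified by the fact that in the norm closed setting the finite dimensional completely contractive representations of $\fA_d$ are in bijective correspondence with points of $\overline{\fB_d}$ via Popescu's functional calculus (recalled at the start of this subsection), with no weak-$*$ continuity issue nor any ``pure'' restriction. The main subtlety is that a direct lift through $\fA_e$ as in the preceding proposition only yields $G$ with values in $(1+\epsilon)\overline{\fB_d}$; I will circumvent this by lifting through $H^\infty(\fB_e)$ and invoking the maximum principle to push the image into the open ball.

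Concretely, I set $g_j = \alpha(z_j|_\fV) \in A(\fW) \subseteq H^\infty(\fW)$ for each $j$. Since $\alpha$ is completely isometric and $(z_1|_\fV, \ldots, z_d|_\fV)$ has row norm at most one in $M_{1,d}(A(\fV))$, the tuple $g = (g_1, \ldots, g_d)$ is a row contraction in $M_{1,d}(H^\infty(\fW))$. Applying the exact-norm row lifting of Theorem~\ref{thm:quotient_mult} produces a row $G = (G_1, \ldots, G_d) \in M_{1,d}(H^\infty(\fB_e))$ with $G_j|_\fW = g_j$ and row norm at most one, so $G : \fB_e \to \overline{\fB_d}$ is nc holomorphic. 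The maximum principle (Lemma~\ref{lem:maximum_principle}) then forces either $G(\fB_e) \subseteq \fB_d$ or $G$ to be a constant of norm one; the latter is ruled out by injectivity of $\alpha$ exactly as in the proof of Theorem~\ref{thm:isomorphism}. A direct computation with $G_\alpha(W) = \pi_{d,k}(\alpha^*(\rho_W))$ shows $G|_\fW = G_\alpha|_\fW$, and since $G_\alpha$ maps $\overline{\fW}$ into $\overline{\fV}$ while $G(\fB_e) \subseteq \fB_d$, we conclude $G(\fW) \subseteq \overline{\fV} \cap \fB_d = \fV$ (using that $\fV$ is closed in $\fB_d$).

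To verify that $\alpha$ is implemented by composition with $G$, note that for every free polynomial $p$ the homomorphism property yields $\alpha(p|_\fV) = p(g_1, \ldots, g_d) = (p \circ G)|_\fW$. Corollary~\ref{cor:levelwise_continuous}, which uses homogeneity of $\fV$, asserts $A(\fV) = \fA_\fV$, so free polynomials are norm dense in $A(\fV)$; since both $f \mapsto \alpha(f)$ and $f \mapsto (f \circ G)|_\fW$ are sup-norm continuous (the latter because $G(\fW) \subseteq \fV$), the identity $\alpha(f) = f \circ G|_\fW$ extends to all $f \in A(\fV)$. Finally, I apply the same construction to $\alpha^{-1}$ to obtain an nc holomorphic map $H : \fB_d \to \fB_e$ with $H(\fV) \subseteq \fW$ implementing $\alpha^{-1}$. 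For any $X \in \fV$ and $f \in A(\fV)$ the chain $f(X) = \alpha^{-1}(\alpha(f))(X) = f(G(H(X)))$, evaluated on the coordinate functions (which separate points of $\fV$), forces $G(H(X)) = X$; symmetrically $H(G(W)) = W$ for every $W \in \fW$. Thus $G|_\fW$ is a bijection onto $\fV$ with inverse $H|_\fV$, and the main obstacle---bridging the gap between a lift with $(1+\epsilon)$-slack and a genuine map into the open ball---has been bypassed by combining the $H^\infty$-lift with the maximum principle.
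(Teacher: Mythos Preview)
Your proof is correct and follows essentially the same strategy as the paper's: lift the coordinate images $g_j = \alpha(z_j|_\fV)$ through Theorem~\ref{thm:quotient_mult} to get $G:\fB_e\to\overline{\fB_d}$, invoke the maximum principle (Lemma~\ref{lem:maximum_principle}) together with injectivity of $\alpha$ to force $G(\fB_e)\subseteq\fB_d$, and then repeat for $\alpha^{-1}$ to obtain the inverse map.

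The one substantive difference is in how you establish $\alpha(f)=f\circ G$ for all $f\in A(\fV)$. The paper argues representation-theoretically: since point evaluations are the \emph{only} completely contractive finite dimensional representations of $A(\fV)$, the representation $\rho_W\circ\alpha$ must equal $\rho_{G(W)}$, whence $\alpha(f)(W)=f(G(W))$ directly for every $f$. You instead verify the identity on free polynomials and then pass to the closure using $A(\fV)=\fA_\fV$ (Corollary~\ref{cor:levelwise_continuous}), which genuinely uses homogeneity of $\fV$. Both arguments are valid here; the paper's route is slightly more robust in that it does not rely on the polynomial density result, while yours makes the role of homogeneity explicit.
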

\begin{proof}
Since $\alpha$ is completely isometric, it takes completely contractive representations to completely contractive representations.
Hence $\alpha^*$ and $(\alpha^{*})^{-1}$ take $\ol{\fW}$ to $\ol{\fV}$ and vice versa.
By Theorem \ref{thm:quotient_mult}, we can lift $\alpha^*$ and $(\alpha^{*})^{-1}$ to nc maps $G \colon \fB_e \to \ol{\fB_d}$ and $F \colon \fB_d \to \ol{\fB_e}$, such that $G|_{\fW} =\alpha^*$ and $F|_{\fV} = (\alpha^{*})^{-1}$. The maximum principle (Lemma \ref{lem:maximum_principle}) and the injectivity of $\alpha$ and $\alpha^{-1}$ imply that $G(\fB_e) \subseteq \fB_d$ and $F(\fB_d)\subseteq \fB_e$. Since point evaluations are the only completely contractive representations of $A(\fV)$ and $A(\fW)$, we deduce --- as in the proof of Theorem \ref{thm:isomorphism} --- that $\alpha(f)=f \circ G$ for all $f\in A(\fV)$ and $\alpha^{-1}(g)=g\circ F$ for all $g\in A(\fV)$.
\end{proof}

Following the lines of the proof of Theorem \ref{thm:isomorphism_homo} --- using Lemma \ref{lem:isomorphism_continuous} instead of Theorem \ref{thm:isomorphism} ---  we obtain the counterpart of Theorem \ref{thm:isomorphism_homo} for the norm closed algebras $A(\fV)$ and $A(\fW)$.
\begin{thm}\label{thm:isomorphism_homo_cont}
Let $\fV \subseteq \fB_d$ and $\fW \subseteq \fB_{e}$ be homogeneous nc varieties, and let $\alpha : A(\fV) \to A(\fW)$ be a completely isometric isomorphism.
Then $\fV$ and $\fW$ are conformally equivalent, in the sense that one may assume that there is some $k$ such that $\fV, \fW \subseteq \fB_k$, and that under this assumption there exists an automorphism $G \in \operatorname{Aut}(\fB_k)$ such that $G(\fW)  = \fV$, and such that
\[
\alpha(f) = f \circ G \quad, \quad f \in A(\fV).
\]
\end{thm}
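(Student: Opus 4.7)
The plan is to mimic the proof of Theorem \ref{thm:isomorphism_homo} step for step, replacing its single appeal to Theorem \ref{thm:isomorphism} by an appeal to Lemma \ref{lem:isomorphism_continuous}, which plays exactly the same role in the norm-closed setting. The remaining tools---Lemma \ref{lem:matspan} on the structure of matrix spans, Lemmas \ref{lem:Delta_id} and \ref{lem:homog_nc_der} on nc derivatives at the origin, and the noncommutative Cartan uniqueness theorem (Theorem \ref{thm:nc_cartan})---are statements purely about nc holomorphic maps on the open ball, independent of whether the originating algebra was weak-$*$ closed or norm closed, so they carry over unchanged.

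Concretely, I would first apply Lemma \ref{lem:matspan} to produce increasing sequences of subspaces $V_n \subseteq \C^d$ and $W_n \subseteq \C^e$ with $\mspn{\fV}(n) = V_n \otimes M_n$ and $\mspn{\fW}(n) = W_n \otimes M_n$, and set $V = \overline{\bigcup_n V_n}$, $W = \overline{\bigcup_n W_n}$. After restricting attention to the sub-balls sitting inside these matrix spans, one may assume without loss of generality that $\sqcup_n (V \otimes M_n) \cap \fB_d = \fB_d$ and $\sqcup_n (W \otimes M_n) \cap \fB_e = \fB_e$. Then Lemma \ref{lem:isomorphism_continuous} supplies nc holomorphic maps $G \colon \fB_e \to \fB_d$ and $H \colon \fB_d \to \fB_e$ that implement $\alpha$ and $\alpha^{-1}$ by composition, and whose restrictions to $\fW$ and $\fV$ are mutually inverse bijections.

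It remains to upgrade $G$ and $H$ to automorphisms of a common nc ball. Put $F = G \circ H \colon \fB_d \to \fB_d$; then $F|_\fV = \id_\fV$, so Lemma \ref{lem:homog_nc_der} gives $\Delta F(0,0)(X) = X$ for every $X \in \fV$. Combined with Lemma \ref{lem:Delta_id}, this shows $\Delta F(0,0)$ fixes the full matrix span $V_n \otimes M_n$ at every level $n$. Writing $\Delta F(0,0) = A \otimes I_n$ for a fixed linear $A \colon \C^d \to \C^d$, we deduce that $A$ is the identity on $\bigcup_n V_n$, hence on $V = \C^d$ by continuity, so $\Delta F(0,0) = I$. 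Since also $F(0) = 0$, Theorem \ref{thm:nc_cartan} forces $F = \id_{\fB_d}$. The symmetric argument yields $H \circ G = \id_{\fB_e}$, hence $d = e$, and $G$ is an automorphism of $\fB_d$ mapping $\fW$ onto $\fV$ and implementing $\alpha$ by composition.

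The hard part---producing an honest nc map between nc balls from an abstract completely isometric isomorphism of norm-closed algebras---has already been absorbed into Lemma \ref{lem:isomorphism_continuous}, so the present theorem is essentially a corollary of that lemma together with the argument of Theorem \ref{thm:isomorphism_homo}. No new subtlety arises from the passage to the norm closure, because the Cartan-type rigidity argument concerns only the nc holomorphic maps themselves and not the topology on the function algebras they come from.
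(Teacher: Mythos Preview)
Your proposal is correct and follows essentially the same approach as the paper, which explicitly states that the proof proceeds by following the lines of the proof of Theorem \ref{thm:isomorphism_homo}, using Lemma \ref{lem:isomorphism_continuous} in place of Theorem \ref{thm:isomorphism}. Your detailed write-up faithfully reproduces that argument step by step.
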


From the above theorem, together with Theorems \ref{thm:isomorphism_homo}, \ref{thm:isomorphism}, Lemma \ref{lem:isomorphism_continuous}, and Corollary \ref{cor:iso_homo_weak} we get the following:
\begin{cor}\label{cor:iso_homo_cont}
Let $\fV \subseteq \fB_d$ and $\fW \subseteq \fB_e$ be two homogeneous varieties. 
Then the following are equivalent:
\begin{enumerate}[(i)]
\item $A(\fV)$ and $A(\fW)$ are completely isometrically isomorphic;
\item $H^{\infty}(\fV)$ and $H^{\infty}(\fW)$ are completely isometrically isomorphic via a weak-$*$-continuous map (weak-$*$ is automatic when $d< \infty$);
\item  $\fV$ and $\fW$ are biholomorphic; 
\item $\fV$ and $\fW$ are conformally equivalent (perhaps after finding a new embedding $\fV, \fW \subseteq \fB_k$); 
\item there is a unitary transformation which maps $\fV$ onto $\fW$ (perhaps after finding a new embedding $\fV, \fW \subseteq \fB_k$).
\end{enumerate}
\end{cor}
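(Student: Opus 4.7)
The proof will be a straightforward cycle of implications that glues together results already established in the paper, so the plan is simply to indicate which prior result supplies each arrow.

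First I would dispose of the easy implications. The implication (v) $\Rightarrow$ (iv) is immediate, since a unitary transformation preserving $\fB_k$ is an automorphism of $\fB_k$ (one may also appeal to Proposition \ref{prop:autballs}). The implication (iv) $\Rightarrow$ (iii) is trivial, since an automorphism of $\fB_k$ restricts to a biholomorphism between $\fV$ and $\fW$ (with inverse the restriction of $G^{-1}$). For (iii) $\Rightarrow$ (i) and (iii) $\Rightarrow$ (ii), the key tool is Proposition \ref{prop:biholo_auto}: if $G \colon \fB_e \to \fB_d$ and $H \colon \fB_d \to \fB_e$ are nc holomorphic maps inverse to each other on $\fV$ and $\fW$, then $f \mapsto f \circ G$ defines a completely isometric isomorphism $H^\infty(\fV) \to H^\infty(\fW)$, and moreover this map is weak-$*$-continuous because it is implemented by composition with the nc map $G$ (continuity follows from Lemma \ref{prop:bound_wot_conv_is_point_conv}, noting that evaluations at points of $\fW$ are in the predual of $H^\infty(\fW)$). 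Restricting to uniformly continuous nc functions on $\ol{\fV}$ and $\ol{\fW}$ --- equivalently, by Corollary \ref{cor:levelwise_continuous}, to $A(\fV)$ and $A(\fW)$ --- the same formula yields a completely isometric isomorphism of the norm-closed algebras.

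The nontrivial directions are (i) $\Rightarrow$ (iv) and (ii) $\Rightarrow$ (iv), which are supplied directly by Theorem \ref{thm:isomorphism_homo_cont} and Theorem \ref{thm:isomorphism_homo} respectively; both results say that a completely isometric isomorphism forces conformal equivalence (after finding a common embedding $\fV, \fW \subseteq \fB_k$). Finally, the implication (iv) $\Rightarrow$ (v) is exactly Corollary \ref{cor:equivalence_is_linear_homog}: a conformal equivalence between two homogeneous varieties can be arranged to fix the origin via the disc trick (Proposition \ref{prop:biholo=>0-biholo}), and a $0$-preserving automorphism of $\fB_k$ is necessarily a unitary by the classical (or free) Cartan uniqueness theorem.

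Combining these, we obtain the cycle (v) $\Rightarrow$ (iv) $\Rightarrow$ (iii) $\Rightarrow$ (i), (ii), and both (i) $\Rightarrow$ (iv) and (ii) $\Rightarrow$ (iv), with (iv) $\Rightarrow$ (v) closing the loop. No step requires calculation beyond citing results; the only conceptual subtlety is ensuring that the ``weak-$*$-continuous'' hypothesis in (ii) is genuinely produced by the implication (iii) $\Rightarrow$ (ii) --- this is where Lemma \ref{prop:bound_wot_conv_is_point_conv} enters --- and that when $d < \infty$ Corollary \ref{cor:isomorphism_auto_weakstar} makes the weak-$*$ condition automatic, which is why the parenthetical remark in (ii) is correct. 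The main obstacle, if any, is bookkeeping around the re-embedding of $\fV$ and $\fW$ into a common $\fB_k$ in going between the two ambient balls $\fB_d$ and $\fB_e$, but this is handled exactly as in Theorem \ref{thm:isomorphism_homo} via the matrix-span lemmas \ref{lem:Delta_id} and \ref{lem:matspan}.
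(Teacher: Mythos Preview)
Your approach is correct and mirrors the paper's: the corollary is assembled from Theorem~\ref{thm:isomorphism_homo_cont}, Theorem~\ref{thm:isomorphism_homo}, Corollary~\ref{cor:equivalence_is_linear_homog}, and Proposition~\ref{prop:biholo_auto}, and your bookkeeping of the cycle is essentially the same as what the paper intends.

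There is, however, a small gap in your direct implication (iii) $\Rightarrow$ (i). You assert that the formula $f \mapsto f\circ G$ from Proposition~\ref{prop:biholo_auto} ``restricts'' to a completely isometric isomorphism $A(\fV)\to A(\fW)$. But in (iii) the map $G:\fB_e\to\fB_d$ is merely a bounded nc holomorphic map; there is no reason a priori that its components $G_i\big|_{\fW}=\alpha(z_i)$ lie in $A(\fW)$ rather than just $H^\infty(\fW)$, and hence no reason that $f\circ G$ is uniformly continuous on $\ol{\fW}$ when $f\in A(\fV)$. (Bounded nc holomorphic functions need not be uniformly continuous --- think of an inner function on $\fB_1$.) Your cycle as written reaches (i) only through (iii) $\Rightarrow$ (i), so this step matters.

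The fix is immediate and is implicit in the paper's arrangement: derive (i) from (iv) or (v) instead. A ball automorphism --- and certainly a unitary --- extends to a uniformly continuous map on $\ol{\fB_k}$ at every level (the M\"obius formula is explicit and uniform in $n$), so composition with it carries $A(\fV)$ onto $A(\fW)$. Then your chain (i) $\Rightarrow$ (iv) $\Rightarrow$ (v) $\Rightarrow$ (i) closes without passing through the unjustified step. Alternatively, you may note that in the homogeneous case the proof of Theorem~\ref{thm:isomorphism_homo} shows that the biholomorphism $G$ in (iii) is \emph{already} an automorphism after re-embedding, which retroactively justifies your original formula --- but that amounts to proving (iii) $\Rightarrow$ (iv) first, not invoking Proposition~\ref{prop:biholo_auto} directly.
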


In Theorem \ref{thm:iso_homo_cont_older}, we will see that in the finite dimensional case ($d<\infty$) the condition ``completely isometrically isomorphic" can be weakened ``isometrically isomorphic".

\subsection{An example (radial continuity versus uniform continuity)}
Let $\fC \fB_d$ denote the commutative nc unit ball, that is
\[
\fC \fB_d = \{X \in \fB_d : X_i X_j = X_j X_i \textrm{ for all } i,j=1,\ldots, d\}.
\]
$\fC \fB_d$ is a homogeneous variety in $\fB_d$.
We will give an example of a function in $H^\infty(\fC\fB_d)$ that extends to a function on $\ol{\fC\fB}_d$ which is radially continuous at each level $\fC\fB_d(n)$, but is not in $\fA_{\fC\fB_d}$ (and hence, not radially uniformly continuous).

First, we need a preliminary result. 
\begin{prop}
For every $T \in \fC \fB_d$, there exists a constant $C_T > 0$, such that
\[
\|p(T)\| \leq C_T \sup_{z\in {\bB_d}} |p(z)|
\]
for all $p \in \bC[z_1, \ldots, z_d]$.
\end{prop}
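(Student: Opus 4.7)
The plan is to exploit the finite-dimensionality together with the strict inequality $\|T\|<1$ to decompose $T$ via its joint spectrum and then apply Cauchy's estimates on each joint generalized eigenspace.

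First, I would observe that the joint spectrum $\sigma(T)$ (the set of $\lambda=(\lambda_1,\dots,\lambda_d)\in\bC^d$ for which $T_jv=\lambda_jv$ for all $j$ has a nonzero solution) is a finite subset of $\bB_d$. Indeed, since $T$ is a tuple of commuting $n\times n$ matrices, a nonzero joint eigenvector exists, and for any such $v$,
\[
\sum_{j=1}^d |\lambda_j|^2 \;=\; \frac{\|Tv\|^2}{\|v\|^2}\;\leq\;\|T\|^2\;<\;1.
\]

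Next, I would invoke the joint generalized eigenspace decomposition for commuting matrices to write $\bC^n=\bigoplus_{\lambda\in\sigma(T)}V_\lambda$, with each $V_\lambda$ invariant under every $T_j$ and $T_j|_{V_\lambda}=\lambda_j I+N_j^{(\lambda)}$, where $N_1^{(\lambda)},\dots,N_d^{(\lambda)}$ are commuting nilpotent operators on $V_\lambda$. Taylor's formula in several commuting variables (valid at the level of formal power series and hence for any polynomial evaluated on commuting nilpotents) then gives
\[
p(T)|_{V_\lambda} \;=\; \sum_{\alpha\in\N^d}\frac{\partial^\alpha p(\lambda)}{\alpha!}\,(N^{(\lambda)})^\alpha,
\]
and this sum is finite because $(N^{(\lambda)})^\alpha=0$ for all sufficiently large $|\alpha|$.

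The key estimate is now classical. Since $\lambda\in\bB_d$, pick a polyradius $r_\lambda>0$ such that the closed polydisc $\{z:|z_j-\lambda_j|\leq r_\lambda\text{ for all }j\}$ sits inside $\bB_d$. Cauchy's estimates give
\[
\left|\frac{\partial^\alpha p(\lambda)}{\alpha!}\right|\;\leq\;\frac{1}{r_\lambda^{|\alpha|}}\,\sup_{z\in\bB_d}|p(z)|,
\]
so that
\[
\|p(T)|_{V_\lambda}\|\;\leq\;\Bigl(\sum_{\alpha}r_\lambda^{-|\alpha|}\|(N^{(\lambda)})^\alpha\|\Bigr)\|p\|_\infty\;=:\;C_\lambda\,\|p\|_\infty,
\]
with the sum finite and $C_\lambda$ depending only on $T$.

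Finally, letting $P_\lambda$ denote the (not necessarily orthogonal) spectral projection onto $V_\lambda$ along $\bigoplus_{\mu\neq\lambda}V_\mu$, and noting that $p(T)P_\lambda=P_\lambda p(T)P_\lambda$, one combines
\[
\|p(T)\|\;\leq\;\sum_{\lambda\in\sigma(T)}\|p(T)|_{V_\lambda}\|\,\|P_\lambda\|\;\leq\;\Bigl(\sum_{\lambda}C_\lambda\|P_\lambda\|\Bigr)\|p\|_\infty,
\]
yielding a constant $C_T$ that depends only on $T$. The one mildly technical point is that the joint spectral projections are oblique rather than orthogonal; this only contributes the $\|P_\lambda\|$ factors, which again depend only on $T$ and therefore get absorbed into $C_T$.
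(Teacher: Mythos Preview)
Your argument is essentially correct and takes a more hands-on route than the paper. One small gap: your inequality $\sum_j|\lambda_j|^2=\|Tv\|^2/\|v\|^2\leq\|T\|^2$ tacitly bounds by the \emph{column} norm $\|\sum_j T_j^*T_j\|^{1/2}$, whereas membership in $\fB_d$ is defined via the \emph{row} norm $\|\sum_j T_jT_j^*\|^{1/2}$, and these differ in general. The conclusion $\sigma(T)\subset\bB_d$ is nonetheless true and easy to repair: apply the row operator to $u=(\bar\lambda_1 v,\dots,\bar\lambda_d v)^t$ and compute $\|Tu\|/\|u\|=(\sum_j|\lambda_j|^2)^{1/2}\leq\|T\|_{\mathrm{row}}<1$; or simultaneously upper-triangularize the $T_j$ with respect to an orthonormal basis and observe that the $(i,i)$ entry of $\sum_j T_jT_j^*$ dominates $\sum_j|\lambda_{i,j}|^2$.

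The paper argues differently. It first places $\sigma(T)$ only in the \emph{closed} ball, then splits off any boundary spectrum as a normal direct summand (which, since $\|T\|<1$, is in fact empty, though the paper does not say so), and finally invokes the continuity of the Taylor holomorphic functional calculus for commuting operators as a black box to obtain the constant $C_T$. Your approach replaces that black box with the explicit generalized-eigenspace decomposition, the finite Taylor expansion in the commuting nilpotents, and Cauchy estimates --- which is exactly what the Taylor calculus amounts to in finite dimensions, but carried out by hand. What you gain is a self-contained and transparent argument; what the paper's formulation gains is that it applies verbatim to commuting tuples on an arbitrary Banach space whose Taylor spectrum is compactly contained in $\bB_d$.
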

\begin{proof}
Fix $T \in \fC \fB_d$.
Then the joint spectrum of $T$, $\sigma(T)$, is contained in the closed unit ball $\ol{\bB}_d$.
Now the joint spectrum of a tuple of commuting $n \times n$ matrices $(T_1, \ldots, T_d)$ is nothing but the points appearing on the diagonals of the matrices when put in upper triangular form.
In other words, $\sigma(T)$ is simply the $n$ points in $\bC^d$ obtained as $(\langle T_1 v_i, v_i \rangle, \ldots, \langle T_d v_i, v_i\rangle)$, $i =1, \ldots, n$, where $\{v_1, \ldots, v_n\}$ is an orthonormal basis of $\bC^n$ with respect to which $T_1, \ldots, T_d$ are simultaneously upper triangular.

As $T$ is a row contraction, every point in the spectrum that is on the boundary of $\ol{\bB}_d$ corresponds to a direct summand.
Thus we have $T = N \oplus T'$, where $N$ is a normal tuple with $\sigma(N) \subseteq \partial \bB_d$, and $\sigma(T') \subseteq \bB_d$.
Thus we may assume $\sigma(T) \subseteq \bB_d$, because the spectral theorem for commuting normal tuples implies that $\|p(N)\| = \sup_{z \in \sigma(N)}|p(z)|$.

But if $\sigma(T) \subseteq \bB_d$, then $\sigma(T) \subseteq r\bB_d$ for some $r<1$.
By the continuity of the holomorphic functional calculus  for commuting operators (see, e.g., \cite{Hlm81} or \cite{Tay70}), there is a constant $C_T$ such that
\[
\|f(T)\| \leq C_T \sup_{z\in {\bB_d}} |f(z)|
\]
for every function $f$ analytic in $\bB_d$, as required.
\end{proof}
Let $A(\ol{\bB_d})$ denote the closure in the sup norm of the polynomials in $H^\infty(\bB_d)$.
$A(\ol{\bB_d})$ is called {\em the ball algebra}.
\begin{cor}
Every $T \in \fC\fB_d(n)$ gives rise to a functional calculus
\[
\Phi_T : A(\ol{\bB}_d) \to M_n,
\]
which we denote by $f \mapsto f(T)$.
The functional calculus is a bounded homomorphism, satisfying $\|f(T)\| \leq C_T \|f\|_\infty$,  for all $f \in A(\ol{\bB}_d)$.
\end{cor}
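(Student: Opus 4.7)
The plan is to extend the polynomial functional calculus to all of $A(\ol{\bB}_d)$ by a straightforward density and continuity argument, relying entirely on the preceding proposition as the main ingredient.

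First, given $T \in \fC\fB_d(n)$, the map $p \mapsto p(T)$ is a well-defined unital algebra homomorphism from $\bC[z_1,\ldots,z_d]$ to $M_n$; this uses the fact that the components of $T$ commute, so substitution into polynomials is unambiguous. The preceding proposition tells us precisely that this map is bounded, with
\[
\|p(T)\| \leq C_T \|p\|_{\infty,\bB_d} = C_T \|p\|_{\infty,\ol{\bB}_d},
\]
where the last equality is the maximum modulus principle applied to polynomials (which are continuous on $\ol{\bB}_d$).

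Next, by the very definition of $A(\ol{\bB}_d)$ as the closure of $\bC[z_1,\ldots,z_d]$ in the sup norm, the polynomials form a dense subalgebra. A bounded linear map from a dense subspace of a Banach space into a Banach space extends uniquely to a bounded linear map on the whole space with the same norm bound. I would therefore define $\Phi_T : A(\ol{\bB}_d) \to M_n$ as the unique bounded linear extension of $p \mapsto p(T)$; by construction $\|\Phi_T(f)\| \leq C_T \|f\|_\infty$ for every $f \in A(\ol{\bB}_d)$.

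Finally, I would verify that $\Phi_T$ is multiplicative. Given $f,g \in A(\ol{\bB}_d)$, choose polynomial sequences $p_k \to f$ and $q_k \to g$ uniformly on $\ol{\bB}_d$. Then $p_k q_k \to fg$ uniformly (since $\{p_k\}$ and $\{q_k\}$ are uniformly bounded), so by continuity of $\Phi_T$ and the fact that $\Phi_T(p_k q_k) = p_k(T) q_k(T)$ for polynomials together with continuity of matrix multiplication,
\[
\Phi_T(fg) = \lim_k p_k(T) q_k(T) = \Phi_T(f) \Phi_T(g).
\]
Unitality $\Phi_T(1) = I_n$ is immediate. There is no real obstacle here: the whole corollary is a formal consequence of the preceding proposition together with the density of polynomials in $A(\ol{\bB}_d)$; the only substantive input, namely the bound $\|p(T)\| \leq C_T \|p\|_\infty$, has already been established.
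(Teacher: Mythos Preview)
Your argument is correct and is precisely the standard density--continuity extension the authors have in mind; in fact the paper gives no proof at all for this corollary, treating it as an immediate consequence of the preceding proposition. There is nothing to add.
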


\begin{example}\label{ex:cont_mult}
When the variety is the set $\fC \fB_d$ consisting of all commuting row contractions, then it is common to use the notation $\cM_d = H^\infty(\fC\fB_d)$ and $\cA_d = \fA_{\fC\fB_d}$.
Note that $\cM_d$ is just the multiplier algebra of the Drury--Arveson space, and that $\cA_d$ is the norm closure of the polynomials in $\cM_d$ (in Section \ref{sec:connections_to_comm} we will elaborate further on these identifications).

Let $\psi \in \cM_d \cap C(\ol{\bB}_d)$ be such that $\psi \notin \cA_d$ (the existence of such a multiplier was noted in \cite[Section 5.2]{ShalitSurvey} by invoking \cite{FX11}).
Then $\psi$ is, in particular, in the ball algebra $A(\ol{\bB_d})$.
By the above corollary, we have that $\|\psi(rX) - \psi(sX)\| = \|(\psi_r - \psi_s)(X)\| \leq C_X \|\psi_r-\psi_s\|_\infty$ for every $X \in \fB_d$ and every $r,s \in \ol{\bD}$.
Thus $\psi$ gives rise to a function that is defined on $\ol{\fB_d(n)}$, continuous on $\fB_d$, and radially continuous on $\ol{\fB_d(n)}$, for every $n$.
However, as $\psi \notin \cA_d$, $\psi$ is not in the closure of the polynomials.

We conclude that a function may be continuous on every $\fB_d(n)$, with radial limits everywhere on $\fB_d(n)$, holomorphic and uniformly bounded on $\fB_d$, while not being in the closure of polynomials.
We do not know whether the function $\psi$ above is uniformly continuous on each level $\fB_d(n)$.
\end{example}

\begin{quest}
Let $n \in \bN$ be fixed.
Does there exist a constant $C_n$ (depending implicitly on $d$) such that
\[
\|p(T)\| \leq C_n \sup_{z\in {\bB_d}} |p(z)|
\]
for all $T \in \fC\fB_d(n)$ and all $p \in \bC[z_1, \ldots, z_d]$?
\end{quest}

Of course, by the well known incomparability of the multiplier and supremum norms in $\cM_d$ \cite[Section 3.7]{ShalitSurvey}, if such constants $C_n$ exist then they must satisfy $C_n \to \infty$ (when $d>1$).
If the answer to the previous question is affirmative, then so is the answer to the following question, by making use of the same function from the example above.

\begin{quest}
Does there exist a bounded holomorphic function, that is levelwise uniformly continuous on every $\fB_d(n)$, which is not uniformly continuous on $\fB_d$?
\end{quest}

\section{Connection to subproduct systems} \label{sec:subproduct}
Our results connect well to works on structure and classification of operator algebras associated with subproduct systems.

A {\em subproduct system} is a family $X = \{X(n)\}_{n \in \bN}$ of a Hilbert spaces, such that $X(0) = \bC$, $X(n) \subseteq X(1)^{\otimes n}$, and
\[
X(m+n) \subseteq X(m) \otimes X(n),
\]
for all $m,n \in \bN$.

Subproduct systems were introduced in \cite{ShalitSolel} as a technical tool for the analysis of semigroups of completely positive maps on von Neumann algebras (recently, they have been used to study semigroups on C*-algebras too \cite{ShalitSkeideBig}).
In fact, one also looks at subproduct systems over more general semigroups, and it is useful to allow fibers that are Hilbert W*-correspondences, and not just Hilbert spaces, but such generality is beyond the scope of the present work.
Subproduct systems give rise to a class of natural operator algebras, and in recent years these algebras have been investigated by several researchers \cite{And15,DRS11,DorMar14,DorMar15,Hartz12,KakSh15,Vis11,Vis12}.
We will now explain how algebras of bounded analytic functions on homogeneous varieties are operator algebras associated with subproduct systems, and indicate points of intersection with previous works.

Assume that $X$ is a subproduct system, and that $X(1) = \C^d$, with $d< \infty$ (the assumption $d< \infty$ is mainly for simplicity).
We identify the free algebra $\F_d$ with a dense subspace of the Fock space $\cF(\C^d)$.
Then we can define a homogeneous ideal $I^X$ by saying that a homogeneous polynomial $p$ of degree $n$ is in $I^X$ if and only if $p \in (\C^d)^{\otimes n} \ominus X(n)$.
It is straightforward to check that $I^X$ is really a homogeneous (two sided) ideal in $\F_d$.

Conversely, given a homogeneous (two sided) ideal $I \triangleleft \F_d$, we define a subproduct system $X_I$ by
letting $X_I(n)$ be the orthogonal complement in $(\C^d)^{\otimes n}$ of the homogeneous polynomials in $I$ that have degree $n$.

In \cite[Proposition 7.2]{ShalitSolel}  it was observed that the map $X \mapsto I^X$ is a bijective correspondence (with inverse $I \mapsto X_I$) between subproduct subsystems of $\{(\C^d)^{\otimes n}\}_{n \in \bN}$ and proper homogeneous ideals in $\F_d$.

Let $X$ be a subproduct system with $X(1) = \C^d$.
The {\em $X$-Fock space} is the direct sum
\[
\cF_X = X(0) \oplus X(1) \oplus X(2) \oplus \ldots
\]
Fix an orthonormal basis $\{e_1, \ldots, e_d\}$ for $\C^d$.
The {\em $X$-shift} is the $d$-tuple of operators $S^X = (S^X_1, \ldots, S^X_d)$, given by
\[
S^X_i \eta = e_i \otimes \eta \,\, , \,\, \eta \in X(n).
\]
The {\em tensor algebra} associated with $X$ is the unital, norm closed operator algebra $\cA_X$ generated by $S^X$.
The {\em noncommutative Hardy algebra} associated with $X$ is the unital, weak-operator closed operator algebra $\cL_X$ generated by $S^X$.

Let $I \triangleleft \F_d$ be a homogeneous ideal, let $\fV = V_{\fB_d}(I)$ and let $\cJ = \cJ_{\fV}$ be the weak-$*$ closed ideal consisting of multipliers in $H^\infty(\fB_d)$ that vanish on $\fV$.
By Theorem \ref{thm:homoclosedweakstar}, $\cJ = \ol{I}^{w*}$.
By Lemma \ref{lem:mu_iota}, $\cH_\fV = (\cJ \cH^2_d)^\perp$.

On the other hand, if we identify $\cF_X$ as a subspace of $\cF(\C^d) = \cH^2_d$, we see that
$\cF_X = I^\perp = (\cJ \cH^2_d)^\perp$.
Thus, using Corollary \ref{cor:levelwise_continuous}, we make the identifications
\begin{equation}\label{eq:algebras}
\cA_X = A(\fV) \quad , \quad \cL_X = H^\infty(\fV).
\end{equation}

In \cite{DRS11,DorMar14} the algebras $\cA_X$ and $\cL_X$ were classified in terms of the structure of the subproduct systems.

\begin{dfn}
Two subproduct systems $X = \{X(n)\}_{n \in \bN}$ and $Y = \{Y(n)\}_{n \in \bN}$ are said to be {\em isomorphic} if there exists a family of unitaries $\{U_n : X(n) \to Y(n)\}_{n \in \bN}$ such that
\[
U_{m+n} P_{X(m) \otimes X(n) \to X(m+n)} = P_{Y(m) \otimes Y(n) \to Y(m+n)} (U_m \otimes U_n) ,
\]
for all $m,n \in \bN$.
\end{dfn}

In \cite[Proposition 7.4]{ShalitSolel} (see also \cite[Proposition 3.1]{DRS11}) it was shown that if $X$ and $Y$ are subproduct subsystems of $(\C^d)^{\otimes n}_{n \in \bN}$, then $X$ and $Y$ are isomorphic, if and only if $I^X$ is obtained from $I^Y$ by unitary change of variables.
Using this, we can now prove the following geometric characterization of subproduct system isomorphism.

\begin{prop}\label{prop:subproduct_varieties}
Let $X$ and $Y$ be subproduct subsystems of $(\C^d)^{\otimes n}_{n \in \bN}$, for $d \in \N$.
$X$ and $Y$ are isomorphic if and only if $V_{\fB_d}(I^X)$ and $V_{\fB_d}(I^Y)$ are conformally equivalent, and this happens if and only if there is a unitary map $U$ of $\C^d$ such that
\[
U V_{\fB_d}(I^X) = V_{\fB_d}(I^Y) .
\]
\end{prop}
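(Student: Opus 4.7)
The proof will establish two equivalences: (I) the three conditions "subproduct systems isomorphic", "varieties related by a unitary of $\bC^d$", and "varieties conformally equivalent" are pairwise equivalent. The implication "related by a unitary $\Rightarrow$ conformally equivalent" is trivial, since a unitary $U \in U(d)$ acts on $\fB_d$ by $Z \mapsto (UZ_1 U^*, \ldots, UZ_d U^*)$... wait, more simply, the induced map $Z = (Z_1, \ldots, Z_d) \mapsto UZ := (\sum_j U_{1j} Z_j, \ldots, \sum_j U_{dj} Z_j)$ is an nc automorphism of $\fB_d$ preserving $0$. So the bulk of the proof is in the other two implications.

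For "conformally equivalent $\Rightarrow$ related by a unitary", the plan is to apply Corollary \ref{cor:equivalence_is_linear_homog}. That corollary gives some $k$ and an embedding $\fV, \fW \subseteq \fB_k$ together with a unitary transformation of $\fB_k$ mapping one onto the other. Here however one must verify that $k$ can be taken to equal $d$, i.e.\ that the unitary lives on all of $\bC^d$. The key observation is that since $X(1) = Y(1) = \bC^d$, neither $I^X$ nor $I^Y$ contains a nonzero linear polynomial, so every coordinate functional is non-trivial on $V_{\fB_d}(I^X)$ and $V_{\fB_d}(I^Y)$. This forces the subspaces $V, W \subseteq \bC^d$ supplied by Lemma \ref{lem:matspan} to both equal $\bC^d$, and hence the unitary on the minimal ambient ball extends (trivially) to a unitary $U \in U(d)$ with $U \cdot V_{\fB_d}(I^X) = V_{\fB_d}(I^Y)$.

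For "varieties related by a unitary $\iff$ subproduct systems isomorphic", the plan is to use the homogeneous Nullstellensatz to translate between the ideals and the varieties. A unitary $U \in U(d)$ induces an automorphism $\hat U$ of $\F_d$ by the formula $(\hat U p)(z_1,\ldots,z_d) = p(U^{-1} z_1,\ldots,U^{-1} z_d)$ (acting linearly on the generators), and a corresponding unitary map on $\fB_d$ sending $Z$ to $UZ$. A direct computation shows $\hat U(I^X) = I^Y$ if and only if $p \in I^Y$ whenever $p \circ U \in I^X$, which by Theorem \ref{thm:null_poly} (applied to the homogeneous ideals $I^X$ and $I^Y$) is equivalent to $U \cdot V_{\fB_d}(I^X) = V_{\fB_d}(I^Y)$. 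Finally, \cite[Proposition 7.4]{ShalitSolel} (equivalently \cite[Proposition 3.1]{DRS11}) says that $X \cong Y$ as subproduct systems if and only if there exists such a unitary $U \in U(d)$ with $\hat U(I^X) = I^Y$, closing the chain.

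The main obstacle, and the only point that requires real care, is the first one — ensuring that the unitary produced by Corollary \ref{cor:equivalence_is_linear_homog} genuinely lives on $\bC^d$ and not on some proper subspace. Once one observes that the linear part of $I^X$ vanishes precisely because $X(1)$ exhausts $\bC^d$, the matrix span of $V_{\fB_d}(I^X)$ is forced to be all of $\bC^d$ at every level, and then the re-embedding issue in Corollary \ref{cor:equivalence_is_linear_homog} simply disappears. Everything else is a routine application of results already established in the paper.
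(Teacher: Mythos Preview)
Your proof is correct, and for the equivalence ``unitary on varieties $\Leftrightarrow$ $X \cong Y$'' it matches the paper's argument (Nullstellensatz plus \cite[Proposition 7.4]{ShalitSolel}). The route differs in the direction ``$X \cong Y \Rightarrow$ unitary on varieties'': the paper argues that an isomorphism of subproduct systems yields a unitary equivalence $\cF_X \cong \cF_Y$, hence a completely isometric isomorphism $H^\infty(V_{\fB_d}(I^X)) \cong H^\infty(V_{\fB_d}(I^Y))$, and then invokes Corollary~\ref{cor:iso_homo_weak} to produce the unitary. Your route is more direct --- you quote \cite[Proposition 7.4]{ShalitSolel} for both implications, reading off the unitary on the varieties straight from the unitary change of variables on the ideals, without passing through the multiplier algebras. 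That is cleaner in isolation; the paper's detour is natural only because the surrounding section is precisely about tying subproduct systems to the algebras classified earlier.

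One correction: your concern about a re-embedding into $\fB_k$ is misplaced. Corollary~\ref{cor:equivalence_is_linear_homog}, as stated and proved in the paper, applies to two homogeneous varieties already sitting in the \emph{same} $\fB_d$ and related by an element of $\operatorname{Aut}(\fB_d)$; its output is a unitary on $\C^d$, with no re-embedding. (You seem to be conflating it with Theorem~\ref{thm:isomorphism_homo} or Corollary~\ref{cor:iso_homo_weak}, where the re-embedding appears because the varieties may start in balls of different dimensions.) So the paragraph about $X(1) = Y(1) = \C^d$ forcing the matrix spans to be full is unnecessary --- and in fact that hypothesis is not part of the statement, since ``subproduct subsystems of $\{(\C^d)^{\otimes n}\}$'' allows $X(1) \subsetneq \C^d$. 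You can simply drop that discussion: ``conformally equivalent $\Rightarrow$ related by a unitary on $\C^d$'' is immediate from Corollary~\ref{cor:equivalence_is_linear_homog}.
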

\begin{proof}
First let us assume that $X$ and $Y$ are isomorphic as subproduct systems. 
Then, the associated Fock spaces $\cF_X$ and $\cF_Y$ are in particular unitarily equivalent. This equivalence induces an isomorphism between the multiplier algebras. Applying Corollary \ref{cor:iso_homo_weak}, we obtain that there exists a unitary $U$ on $\C^d$ that satisfies $U V_{\fB_d}(I^X) = V_{\fB_d}(I^Y)$.

Conversely, assume that there exists a unitary $U$ on $\C^d$, such that $U V_{\fB_d}(I^X) = V_{\fB_d}(I^Y)$. We note that $U$ is in fact a coordinate change and extends to a map $\Gamma(U) \colon \F_d \to \F_d$, which acts on the $n$-th graded component (which we view as $(\C^d)^{\otimes n}$) as $U^{\otimes n}$. 
By the homogeneous Nullstellensatz (Theorem \ref{thm:null_poly}), this coordinate change maps $I^X$ onto $I^Y$. 
As $I^X$ is obtained from $I^Y$ by a unitary change of variables the subproduct systems are isomorphic.
\end{proof}

Using the above characterization of subproduct system isomorphism, we can now recognize that Corollaries \ref{cor:iso_homo_weak} and \ref{cor:iso_homo_cont} were obtained (for finite $d$) in \cite[Theorems  4.8 and 11.2]{DRS11}. 
In \cite{DRS11}, the general \cite[Theorem 9.7]{ShalitSolel} on isomorphisms of subproduct systems was invoked, to obtain a stronger statement, with ``completely isometric" replaced by ``isometric". 
Having the dictionary set up between homogeneous nc varieties and subproduct systems, we reformulate these results as follows. 

\begin{thm}\label{thm:iso_homo_cont_older}
Let $d< \infty$, and let $\fV, \fW \subseteq \fB_d$ be two homogeneous varieties.
Then $A(\fV)$ and $A(\fW)$ are isometrically isomorphic if and only if $H^{\infty}(\fV)$ is isometrically isomorphic to $H^{\infty}(\fW)$, and this happens if and only if $\fV$ and $\fW$ are conformally equivalent, which is the case if and only if there is a unitary map $U$ of $\C^d$ such that
\[
U V_{\fB_d}(I(\fV)) = V_{\fB_d}(I(\fW)) .
\] 
\end{thm}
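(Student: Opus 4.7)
The plan is to show the chain of implications (d) $\Rightarrow$ (c) $\Rightarrow$ (a) and (b) $\Rightarrow$ (d), where (a), (b), (c), (d) are the four conditions listed. The nontrivial content is (a) $\Rightarrow$ (d) and (b) $\Rightarrow$ (d), which --- since we only assume \emph{isometric} (not completely isometric) isomorphism --- cannot be obtained from Theorems \ref{thm:isomorphism_homo} and \ref{thm:isomorphism_homo_cont} alone. Instead, I would route the argument through the subproduct system dictionary established in Section~\ref{sec:subproduct} and invoke the stronger isomorphism theorem of \cite{ShalitSolel,DRS11}, which applies to mere isometric isomorphisms.

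First I would dispose of the easy implications. The implication (d) $\Rightarrow$ (c) is immediate since a unitary $U$ on $\bC^d$ restricts to an automorphism of $\fB_d$ (fixing the origin). For (c) $\Rightarrow$ (a) and (c) $\Rightarrow$ (b), if $G \in \operatorname{Aut}(\fB_d)$ maps $\fW$ onto $\fV$, then $f \mapsto f \circ G$ is a completely isometric (hence isometric) isomorphism from $H^\infty(\fV)$ onto $H^\infty(\fW)$ by Proposition \ref{prop:biholo_auto}; the same map takes $A(\fV)$ isometrically onto $A(\fW)$ because $G$ and $G^{-1}$ are uniformly continuous on $\overline{\fB_d}$.

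For the key implications (a) $\Rightarrow$ (d) and (b) $\Rightarrow$ (d), I would use the identifications of \eqref{eq:algebras}. Let $I^X = I(\fV)$ and $I^Y = I(\fW)$ be the homogeneous ideals, and let $X, Y$ be the corresponding subproduct subsystems of $\{(\bC^d)^{\otimes n}\}_{n \in \bN}$ as in Section \ref{sec:subproduct}. Then $\cA_X \cong A(\fV)$, $\cA_Y \cong A(\fW)$, $\cL_X \cong H^\infty(\fV)$, and $\cL_Y \cong H^\infty(\fW)$ (completely) isometrically. Under the hypothesis (a) or (b), we obtain an isometric isomorphism $\cA_X \cong \cA_Y$ or $\cL_X \cong \cL_Y$. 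At this point I would invoke \cite[Theorem 9.7]{ShalitSolel} (as applied in \cite[Theorems 4.8 and 11.2]{DRS11}), which asserts that in the finite-dimensional setting an isometric isomorphism between the tensor algebras --- or between the Hardy algebras --- of two subproduct systems forces an isomorphism of the subproduct systems. By Proposition \ref{prop:subproduct_varieties}, this isomorphism of subproduct systems gives a unitary $U$ on $\bC^d$ with $U V_{\fB_d}(I^X) = V_{\fB_d}(I^Y)$, which is condition (d).

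The main obstacle I anticipate is verifying that the hypotheses of \cite[Theorem 9.7]{ShalitSolel} are genuinely met in the present setting --- in particular, that the isometric (not necessarily completely isometric) assumption is enough to trigger its conclusion about unitary equivalence of subproduct systems. Once this is settled, the proposition merely assembles the dictionary already developed in this section; no new nc-function-theoretic work is required. It is worth remarking that this argument explains why the finiteness assumption $d < \infty$ is essential: it is used both in the correspondence of Proposition \ref{prop:subproduct_varieties} and in the original subproduct-system rigidity result. The case $d = \infty$ would require a separate treatment and is not claimed here.
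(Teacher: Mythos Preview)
Your proposal is correct and follows essentially the same route as the paper: translate to subproduct systems via \eqref{eq:algebras}, invoke \cite[Theorem 9.7]{ShalitSolel} (as packaged in \cite[Theorems 4.8 and 11.2]{DRS11}) to get $X\cong Y$, then apply Proposition \ref{prop:subproduct_varieties}. The obstacle you flag is slightly misidentified: the hypothesis of \cite[Theorem 9.7]{ShalitSolel} that needs attention is not ``isometric versus completely isometric'' but rather that the isomorphism be \emph{vacuum preserving}; the paper (and \cite{DRS11}) resolves this by first applying the disc trick (as in Proposition \ref{prop:biholo=>0-biholo}) to replace an arbitrary isometric isomorphism by one that fixes the zero representation, after which \cite[Theorem 9.7]{ShalitSolel} applies directly.
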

\begin{proof}
This follows from \cite[Theorems  4.8 and 11.2]{DRS11} together with Proposition \ref{prop:subproduct_varieties}. 

Alternatively, if $A(\fV)$ and $A(\fW)$ are isometrically isomorphic, then by the above discussion the corresponding algebras $\cA_X$ and $\cA_Y$ (as in Equation \eqref{eq:algebras}) are isometrically isomorphic. 
By an application of the disc trick (as in Proposition \ref{prop:biholo=>0-biholo}), there exists a vacuum preserving isometric isomorphism $\cA_X \to \cA_Y$. 
By \cite[Theorem 9.7]{ShalitSolel} this means that $X \cong Y$, which, by Proposition \ref{prop:subproduct_varieties}, means that there exists a unitary as stated. 
The converse is already taken care of by Corollary \ref{cor:iso_homo_cont} together with Proposition \ref{prop:subproduct_varieties}. 

The case of $H^\infty(\fV) \cong H^\infty(\fW)$ is handled in a similar manner. 
\end{proof}
Note that in Theorems \ref{thm:isomorphism_homo} and \ref{thm:isomorphism_homo_cont} we obtain additional information regarding the form of non-zero-preserving isomorphisms, and moreover we also handle the case of $d = \infty$. 

In fact, the connection between subproduct systems and nc varieties hold also in the setting of $d = \infty$, but when discussing the connection between ideals and varieties there might occur ideals which are not ideals of polynomials in the classical sense (for example, if $a = (a_i)_{i=1}^\infty \in \ell^2$, then the function $z \mapsto \langle z, a \rangle = \sum a_i z_i$ is not a polynomial in the classical sense, but such functions may naturally be thought of polynomials of degree one). 
In this setting one has a bijective correspondence between homogeneous norm closed ideals $\cI$ in $A(\fB_\infty) = \fA_\infty$ and subproduct systems $X$ with $X(1)$ a separable Hilbert space. 
Let us write the bijection $\cI \leftrightarrow X_\cI$ and $X \leftrightarrow \cI^X$. 

In this setting it still holds that two subproduct systems $X$ and $Y$ are isomorphic if and only if $\cI^X$ and $\cI^Y$ are related by a unitary change of variables. 
However, we do not know whether or not homogeneous normed closed ideals in $A(\fB_\infty)$ are in bijective correspondence with homogeneous nc varieties in $\fB_\infty$. 
The issue is that we do not know whether the homogeneous Nullstellensatz (Theorem \ref{thm:cont_nullss}) holds in the case $d = \infty$. 

With these comments in mind, we use the results of this paper to contribute to the completely isometric isomorphism problem for tensor algebras of subproduct systems in the case of $d = \infty$, something that was left open in \cite{DRS11}. 
The following result still leaves much to be desired. 

\begin{prop}\label{prop:iso_homo_dinfty}
Let $X$ and $Y$ be two subproduct systems whose fibers $X(n)$ and $Y(n)$ are separable Hilbert spaces for all $n$. 
Suppose that there exist homogeneous nc varieties such that $\cI^X = \cI_\fV$ and $\cI^Y = \cI_\fW$. 
Then $\cA_X$ and $\cA_Y$ are completely isometrically isomorphic if and only if $X$ and $Y$ are isomorphic. 
\end{prop}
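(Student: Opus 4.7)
The plan is to run the proof through the dictionary \eqref{eq:algebras} between tensor algebras of subproduct systems and algebras of continuous nc functions on homogeneous varieties, and then invoke the classification results of Section \ref{sec:homog_isom} (in their norm-closed form from Section \ref{sec:continuous}). The hypothesis $\cI^X=\cI_\fV$ and $\cI^Y=\cI_\fW$ is precisely what is needed to replace the (missing) homogeneous Nullstellensatz for $d=\infty$.

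For the easy direction, suppose $X\cong Y$ via unitaries $\{U_n\colon X(n)\to Y(n)\}_{n\in\bN}$ compatible with the subproduct structure. Assembling them gives a unitary $U=\bigoplus_n U_n\colon \cF_X\to\cF_Y$. A direct computation from the intertwining relation $U_{m+n}P_{X(m)\otimes X(n)\to X(m+n)}=P_{Y(m)\otimes Y(n)\to Y(m+n)}(U_m\otimes U_n)$ shows $U S^X_i=T_iU$, where $T_i$ are the $Y$-shifts expressed in the basis $\{U_1(e_j)\}$. Thus conjugation by $U$ implements a completely isometric isomorphism $\cA_X\to\cA_Y$.

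For the nontrivial direction, assume $\cA_X\cong\cA_Y$ completely isometrically. Using \eqref{eq:algebras}, this is a completely isometric isomorphism $A(\fV)\cong A(\fW)$. Since $\fV,\fW$ are homogeneous nc varieties, Theorem \ref{thm:isomorphism_homo_cont} (together with Corollary \ref{cor:equivalence_is_linear_homog}) applies: after embedding $\fV,\fW\subseteq\fB_k$ for some $k\leq\infty$, there exists a unitary $U\in\operatorname{Aut}(\fB_k)$ (equivalently a unitary of the underlying Hilbert space $X(1)\oplus Y(1)\hookrightarrow \bC^k$) with $U(\fW)=\fV$. The disc-trick argument behind Proposition \ref{prop:biholo=>0-biholo} applies verbatim in this separable $d=\infty$ setting, so one really obtains a linear unitary; this is the step where one must be careful, and it constitutes the main technical checkpoint.

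Finally, translate the variety isomorphism back to a subproduct system isomorphism. The unitary $U$ extends to the Fock-space unitary $\Gamma(U)=\bigoplus_n U^{\otimes n}$. By the homogeneous Nullstellensatz for tensor algebras (Theorem \ref{thm:cont_nullss}, whose proof applies whenever $\cI^X=\cI_\fV$ and $\cI^Y=\cI_\fW$ — this is exactly why the hypothesis is included), $U(\fW)=\fV$ forces $\Gamma(U)(\cI^Y)=\cI^X$. Taking orthogonal complements degree by degree inside $X(1)^{\otimes n}$ and $Y(1)^{\otimes n}$ yields a family of unitaries $U_n:=U^{\otimes n}\big|_{X(n)}\colon X(n)\to Y(n)$ that automatically satisfies the subproduct compatibility condition (because $U^{\otimes(m+n)}=U^{\otimes m}\otimes U^{\otimes n}$ and both projections onto $X(m+n)$ and $Y(m+n)$ are preserved by $\Gamma(U)$). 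This is precisely the isomorphism $X\cong Y$, completing the proof. The main obstacle is the one just isolated: the $d=\infty$ Nullstellensatz is not at our disposal, and the hypothesis $\cI^X=\cI_\fV$, $\cI^Y=\cI_\fW$ is the minimal assumption that lets the argument go through.
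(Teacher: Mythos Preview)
Your proposal is essentially correct and follows the same route as the paper: use \eqref{eq:algebras} to translate to $A(\fV)\cong A(\fW)$, invoke the norm-closed classification (the paper cites Corollary \ref{cor:iso_homo_cont}, which packages your Theorem \ref{thm:isomorphism_homo_cont} $+$ Corollary \ref{cor:equivalence_is_linear_homog}), obtain a unitary $U$ with $U\fW=\fV$, and then pass back to a subproduct-system isomorphism.

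One point is muddled, though. In the final step you invoke Theorem \ref{thm:cont_nullss} and say ``its proof applies whenever $\cI^X=\cI_\fV$ and $\cI^Y=\cI_\fW$''. That theorem is stated for $d\in\bN$, and neither its statement nor its proof is what you actually need. The passage from $U(\fW)=\fV$ to $\Gamma(U)(\cI^Y)=\cI^X$ requires no Nullstellensatz at all: $U(\fW)=\fV$ immediately gives that $f\mapsto f\circ U$ carries $\cI_\fV$ onto $\cI_\fW$, and the hypothesis $\cI^X=\cI_\fV$, $\cI^Y=\cI_\fW$ then says precisely that $\cI^X$ and $\cI^Y$ are related by this unitary change of variables. (This is the step where, for $d<\infty$, the Nullstellensatz is used in Proposition \ref{prop:subproduct_varieties}; here the hypothesis is a direct substitute for its \emph{conclusion}, not an input to its proof.) Once you know the ideals are unitarily related, $X\cong Y$ follows from the remark preceding this proposition (the $d=\infty$ analogue of \cite[Proposition 7.4]{ShalitSolel}). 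So strike the reference to Theorem \ref{thm:cont_nullss} and state the implication directly.
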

\begin{proof}
By assumption, there exist homogeneous varieties $\fV$ and $\fW$ such that $\cA_X = A(\fV)$ and $\cA_Y = A(\fW)$. 
By Corollary \ref{cor:iso_homo_cont}, after perhaps finding a new embedding $\fV, \fW \subseteq \fB_k$, there is a unitary $U$ such that $\fW = U \fV$. 
As above, this implies that $X$ and $Y$ are isomorphic. 
\end{proof}

We can now also recognize that \cite[Theorem 9.2]{KakSh15} treated the classification of the operator algebras of the form $A(\fV)$, where $\fV$ is the zero set of an ideal generated by monomials. 
In \cite[Theorem 9.2]{KakSh15}, due to the particularity of the ideals under investigation, additional rigidity was present: completely isometric isomorphism was actually shown to be equivalent to algebraic isomorphism.

Finally, we mention that in \cite[Theorem 3.4]{KakSh15} (following work done in \cite{DorMar14}) it was shown that operator algebras $\cA_X$ and $\cA_Y$ (arising from subproduct systems $X$ and $Y$) are boundedly isomorphic if and only if $X$ and $Y$ are {\em similar}. 
We leave it for future work to parse what this means in terms of bounded isomorphisms between algebras of the form $A(\fV)$, where $\fV \subset \fB_d$ is a homogeneous variety.

\section{Connection to the commutative case} \label{sec:connections_to_comm}

In this section, we show how our study connects to previous works on algebras of bounded analytic functions on commutative analytic varieties.
As we shall see, the nc setting not only generalizes some of the results, it also clarifies some of the results (as well as some non-results) that were obtained for commutative algebras.

\begin{remark}
We will be using somewhat confusing terminology, as we will be considering ``commutative noncommutative varieties".
The word ``noncommutative" here means that we will be considering subvarieties of the nc ball $\fB_d$, that is, varieties consisting of $d$-tuples of matrices of arbitrary size.
The word ``commutative" here refers to the fact that the varieties under consideration will all lie in the commuting variety $\fC\fB_d \subset \fB_d$, that is, the tuples of matrices are assumed to commute with one another.
Perhaps an alternative way of saying ``commutative nc variety" would be ``free commutative variety".
In any case, now that the reader is warned, there should be no confusion.
\end{remark}

\subsection{The isomorphism problem in the commutative case}

We start by recalling that the Drury--Arveson space $H^2_d$ is the reproducing kernel Hilbert space (in the usual, commutative function-theoretic sense) on the unit ball $\bB_d \subseteq \C^d$, with reproducing kernel $k_w(z) = k(z,w) = \frac{1}{1-\langle z,w \rangle}$ (see \cite{ShalitSurvey}).
Let $\cM_d$ denote the multiplier algebra (in the usual, commutative function-theoretic sense) of $H^2_d$.
Note that if we put $\Omega = \bB_d = \fB_d(1)$, and if we denote $\fC \fB_d$ the part of $\fB_d$ consisting of all commuting tuples, then using Lemma \ref{lem:HOmega} we see that
\[
H^2_d \cong \cH_\Omega = \cH_{V(\cJ_\Omega)} = \cH_{\fC\fB_d};
\]
this is because $\fC \fB_d$ is the smallest variety in $\fB_d$ that contains $\bB_d$.
Thus $\cM_d$ can be identified with $\mlt \cH_{\fC\fB_d} = H^\infty(\fC \fB_d)$ (further explanation will be given in Proposition \ref{prop:commutingcase} below).

We call a subset $V \subseteq \bB_d$ a {\em variety} if $V$ is the joint zero set of a family of functions in $\cM_d$.

In \cite{DHS14,DRS11,DRS15,Hartz12,Hartz16,KerMcSh13,RamseyThesis} (see also the survey paper \cite{SalomonShalit}), the following problem was investigated.
For a variety $V \subseteq \bB_d$, consider the Hilbert space $\cF_V = \spn\{k_v : v \in V\}$, and define $\cM_V = \mlt \cF_V$.
By \cite[Proposition 2.6]{DRS15},
\[
\cM_V = \{f\big|_V : f \in \cM_d\} .
\]

\begin{prop}\label{prop:commutingcase}
Let $V \subseteq \bB_d$.
Let $\fV = V(\cI_V) = V(\cJ_V)$ be the smallest nc variety that contains $V$.
Then $\cM_V$ is completely isometrically isomorphic and unitarily equivalent to $H^\infty(\fV) = \mlt \cH^2_\fV$.
\end{prop}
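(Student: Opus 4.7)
The plan is to build a single unitary $U \colon \cH_\fV \to \cF_V$ whose conjugation simultaneously delivers the unitary equivalence and the completely isometric isomorphism of multiplier algebras.

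First I would use Lemma \ref{lem:HOmega} to simplify the space: since $V \subseteq \bB_d = \fB_d(1)$, we have $\cH_\fV = \cH_V = \ol{\spn}\{K_{v,1,1} : v \in V\}$, the only kernel functions at the scalar level being those with $u = y = 1 \in \C$.

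Next I would construct the unitary. Expanding the nc Szego kernel function at a point $z \in \bB_d$ gives
\[
K_{v,1,1}(z) \;=\; \sum_{k \in \W_d} \ol{v^k}\, z^k \;=\; \sum_{n \geq 0} \langle z, v\rangle^n \;=\; \frac{1}{1-\langle z, v\rangle} \;=\; k_v(z),
\]
the classical Drury--Arveson reproducing kernel on $\bB_d$. Using the reproducing property of $\cH_\fV$ and the analogous property of $\cF_V$,
\[
\langle K_{v,1,1}, K_{w,1,1}\rangle_{\cH_\fV} \;=\; K_{v,1,1}(w) \;=\; \frac{1}{1-\langle w, v\rangle} \;=\; \langle k_v, k_w\rangle_{\cF_V},
\]
so $K_{v,1,1} \mapsto k_v$ extends linearly to a unitary $U \colon \cH_\fV \to \cF_V$.

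The third step is to verify that $U$ intertwines the coordinate multipliers: $U M_{z_i} U^* = M_{z_i}$ for each $i$. Using Lemma \ref{lem:adjoint}, $M_{z_i}^* K_{v,1,1} = K_{v,1,\ol{v_i}} = \ol{v_i}\, K_{v,1,1}$ in $\cH_\fV$, which matches the standard identity $M_{z_i}^* k_v = \ol{v_i}\, k_v$ in $\cF_V$. Hence the two operators agree on a dense subset, and so on the whole space.

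Finally, I would argue that both $\mlt\cH_\fV = H^\infty(\fV)$ and $\cM_V = \mlt\cF_V$ coincide with the weak-$*$ closures of the algebras generated by their respective coordinate multipliers $M_{z_1},\ldots,M_{z_d}$. On the nc side, the last theorem of Section \ref{sec:szego_and_bounded_functions} asserts that the Taylor series of any $f \in H^\infty(\fB_d)$ Ces\`aro-converges to $f$ in the weak-$*$ topology; the weak-$*$ continuous surjection $H^\infty(\fB_d) \twoheadrightarrow H^\infty(\fV)$ from Theorem \ref{thm:quotient_mult} then transports this density to $H^\infty(\fV)$. On the commutative side, this is a classical fact about the Drury--Arveson multiplier algebra $\cM_d$ which descends to the quotient $\cM_V$. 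Since Step 3 gives $U M_p U^* = M_p$ for every polynomial $p$, conjugation by $U$ carries the weak-$*$ closed polynomial algebra on $\cH_\fV$ onto the corresponding algebra on $\cF_V$, i.e., $U (\mlt\cH_\fV) U^* = \cM_V$. As $U$ is a unitary, this conjugation is a $*$-isomorphism of operator algebras that is automatically completely isometric, yielding both the unitary equivalence and the completely isometric isomorphism.

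The main obstacle is establishing the weak-$*$ density of polynomial multipliers in both algebras; once that is in hand, the rest of the argument is a routine identification of kernel functions.
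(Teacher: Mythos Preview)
Your proof is correct and shares its first two steps with the paper's: both use Lemma~\ref{lem:HOmega} to identify $\cH_\fV$ with $\cH_V$, and both recognize that $K_{v,1,1}$ corresponds to the commutative kernel $k_v$, so that $\cH_\fV$ and $\cF_V$ are unitarily equivalent (the paper simply says ``via the identity map'' rather than writing out the kernel computation).

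The approaches diverge at the last step. You show that $U$ intertwines the coordinate multipliers and then appeal to weak-$*$ density of polynomials on both sides to conclude that $U$ conjugates one multiplier algebra onto the other. The paper instead argues by compression: it first treats the case $V=\bB_d$, observing that both $\cM_d$ and $H^\infty(\fC\fB_d)$ arise as the compression of $H^\infty(\fB_d)=\cL_d$ to the common co-invariant subspace $H^2_d=\cH_{\fC\fB_d}$, and hence coincide as operator algebras; for a general $V$ it then notes (using \cite[Proposition~2.6]{DRS15} and Lemma~\ref{lem:quotient_mult}) that $\cM_V$ and $H^\infty(\fV)$ are further compressions of this common algebra to the same subspace $\cF_V=\cH_\fV$. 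The compression route avoids having to verify weak-$*$ density of polynomials in $H^\infty(\fV)$ and in $\cM_V$ separately, while your route is more self-contained and does not pass through the intermediate case $\fC\fB_d$. Both arguments ultimately rest on the same identification of Hilbert spaces, so the difference is one of packaging rather than substance.
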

\begin{proof}
First, $V(\cI_V) = V(\cJ_V)$ thanks to Lemma \ref{lem:HvsMltVarieities} (here we are using the notation of that lemma).
As above, we apply Lemma \ref{lem:HOmega} to obtain
\[
\cH_V = \cH_{\fV} ,
\]
as subspaces of $\cH^2_d$, and $\cH_V$ is clearly unitarily equivalent to $\cF_V$, via the identity map.

Let us concentrate first on the case $V = \bB_d$.
In this case, $\fV = \fC \fB_d$.
To see this, observe that $\cJ_V$ is simply the weak-operator closed ideal generated by the nc functions $z_i z_j - z_j z_i$ ($i,j \in \{1, \ldots, d\}$), thus $V(\cJ_V) = \fC\fB_d$.
Now, $\cM_d$ and $H^\infty(\fC\fB_d)$ have different interpretations as function algebras, but both $\cM_d$ and $H^\infty(\fC\fB_d)$ are the operator algebra obtained by compressions of $\cL_d = H^\infty(\fB_d)$ to the co-invariant subspace $H^2_d = \cH_{\fC\fB_d}$.
Thus these operator algebras coincide.

Now let $V \subseteq \bB_d$ be a variety, and let $\fV = V(\cJ_V)$ be the smallest nc variety in $\fB_d$ that contains it.
The algebra $\cM_V$ is obtained from $\cM_d$ by compressing to $\cF_V$ \cite[Proposition 2.6]{DRS15}, and by Theorem 7.2, $H^\infty(\fV)$ is obtained from $H^\infty(\fB_d)$ by compressing to $\cH_\fV$, and since $\cH_\fV \subseteq \cH_{\fC\fB_d}$, $H^\infty(\fV)$ is the compression of $H^\infty(\fC\fB_d)$ to that subspace.
Thus, $\cM_V$ and $H^\infty(\fV)$ coincide.
\end{proof}

\begin{remark}\label{rem:mult_sup_norm}
It is well known that $\cM_d \subsetneq H^\infty(\bB_d)$, and that the multiplier norm and supremum norm $\|f\|_\infty = \sup_{z \in \bB_d}|f(z)|$ are not comparable.
The noncommutative framework allows to view the multiplier norm as a supremum norm: for every multiplier $f \in \cM_d$
\[
\|f\|_{\mlt} = \sup_{Z\in \fC\fB_d}\|f(Z)\| .
\]
Another thing that the noncommutative framework helps to clarify, is the issue of continuous multipliers.
As pointed out in Example \ref{ex:cont_mult}, the algebra $\cA_d$, obtained as the norm closure of polynomials in $\cM_d$, is strictly smaller than the algebra $\cM_d \cap C(\ol{\bB}_d)$ consisting of  multipliers that extend to (uniformly) continuous functions on $\ol{\bB}_d$.
When looked at from the noncommutative point of view, we see that $\cA_d = A(\fC\fB_d)$ --- the algebra of multipliers that extend to uniformly continuous functions on $\ol{\fB}_d$.
Thus, the urge to call $\cA_d$ the algebra of ``continuous multipliers" need not be suppressed.
\end{remark}

In \cite{DRS15}, the point of departure was a radical homogeneous ideal $I \triangleleft \C[z_1, \ldots, z_d]$.
Let $V$ be the affine variety corresponding to $I$ (i.e., the zero locus of $I$).
Let $\cA_V$ denote the norm closure of polynomials in $\cM_V$.
This algebra is also, in some sense, the universal unital operator algebra generated be a commuting row contraction that satisfies the relations in the ideal $I$.
In \cite{DRS15}, $\cA_V$ was denoted by $\cA_I$, to highlight the role of the ideal $I$.
In fact, one naturally defines the universal operator algebras $\cA_I$ and $\cM_I$ and  for a not-necessarily radical homogeneous ideal $I \triangleleft \C[z_1, \ldots, z_d]$ --- $\cA_I$ is simply the compression of $\cA_d$ to the complement of $I H^2_d$ in $H^2_d$, and likewise for $\cM_I$.

Let $I$ and $J$ be radical homogeneous ideals corresponding to affine varieties $V$ and $W$. 
In \cite[Theorem 8.2]{DRS11} it was shown that $\cA_I = \cA_V$ is completely isometrically isomorphic to $\cA_J = \cA_W$ if and only if $V$ and $W$ are related by a unitary transformation, and in \cite[Theorem 8.5]{DRS11} it was shown that $\cA_I = \cA_V$ is  algebraically isomorphic to $\cA_J = \cA_W$ if and only if $V$ and $W$ are related by a linear map or, equivalently, if $V$ and $W$ are biholomorphic (the proof of that theorem was completed only later, with an important contribution by Hartz \cite{Hartz12}).
Likewise, in \cite[Theorem 11.7]{DRS11} it was shown that the algebras $\cM_I = \cM_V$ and $\cM_J = \cM_W$ are isomorphic/completely isometrically isomorphic under the exact same terms.
Thus, the variety $V = V(I)$ serves as a geometric invariant of the structure of the operator algebras $\cA_I$ and $\cM_I$, when $I$ is a homogeneous and radical ideal.
The question of whether there exists a geometric invariant for classifying the algebras $\cA_I$ and $\cM_I$ for a not-necessarily-radical ideal was left open.

In the noncommutative setting, the geometric invariant becomes evident.
Indeed, it is easy to see that $\cA_I = A(V_{\fB_d}(I))$ and that $\cM_I = H^\infty(V_{\fB_d}(I))$, thus Corollaries \ref{cor:isomorphism_homo} and \ref{cor:iso_homo_cont} give the ``geometric" classification result.
Note that Hilbert's Nullstellensatz explains why we should expect that the affine varieties give a classification for (algebras associated with) ideals, only for the class of radical ideals.
On the other hand, the nc homogeneous Nullstellensatz, Theorem \ref{thm:null_poly}, shows that homogeneous nc varieties are in bijective correspondence with homogeneous ideals (see also Corollary \ref{cor:free_com_NSTZ} below).

Finally, let us point out how the nc theoretic Corollaries \ref{cor:isomorphism_homo} and \ref{cor:iso_homo_cont} contain the function-theoretic Theorems 8.2 and 11.7 in \cite{DRS11} (and this should also shed light on how Corollary \ref{cor:isomorphism} relates to \cite[Theorem 4.4]{DRS15}).
To wit, if $I$ is a radical  ideal, and $V = V(I)$ is the associated affine variety, then $\fV = V_{\fB_d}(I)$ is the smallest nc variety containing $V$.
Thus, if $J$ is another radical ideal and $W$ and $\fW$ the associated affine and nc varieties, respectively, then $V$ and $W$ are related by a unitary/automorphism if and only if $\fV$ and $\fW$ are.
By Proposition \ref{prop:commutingcase}, we conclude that $\cM_V$ is completely isometrically isomorphic to $\cM_W$, if and only if $H^\infty(\fV)$ and $H^\infty(\fW)$ are completely isometrically isomorphic, and this happens (by Corollary \ref{cor:isomorphism_homo}) if and only if $\fV$ and $\fW$ are conformally equivalent (equivalently, if and only if a unitary maps one onto the other), which, by the previous remarks, happens if and only if $V$ and $W$ are conformally equivalent (equivalently, if and only if a unitary maps one onto the other).
Thus, we recapture some of the classification results of \cite{DRS11}.

When an ideal $I \triangleleft \C[z_1, \ldots, z_d]$ is not radical, then $V_{\fB_d(I)}$ is not uniquely determined by the scalar level, and to encode $I$ one is required to use higher matrix levels.

\subsection{An example}
In \cite{McSh16}, a reproducing kernel space consisting of Dirichlet series on the half-plane, which is {\em weakly isomorphic} to the Drury--Arveson space $H^2_d$, was discovered.
Let $\bH_0 = \{z \in \bC : \operatorname{Re} z > 0\}$.
Fix $d \in \bN \cup \{\infty\}$, and let $b = (b_n)_{n=1}^d$ be a sequence of positive numbers such that $\sum b_i^2 = 1$.
Consider the map $f : \bH_0 \to \bB_d$ by
\[
f(s) = (b_1 p_1^{-s}, b_2 p_2^{-s}, \ldots),
\]
(where $p_i$ denotes the $i$th prime number), and define a kernel
\[
k(s,u) = \left(1 - \langle f(s), f(u) \rangle \right)^{-1} = \sum_{n \geq 1} a_n n^{-s - \bar u},
\]
on $\bH_0 \times \bH_0$ (the $a_n$s are positive numbers determined uniquely by this equality).
This kernel gives rise to a reproducing kernel Hilbert space $\cH$ on the set $\bH_0$, which has the complete Pick property.
The elements of $\cH$ are precisely the Dirichlet series $h(s) = \sum_n \gamma_n n^{-s}$, that satisfy $\|h\|^2_\cH := \sum |\gamma_n|^2 a_n^{-1} < \infty$.

One of the main results in \cite{McSh16} is that $\cH$ is {\em weakly isomorphic} to $H^2_d$, via the unitary map $U : k(\cdot,u) \mapsto \frac{1}{1-\langle \cdot, f(u) \rangle}$, which has inverse $U^* : g \mapsto g \circ f$.
Consequently, $\mlt(\cH)$ is unitarily equivalent $\mlt(H^2_d)$, and the inverse associates $\psi \in \mlt(H^2_d)$ with $\psi \circ f \in \mlt(\cH)$.
This is somewhat surprising (especially in the case $d= \infty$), as $\mlt(\cH)$ is an algebra of analytic functions in a single variable, whereas $\mlt(H^2_d)$ has several universal properties.
The norm of a multiplier $\varphi \in \mlt(\cH)$ is given by the highly inexplicit formula $\|\varphi\| = \sup_{\|h\|_\cH=1}\|\varphi h\|$, and it is not comparable to the perhaps-more-accessible supremum norm $\|\varphi\|_{\bH_0,\infty} = \sup_{s \in \bH_0}\|\varphi(s)\|$.
In light of Remark \ref{rem:mult_sup_norm} (as well as some wishful thinking), one might hope that there exists some kind of ``noncommutative half-plane" $\mathfrak{H}_0$, which will enable to find the multiplier norm of a matrix valued multiplier $\varphi$ by an nc supremum $\|\varphi(S)\|_{\mathfrak{H}_0,\infty} = \sup_{S \in \mathfrak{H}_0}\|\varphi(S)\|$.
We will now show that there is no such noncommutative half-plane.

Suppose that $\fH_0 \subseteq \M_1 = \sqcup_n M_n$ is an nc set for which it holds that
\begin{equation}\label{eq:nc_HP}
\|\varphi\| = \sup_{S \in \mathfrak{H}_0}\|\varphi(S)\|,
\end{equation}
for every matrix valued multiplier (we are assuming that $\mathfrak{H}_0$ is an nc set such that every element in $\mlt(\cH)$ can be evaluated at any $S \in \mathfrak{H}_0$).
For convenience, let us assume that $2 \leq d< \infty$.
Now, the unitary equivalence maps the function $s \mapsto b_i p_i^{-s}$ to the function $z \mapsto z_i$, therefore the row multiplier $f(s) = (b_1 p_1^{-s}, b_2 p_2^{-s}, \ldots, b_d p_d^{-s})$ is unitary equivalent to the row multiplier $(z_1, \ldots, z_d)$, and hence is a row contraction.
Therefore, if \eqref{eq:nc_HP} holds for all matrix valued multipliers, then for every $S \in \mathfrak{H}_0$, $\|f(S)\| \leq \|f\| = 1$.
In other words, for every $S \in \mathfrak{H}_0$,
\[
\sum_i b_i^2 e^{-\log p_i (S+S^*) } = \sum_i b_i^2 e^{-\log p_i S }(e^{-\log p_i S })^* \leq I.
\]
It follows that for every eigenvalue $\lambda$ of $S+S^*$, $f(\lambda) \in \ol{\bB}_d$.
This means that $\operatorname{Re}S \geq 0$.
But if $S$ has non-negative real part, then $\ol{\bH}_0$ is a complete spectral set for $S$, meaning that for any matrix valued Dirichlet polynomial $\varphi$, it holds that $\|\varphi(S)\| \leq \sup_{s \in \bH_0}|\varphi(s)|$.
Since, in general, $\sup_{s \in \bH_0}|\varphi(s)| < \|\varphi\|$, we see that \eqref{eq:nc_HP} cannot hold.

We conclude the examination of this example, by finding the natural nc variety in $\fB_d$ on which $\cH$ and $\mlt(\cH)$ can be thought to live.
First, the map $U: k(\cdot, u) \mapsto \frac{1}{1 - \langle \cdot, f(u) \rangle}$ identifies $\cH$ with the subspace
\[
\cH_{f(\bH_0)} = \spn\left\{\frac{1}{1 - \langle \cdot, f(u) \rangle} : u \in \bH_0 \right\} \subseteq H^2_d \subseteq \cH^2_d.
\]
By Lemma \ref{lem:HOmega}, $\cH_{f(\bH_0)} = \cH_{V(\cJ_{f(\bH_0)})}$, where $V(\cJ_{f(\bH_0)})$ is the smallest nc variety (cut out by $H^\infty(\fB_d)$ functions) that contains $f(\bH_0)$.
Now, $V(\cJ_{f(\bH_0)})$ is an nc set, so it clearly contains the nc set $(f(\bH_0))_{nc}$ that consists of all $d$-tuples of diagonal matrices formed by taking direct sums of the $d$-tuples $f(s) = (b_1 p_1^{-s}, \ldots, b_d p_d^{-s})$, where $s \in \bH_0$.
The commutators $x_i x_j - x_j x_i$ vanish on $(f(\bH_0))_{nc}$, and therefore, so does any function in the weakly closed ideal $\cJ_c$ generated by the commutators.
On the other hand, the quotient of $H^\infty(\fB_d)$ by $\cJ_c$ is $\cM_d \cong H^\infty(\fC \fB_d)$.
By \cite[Lemma 34]{McSh16}, there is no non-zero function $g \in \cM_d$ that vanishes on $f(\bH_0)$.
It follows that $\cJ_c = \cJ_{f(\bH_0)}$, and so $V(\cJ_{f(\bH_0)}) = \fC \fB_d$.

It is interesting to note that the supremum of a multiplier $\psi$ on $(f(\bH_0))_{nc}$ is given by the scalar sup norm of $\psi \circ f$ on $\bH_0$, and this is strictly smaller than $\|\psi\|$.
This does not contradict Theorems \ref{thm:quotient_mult} and \ref{thm:mult_are_bounded_on_V}, as $(f(\bH_0))_{nc}$ is not an nc variety in our sense.

\subsection{Commutative free Nullstellensatz}
In connection to the previous discussion on how the higher matrix levels encode the difference between an ideal and its radical, we investigate the matter from a purely algebraic point of view.

Let us denote $\fC\bM_d = \{ X \in \bM_d : X_i X_j = X_j X_i\, , \, i,j=1, \ldots, d\}$. 
In \cite{EisHoch79}, Eisenbud and Hochester obtained a generalization of the Nullstellensatz to the setting of rings with nilpotents.
More precisely, if $A$ is an affine ring and $I \triangleleft A$ is an ideal, then there exists a positive integer $k$, that depends on the nilpotence of $A/I$, such that:
\[
I = \bigcap_{\stackrel{I \subset \fm}{\fm \text{ maximal}}} \left( \fm^k + I \right).
\]
We will now obtain a version of their Nullstellensatz for ideals in $\C[z_1, \ldots, z_d]$ where the ``points" are allowed to be any tuple of commuting matrices. This will provide a more elementary proof of a slightly weaker result than the main result of \cite{EisHoch79}, while emphasizing the role played by finite dimensional representations.
In other words, we will show that zero locus of an ideal $I \triangleleft \C[z_1, \ldots, z_d]$ completely determines the ideal $I$ (see Corollary \ref{cor:free_com_NSTZ}).

\begin{prop} \label{prop:matrices_know_radical}
Let $k$ be a field and let $A$ be a Noetherian commutative local $k$-algebra with maximal ideal $\fm$, such that $A/\fm \cong k$ .
Let $f \in A$. 
If $\varphi(f) = 0$ for every homomorphism $\varphi \colon A \to M_n(k)$, then $f = 0$.
\end{prop}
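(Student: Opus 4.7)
The plan is to prove the contrapositive: assuming $f \neq 0$, construct a homomorphism $\varphi : A \to M_n(k)$ (for some $n$) with $\varphi(f) \neq 0$. The two ingredients will be Krull's intersection theorem and the regular representation of a finite-dimensional algebra.

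First I would use the Krull intersection theorem, which for a Noetherian local ring gives $\bigcap_{j \geq 1} \fm^j = 0$. Hence if $f \neq 0$, then there exists some $j$ such that $f \notin \fm^j$, i.e., the image of $f$ in the quotient $A/\fm^j$ is nonzero.

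Next I would argue that $A/\fm^j$ is a finite-dimensional $k$-algebra. This uses Noetherianness together with the assumption $A/\fm \cong k$: since $\fm$ is finitely generated, each $\fm^i/\fm^{i+1}$ is a finitely generated $A/\fm$-module, that is, a finite-dimensional $k$-vector space. The filtration
\[
0 \subseteq \fm^{j-1}/\fm^j \subseteq \fm^{j-2}/\fm^j \subseteq \cdots \subseteq \fm/\fm^j \subseteq A/\fm^j
\]
then has finite-dimensional successive quotients, so $N := \dim_k(A/\fm^j) < \infty$.

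Finally, consider the regular representation of the finite-dimensional $k$-algebra $B := A/\fm^j$: send $b \in B$ to left multiplication $L_b \in \End_k(B) \cong M_N(k)$ (after choosing a $k$-basis). Because $B$ is unital, this representation is injective: $L_b = 0$ forces $b = L_b(1) = 0$. Composing with the canonical projection $A \twoheadrightarrow A/\fm^j$ yields a $k$-algebra homomorphism $\varphi : A \to M_N(k)$. By construction, the image of $f$ in $A/\fm^j$ is nonzero, so $\varphi(f) \neq 0$, contradicting the hypothesis. Hence $f = 0$.

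There is no real obstacle here; the only subtlety worth flagging is that the statement must be read with $n$ existentially quantified (i.e., the assumption is that $\varphi(f)=0$ for every finite $n$ and every homomorphism into $M_n(k)$), since the value of $N$ produced by the argument depends on how deep in the $\fm$-adic filtration $f$ lies.
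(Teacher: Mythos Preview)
Your proof is correct and follows essentially the same approach as the paper: both use Krull's intersection theorem together with the regular representation of $A$ on the finite-dimensional quotient $A/\fm^j$. The only cosmetic difference is that you argue by contrapositive, while the paper shows directly that the hypothesis forces $f \in \fm^\ell$ for every $\ell$; your write-up is also a bit more explicit about why $A/\fm^j$ is finite-dimensional and why the regular representation is faithful.
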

\begin{proof}
Since $A/\fm \cong k$, we conclude that $f$ maps to $0$ and thus $f \in \fm$. 
Since $A$ is Noetherian, $\fm$ is finitely generated and thus $A/\fm^{\ell}$ is a finite-dimensional vector space over $k$, for every $\ell \geq 1$. 
The natural map $\pi \colon A \to A/\fm^{\ell}$ endows this finite dimensional space with a structure of an $A$-module, and thus $f$ acts as $0$ on this space. 
We conclude that $f \in \fm^{\ell}$.
Now we apply Krull's intersection theorem \cite[Corollary 5.4]{Eisenbud} to deduce that $f = 0$.
\end{proof}

\begin{remark}
For example, the assumption of the above proposition holds if $k$ is algebraically closed and $A$ is a localization of a finite type algebra over $k$ at a maximal ideal or, alternatively, if $k = \C$ and $A = \C\{\{z\}\}$ the ring of germs of analytic functions at $0$.
\end{remark}

\begin{cor} \label{cor:matrices_know_radical_poly}
Let $k$ be an algebraically closed field. 
Let $J \triangleleft k[x_1,\ldots,x_d]$ be an ideal. 
Put $A = k[x_1,\ldots,x_d]/J$, and let $\pi$ be the natural projection onto $A$.
If $f \in k[x_1,\ldots,x_d]$ is such that for every homomorphism $\varphi \colon A \to M_n(k)$ we have that $\varphi(\pi(f)) = 0$, then $f \in J$.
\end{cor}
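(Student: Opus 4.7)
The plan is to reduce to Proposition \ref{prop:matrices_know_radical} by localizing at an appropriate maximal ideal. The contrapositive formulation is cleaner: assume $f \notin J$, equivalently $\pi(f) \neq 0$ in $A$, and produce a homomorphism $\varphi \colon A \to M_n(k)$ with $\varphi(\pi(f)) \neq 0$.

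First I would use the local-global principle for Noetherian modules: since $\pi(f)$ is a nonzero element of the finitely generated $k$-algebra $A$ (which is Noetherian by Hilbert's basis theorem), there exists a maximal ideal $\fm \triangleleft A$ such that the image of $\pi(f)$ in the localization $A_\fm$ is nonzero. (If $\pi(f)$ mapped to zero in every $A_\fm$, then $\operatorname{Ann}(\pi(f))$ would not be contained in any maximal ideal, forcing $1 \in \operatorname{Ann}(\pi(f))$, i.e., $\pi(f)=0$.)

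Next I would verify the hypotheses of Proposition \ref{prop:matrices_know_radical} for the ring $A_\fm$. Since $A$ is a finitely generated $k$-algebra and $k$ is algebraically closed, the classical (weak) Nullstellensatz gives $A/\fm \cong k$, and hence the residue field $A_\fm / \fm A_\fm$ of the local ring $A_\fm$ is also $k$. Moreover, $A_\fm$ is Noetherian (as a localization of a Noetherian ring), commutative, local with maximal ideal $\fm A_\fm$, and a $k$-algebra. Thus Proposition \ref{prop:matrices_know_radical} applies. Since the image $\pi(f)/1 \in A_\fm$ is nonzero by choice of $\fm$, the proposition yields some $n$ and a $k$-algebra homomorphism $\psi \colon A_\fm \to M_n(k)$ with $\psi(\pi(f)/1) \neq 0$.

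Finally, I would compose with the canonical localization map $\lambda \colon A \to A_\fm$ to obtain $\varphi := \psi \circ \lambda \colon A \to M_n(k)$, which is a $k$-algebra homomorphism satisfying $\varphi(\pi(f)) = \psi(\pi(f)/1) \neq 0$. This contradicts the hypothesis that $\varphi(\pi(f)) = 0$ for every such homomorphism, so $f \in J$ as claimed. No step here is really an obstacle — the only point that deserves care is checking that localization does not annihilate $\pi(f)$ for \emph{some} maximal ideal, and that the residue field at every maximal ideal of $A$ is precisely $k$ (which is where algebraic closedness is essential).
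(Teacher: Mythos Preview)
Your proof is correct and follows essentially the same approach as the paper: localize $A$ at maximal ideals, use the weak Nullstellensatz to verify the residue field hypothesis of Proposition \ref{prop:matrices_know_radical}, and pull back representations along the localization map. The only difference is that you argue by contrapositive (pick one $\fm$ where $\pi(f)$ survives) whereas the paper argues directly (show $\pi(f)$ dies in every $A_\fm$), but the content is the same.
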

\begin{proof}
Let us write $\overline{f} = \pi(f)$. 
For every maximal ideal $\fm \triangleleft A$, let $A_{\fm}$ be the localization of $A$ at $\fm$, and let $\iota_{\fm} \colon A \to A_{\fm}$ be the localization map. 
Since every finite dimensional representation of $A_{\fm}$ induces via $\iota_{\fm}$ a finite dimensional representation of $A$, we can conclude by the above proposition that $\iota_{\fm}(\overline{f}) = 0$. 
Since $\overline{f} = 0$ in every localization, it follows that $\overline{f} = 0$ (indeed, otherwise there exists a maximal ideal $\fm$ that contains the annihilator of $\overline{f}$ in $A$, and thus $\iota_{\fm}(\overline{f})$ is non-zero).
\end{proof}

Given $\Omega \subset \fC\bM_d$ and $S \subseteq \bC[z]$, if we denote
\[
I_{\bC[z]}(\Omega) = \{p \in \bC[z] : p(X) = 0 \,\, \textrm{ for all } \,\,X \in \Omega\}
\]
and
\[
V_{\fC\bM_d}(S) = \{X \in \Omega : p(X) = 0 \,\, \textrm{ for all } \,\, p \in S\},
\]
then we can reformulate the above corollary as
\begin{cor}[Commutative free Nullstellensatz]\label{cor:free_com_NSTZ}
For every ideal $J \triangleleft \bC[z]$,
\[
I_{\bC[z]}(V_{\fC\bM_d}(J)) = J.
\]
\end{cor}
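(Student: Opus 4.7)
The plan is to deduce this corollary directly from Corollary~\ref{cor:matrices_know_radical_poly}, which has just been proved. The nontrivial content is already packaged there; what remains is only a translation of language from ``finite-dimensional representations of $A = \bC[z]/J$'' into ``commuting tuples of matrices in $V_{\fC\bM_d}(J)$''. Concretely, I plan to set up a correspondence
\[
\{\varphi : A \to M_n(\bC) \text{ a homomorphism}\} \;\longleftrightarrow\; V_{\fC\bM_d}(J) \cap M_n^d,
\]
given in one direction by $\varphi \mapsto (\varphi(\pi(z_1)),\ldots,\varphi(\pi(z_d)))$, and in the other by $X \mapsto \varphi_X$, where $\varphi_X$ is evaluation at $X$. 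This is well defined: commutativity of the coordinates of $X$ ensures that $\varphi_X$ descends from $\bC[z]$ to a homomorphism on $A$, and conversely the $d$-tuple obtained from a homomorphism is automatically commuting and annihilates~$J$.

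Given this correspondence, the forward inclusion $J \subseteq I_{\bC[z]}(V_{\fC\bM_d}(J))$ is immediate from the definition of $V_{\fC\bM_d}(J)$. For the reverse inclusion, suppose $f \in I_{\bC[z]}(V_{\fC\bM_d}(J))$. For any homomorphism $\varphi : A \to M_n(\bC)$, set $X = (\varphi(\pi(z_1)),\ldots,\varphi(\pi(z_d))) \in V_{\fC\bM_d}(J)$. Then
\[
\varphi(\pi(f)) = f(X) = 0,
\]
since $f$ vanishes on $V_{\fC\bM_d}(J)$ by assumption. Thus $\pi(f)$ lies in the kernel of every finite-dimensional representation of $A$. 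By Corollary~\ref{cor:matrices_know_radical_poly}, we conclude $f \in J$, which completes the proof.

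There is essentially no obstacle here beyond the work already carried out to prove Corollary~\ref{cor:matrices_know_radical_poly} (whose engine is Proposition~\ref{prop:matrices_know_radical}, where the real content sits: a local Noetherian $k$-algebra with residue field $k$ is detected by its finite-dimensional quotients $A/\fm^\ell$, via Krull's intersection theorem). The only genuine remark worth making in the write-up is that one should point out the translation between representations and points, since this is what explains why allowing \emph{matrix} evaluations (rather than just scalar ones in $\bC^d$) upgrades the classical Hilbert Nullstellensatz --- which requires taking a radical --- to a perfect Nullstellensatz with no radical on the right-hand side.
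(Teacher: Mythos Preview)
Your proposal is correct and is exactly the paper's approach: the paper presents Corollary~\ref{cor:free_com_NSTZ} simply as a reformulation of Corollary~\ref{cor:matrices_know_radical_poly}, with no additional argument. You have merely spelled out the dictionary between unital homomorphisms $A \to M_n(\bC)$ and points of $V_{\fC\bM_d}(J)$ that the word ``reformulate'' is meant to convey.
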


\begin{rem}
From the main result of \cite{EisHoch79} it follows that it is enough to consider only finite dimensional representations of $\C[z_1,\ldots,z_d]/J$ of a fixed dimension. 
To see this note that as above we can represent the polynomial ring $A = \C[z_1,\ldots,z_d]$ (and in fact $A/J$) on $\left(\C[z_1,\ldots,z_d]/J\right)_{\fm}/\fm_{\fm}^k$ by multiplication operators, where $\fm$ is a maximal ideal containing $J$ and $k$ is the positive integer obtained using the main theorem of \cite{EisHoch79}. This representation is of course finite dimensional, in fact the dimension of this representation is bounded from above by the number of monomials of degree less than $k$, that we shall denote by $N$. Now if we assume that that $f \in A$ vanishes on all $N$ dimensional representations of $A/J$, then it implies that $f \in \fm^k + J$, for every maximal ideal $\fm$ that contains $J$ and we conclude that $f \in \bigcap_{J \subset \fm} (\fm^k + J)= J$.

\end{rem}


\begin{example} \label{eq:no_fd_nullss}
Take $s \in H^{\infty}(\D)$ a singular inner function (for example $s(z) = e^{\frac{z + 1}{z-1}}$). Then for any $X \in \fB_1$ we have that $s(X) \neq 0$, since we can always conjugate $X$ to an upper triangular form with a unitary and since $s$ does not vanish on the disc it won't vanish on the entries of the diagonal.
Now, note that the ideal generated by $s$ is $\textsc{wot}$-closed and its range is the shift invariant subspace $sH^2(\D)$ (here we used the Beurling-Lax theorem \cite[Theorem V.3.3]{SzNFo10}).
This is a proper subspace of $H^2(\D)$ since $1 \notin s H^2(\D)$.
By \cite{DavPitts2} the $\textsc{wot}$-closed ideal $s H^{\infty}(\D)$ is not trivial and thus the function $1$, which vanishes on precisely the same matrices in the ball as $s$ does, is not in the ideal.
We conclude that one cannot get a version of the Nullstellensatz for $\textsc{wot}$-closed ideals in $H^\infty(\D) = H^\infty(\fB_1)$ considering only finite dimensional representations.
\end{example}

\begin{remark}
It is trivial that if we throw infinite dimensional representations into the mix, then we get a Nullstellensatz (one only needs to consider the representation obtained by compressing the shifts to the orthogonal complement of the range of the ideal).
\end{remark}

\bibliographystyle{abbrv}
\bibliography{nc_bibliography}

\end{document}